\newtheorem{theorem}{Theorem}
\newtheorem{corollary}[theorem]{Corollary}
\newtheorem{prop}[theorem]{Proposition}
\newtheorem{lem}[theorem]{Lemma}
\newtheorem{lemma}[theorem]{Lemma}
\newtheorem{observation}[theorem]{Observation}
\theoremstyle{definition}
\newtheorem*{definition}{Definition}
\newtheorem{case}{Case}
\newcommand{\startingcases}{\setcounter{case}{0}}
\def\comp{\text{comp}}
\def\cut{\setminus}
\def\contract{/}
\let\emptyset\varnothing
\title{Forbidden minors for graphs with no first obstruction to parametric Feynman integration}
\author{Samson Black, Iain Crump, Matt DeVos, and Karen Yeats}
\begin{document}
\maketitle
\begin{abstract}
  We give a characterization of 3-connected graphs which are planar and forbid cube, octahedron, and $H$ minors, where $H$ is the graph which is one $\Delta-Y$ away from each of the cube and the octahedron.  Next we say a graph is \emph{Feynman 5-split} if no choice of edge ordering gives an obstruction to parametric Feynman integration at the fifth step.  The 3-connected Feynman 5-split graphs turn out to be precisely those characterized above.  Finally we derive the full list of forbidden minors for Feynman 5-split graphs of any connectivity.
\end{abstract}

\tableofcontents

\section{Introduction}

The Robertson-Seymour theorem \cite{RSXX} tells us that any minor closed graph property is defined by a finite set of forbidden minors.  The set of forbidden minors itself can vary from the sublime, such as Wagner's theorem, to the ridiculous, like the sixty-eight billion (and counting) forbidden minors for $\Delta-Y$ reducibility \cite{YuYDY}.  Middle ground includes cases like \cite{DDpathwidth} where the 3-connected situation is simple while the general case is more intricate.

In this paper we are interested in a minor closed graph property, called \emph{Feynman 5-splitting}, which originates from a physics residue calculation.  The property of interest is defined in Section \ref{sec splitting} in its original physics manifestation, and matroidally using Edmonds’ matroid intersection theorem in subsection \ref{matroid splitting}.   

Briefly, Feynman graphs in quantum field theory encode integrals which describe particle interactions.  Francis Brown \cite{Brbig} developed an algebro-geometric algorithm to integrate certain scalar Feynman integrals one edge of the graph at a time.  The denominators after each step of the integration are key to the algorithm and also can be interpreted combinatorially as certain polynomials defined from the original graph.  A graph which is not Feynman 5-split is a graph which, for at least one order of its edges, has an obstruction to continuing the algorithm after the fifth step.  

\medskip

Our first result in the classification of the excluded minors for Feynman 5-splitting is a purely graph theoretic structure theorem showing that a simple 3-connected graph $G$ has a certain width property, closely related to pathwidth 3, if and only if 
$G$ does not have one of a family of five minors.  To state this precisely we require some further terminology.  Let $C$ and $O$ denote the cube and octahedron graphs respectively, let $H$ denote the graph depicted in Figure \ref{firsth}, and define $\mathcal{F}_0 = \{ K_{3,3}, K_5, C, H, O \}$.  

\begin{figure}[h]
  \centering
    \includegraphics[scale=1.0]{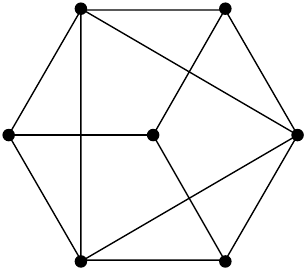}
  \caption{$H$}
\label{firsth}
\end{figure}

For any set of edges $A \subseteq E(G)$ we let $G_A$ denote the subgraph of $G$ induced by $A$.  A \emph{separation} of $G$ consists of a pair $(A,B)$ of subsets of $E(G)$ for which $A \cap B = \emptyset$ and $A \cup B = E(G)$.  We say that this is a separation \emph{on} $V(G_A) \cap V(G_B)$, we call $| V(G_A) \cap V(G_B) |$ the \emph{order} of the separation, and write $\partial(A)=\partial(B)=V(G_A) \cap V(G_B)$.  If the order of $(A,B)$ is at most $k$, we call $(A,B)$ a $k$-separation.  Finally, a separation $(A,B)$ is \emph{proper} if both $V(G_A) \setminus V(G_B)$ and $V(G_B) \setminus V(G_A)$ are nonempty.  

\begin{definition}[\cite{Brbig} section 1.4\footnote{Brown uses the term ``vertex width'' instead of ``width''.  This notion is also known as ``linear-width'' by Thilikos \cite{Twidth}}]
Let $G$ be a graph with $n$ edges.  The \emph{width} of an ordering $e_1, e_2, \ldots, e_n$ of $E(G)$ is the maximum order of 
a separation of the form $( \{e_1, \ldots, e_{\ell}\}, \{e_{\ell+1}, \ldots, e_n \} )$.  The \emph{width} of $G$ is the minimum width among 
all edge orders of $G$.
\end{definition}

The notion of width will be explored in detail in Section \ref{sec matroids}, however let us note here that it is closely related to the well-known concept of path width.  We may state the structure theorem as follows.

\begin{theorem} \label{decomp} A simple 3-connected graph has width at most $3$ if and only if it has no minor in $\mathcal{F}_0$.
\end{theorem}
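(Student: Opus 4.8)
The plan is to prove the two implications separately; the reverse one, that $\mathrm{width}(G)\le 3$ whenever $G$ has no minor in $\mathcal{F}_0$, carries essentially all of the difficulty.

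\emph{Forward implication.} First I would record that width is minor-monotone: if $G'$ arises from $G$ by deleting or contracting a single edge, then $\mathrm{width}(G')\le\mathrm{width}(G)$. Given an optimal edge order of $G$, delete from the list the edge in question --- identifying its two endpoints wherever they occur, in the contraction case --- and check that each prefix of the resulting order of $G'$ induces a separation whose boundary is that of the corresponding prefix of $G$ with, at worst, the two endpoints of the contracted edge replaced by their common image; its order therefore never increases. With this in hand it suffices to verify that each of $K_{3,3},K_5,C,H,O$ has width at least $4$, which is a finite check over edge orders. For $K_5$, say, the first three edges of any order either fail to form a triangle, in which case they already span at least four vertices, each of which is also met by the remaining six edges, so the separation has order at least $4$; or they form a triangle, and then the fourth edge brings in a new vertex and the separation again has order at least $4$. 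The other four lower bounds follow from similarly short (symmetry-reduced) analyses, or may be read off from the connection between width and path-width in Section~\ref{sec matroids}.

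\emph{Reverse implication.} Suppose $G$ is simple and $3$-connected with no minor in $\mathcal{F}_0$. As $G$ has no $K_5$ or $K_{3,3}$ minor, Wagner's theorem makes $G$ planar, and being $3$-connected it is embedded essentially uniquely. The core is then an explicit structural description of the simple $3$-connected planar graphs with no cube, octahedron, or $H$ minor --- the characterization promised in the abstract --- which I would prove by induction on $|V(G)|$. The finitely many graphs on at most six vertices are handled directly (the only one that must be excluded is $O$ itself). For $|V(G)|\ge 7$, apply Tutte's wheel theorem: either $G$ is a wheel $W_n$, and then we are done, since a branch-set model of $C$, $O$, or $H$ would make the hub's part adjacent to too many others, so $W_n$ has no minor in $\mathcal{F}_0$, and $W_n$ has width $3$ (sweep spoke--rim--spoke--rim around the hub, keeping the boundary inside the three vertices consisting of the hub, the current rim vertex, and the first rim vertex); or some edge $e$ has $G/e$ --- or, in a few exceptional cases, $G\setminus e$ --- again simple and $3$-connected. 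That smaller graph is a minor of $G$, hence planar and free of $C$, $O$, $H$ minors, so by induction it lies in the structural family; it then remains to run through the ways of splitting a vertex of a family member and to show that every split leaving the family displays $C$, $O$, or $H$ as a minor. Planar duality organizes this: among planar graphs $C^{\ast}=O$ and $H^{\ast}=H$, the prism and $K_5-e$ are dual to each other, and wheels are self-dual, so both the excluded set and the target family are duality-closed. Finally, for each graph the structure theorem produces one exhibits an edge order of width at most $3$; beyond the wheels these are the prism, $K_5-e$, and their near-wheel relatives, each of which has an explicit ``finish one vertex at a time'' order.

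The step I expect to be the main obstacle is the inductive clause of the structure theorem: pinning down exactly which vertex splits of a family member stay in the family, and certifying every other split by a cube, octahedron, or $H$ minor. The difficulty is that the family is genuinely richer than ``wheels plus two sporadic graphs'' --- for instance $W_n$ with a single short chord still avoids all of $\mathcal{F}_0$ --- so the casework has to track chords and near-wheel configurations carefully. The rigidity of $3$-connected graphs --- essentially unique embedding, Tutte's wheel theorem, and the fact that every proper separation has order at least $3$ --- together with the duality symmetry, are what I would rely on to keep the analysis finite.
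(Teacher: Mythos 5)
Your forward implication is essentially right and broadly parallels the paper (the paper gets the lower bound a bit more slickly, by noting that each graph in $\mathcal{F}_0$ has no $3$-separation with both sides large, but minor-monotonicity plus a finite check over the five graphs works just as well).

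The reverse implication is where all the difficulty lives, and there your plan diverges from the paper's in a way that leaves a real gap. You propose to invoke Wagner and Tutte's wheel theorem and induct on $|V(G)|$, passing through an explicit structural description of the $3$-connected planar graphs with no $C$, $O$, $H$ minor. The paper never produces such a description. Instead it uses the Halin--Jung theorem to locate a degree-$3$ vertex, greedily extends an edge order maintaining $3$-separations from that vertex, and, when the greedy process stalls, invokes the ``rich/poor'' analysis of $3$-separations (Lemmas~\ref{fan2} and~\ref{getsrich}) to find a $3$-separation that is rich on both sides, from which it sweeps outward in both directions. That machinery replaces your wheel-theorem induction entirely.

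The concrete gap is that you never pin down the structural family your induction is supposed to land in, and you yourself identify the crucial clause --- that every vertex-split of a family member either stays in the family or exhibits a $C$, $O$, or $H$ minor --- as the main obstacle without resolving it. This cannot be deferred: width is minor-monotone, so knowing $G/e$ has width at most $3$ tells you nothing about the width of $G$; the induction only closes if, from the structure of the smaller graph, you can either build an edge order of $G$ of width $3$ directly, or show that the split leaves the family and forces a forbidden minor. You correctly observe that the family is not ``wheels plus two sporadics'' (wheels with an extra chord, for instance, still avoid $\mathcal{F}_0$), which means this casework is genuinely extensive, and it is precisely the work that the rich/poor sweep in the paper is designed to sidestep. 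Until the family is written down and the split analysis carried out, the reverse direction is not established.
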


The proof of this theorem appears in Section \ref{seccharac}. Related characterizations in the literature include cube-free graphs \cite{Mcube}, octahedron-free graphs \cite{MR3090713} and planar cube-free graphs \cite{PScube}.  In the other direction, Thilikos \cite{Twidth} gave the list of forbidden minors for width at most 2.

Now we turn our attention back to the notion of Feynman 5-splitting. 
As with \cite{DDpathwidth}, our forbidden minor result breaks into the 3-connected simple case and the non-3-connected case.  
Feynman 5-split graphs which are at least 3-connected turn out to again be those 3-connected graphs which forbid $\mathcal{F}_0$ (Theorem \ref{heyguesswhatsplits}). 
The 2-connected case is more intricate.  First we must observe that certain small 2-cuts are functionally the same for the purposes of 
Feynman 5-splitting 
(Section \ref{wands}).  This then gives 34 more functionally distinct forbidden minors to complete our characterization  (Lemma \ref{fminor}).

\section{3-connected graphs with no $\mathcal{F}_0$ minor}\label{seccharac}

In this section we prove Theorem \ref{decomp}.  This argument will require the notion of a rooted minor.  Let $G$ and $R$ be loopless graphs.  As usual, we say that $G$ has $R$ as a \emph{minor} if for every $v \in V(R)$ there exists a set of vertices $X_v \subseteq V(G)$ satisfying the following:
\begin{itemize}
\item $X_v \cap X_w = \emptyset$ whenever $v,w \in V(R)$ and $v \neq w$.  
\item The subgraph of $G$ induced by $X_v$ is connected for every $v \in V(R)$.
\item For all $v,w \in V(R)$ with $v \neq w$, the number of edges in $G$ between $X_v$ and $X_w$ is at least the number of edges between $v$ and $w$ in $R$.
\end{itemize}
If $U \subseteq V(R)$ and $T \subseteq V(G)$ are the same size and in addition $|T \cap X_{u_i}| = 1$ for every $1 \le i \le n$, then $(G,T)$ has a \emph{rooted} $(R,U)$-minor.  If $U = \{u_1, \ldots, u_n \} \subseteq V(R)$ and $T = \{t_1, \ldots, t_n\} \subseteq V(G)$ and in addition 
$t_i \in X_{u_i}$ for every $1 \le i \le n$, then $(G;  t_1, \ldots, t_n)$ has a \emph{rooted} $(R; u_1, \ldots, u_n)$ minor.  In these cases we refer 
to the vertices of both $T$ and $U$ as \emph{roots}.

\begin{figure}[h]
  \centering
    \includegraphics[scale=0.6]{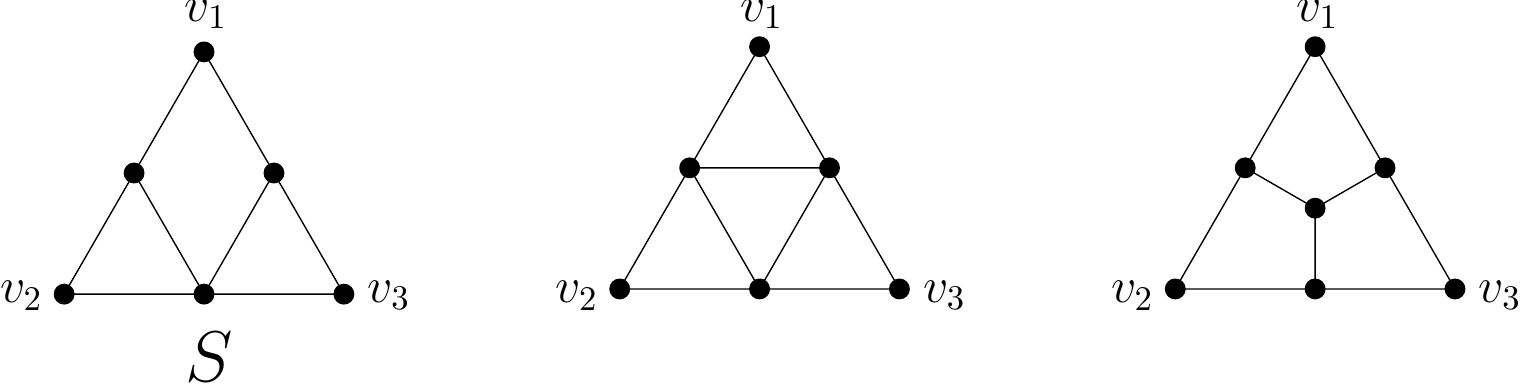}
  \caption{}
\label{s_plus}
\end{figure}

We define $S$ to be the first graph depicted in Figure \ref{s_plus} and consider the vertices $\{v_1,v_2,v_3\}$ to be roots.  

\begin{lem} \label{fan2} Let $G$ be a simple 3-connected graph with no minor in $\mathcal{F}_0$ and let $(A,B)$ be a proper 3-separation on $X = \{x_1,x_2,x_3\}$.  If $X$ is independent in $G_B$ and $(G_B; x_1, x_2, x_3)$ has a rooted $(S; v_1, v_2, v_3)$-minor, then $G_A \setminus x_1$ is a path from $x_2$ to $x_3$.
\end{lem}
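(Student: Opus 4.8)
The plan is to argue by contradiction: suppose $G_A \setminus x_1$ is not a path from $x_2$ to $x_3$, and extract one of the forbidden minors in $\mathcal{F}_0$. The starting point is the rooted $(S;v_1,v_2,v_3)$-minor in $G_B$. Picture $S$: since $S$ has three roots and its extra vertices/edges are drawn in Figure \ref{s_plus}, the relevant feature is that $S$ (rooted at $v_1,v_2,v_3$) forces $G_B$ to contain three internally disjoint paths, or a small tree, connecting $x_1,x_2,x_3$ together with one additional ``private'' vertex of $S$, giving enough connective structure in $G_B$ that gluing almost anything nontrivial onto $X$ from the $A$ side produces a large minor. Concretely, the $S$-minor together with the separation gives us, on the $B$ side, a subdivision of $S$ meeting $\partial(A)=X$ exactly in $\{x_1,x_2,x_3\}$; I will fix such a model and treat it as a black box.

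First I would establish that $G_A$ is ``highly connected to $X$'': because $G$ is $3$-connected and $(A,B)$ is a proper $3$-separation, Menger's theorem gives three disjoint paths in $G_A$ from any vertex of $V(G_A)\setminus X$ to $X$, and more importantly every vertex of $G_A$ lies on some structure linking to all of $X$. Then I would split into cases according to how $G_A \setminus x_1$ fails to be a path from $x_2$ to $x_3$: either (i) $G_A \setminus x_1$ is disconnected or $x_2,x_3$ lie in different components, (ii) it is connected but has a vertex of degree $\ge 3$, or (iii) it is a path but not one from $x_2$ to $x_3$ (e.g. $x_2$ or $x_3$ is an interior vertex, or an endpoint is some other vertex). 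In each case the failure supplies an ``extra'' path or branch inside $G_A$; combined with the $S$-model in $G_B$ and the independence of $X$ in $G_B$ (which prevents shortcuts that would otherwise collapse the minor), I would route disjoint connected pieces to realize a branch decomposition of one of $K_{3,3}$, $K_5$, $C$, $H$, or $O$. The independence hypothesis on $X$ in $G_B$ is exactly what is needed to guarantee the paths through $B$ are genuinely internally disjoint from $X$ and do not degenerate.

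The main obstacle will be the bookkeeping in case (ii)/(iii): showing that a degree-$\ge 3$ vertex or a misplaced endpoint in $G_A \setminus x_1$, when spliced onto the $S$-minor, actually yields one of the five specific minors rather than some unavoidable smaller graph. This requires knowing precisely which rooted minors $(S;v_1,v_2,v_3)$ guarantees — i.e. a lemma (presumably proved just before or after, given the figure labeled $s\_plus$ shows ``$S$'' together with a ``plus'' variant) saying that adding a path between two roots, or a claw onto the three roots, to the $S$-side produces a member of $\mathcal{F}_0$. I would phrase the core of the argument as: "by the structure of the $S$-minor, $(G_B;x_1,x_2,x_3)$ also has a rooted $(S';\ldots)$-minor for the relevant enlargement $S'$, and $S'$ glued to the $A$-side contains [the forbidden minor]," reducing everything to a finite check on small graphs. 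A secondary subtlety is ensuring $x_1$ is genuinely needed (so that deleting it is the right normalization): if $G_A$ itself, before deleting $x_1$, already behaves like a path, one must check $x_1$ attaches as a pendant-type vertex; the $3$-connectivity of $G$ forces $x_1$ to have at least one more neighbor, and I would use that extra edge to rule out the degenerate subcase or to build the minor.
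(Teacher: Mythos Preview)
Your overall direction---assume failure and combine with the rooted $S$-minor to force a member of $\mathcal{F}_0$---is right, but you have mislocated the difficulty. The paper's argument is much shorter because it splits not into your three cases but into ``$G_A \setminus x_1$ contains a cycle'' versus ``$G_A \setminus x_1$ is acyclic,'' and the acyclic case needs no forbidden minor at all.

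If $G_A \setminus x_1$ is acyclic, it is connected (any $v\in V(G_A)\setminus X$ has, by $3$-connectivity and Menger, three internally disjoint $v$--$X$ paths in $G_A$, hence a $v$--$x_2$ path and a $v$--$x_3$ path avoiding $x_1$), so it is a tree. Any leaf $u\notin\{x_2,x_3\}$ of this tree satisfies $u\notin V(G_B)$ and has at most one neighbour ($x_1$) outside the tree, so $\deg_G(u)\le 2$, contradicting $3$-connectivity. Hence the only leaves are $x_2$ and $x_3$, and the tree is the required path. This dispatches your cases (i), (iii), and the acyclic half of (ii) with no appeal to $S$ or to $\mathcal{F}_0$; the independence hypothesis on $X$ in $G_B$ plays no role here either.

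The $S$-minor is invoked only in the cyclic case, and then exactly one forbidden minor appears: $H$. If $D\subseteq G_A\setminus x_1$ is a cycle, three vertex-disjoint paths in $G_A$ from $x_1,x_2,x_3$ to $D$ (again by $3$-connectivity) give, on the $A$ side, a rooted minor consisting of a triangle with the three roots attached as pendants; gluing this to the $S$-model on the $B$ side produces $H$. So you will not need $K_{3,3}$, $K_5$, $C$, or $O$, and the ``bookkeeping'' you anticipate in (ii)/(iii) collapses to a single picture.
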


\begin{proof} If $G_A \setminus x_1$ contains a cycle $D$, then by the 3-connectivity of $G$ we may choose three vertex disjoint paths in $G_A$ starting at $x_1,x_2,x_3$ and terminating in $D$ (the paths starting at $x_2$ and $x_3$ may be trivial).  It then follows that $G$ contains $H$ as a minor, which is a contradiction.  

Since $G$ is 3-connected, the graph $G_A \setminus x$ is connected and so must be a tree.  If $u$ is a leaf of this tree which is not one of $x_2,x_3$, 
then $u$ has degree at most 2 in $G$, a contradiction.  It follows that $G_A \setminus x_1$ is a path from $x_2$ to $x_3$ as desired.  
\end{proof}

\begin{definition}
Consider a graph $G$ with a separation $(A,B)$ on $X$.  We say that $A$ is \emph{rich} if for every $x \in X$ there exists a cycle 
$D \subseteq G$ with $x \not\in V(D)$ so that $|E(D) \setminus A| \le 1$, and similarly for $B$.  If $A$ ($B$) is not rich, we call it \emph{poor}.
\end{definition}

Note that in the previous lemma we may add the conclusion that $A$ is poor since there does not exist  a cycle in $G \setminus x_1$ containing 
at most one edge of $B$. 

\begin{lem}
\label{getsrich}
Let $G$ be a 3-connected graph with no $\mathcal{F}_0$ minor which has a proper 3-separation $(A,B)$ on $X$.  If
$X$ is independent in $G_B$ and $\mathit{deg}_{G_B}(x) \ge 2$ for every $x \in X$, then $A$ is poor, and there exists a 
3-separation $(A',B')$ for which $A'$ and $B'$ are rich.
\end{lem}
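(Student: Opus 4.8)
The plan is to treat the two conclusions separately: first dispose of ``$A$ is poor'' by combining the hypotheses on $G_B$ with Lemma~\ref{fan2}, and then build the balanced separation $(A',B')$ more or less by hand. Before anything else I would record the routine connectivity facts that $3$-connectivity of $G$ forces here: since $(A,B)$ is proper and every vertex of $V(G_B)\setminus X$ has all of its $G$-neighbours in $V(G_B)$ (and symmetrically for $A$), one checks that $G_A$, $G_B$, each $G_A\setminus x_i$, each $G_B\setminus x_i$, and each $G_B\setminus\{x_i,x_j\}$ is connected. This already gives that $B$ is rich: given $x_i$, choose $j\ne i$; by independence $x_j$ has two neighbours $y,y'\in V(G_B)\setminus X$, and a $y$--$y'$ path inside the connected graph $G_B\setminus\{x_i,x_j\}$ closes up with $yx_jy'$ to a cycle of $G$ avoiding $x_i$ and lying entirely in $G_B$. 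A second preliminary observation is that, because $X$ is independent in $G_B$, any cycle $D$ avoiding $x_i$ with $|E(D)\setminus A|\le 1$ must have $|E(D)\setminus A|=0$ (a lone $B$-edge of $D$ would have to join the two vertices of $X\setminus\{x_i\}$). Hence ``$A$ is rich'' is equivalent to ``$G_A\setminus x_i$ contains a cycle for every $i$,'' and ``$A$ is poor'' is equivalent to ``$G_A\setminus x_i$ is acyclic for some $i$.''

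For the first conclusion I would show that $(G_B;x_1,x_2,x_3)$ has a rooted $(S;v_1,v_2,v_3)$-minor, after relabelling the roots if necessary; the input is exactly that $G_B$ is connected, that $X$ is independent in $G_B$, and that each $x_i$ has at least two neighbours in $V(G_B)\setminus X$, the branch sets being obtained by contracting a component of $G_B\setminus X$ together with short paths realising the extra multiplicities. Feeding this into Lemma~\ref{fan2} shows that $G_A\setminus x_i$ is a path from $x_j$ to $x_k$, where $x_i$ is the root playing the role of $v_1$, so $A$ is poor by the remark following that lemma. Pushing the same $3$-connectivity argument as in the proof of Lemma~\ref{fan2} a little further then pins down $G_A$ completely: the tree $G_A\setminus x_i$ has all its leaves in $\{x_j,x_k\}$, so it is a path, and each internal vertex of that path has degree $2$ in $G_A$ and degree $0$ in $G_B$, hence is adjacent to $x_i$. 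In other words $G_A$ is a fan with apex $x_i$ and rim a path from $x_j$ to $x_k$, and this rim has at least one internal vertex because $(A,B)$ is proper.

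It remains to produce a $3$-separation $(A',B')$ with both sides rich, and this is the step I expect to be the main obstacle. No part of the fan $G_A$ can be a rich side, since its apex meets every cycle of $G_A$, so $(A',B')$ must cut into $G_B$, and this is where the hitherto unused hypothesis that $G$ has no $\mathcal{F}_0$-minor must be brought in. I would first note that if $G_B\setminus X$ had two disjoint connected pieces each adjacent to all of $X$, then contracting those two pieces and the rim interior of the fan, with $\{x_1\},\{x_2\},\{x_3\}$ as the remaining branch sets, would exhibit a $K_{3,3}$-minor; so $G_B\setminus X$ is connected and, in a sense to be made precise, ``path-like'' relative to $X$. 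Then, using that $B$ is rich, fix cycles $D_i\subseteq G_B$ avoiding $x_i$, locate a vertex lying on two of them together with a suitable second vertex of $G_B$, and adjoin to these two vertices a single interior vertex $p$ of the fan's rim; the claim to be proved is that this triple is a $3$-cut $Y$ of $G$ whose two sides each retain enough of the appropriate $D_i$ (rerouted through $p$ when necessary) that, for every vertex of $Y$, both sides contain a cycle avoiding it. Checking that $Y$ genuinely separates $G$, and that neither side is accidentally pinched back into a path or fan, is where $\mathcal{F}_0$-freeness has to be used in full — it is precisely what guarantees that $G_B$ is ``fat enough'' in the relevant places — and is where the bulk of the case analysis will sit.
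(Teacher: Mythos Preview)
Your preliminary observations are sound, but the two main steps both have genuine gaps, and the overall strategy of decoupling ``$A$ is poor'' from the construction of $(A',B')$ misses the paper's key idea.

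First, your claim that a rooted $(S;v_1,v_2,v_3)$-minor in $G_B$ follows ``exactly'' from connectedness of $G_B$, independence of $X$, and $\deg_{G_B}(x_i)\ge 2$ is not justified: the one-line sketch (``contract a component of $G_B\setminus X$ together with short paths'') is not a proof, and you give no reason to believe it holds without invoking $\mathcal{F}_0$-freeness or planarity. Second, your construction of $(A',B')$ is, as you yourself concede, where ``the bulk of the case analysis will sit'' --- but none of that analysis is carried out, and there is no mechanism explaining how abstract cycles $D_i$ and ``a vertex lying on two of them'' produce a genuine $3$-cut, let alone one with both sides rich.

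The paper proceeds entirely differently, and the structure is worth knowing. Since $K_5,K_{3,3}\in\mathcal{F}_0$, the graph $G$ is planar; one embeds it in the sphere and restricts $G_B$ to a disc bounded by the three facial paths $P_{12},P_{13},P_{23}$ joining the $x_i$. One then analyses the \emph{bridges} of $G_B$ relative to the boundary cycle $P_{12}\cup P_{13}\cup P_{23}$. A bridge attaching to the interiors of all three $P_{ij}$ forces a cube minor; bridges realising all three pairs of interiors force an $H$ minor. Once these cases are excluded, the surviving bridge patterns force a $2$-separation of $G_B$ on $\{x_1,u\}$ for a specific vertex $u\in V(P_{23})$ (after relabelling), and this $2$-separation does double duty: it exhibits the rooted $S$-minor needed for Lemma~\ref{fan2}, \emph{and} it hands you the vertex $u$. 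Taking any interior vertex $w$ of the fan path $G_A\setminus x_1$, the required separation is then simply $(A',B')$ on $\{w,x_1,u\}$, and richness of both sides is read off directly from the bridge structure already established.

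So the missing idea is the planar bridge decomposition of $G_B$: it handles both conclusions in a single pass, and the vertex $u$ needed for $(A',B')$ is a byproduct of the very analysis that yields the $S$-minor. Trying to manufacture $u$ afterwards from abstract cycles $D_i$, without the planar picture, is a much harder route and is not what the paper does.
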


\begin{proof} Let $X = \{x_1,x_2,x_3\}$ and consider an embedding of $G$ in the sphere.   For $1 \le i < j \le 3$ let $P_{ij}$ be a non-trivial path in $G_B$ from $x_i$ to $x_j$ which forms part of the boundary of a face.  Note that since $G$ is 3-connected, these paths do not depend on the choice of embedding.  In addition, our assumptions imply that each $P_{ij}$ has at least one interior vertex and are internally disjoint.  By restricting our embedding of $G$ we may obtain an embedding of $G_B$ in a disc with the cycle $D = P_{12} \cup P_{23} \cup P_{13}$ on the boundary of the disc.  Define a \emph{bridge} to be a subgraph of $G_B$ which either consists of a single edge $e \in E(G_B) \setminus E(D)$ for which both ends of $e$ are in $V(D)$, or consists of a component $G'$ of $G_B \setminus V(D)$ together with all edges joining a vertex of  $G'$ and a vertex of $D$ (together with any endpoint of such an edge).  An \emph{attachment} of a bridge $K$ is a vertex in $V(K) \cap V(D)$.  Note that by the embedding of $G_B$ and the 3-connectivity of $G$, for every bridge $K$, and every path $P_{ij}$, the attachments of $K$ which lie in $V(P_{ij})$ must form an interval of this path, and $K$ must have an attachment outside of this path.  

If there is a bridge with attachments in the interiors of $P_{12}$,  $P_{23}$ and $P_{13}$, then $(G_B, \{x_1,x_2,x_3\})$ has the third graph from 
Figure \ref{s_plus} together with $\{v_1,v_2,v_3\}$ as a rooted minor.  It then follows that $G$ has $C$ as a minor, which is a contradiction.
Next suppose that there exists a bridge $K$ with $x_1$ and a vertex in the interior of $P_{23}$ as attachments.  We may assume (without loss) that there are no attachments of $K$ in the interior of $P_{12}$.  Let $u$ be the attachment of $K$ on $P_{23}$ which is closest along this path to $x_2$, and 
note that $u \neq x_2$ (otherwise this would force all of $V(P_{12})$ to be attachments of $K$).  It now follows that $G_B$ has a 2-separation on 
$\{x_1,u\}$.  Furthermore, by planarity, every bridge with an attachment in the interior of $P_{12}$ ($P_{13}$) also has an attachment in the interior of $P_{23}$.  Therefore $(G_B,X)$ has a rooted $(S,\{v_1,v_2,v_3\})$ minor.  Now Lemma \ref{fan2} implies that $G_A \setminus x_1$ is a path, so $G_A$ is poor.  Furthermore, for any interior vertex $w$ of this path we have that $w$ and $x_1$ are adjacent (by 3-connectivity) and thus there is a 3-separation $(A',B')$ of $G$ on $\{w,x_1,u\}$ for which both $A'$ and $B'$ are rich.  Argue similarly for any bridge from $x_2$ or $x_3$ to the interior of the opposite side.

We may now assume that there does not exist a bridge with attachments as considered in the previous paragraph.  It follows from this that every bridge has attachments in the interior of exactly two of the three paths $P_{12}$, $P_{13}$, $P_{23}$.  If for each pair of these paths there is a bridge which attaches to both interiors, then $(G_B, \{x_1,x_2,x_3\})$ has the second graph from figure \ref{s_plus} together with $\{v_1,v_2,v_3\}$ as a rooted minor and it follows that $G$ contains $H$ as a minor, which is contradictory.  So, we may assume (without loss) that there are no bridges with attachments in the interior of both $P_{12}$ and $P_{13}$.  It follows from this that $x_1$ is not an attachment of any bridge.  Now let $u$ be the vertex on $P_{23}$ which is closest to $x_2$ along this path with the property that $u$ is an attachment of a bridge which also has an attachment in the interior of $P_{13}$.  Now $G_B$ has a 2-separation on $\{x_1,u\}$ and $(G_B,X)$ has a rooted $(S,\{v_1,v_2,v_3\})$ minor.  As before, Lemma \ref{fan2} implies that $G_A \setminus x_1$ is a path, so $G_A$ is poor, and for any interior vertex $w$ of this path, we have a 3-separation $(A',B')$ of $G$ on $\{w,x_1,u\}$ for 
which both $A'$ and $B'$ are rich.
\end{proof}

The proof of Theorem \ref{decomp} also requires the following classical theorem on graph minors. 

\begin{theorem}[Halin, Jung \cite{HJ}]
\label{octk5}
Every simple graph with minimum degree at least four contains either $K_5$ or $O$ as a minor.
\end{theorem}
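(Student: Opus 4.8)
Suppose the theorem fails, and among all counterexamples choose $G$ with $|V(G)|$ as small as possible and, subject to that, with $|E(G)|$ as small as possible; thus $\delta(G)\ge 4$ and $G$ has no $K_5$-minor and no $O$-minor. I first dispose of the small cases. Since $\delta(G)\ge 4$ we have $|V(G)|\ge 5$, and $|V(G)|=5$ forces $G=K_5$. If $|V(G)|=6$ and $\Delta(G)=4$, then $\overline{G}$ is $1$-regular on $6$ vertices, hence a perfect matching, so $G=K_6$ minus a perfect matching $=O$. If $|V(G)|=6$ but some vertex $v$ has degree $5$, then $G-v$ has minimum degree $\ge 3$ on $5$ vertices, hence at least $8$ edges; the unique $K_4$-free graph on $5$ vertices with $8$ edges is $K_{2,2,1}$, so $G=K_1\ast K_{2,2,1}=K_6$ minus two independent edges, and contracting a suitable remaining edge turns this into $K_5$. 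In every case we reach a contradiction, so $|V(G)|\ge 7$.

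Next I extract two structural facts. First, \emph{the vertices of degree at least $5$ form an independent set}: if $uv\in E(G)$ with $\deg u,\deg v\ge 5$, then $G-uv$ still has minimum degree $\ge 4$ and, being a subgraph of $G$, has no $K_5$- or $O$-minor, contradicting the minimality of $|E(G)|$. Second, \emph{for every edge $uv$ of $G$, either $uv$ lies in a triangle $uvz$ with $\deg z=4$, or $\deg u=\deg v=4$ and $N(u)\setminus\{v\}=N(v)\setminus\{u\}$}: indeed $G'=G/uv$ (with parallel edges suppressed) is a minor of $G$ on fewer vertices, so by minimality $\delta(G')\le 3$; but the merged vertex has degree $\ge\max(\deg u,\deg v)-1\ge 3$ in $G'$, and every other vertex of $G'$ loses at most one neighbour, so analysing when equality $\delta(G')=3$ occurs yields exactly the two stated alternatives.

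I then claim $G$ is $3$-connected, and in fact $4$-connected. If $G$ had a cut of order at most $2$, one passes to the torso on the larger side, obtained by adding a clique on the cut; a torso vertex of degree $<4$ must lie in the cut, of which there are at most two, so either the torso already has minimum degree $\ge 4$ and is a smaller counterexample, or one of the (at most two) low-degree cut vertices has so few neighbours on that side that a strictly smaller cut can be selected, contradicting the minimal choice of cut. A similar argument, now also using the second structural fact, rules out $3$-cuts: a $3$-cut $\{x,y,z\}$ whose side is lopsided forces one of the two configurations above near the cut, producing either a smaller counterexample or an explicit $K_5$- or $O$-minor. Hence $G$ is $4$-connected, so the second alternative in the structural fact is impossible (a pair $u,v$ with $N(u)\setminus\{v\}=N(v)\setminus\{u\}$ of size $3$ would give a $3$-cut separating $\{u,v\}$ from the rest, which is nonempty since $|V(G)|\ge 7$). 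Therefore \emph{every edge of $G$ lies in a triangle whose third vertex has degree $4$}. Together with the independence of the degree-$\ge 5$ vertices this pins down the local structure at any vertex $v$: every neighbour of $v$ has a degree-$4$ neighbour inside $N(v)$, and $G[N(v)]$ has maximum degree at most $3$ (a vertex of degree $4$ in $G[N(v)]$ would have degree $\ge 5$ in $G$ while being adjacent to $v$). A short case analysis of the graph $G[N(v)]$ — run separately for $\deg v\ge 5$ and for $\deg v=4$ (in which case $G$ is $4$-regular and every edge lies in a triangle) — then produces, using $4$-connectivity to route the required connecting paths in $G$ with a few vertices deleted, an explicit model of $K_5$ or of $O=K_{2,2,2}$ in $G$, the final contradiction.

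The main obstacle is this last step: each individual case is elementary — essentially one hunts for the right branch sets, as in Halin's minimal-structure arguments — but there are several sub-configurations of $G[N(v)]$ to handle and one must be careful with the connectivity-based rerouting. The reduction to $4$-connectivity, in particular re-selecting a smaller cut when the torso inherits a low-degree vertex, is the other fiddly point.
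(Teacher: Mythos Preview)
The paper does not prove this theorem: it is quoted as a classical result of Halin and Jung with a citation, and is used as a black box in the proof of Theorem~\ref{decomp}. So there is nothing in the paper to compare your argument against.

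As for your outline on its own merits: the overall strategy---take a minimal counterexample, reduce to high connectivity, then analyse the neighbourhood of a degree-$4$ vertex---is the classical one, and your two structural observations (vertices of degree $\ge 5$ independent; every edge in a triangle with a degree-$4$ apex, or twin neighbourhoods) are correct and are derived correctly from edge- and vertex-minimality. The small cases $|V(G)|\le 6$ are also handled cleanly.

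However, two of your steps are genuine gaps rather than routine checks. First, the reduction to $4$-connectivity: your torso argument for $2$-cuts is fine in spirit, but for $3$-cuts you say only ``a similar argument, now also using the second structural fact, rules out $3$-cuts,'' and this is not a similar argument. When a cut vertex $x$ has at most one neighbour on one side of a $3$-cut, you do not automatically get a smaller cut or a smaller counterexample; some real work is needed here (and the original Halin--Jung proof does spend effort on it). Second, and more seriously, the entire final step---the case analysis of $G[N(v)]$ that is supposed to exhibit a $K_5$- or $O$-model using $4$-connectivity to route paths---is not carried out at all. That analysis is the substance of the theorem; everything before it is setup. So what you have written is a plausible plan, not a proof.
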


\begin{proof}[Proof of Theorem \ref{decomp}.] \mbox{}
Let $G = (V,E)$ be a simple 3-connected graph with no minor in $\mathcal{F}_0$.
By Theorem \ref{octk5} we may choose a vertex $v$ of degree 3.  We now form a sequence $e_1, e_2, \ldots$ of edges by choosing $e_1,e_2,e_3$ to be the three edges incident with $v$ and then greedily extending the list by a new edge $e_k$ if there exists such an edge with the property that $( \{e_1, \ldots, e_k\} , E \setminus \{e_1, \ldots, e_k \}  )$ is a 3-separation.  If this procedure exhausts all of the edges, then we are done.  Otherwise, at the point where we get stuck, we have a 3-separation $( \{e_1, \ldots, e_{\ell} \} , E \setminus \{e_1, \ldots, e_{\ell} \}  )$ on $X$ with the property that no edge in $E \setminus \{e_1, \ldots, e_{\ell} \}$ has both ends in $X$, and every vertex in $X$ has degree at least two in $G_{ E \setminus \{e_1, \ldots, e_{\ell} \} }$.  So we may apply Lemma \ref{getsrich} to choose a 3-separation $(A',B')$ of $G$ for which $A'$ and $B'$ are both rich.  Now we form a sequence of edges $b_1, b_2, \ldots $ of $B'$ in a greedy fashion by repeatedly choosing an edge $b_{i}$ 
so that $(A' \cup \{b_1, \ldots, b_i \}, B' \setminus \{b_1, \ldots, b_i \} )$ is a 3-separation.  If this procedure terminates before all edges of $B'$ have been used, say after choosing $b_i$, then we have a 3-separation $(A' \cup \{b_1, \ldots, b_i\}, B' \setminus \{b_1, \ldots, b_i\} )$ satisfying the hypothesis of Lemma \ref{getsrich} and this implies that $A' \cup \{b_1, \ldots, b_i\}$ is poor, which is contradictory to $A'$ being rich.  Therefore, this process terminates with a sequence $b_1, \ldots, b_m$ containing all edges of $B'$.  Similarly, we may greedily sequence the edges of $A'$ as $a_1, a_2, \ldots a_{\ell}$ so that $(A' \setminus \{a_1, \ldots, a_j\} , B' \cup \{a_1, \ldots, a_j \})$ is a 3-separation for every $1 \le j \le \ell$.  Now the edge sequence 
$a_{\ell}, \ldots, a_1, b_1, \ldots, b_m$ has width 3.

For the other direction, note that the graphs $K_{3,3}$, $K_5$, $C$, and $O$ do not contain a 3-separation $(A,B)$ with $|A|, |B| \ge 4$,  and the graph $H$ has no such 3-separation $(A,B)$ with $|A|, |B| \ge 5$.  It follows that all of these graphs have width at least 4.
\end{proof}

\section{Splitting}\label{sec splitting}

In this section we introduce the notion of Feynman 5-splitting, which serves as the central focus of our investigation.  
Given a not necessarily simple graph $G$ assign to each edge $e$ a variable $x_e$.  Then the \emph{Kirchhoff} polynomial of $G$ is
\[
\Psi_G = \sum_{\substack{T \text{ spanning}\\\text{tree of } G}} \left( \prod_{e\not\in T}x_e \right)
\]
The \emph{Feynman period} of $G$ is
\[
\int_{0}^\infty\cdots \int_0^\infty \frac{\prod_e dx_e}{\Psi_G^2}\delta(\sum_{e}x_e-1)
\]
where $\delta$ is the Dirac delta.
This is only a part of the full Feynman integral of a graph, which in general would involve external momenta, masses, and for non-scalar Feynman diagrams, further terms in the numerator incorporating the tensor structure of the diagram.  However, it is an important part physically. The full Feynman integral typically diverges, and with appropriate regularization this part is the coefficient of logarithmic growth at infinity, hence is a kind of residue \cite{numbers}.  Furthermore for primitive divergent graphs, defined below, this part gives the most complicated contributions to the $\beta$-function of the theory \cite{Sphi4} and is invariant under a wide variety of renormalization schemes since no choices have to be made for subdivergences.  It is also mathematically a very interesting object both algebro-geometrically; see for example \cite{AMdet, bek, BrS}, and more combinatorially \cite{BrSY, BrY, SFq}.

Let $\ell(G)$ be the cyclotomic number of $G$, the minimum number of edges needed to be removed to create an acyclic graph, and let $e(G)$ be the number of edges of $G$.  The Feynman period converges provided $\ell(G) = 2e(G)$ and $\ell(\gamma) < 2e(\gamma)$ for any proper subset of edges $\gamma$ of $G$. Graphs that meet this requirement are known as \emph{primitive divergent}.  Calculating the values of Feynman periods is quite difficult.  Oliver Schnetz \cite{Sphi4} following older work of David Broadhurst and Dirk Kreimer \cite{bkphi4} calculated as many as possible with current techniques.

There is also a matrix-theoretic point of view on the Kirchhoff polynomials which is very useful.
Following Brown \cite{Brbig} we will use an exploded Laplacian which is well suited to our form of the Kirchhoff polynomial.

Given an undirected graph $G$, choose an arbitrary orientation for the edges and let $\xi_G$ be the $|E(G)| \times |V(G)|$ incidence matrix for this directed graph.  
%
Let $A$ be the diagonal matrix with entries $x_e$ for $e \in E(G)$ with the same choice of order as in the construction of $\xi_G$.  
Let $\widehat{\xi}_G$ be a submatrix of the incidence matrix $\xi_G$ obtained by deleting an arbitrary column. Define the matrix $M_G$ as, 
\[
 M_G = \left[\begin{array}{c|c}A & \widehat{\xi}_G \\ \hline -\widehat{\xi}^T_G & \bf{0}\end{array}\right].
\] 
The first $|E(G)|$ rows and columns are indexed by the edges of $G$, and the remaining $|V(G)|-1$ rows and columns are indexed by the set of vertices of $G$ other than the one removed in the construction of $\widehat{\xi}$.

Then the matrix-tree theorem implies
\[
\Psi_G = \det(M_G)
\] and hence that the determinant of $M_G$ does not depend on the orderings, orientations, or choice of removed vertex (see Proposition 3.7 of \cite{Ycov} for a concise derivation of this from the standard form of the matrix-tree theorem). 

%

In \cite{Brbig} Francis Brown gave an algorithm to compute some Feynman periods one edge variable at a time.  The key to the algorithm is the structure of the denominator at each step; if the denominator at a given step factors into two terms which are each linear in a common variable, then the algorithm can proceed to the next step.  The numerators at each stage are explicit polylogarithms.  The obstruction to this algorithm is the denominator not factoring.  The first time when this can occur is at the fifth step.

\begin{definition} Let $\Psi_G$ be the Kirchoff polynomial for a graph $G$ and $M_G$ the matrix used above. Let $I,J,K \subseteq E(G)$. Let $M_G(I,J)_K$ be the matrix obtained by deleting rows indexed by the edges in $I$, columns indexed by the edges in $J$, and setting $\alpha_e = 0$ for all $e \in K$. 
If $|I| = |J|$ define the \emph{Dodgson polynomial}
\[
\Psi^{I,J}_K = \det(M_G(I,J)_K).
\] 
If $K = \emptyset$, write this as $\Psi^{I,J}$. \end{definition}

It is demonstrated in \cite{Brbig} that Dodgson polynomials are well-defined up to overall sign.  For any given calculation, by fixing a choice of matrix $M_G$ the relative signs between Dodgson polynomials are also well-defined.

\begin{definition} For a graph $G$, a \emph{5-configuration} is a set $S \subseteq E(G)$ such that $|S| = 5$. \end{definition}

\begin{definition} Let $G$ be a graph and $S = e_1, ..., e_5$ a 5-configuration of $G$. The \emph{five-invariant} is the polynomial 
\[
^5\Psi(e_1,e_2,e_3,e_4,e_5) = \pm (\Psi^{e_1e_2,e_3e_4}_{e_5} \Psi^{e_1e_3e_5,e_2e_4e_5} - \Psi^{e_1e_3,e_2e_4}_{e_5} \Psi^{e_1e_2e_5,e_3e_4e_5}).
\]
 \end{definition}

The 5-invariant is the denominator at the fifth stage of integration.  It was first implicitly found in \cite{bek}, equation (8.13).


\begin{lem}[\cite{Brbig} Lemma 87] \label{reindex} Reordering the edges in a five-invariant may at most change the sign of the polynomial. \end{lem}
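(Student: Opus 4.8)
The statement is that for every $\sigma \in S_5$ one has ${}^5\Psi(e_{\sigma(1)},\dots,e_{\sigma(5)}) = \pm\,{}^5\Psi(e_1,\dots,e_5)$. Since the set of such admissible permutations is clearly a subgroup $G \le S_5$, the plan is to produce enough admissible permutations to generate $S_5$. Throughout I would use two elementary facts about Dodgson polynomials, both immediate from the definition of $\Psi^{I,J}_K$ as a minor of $M_G$. First, permuting the entries of $I$, or of $J$, inside $\Psi^{I,J}_K$ multiplies it by the sign of that permutation, since this just reorders rows, or columns, of $M_G(I,J)_K$. Second, $\Psi^{I,J}_K = \pm\,\Psi^{J,I}_K$: if $D$ is the diagonal matrix that is $-1$ on edge indices and $+1$ on vertex indices, then $M_G^{\mathsf T} = -M_G D$, and since $I,J,K$ consist only of edge indices, transposing inside the determinant converts $M_G(I,J)_K$ into $M_G(J,I)_K$ up to the scalar $(-1)^{|V(G)|+1}$; in particular the same sign works for every Dodgson polynomial simultaneously.

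Next I would read off the symmetries visible in the defining formula. Interchanging $e_2$ and $e_3$ swaps the two products and so negates ${}^5\Psi$, so $(e_2\,e_3)$ is admissible. Applying the transpose symmetry to all four Dodgson factors and then reindexing rows and columns gives ${}^5\Psi(e_1,e_2,e_3,e_4,e_5) = \pm\,{}^5\Psi(e_3,e_4,e_1,e_2,e_5)$, so $(e_1\,e_3)(e_2\,e_4)$ is admissible; here the fact that the transpose sign is the same for every factor ensures the two products pick up matching signs. The permutations $(e_2\,e_3)$ and $(e_1\,e_3)(e_2\,e_4)$ generate the dihedral group of order $8$ acting on $\{e_1,e_2,e_3,e_4\}$ and fixing $e_5$ (their product is the $4$-cycle $(e_1\,e_2\,e_4\,e_3)$, which $(e_2\,e_3)$ inverts). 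So ${}^5\Psi$ is already symmetric up to sign in its first four arguments, and the lemma reduces to exhibiting a single further admissible permutation that moves $e_5$, for instance the transposition $(e_4\,e_5)$.

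This last reduction is the heart of the matter, and it cannot be done by formal rearrangement of the formula: in ${}^5\Psi(e_1,e_2,e_3,e_4,e_5)$ the edge $e_5$ is the one set to zero while $e_4$ is deleted from rows and columns, and exchanging these roles changes the individual Dodgson factors in a way for which there is no pointwise identity. The plan is to use the quadratic Dodgson identities of Desnanot--Jacobi (condensation) type satisfied by the $\Psi^{I,J}_K$ -- the identities from which ``Dodgson polynomial'' takes its name -- together with contraction--deletion in auxiliary edge variables, to rewrite both ${}^5\Psi(\dots,e_4,e_5)$ and ${}^5\Psi(\dots,e_5,e_4)$ as one and the same combination of Dodgson polynomials in which $e_4$ and $e_5$ enter symmetrically, and then to check that the two sides agree up to a single overall sign. (An alternative would be to produce a manifestly $S_5$-symmetric closed form for ${}^5\Psi$, e.g.\ via spanning-forest polynomials, but establishing the symmetry of such a form is essentially the same work.) I expect the real difficulty to be exactly this computation, and in particular the bookkeeping of relative signs, rather than anything conceptual. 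Granting it, the proof finishes with a short group-theoretic step: $G$ contains the order-$8$ dihedral group fixing $e_5$ together with a permutation that moves $e_5$, hence $G$ is transitive on $\{e_1,\dots,e_5\}$, so $40 \mid |G|$ and $|G| \mid 120$, forcing $|G| \in \{40,120\}$; since $S_5$ has no subgroup of order $40$ (its cosets would give a nontrivial homomorphism $S_5 \to S_3$, impossible as the only proper nontrivial normal subgroup $A_5$ is too large), we get $G = S_5$, and every reordering of the five edges changes ${}^5\Psi$ by at most a sign.
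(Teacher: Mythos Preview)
The paper does not supply its own proof of this lemma: it is quoted verbatim as Lemma~87 of \cite{Brbig} and used as a black box. So there is no ``paper's proof'' to compare against; the relevant comparison is with Brown's original argument.

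Your outline matches the standard approach (and, in essence, Brown's): identify the obvious admissible permutations coming from formal symmetries of the defining expression, observe these generate a dihedral group on $\{e_1,e_2,e_3,e_4\}$ fixing $e_5$, and then reduce everything to a single identity that exchanges the role of the ``set to zero'' edge $e_5$ with one of the others. Your group-theoretic endgame is clean and correct.

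However, you have not actually proved the lemma: you explicitly write ``Granting it, the proof finishes\ldots'' for the crucial step, namely that swapping $e_4$ and $e_5$ changes $^5\Psi$ by at most a sign. As you yourself note, this cannot be done by bookkeeping of row/column permutations or transposition; it genuinely requires a quadratic identity among Dodgson polynomials (a Pl\"ucker/Dodgson condensation relation applied to the matrix $M_G$). Brown carries this computation out explicitly: one expands each of the four Dodgson factors along the edge variable that is being moved, applies the appropriate Jacobi/Desnanot identity, and verifies that the resulting expressions for $^5\Psi(\ldots,e_4,e_5)$ and $^5\Psi(\ldots,e_5,e_4)$ coincide up to sign. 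Until you actually perform (or at least precisely state and verify) that identity, what you have is a correct reduction together with a plausible plan, not a proof. The sign bookkeeping you flag as the likely difficulty is real but routine once the right Pl\"ucker relation is written down; the substantive missing content is identifying and applying that relation.
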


The Dodgsons which appear in the definition of the 5-invariant must have  $|I \cup J \cup K| =5$, $|I \cap K| = |J \cap K| = 0$, and either $|I| = |J| = 2$, $|K| = 1$, and $|I \cap J| = 0$ or $|I| = |J| = 3$, $|K| = 0$, and $|I \cap J| = 1$. Trivially, $\Psi^{I,J}_K =\Psi^{J,I}_K$. Thus
for a fixed 5-configuration $S$, there are thirty generically distinct Dodgsons of these forms associated with $S$. 


\begin{definition} Let $G$ be a graph and fix a 5-configuration $S$ in $G$. We say that $S$ \emph{splits} if at least one of the 30 Dodgson polynomials associated to $S$ is identically zero. 
If $S$ splits for every possible 5-configuration $S \subseteq E(G)$, we say that $G$ itself is Feynman 5-split which we will henceforth abbreviate as \emph{split} for the sake of brevity. 
\end{definition}

If a 5-configuration $\{e_1,e_2,e_3,e_4,e_5\}$ splits, then, by Lemma \ref{reindex}, it is possible to permute the indices such that $^5\Psi(e_1,...,e_5) = \pm \Psi^{e_1e_2,e_3e_4}_{e_5} \Psi^{e_1e_3e_5,e_2e_4e_5}$. As each Dodgson is, by construction, linear in each variable, this five-invariant can be factored into a product of polynomials that is linear in each variable.  Therefore, splitting is a way of avoiding the obstruction to the integration algorithm at the fifth step.  It is a theoretically nice way to avoid the obstruction since the factorization comes for combinatorial reasons, and it is also in practice the sort of factorization which occurs.  

On the other hand, splitting is a very strong condition as it requires not just that there is some way to avoid the obstruction but that every choice of 5 edges avoids the obstruction.  Furthermore Theorems \ref{decomp} and \ref{heyguesswhatsplits} tell us that split graphs have width 3. 
A result of Brown (\cite{Brbig} Theorems 1 and 2) is that for width 3 graphs there is at least one edge ordering so that the algorithm can continue until all edges have been integrated.  Thus splitting requires that all ways of starting the algorithm are well behaved but as a consequence gives that there is some ordering with no obstructions at any point.

The following proposition provides a more graph-theoretic method of calculating Dodgson polynomials that will prove useful.  

\begin{prop} \label{calcdodgson} For a graph $G$ and $I,J,K \subseteq E(G)$ such that $|I| = |J|$, the Dodgson polynomial $$\Psi^{I,J}_K = \sum_{T \subseteq E(G)} \left( \pm \prod_{e \notin T \cup S} x_e \right)$$ where $S = I \cup J \cup K$ and the sum is over all edge sets which induce spanning trees in both graph minors $G \cut I \contract ((J \cup K) - I)$ and $G \cut J \contract ((I \cup K) - J)$. \end{prop}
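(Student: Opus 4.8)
The plan is to reduce the identity, by two normalizations, to the classical weighted matrix-tree theorem $\Psi_G=\det M_G=\sum_T\prod_{e\notin T}x_e$ (sum over spanning trees $T$ of $G$), and then to read it off from a single Laplace expansion of $\det M_G(I,J)_K$ along the vertex-indexed rows. Write $n=|V(G)|$; for $W\subseteq E(G)$ let $\widehat{\xi}_W$ denote the submatrix of the reduced incidence matrix $\widehat{\xi}$ on the rows indexed by $W$.

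First I would remove $K$. If $e\in K\cap(I\cup J)$, then $x_e$ does not occur in $M_G(I,J)_\emptyset$ at all (its only appearance, the $(e,e)$-entry of $A$, lies in a deleted row or column), and $e\in S$ is deleted or contracted in both minors regardless; so $e$ may be dropped from $K$. If $e\in K\setminus(I\cup J)$, then $M_G(I,J)_K=M_G(I,J)_{K\setminus e}|_{x_e=0}$, so by induction on $|K|$ it suffices to check that setting $x_e=0$ in the claimed formula for $K\setminus e$ yields the claimed formula for $K$: on the right-hand side precisely the monomials coming from edge sets $T$ with $e\in T$ survive, and the standard bijection between spanning trees of a graph containing a fixed non-loop edge $f$ and spanning trees of that graph with $f$ contracted identifies these — with the same monomials, hence the same signs — with the edge sets appearing for $K$. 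Thus assume $K=\emptyset$. Next put $L=I\cap J$: deleting the edge-rows and edge-columns indexed by $L$ from $M_G$ gives exactly $M_{G\setminus L}$, so $M_G(I,J)=M_{G\setminus L}(I\setminus L,\,J\setminus L)$; and since $(G\setminus L)\setminus(I\setminus L)/(J\setminus L)=G\setminus I/(J\setminus I)$ (and symmetrically), the asserted identity for $(G,I,J)$ is equivalent to the one for $(G\setminus L,\,I\setminus L,\,J\setminus L)$. So assume also $I\cap J=\emptyset$, in which case the two minors are $G\setminus I/J$ and $G\setminus J/I$.

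Now set $P=E(G)\setminus(I\cup J)$. Since $A$ is diagonal, in $M_G(I,J)$ the $J$-indexed edge-rows are zero on every edge-column, the $I$-indexed edge-columns are zero on every edge-row, and the $n-1$ vertex-indexed rows are supported only on the surviving edge-columns $E(G)\setminus J$; after reordering rows and columns,
\[
M_G(I,J)=\left[\begin{array}{c|c}\widetilde{A} & \widehat{\xi}_{E\setminus I}\\ \hline -(\widehat{\xi}_{E\setminus J})^{T} & \mathbf{0}\end{array}\right],
\]
where $\widetilde{A}$ is the $(E\setminus I)\times(E\setminus J)$ submatrix of $A$. I would then expand $\det M_G(I,J)$ by Laplace along the $n-1$ vertex rows: a choice of $n-1$ edge-columns $U\subseteq E(G)\setminus J$ for these rows contributes a term $\pm\det(\widehat{\xi}_U)\cdot(\text{complementary minor})$, and $\det(\widehat{\xi}_U)$ is $\pm1$ exactly when $U$ is a spanning tree of $G$ and is $0$ otherwise. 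In the complementary minor — rows the surviving edge-rows $E\setminus I$, columns all $n-1$ vertex-columns together with $(E\setminus J)\setminus U$ — a column indexed by any edge $e\in I\setminus U$ is identically zero (its only potential entry $x_e$ sat in a deleted row), so the term vanishes unless $I\subseteq U$. Writing then $U=I\sqcup T$ with $T\subseteq P$ and $|T|=n-1-|I|$, the condition ``$U$ is a spanning tree of $G$'' becomes ``$T$ is a spanning tree of $G\setminus J/I$''; and reordering the rows of the complementary minor as $P\setminus T$ then $T\cup J$ makes it block triangular with diagonal blocks $\operatorname{diag}(x_e:e\in P\setminus T)$ and $\widehat{\xi}_{T\cup J}$, so it equals $\pm\bigl(\prod_{e\in P\setminus T}x_e\bigr)\det(\widehat{\xi}_{T\cup J})$, which is nonzero exactly when $T\cup J$ is a spanning tree of $G$, i.e.\ when $T$ is a spanning tree of $G\setminus I/J$. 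Hence each $T\subseteq P$ that is simultaneously a spanning tree of $G\setminus J/I$ and of $G\setminus I/J$ contributes one monomial $\pm\prod_{e\notin T\cup I\cup J}x_e$ with a well-defined sign; distinct $T$ give distinct monomials, so nothing cancels, and summing yields the identity.

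The idea carrying the proof is the choice to expand along the vertex rows: this factors $\det M_G(I,J)$ as $\det(\widehat{\xi}_U)$ times a complementary minor which, once $I\subseteq U$ is forced, itself splits off a second incidence-matrix determinant $\det(\widehat{\xi}_{T\cup J})$, so the determinant becomes a signed sum over edge sets that are spanning trees for \emph{two} incidence matrices at once — precisely the ``common spanning tree of two minors'' condition. The parts I expect to require care are: checking that $I\subseteq U$ is genuinely forced and that the two resulting spanning-tree conditions translate exactly into membership in $G\setminus J/I$ and in $G\setminus I/J$, including the degenerate cases (where $I$ or $J$ is dependent and both sides vanish); bookkeeping the signs; and verifying that the two reductions in the first step transport both sides of the identity faithfully. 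Nothing beyond $\Psi_G=\det M_G$, the weighted matrix-tree theorem, and the values of maximal square submatrices of incidence matrices is needed.
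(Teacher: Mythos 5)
The paper states Proposition~\ref{calcdodgson} without proof (it is a standard fact about Dodgson polynomials going back to \cite{Brbig}), so there is no in-paper argument to compare against. Your proof is correct and is the natural route to the identity. The two normalizations are both sound: for $e\in K\setminus(I\cup J)$, specializing $x_e=0$ on the left is $\Psi^{I,J}_{K\setminus e}\mapsto\Psi^{I,J}_K$, and on the right it kills exactly the terms with $e\notin T$, after which the usual bijection between spanning trees containing $e$ and spanning trees of the graph with $e$ contracted identifies the survivors with the $T$'s for $K$; for $L=I\cap J$, one does have $M_G(I,J)=M_{G\setminus L}(I\setminus L,J\setminus L)$, and the two minors transform compatibly. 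The Laplace expansion along the $n-1$ vertex rows then gives, for each $(n-1)$-set $U\subseteq E\setminus J$, the factor $\det\widehat{\xi}_U$ (forcing $U$ to be a spanning tree of $G$); the columns of the complementary minor indexed by $I\setminus U$ are identically zero, forcing $I\subseteq U$; and the block-triangular form peels off $\bigl(\prod_{e\in P\setminus T}x_e\bigr)\det\widehat{\xi}_{T\cup J}$, so a term survives exactly when $T\cup I$ and $T\cup J$ are both spanning trees of $G$, i.e.\ $T$ is a common spanning tree of $G\setminus J/I$ and $G\setminus I/J$. Distinct $T$ give distinct monomials $\prod_{e\notin T\cup S}x_e$, so nothing cancels, and the degenerate cases (a cycle in $I$ or in $J$, or $G$ disconnected) are covered automatically since both sides vanish.
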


In particular, the above proposition immediately implies the following useful corollary.

\begin{corollary}\label{split-tree-cor} 
Let $G$ be a graph and let $S \subseteq E(G)$ be a 5-configuration.  Then $S$ splits if and only if is has a partition $\{ \{e\}, S_1 , S_2\}$ with $|S_1| = 2 = |S_2|$ so that one of the graphs $G \setminus e$ or $G/e$ does not have a set of edges $T$ for which both $T \cup S_1$ and $T \cup S_2$ are spanning trees.
\end{corollary}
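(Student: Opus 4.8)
The plan is to unwind the definitions directly via Proposition~\ref{calcdodgson}. By definition $S$ splits exactly when one of its $30$ associated Dodgson polynomials vanishes identically, and these $30$ break into two families of $15$: the polynomials $\Psi^{S_1,S_2}_e$ with $|S_1|=|S_2|=2$ and $K=\{e\}$, and the polynomials $\Psi^{S_1\cup\{e\},S_2\cup\{e\}}$ with $|I|=|J|=3$ and $I\cap J=\{e\}$. Both families are indexed by the same data --- an unordered partition of $S$ into a singleton $\{e\}$ and two pairs $S_1,S_2$, of which there are $5\cdot 3=15$ --- so the whole list of $30$ Dodgsons is parametrised by such a partition together with a choice of which family it belongs to. Hence it suffices to prove, for each partition $\{\{e\},S_1,S_2\}$, that $\Psi^{S_1,S_2}_e\equiv 0$ if and only if $G/e$ has no edge set $T$ for which both $T\cup S_1$ and $T\cup S_2$ are spanning trees, and that $\Psi^{S_1\cup\{e\},S_2\cup\{e\}}\equiv 0$ if and only if $G\setminus e$ has no such $T$. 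The corollary then follows by quantifying over all $15$ partitions and both choices.

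To establish the two biconditionals, feed the relevant indices into Proposition~\ref{calcdodgson}. For $\Psi^{S_1,S_2}_e$ one has $I=S_1$, $J=S_2$, $K=\{e\}$, and since $S_1,S_2,\{e\}$ are pairwise disjoint the two minors in the proposition are $G\setminus S_1/(S_2\cup\{e\})$ and $G\setminus S_2/(S_1\cup\{e\})$. Commuting deletion and contraction of disjoint edges and using the standard fact that $F\subseteq E(G)\setminus\{e\}$ is a spanning tree of $G/e$ exactly when $F\cup\{e\}$ is a spanning tree of $G$, one checks that an edge set is a common spanning tree of these two minors precisely when it is a set $T\subseteq E(G)\setminus S$ with $T\cup S_1$ and $T\cup S_2$ both spanning trees of $G/e$. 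The polynomial $\Psi^{S_1\cup\{e\},S_2\cup\{e\}}$ is handled by the identical computation with $e$ moved out of $K$ and into $I\cap J$: the two minors become $G\setminus(S_1\cup\{e\})/S_2$ and $G\setminus(S_2\cup\{e\})/S_1$, and a common spanning tree of them is exactly a $T\subseteq E(G)\setminus S$ with $T\cup S_1$ and $T\cup S_2$ spanning trees of $G\setminus e$. A small amount of extra care is needed in the degenerate cases where one of the contracted sets already contains a cycle, but there both the Dodgson and the hypothetical $T$ are absent, so the biconditionals persist.

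The one step that is not pure bookkeeping is the passage from ``$\Psi\equiv 0$'' to ``there is no such $T$'', since a priori the signed sum in Proposition~\ref{calcdodgson} could vanish by cancellation. This is ruled out by the observation that every index $T$ in that sum is a subset of $E(G)\setminus S$, so the assignment $T\mapsto\prod_{g\notin T\cup S}x_g$ is injective on the index set: distinct $T$ contribute distinct monomials, each with coefficient $\pm1$, and therefore the polynomial is zero if and only if the sum is empty. I expect this no-cancellation remark (together with the degenerate-case bookkeeping above) to be the only delicate point; the remainder is a direct translation through Proposition~\ref{calcdodgson}.
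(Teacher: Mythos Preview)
Your proposal is correct and takes essentially the same approach as the paper, which in fact offers no proof at all beyond the remark that the corollary ``immediately'' follows from Proposition~\ref{calcdodgson}. You have carefully filled in exactly the details the paper leaves implicit --- the bijection between the $30$ Dodgsons and the $15$ partitions together with a choice of delete/contract, the identification of the relevant minors, and in particular the no-cancellation argument (distinct $T$ give distinct monomials) needed to pass from $\Psi\equiv 0$ to nonexistence of $T$.
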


Key for us is the observation that being split is a minor closed property. Hence, by the Robertson-Seymour Theorem, our goal is to find the forbidden minors for splitting.

\begin{prop} Splitting is a minor-closed property. \end{prop}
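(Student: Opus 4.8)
The plan is to show that if $G$ is split and $G'$ is obtained from $G$ by deleting or contracting a single edge $f$, then $G'$ is split; the general minor case then follows by induction on the number of edge operations (deletion of isolated vertices does not affect any Kirchhoff or Dodgson polynomial and can be ignored, and loops likewise do not interfere since they contribute only a factor of the loop variable which never vanishes identically). So fix a 5-configuration $S' \subseteq E(G') $ and, using Corollary \ref{split-tree-cor}, we must produce a partition $\{\{e\}, S_1, S_2\}$ of $S'$ with $|S_1|=|S_2|=2$ such that one of $G' \setminus e$, $G' / e$ has no edge set $T$ with both $T \cup S_1$ and $T \cup S_2$ spanning trees. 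Since $S' \subseteq E(G') \subseteq E(G)$, we may regard $S'$ as a 5-configuration of $G$, and by hypothesis $S'$ splits in $G$: there is a partition $\{\{e\}, S_1, S_2\}$ witnessing this in $G$.

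The key step is to transport that witnessing partition from $G$ to $G'$. First I would handle the case $G' = G \setminus f$. If the chosen edge $e$ of the partition is such that $G \setminus e$ is the graph with no common $T$, then $G' \setminus e = G \setminus \{e,f\} = (G\setminus e)\setminus f$; any edge set $T$ that were a common spanning-tree-completion in $G'\setminus e$ would in particular be one in $G \setminus e$ after possibly adding $f$ back — more precisely, a spanning tree of $(G \setminus e) \setminus f$ is a spanning tree of $G \setminus e$ on the same vertex set only if $f$ is not a cut edge, so one must instead argue directly: a set $T \subseteq E(G' \setminus e)$ with $T \cup S_i$ spanning in $G' \setminus e$ gives the same $T$ with $T \cup S_i$ a forest spanning all vertices in $G \setminus e$, hence (since $G \setminus e$ has at least as many vertices reachable — in fact the same vertex set, as deleting an edge does not remove vertices) $T \cup S_i$ is spanning in $G \setminus e$, contradicting the choice of the partition. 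Symmetrically, if the witnessing operation in $G$ was $G/e$, then $G'/e = (G \setminus f)/e = (G/e)\setminus f$ and the same monotonicity of "being a spanning tree / containing a spanning forest" under edge deletion applies. The contraction case $G' = G/f$ is handled dually: $G'\setminus e = (G/f)\setminus e = (G\setminus e)/f$ and $G'/e = (G/e)/f$, and now one uses that a common spanning-tree-completion upstairs in $(G\setminus e)/f$ pulls back to one in $G \setminus e$ (adjoin $f$ if the contraction identified two components, otherwise keep $T$ as is), again contradicting the choice of partition in $G$. In all four combinations the same partition $\{\{e\},S_1,S_2\}$ continues to witness that $S'$ splits in $G'$.

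The main obstacle is the bookkeeping in the previous paragraph: the statement "$T \cup S_i$ is a spanning tree" is not literally monotone under deletion or contraction in a naive way, because deleting an edge can disconnect a tree and contracting can create the need to delete a parallel copy. The clean way around this, which I would adopt, is to phrase everything in terms of the equivalent matroid statement behind Corollary \ref{split-tree-cor}: $S$ splits iff for one of the three partitions $\{\{e\},S_1,S_2\}$, $S_1$ and $S_2$ are \emph{not} simultaneously extendible to a common basis in one of the matroids $M(G)\setminus e$ or $M(G)/e$ (equivalently, by matroid intersection, the relevant two-matroid intersection has rank below the target). Minor operations on $G$ correspond exactly to minor operations on the cycle matroid $M(G)$, and the existence of a common independent set of a given size in an intersection of two matroids only \emph{decreases} when passing to a minor of both matroids. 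Thus "no common $T$" is preserved under taking minors, which is precisely what is needed, and the messy graph-level case analysis collapses to a one-line matroid argument. I would therefore state the proof matroidally, invoking Proposition \ref{calcdodgson}/Corollary \ref{split-tree-cor} to translate the combinatorial splitting condition into this matroid-intersection form, and conclude. $\qed$
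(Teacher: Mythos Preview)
Your proposal is correct and follows the same route as the paper: both use Corollary~\ref{split-tree-cor} and the observation that deleting or contracting an edge outside $S$ cannot create new common spanning-tree completions. The paper compresses this into two sentences (``deletion or contraction of $e$ cannot create more spanning trees'') without your case analysis or the matroid reformulation, but the content is identical.
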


\begin{proof} Suppose a graph $G$ splits. For any 5-configuration $S$ and edge $e \notin S$, the deletion or contraction of $e$ cannot create more spanning trees. It follows from Corollary \ref{split-tree-cor} that $G\cut e$ and $G \contract e$ must also split. \end{proof}

The following proposition is another key property of splitting that will be useful.

\begin{prop}\label{closedunderdual} Let $G$ be a planar graph and $G^*$ its planar dual. Then $G$ splits if and only if $G^*$ splits. \end{prop}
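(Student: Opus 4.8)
The plan is to reduce the statement to Corollary~\ref{split-tree-cor} and then invoke the classical duality between spanning trees of a connected plane graph and spanning trees of its dual. I will use three standard facts: (i) for $e \in E(G)$ one has $(G \setminus e)^* = G^*/e^*$ and $(G/e)^* = G^* \setminus e^*$, where $e^*$ denotes the dual edge; (ii) a set $T \subseteq E(H)$ is a spanning tree of a connected plane graph $H$ if and only if $E(H)\setminus T$, read through the edge bijection, is a spanning tree of $H^*$; and (iii) the bijection $e \leftrightarrow e^*$ sends $5$-configurations of $G$ to $5$-configurations of $G^*$. By (iii) it suffices to fix one $5$-configuration $S \subseteq E(G)$, with image $S^* \subseteq E(G^*)$, and show that $S$ splits in $G$ if and only if $S^*$ splits in $G^*$.

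First I would clear away degenerate cases so that fact (ii) applies to both $G\setminus e$ and $G/e$ for every edge $e$ that comes up. If $G$ is disconnected it splits trivially, and $G^*$ may be taken connected; more generally, contracting a bridge of $G$ (equivalently, deleting the dual loop of $G^*$) and deleting a loop of $G$ (equivalently, contracting the dual bridge of $G^*$) leave unchanged both the splitting status of a graph and that of its dual, so a short induction on $|E(G)|$ reduces us to the case that $G$ is connected, bridgeless, and loopless. Then $G\setminus e$ and $G/e$ are connected for every edge $e$.

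Now fix a partition $\{\{e\}, S_1, S_2\}$ of $S$ with $|S_1| = |S_2| = 2$. For any $T \subseteq E(G)\setminus S$ the sets $T, S_1, S_2$ are pairwise disjoint subsets of $E(G)\setminus\{e\}$, which gives the set identity $(E(G)\setminus\{e\})\setminus(T\cup S_1) = ((E(G)\setminus S)\setminus T)\cup S_2$, and the symmetric one with $S_1$ and $S_2$ swapped. Writing $T' = (E(G)\setminus S)\setminus T$ and applying (ii) to the connected plane graph $G/e$, whose dual is $G^*\setminus e^*$, we see that $T\cup S_1$ and $T\cup S_2$ are both spanning trees of $G/e$ exactly when $T'\cup S_2$ and $T'\cup S_1$ are both spanning trees of $G^*\setminus e^*$. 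Since $T\mapsto T'$ is an involution on the subsets of $E(G)\setminus S$, it follows that $G/e$ has a set $T$ with $T\cup S_1$ and $T\cup S_2$ both spanning trees if and only if $G^*\setminus e^*$ does; equivalently, $G/e$ fails to have such a set if and only if $G^*\setminus e^*$ fails to. Running the same argument with $G\setminus e$ in place of $G/e$, its dual now being $G^*/e^*$, handles the other alternative in Corollary~\ref{split-tree-cor}. Hence each witness to ``$S$ splits in $G$'' produced by that corollary --- a partition together with a choice of $G\setminus e$ or $G/e$ --- corresponds to a witness to ``$S^*$ splits in $G^*$'', with the two choices $\setminus$ and $/$ interchanged; therefore $S$ splits in $G$ if and only if $S^*$ splits in $G^*$, and consequently $G$ splits if and only if $G^*$ splits.

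The only real obstacle is the bookkeeping in the degenerate cases of the second paragraph --- verifying that bridges, loops, and disconnectedness are genuinely harmless for splitting and that the bridge/loop reductions commute with planar duality; the core of the argument is simply the complementation involution $T\mapsto(E(G)\setminus S)\setminus T$ combined with the spanning-tree/dual correspondence. As an alternative that sidesteps Corollary~\ref{split-tree-cor}, the same complementation applied to Proposition~\ref{calcdodgson} shows directly that, up to sign and the monomial factor $\prod_{f\in E(G)\setminus S}x_f$, the Dodgson $\Psi^{I,J}_K$ of $G$ equals the Dodgson $\Psi^{(J\cup K)\setminus I,\,(I\cup K)\setminus J}_{I\cap J}$ of $G^*$ evaluated with each $x_f$ replaced by $x_f^{-1}$, where $S = I\cup J\cup K$; one of these polynomials vanishes identically if and only if the other does, which is again exactly what is needed.
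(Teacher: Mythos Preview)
Your proof is correct and follows essentially the same route as the paper's: both reduce to Corollary~\ref{split-tree-cor}, invoke the spanning-tree complementation duality between a plane graph and its dual, and use that deletion and contraction are interchanged under duality. Your version simply spells out the bookkeeping (the involution $T\mapsto(E(G)\setminus S)\setminus T$ and the degenerate bridge/loop/disconnected cases) that the paper leaves implicit in a one-line proof.
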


\begin{proof}

This follows immediately from Corollary \ref{split-tree-cor}, using the edges dual to those not in the tree in $G$ to create a tree in $G^*$, and from the fact that edge deletion and contraction are dual operations. It follows immediately from this that if a graph $G$ is minor-minimal non-splitting, then $G^*$ must be also.
\end{proof}

The following calculations using the 5-configurations indicated in Figure \ref{nonsplitfives} show that the graphs in
$\mathcal{F}_0$ are non-split.  In the next section, following Corollary \ref{whatsplitmeans}, we will provide an alternate proof of this fact.

\begin{figure}[h]
  \centering
    \includegraphics[scale=0.95]{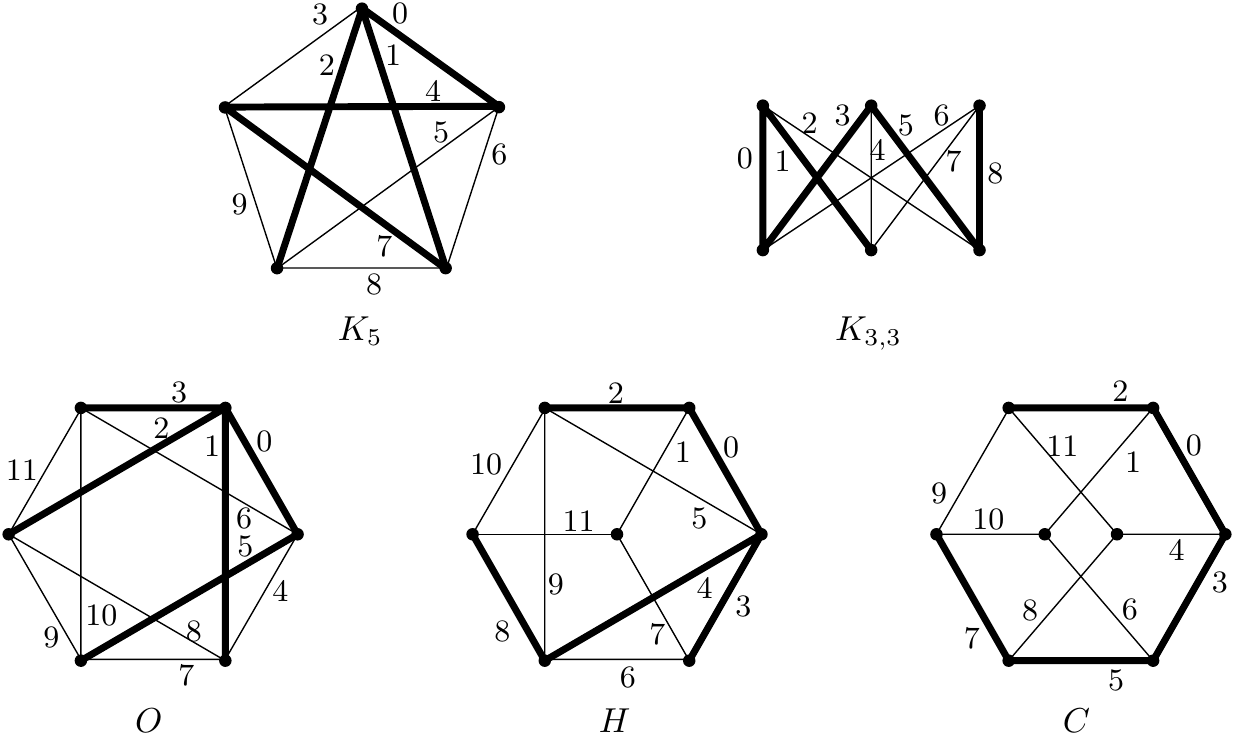}
  \caption{Non-splitting five configurations.}
  \label{nonsplitfives}
\end{figure}

\begin{align*} ^5\Psi_{K_5} (e_0,e_1,e_2,e_6,e_8) = & x_7 x_5 x_4 x_3 (x_3 x_5 x_7 + x_3 x_4 x_9 \\
&+ x_3 x_5 x_9 + x_3 x_7 x_9 + x_4 x_9^2)\end{align*}
\begin{align*} ^5\Psi_{K_{3,3}} (e_0,e_1,e_3,e_5,e_8) = & x_2 x_4 x_6 + x_2 x_4 x_7 + x_2 x_6 x_7  - x_4 x_6 x_7 + x_2 x_7^2 \end{align*}
\begin{align*} ^5\Psi_{O} (e_0,e_1,e_2,e_3,e_5) = 	&x_{11} x_9 x_8 x_6 x_4 ( -x_6 x_7^2 x_9 - x_6 x_7^2 x_{10} \\
&+ x_4 x_7 x_9 x_{10} - x_6 x_7 x_9 x_{10} + x_4 x_7 x_{10}^2 \\
&+ x_4 x_8 x_{10}^2 + x_7 x_8 x_{10}^2 + x_4 x_9 x_{10}^2 \\
&- x_6 x_7^2 x_{11} - x_7^2 x_{10} x_{11} ) \end{align*}
\begin{align*} ^5\Psi_{H} (e_0,e_1,e_3,e_4,e_8) = &x_6  x_5  ( x_2 x_5 x_6 x_9 x_{10} + x_2 x_5 x_7 x_9 x_{10} \\ 
&+ x_2 x_5 x_6 x_{10}^2 + x_2 x_5 x_7 x_{10}^2 + x_2 x_6 x_9 x_{10}^2 \\
&+ x_2 x_7 x_9 x_{10}^2 - x_2 x_5 x_9^2 x_{11} -x_2 x_7 x_9^2 x_{11} \\
&- x_2 x_5 x_9 x_{10} x_{11} + x_2 x_6 x_9 x_{10} x_11 - x_2 x_9^2 x_{10} x_{11} \\
&- x_7 x_9^2 x_{10} x_{11} - x_2 x_9^2 x_{11}^2 - x_7 x_9^2 x_{11}^2 ) \end{align*}
\begin{align*} ^5\Psi_{C} (e_0,e_1,e_3,e_5,e_7) = &-x_2 x_4 x_6 x_8 x_9 - x_2 x_4 x_6 x_9^2 - x_4 x_6 x_8 x_9^2 \\
&- x_4 x_6 x_8 x_9 x_{10} -x_2 x_4 x_6 x_9 x_{11} + x_2 x_4 x_8 x_{10} x_{11} \\
&+ x_2 x_6 x_8 x_{10} x_{11} + x_2 x_4 x_9 x_{10} x_{11} + x_2 x_8 x_9 x_{10} x_{11} \\
&+ x_6 x_8 x_9 x_{10} x_{11} + x_2 x_8 x_{10}^2 x_{11} + x_6 x_8 x_{10}^2 x_{11} \\
&+ x_2 x_4 x_{10} x_{11}^2 + x_2 x_8 x_{10} x_{11}^2 \end{align*}

\begin{theorem} \label{heyguesswhatsplits} A simple 3-connected graph splits if and only if it has  no minor in $\mathcal{F}_0$.
\end{theorem}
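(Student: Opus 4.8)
The ``only if'' direction is immediate: splitting is minor-closed, and the five-invariant computations displayed above (for the configurations of Figure \ref{nonsplitfives}) show that none of $K_{3,3}$, $K_5$, $C$, $H$, $O$ splits, so no graph with a minor in $\mathcal{F}_0$ can split. The substance is the ``if'' direction, and the plan is to reduce it to a statement about width. If $G$ is simple, $3$-connected, and has no minor in $\mathcal{F}_0$, then $G$ is planar (as $K_5,K_{3,3}\in\mathcal{F}_0$) and, by Theorem \ref{decomp}, has width at most $3$; fix an ordering $e_1,\dots,e_n$ of $E(G)$ realizing this, so that $A_\ell := \{e_1,\dots,e_\ell\}$ gives a $3$-separation $(A_\ell, E(G)\setminus A_\ell)$ for every $\ell$. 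It then suffices to show every $5$-configuration splits, and by Corollary \ref{split-tree-cor} I would do this by producing, for a given $S$, a partition $\{\{e\},S_1,S_2\}$ of $S$ into a singleton and two pairs together with a choice of $G'\in\{G\setminus e, G/e\}$ admitting no edge set $T$ for which both $T\cup S_1$ and $T\cup S_2$ are spanning trees of $G'$.

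Given $S$, list the positions of its edges in the ordering as $p_1<p_2<p_3<p_4<p_5$ and set $A=A_{p_3}$, $B=E(G)\setminus A$; then $(A,B)$ is a $3$-separation with exactly three edges of $S$ in $A$ and two in $B$, and I write $X=\partial(A)$, so $|X|\le 3$. The intended partition takes $S_1$ to be the two $S$-edges lying in $B$, and $e$ together with the pair $S_2$ to be the three $S$-edges lying in $A$. The heart of the argument is a counting estimate across the separation: for any spanning tree $\mathcal{T}$ of $G'$, the restrictions $\mathcal{T}\cap A$ and $\mathcal{T}\cap B$ are spanning forests of $G_A$ and $G_B$ each of whose components meets $X$, and the two induced partitions of $X$ must be transversal (they jointly connect $X$ and have $|X|+1$ parts in total). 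Applying this simultaneously to $\mathcal{T}=T\cup S_1$ and $\mathcal{T}=T\cup S_2$ --- which induce the same forest $T\cap B$ on the $B$-side but $A$-forests differing by the two edges of $S_2\subseteq A$ --- forces, because $|X|\le 3$, the component counts of $T\cap A$ and $T\cap B$ to be pinned down exactly. In the most favourable case, namely when $e$ can be chosen with both ends on $X$, contracting $e$ lowers the order of the separation to $2$ and these component counts become contradictory, so no such $T$ exists in $G/e$ and $S$ splits. In the remaining cases I would instead exploit $G\setminus e$, or choose a different index $\ell$ in the range $p_3\le \ell < p_4$ so that some $S$-edge of $A_\ell$ has both ends on $\partial(A_\ell)$; and when even this fails, the structural decomposition underlying Theorem \ref{decomp} confines the part of $G$ carrying $S$ to one of boundedly many small graphs, whose $5$-configurations are checked directly (with duality, Proposition \ref{closedunderdual}, halving the work).

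The main obstacle is precisely this case analysis. The clean subcase --- an $S$-edge with both endpoints on a separator of order $3$ --- is decisive when it occurs, but showing that one can always either arrange it, or win instead using a deletion and a suitably chosen separation index, or else reduce to a finite list of bounded-size configurations, requires care; keeping the component-count and transversality bookkeeping across the order-at-most-$3$ separator honest throughout is the technical crux, while the residual finite checks, although tedious, are routine.
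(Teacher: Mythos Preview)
Your overall strategy is right, and you correctly isolate the contract case, but you are manufacturing a case analysis where none is needed. The missing observation is to look at \emph{two} consecutive separators, not one. With $e=e_{p_3}$ the third $S$-edge, set $X'=\partial(A_{p_3-1})$ and $X=\partial(A_{p_3})$. Since $G$ is $3$-connected and the ordering has width $3$, both $|X'|=|X|=3$. Now there are exactly two cases:
\begin{itemize}
\item $X'=X$. Then both endpoints of $e$ lie in $X'$ (indeed each endpoint is in $V(G_{A_{p_3-1}})$ because otherwise it would be a new point of $X\setminus X'$), so $G/e$ has the separation $(A_{p_3-1},\,E\setminus A_{p_3})$ of order $2$, with two $S$-edges on each side.
\item $X'\neq X$. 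A one-line check shows that any vertex in $X\setminus X'$ must be an endpoint of $e$ not in $V(G_{A_{p_3-1}})$, and any vertex in $X'\setminus X$ must be an endpoint of $e$ whose only $B_{p_3-1}$-edge is $e$. Hence the endpoints of $e$ are split, one in $X'\setminus X$ and one in $X\setminus X'$, and $G\setminus e$ has the separation $(A_{p_3-1},\,E\setminus A_{p_3})$ of order $|X'\cap X|=2$, again with two $S$-edges on each side.
\end{itemize}
In either case Corollary~\ref{split-tree-cor} finishes immediately (the easy direction of Theorem~\ref{edmondssplitting} is exactly the rank count you sketch, so your transversal-partition bookkeeping is correct but unnecessary once the $2$-separation is in hand).

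So the ``remaining cases'' you worry about --- sliding $\ell$, falling back on deletion in an ad hoc way, or reducing to a finite list of small graphs to be checked by hand --- never occur. The paper's proof is four sentences; what you are missing is precisely the comparison of $\partial(A_{p_3-1})$ with $\partial(A_{p_3})$.
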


\begin{proof} It follows from the above discussion that every graph with a minor in $\mathcal{F}_0$ is non-splitting.  Now let $G$ be a simple 3-connected graph with no minor in $\mathcal{F}_0$.  By Theorem \ref{decomp} we may choose an edge ordering $e_1, \ldots, e_m$ of $G$ of width 3.  Now let 
$S \subseteq E(G)$ be a 5-configuration and choose $1 \le \ell \le m$ so that $e_{\ell}$ is the third edge of this 5-configuration in the ordering.  Now 
both $\partial( \{e_1, \ldots, e_{\ell - 1} \})$ and $\partial( \{e_1, \ldots, e_{\ell} \} )$ have size three.  If these sets are equal, then $G / e_{\ell}$ has a 2-separation with two edges in $S$ on each side.  Otherwise each of these sets has exactly one endpoint of $e_{\ell}$ and $G \setminus e_{\ell}$ has a 2-separation with two edges in $S$ on each side.  In either case, it follows from Corollary \ref{split-tree-cor} that $S$ splits.
\end{proof}

\section{Matroids}\label{sec matroids}

In this section we introduce matroid theory to the study of splitting.  This section is divided into three subsections.  The first is a primer on matroids which we intend as an introduction for those not familiar with their theory.  This subsection can safely be skipped by those readers familiar with matroids.  The second subsection introduces a notion of width for matroids which is quite closely related to the width we have already established for graphs, and compares these two notions.  The final subsection calls upon the Matroid Intersection Theorem to reformulate the notion of splitting in terms of basic connectivity properties.

\subsection{Introduction}

Our primary goal in this subsection is to acquaint the reader with matroids, and to highlight a couple of powerful theorems from this realm with particularly broad scope for application.  

A matroid $M$ consists of a finite ground set $E$ together with a collection $\mathcal{I}$ of subsets of $E$ called \emph{independent sets}, 
which satisfy the following axioms:
\begin{enumerate}
\item $\emptyset \in \mathcal{I}$.
\item If $A \in \mathcal{I}$ and $A' \subseteq A$ then $A' \in \mathcal{I}$.  
\item If $A,B \in \mathcal{I}$ and $|A| < |B|$ then there exists $b \in B \setminus A$ so that $A \cup \{b\} \in \mathcal{I}$.  
\end{enumerate}

Two of the principle examples of matroids are:

\begin{itemize}
\item Let $E$ be a finite set of vectors from a vector space and define $S \subseteq E$ to be independent if these vectors are linearly independent.  
\item Let $E$ be the edge set of a graph $G$ and define $S \subseteq E$ to be independent if these edges do not contain (the edge set of) a cycle.  
This matroid, denoted $M(G)$, is called the \emph{cycle matroid} of $G$.
\end{itemize}

The fact that the first example above satisfies the axioms for a matroid is immediate.  Indeed we may view matroids as a natural abstraction of the concept 
of linear independence in vector spaces.  Our second example is actually a special case of the first.  To see this, let $G = (V,E)$ and define $M$ to be the $V \times E$ incidence matrix of $E$ viewed as a matrix over $\mathbb{F}_2$.  Now associating each edge  with the corresponding column of $M$ we find that a set of edges $S \subseteq E$ has no cycle if and only if the corresponding column vectors are linearly independent.  

A \emph{basis} of a matroid is a maximal independent set.  Generalizing a familiar property from linear algebra, we observe that the third axiom implies that any two bases must have the same size.  Note that for a connected graph, the bases of its cycle matroid are edge sets of spanning trees.  Pleasingly, every matroid $M$ has a dual $M^*$ with the property that $B$ is a basis of $M$ if and only if $E \setminus B$ is a basis of $M^*$.  This duality generalizes that found in planar graphs in the sense that the cycle matroids associated with two dual planar graphs will be dual matroids. 

Another concept from linear algebra which generalizes naturally to matroids is that of rank.  For an arbitrary set $S \subseteq E$, the \emph{rank} of $S$, denoted $r(S)$, is the size of the largest independent set in $S$.  We define the \emph{rank} of the matroid to be $r(M) = r(E)$.  The following classical theorem shows that both covering and packing by bases are well-characterized in terms of the rank function.

\begin{theorem}
Let $M$ be a matroid with ground set $E$ and let $k \ge 0$.  The following hold:
\begin{itemize}
\item Either there exist $k$ bases with union $E$ or there is a subset $S \subseteq E$ with $|S| > k \cdot r(S)$.  
\item Either there exist $k$ disjoint bases in $E$ or there is a subset $S \subseteq E$ so that $|E \setminus S| < k \cdot (r(M) - r(S))$.
\end{itemize}
\end{theorem}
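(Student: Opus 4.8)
The plan is to recognize this as the Matroid Union Theorem (Nash-Williams / Edmonds), whose two halves — base covering and base packing — are dual to each other, and to prove one of them directly, deriving the other by passing to the dual matroid. I would prove the covering statement first. One direction of each bullet is trivial: if $|S| > k \cdot r(S)$ for some $S$, then no $k$ independent sets (let alone bases) can cover $S$, since each contributes at most $r(S)$ elements to $S$; dually, if $|E \setminus S| < k(r(M) - r(S))$, then $k$ disjoint bases are impossible, because each basis meets $E \setminus S$ in at least $r(M) - r(S)$ elements (a basis restricted to $E \setminus S$ together with a basis of $S$ spans, so a basis has at most $r(S)$ elements inside $S$). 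The content is in the other direction.

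The standard route to the hard direction is through the Matroid Union Theorem: the union $M_1 \vee \cdots \vee M_k$ of matroids $M_i$ on a common ground set $E$ (independent sets being all unions $I_1 \cup \cdots \cup I_k$ with $I_i$ independent in $M_i$) is itself a matroid, with rank function
\[
r_{\vee}(E) = \min_{S \subseteq E} \left( |E \setminus S| + \sum_{i=1}^k r_i(S) \right).
\]
Applying this with $M_1 = \cdots = M_k = M$, the set $E$ is a union of $k$ independent sets of $M$ exactly when $r_{\vee}(E) = |E|$, i.e. when $|E \setminus S| + k\, r(S) \ge |E|$ for all $S$, equivalently $|S| \le k\, r(S)$ for all $S$. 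So if no violating $S$ exists, $E$ is covered by $k$ independent sets, and these can be enlarged to bases; this gives the first bullet. For the second bullet, I would invoke the first bullet applied to the dual matroid $M^*$: $k$ disjoint bases of $M$ exist iff $k$ bases of $M^*$ together with $E$'s remainder can be arranged disjointly — more precisely, one checks that $E$ contains $k$ disjoint bases of $M$ iff $E$ can be covered by $k$ independent sets of $M^*$ together with using the complement structure; translating the rank condition from $M^*$ (where $r^*(S) = |S| - r(M) + r(E \setminus S)$) back into $M$ yields exactly the condition $|E \setminus S| < k(r(M) - r(S))$ as the obstruction. Alternatively, I can cite Edmonds' Matroid Intersection Theorem, which the excerpt has already reserved as a named theorem, and deduce both halves from it.

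The main obstacle is proving that the matroid union $M_1 \vee \cdots \vee M_k$ is actually a matroid and computing its rank function — this is the real theorem, and the exchange-axiom verification plus the min-max rank formula is not a one-liner. If I want to keep the exposition self-contained within this introductory matroid section, I would instead give the short inductive/augmenting-path argument: start with any $k$ independent sets, and if their union misses an element $e$, attempt to augment along an alternating-path-style argument using axiom (3) in each $M_i$; when augmentation fails, the reachable set $S$ witnesses $|S| > k\, r(S)$. Given that the surrounding text treats these as "classical theorems" and has set up both Edmonds' Matroid Intersection Theorem and the dual $M^*$ as available tools, I expect the paper simply cites Edmonds/Nash-Williams here rather than reproving it; my proposal would do likewise, noting that the covering half is Nash-Williams' covering theorem and the packing half is Nash-Williams' / Tutte's packing theorem, both special cases of matroid union, and that each follows from the other by matroid duality.
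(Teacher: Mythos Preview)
Your anticipation is exactly right: the paper does not prove this theorem at all. It is stated as a classical result in the introductory matroid primer, and the only argument given is the remark immediately following it that the two alternatives in each bullet are mutually exclusive (i.e., the easy direction you also give: each basis meets $S$ in at most $r(S)$ elements, hence meets $E \setminus S$ in at least $r(M)-r(S)$ elements). Your sketch of the hard direction via matroid union and duality is correct and standard, but it goes well beyond what the paper supplies; the paper simply cites the result and moves on to the Matroid Intersection Theorem, which is the tool it actually uses later.
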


In each of the above cases, the ``or'' is actually an exclusive or, since the two conditions are mutually exclusive.  To see this, we need only note 
that every basis contains at most $r(S)$ elements from a set $S$ (so must contain at least $r(M) - r(S)$ elements from $E \setminus S$). 

Next we state another famous and extremely useful theorem concerning finding a common independent set in two matroids.  This is the
result we will require for our characterization of splitting 5-configurations.

\begin{theorem}[Matroid Intersection]
Let $M_1$ and $M_2$ be matroids on $E$ with rank functions $r_1$ and $r_2$ and let $k \ge 0$.  Exactly one of the following holds:
\begin{enumerate}
\item There exists a set $S \subseteq E$ with $|S| = k$ which is independent in both $M_1$ and $M_2$.
\item There exists a pair of disjoint sets $A,B \subseteq E$ with $A \cup B = E$ so that  $r_1(A) + r_2(B) < k$.
\end{enumerate}
\end{theorem}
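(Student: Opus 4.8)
The plan is to prove the equivalent min--max identity
\[
\max\{\,|S| : S \subseteq E \text{ independent in } M_1 \text{ and } M_2\,\}
\;=\; \min_{A \subseteq E}\bigl(r_1(A) + r_2(E \setminus A)\bigr),
\]
and then read off the dichotomy. The two alternatives are mutually exclusive: for any common independent set $S$ and any partition $(A,B)$ of $E$ we have $|S| = |S \cap A| + |S \cap B| \le r_1(A) + r_2(B)$, since $S \cap A$ is independent in $M_1$ and contained in $A$ (and symmetrically on $B$); thus $\min_A(r_1(A) + r_2(E\setminus A)) \ge \max|S|$ already, and no partition can undercut a common independent set. Granting the identity, at least one alternative always holds: if $k$ is at most the common value, any size-$k$ subset of a maximum common independent set (independence is preserved under taking subsets) furnishes alternative (1); if $k$ exceeds the common value, a minimizing partition furnishes alternative (2). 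Hence exactly one holds.

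For the nontrivial inequality I would run Edmonds' augmenting-path argument. Given a common independent set $I$ (starting from $I = \emptyset$), build the directed \emph{exchange graph} $D_I$ on vertex set $E$: for $x \in I$, $y \notin I$, put an arc $x \to y$ when $I - x + y$ is independent in $M_1$, and an arc $y \to x$ when $I - x + y$ is independent in $M_2$. Let $X_1 = \{\,y \notin I : I + y \text{ independent in } M_1\,\}$ and $X_2 = \{\,y \notin I : I + y \text{ independent in } M_2\,\}$. If $D_I$ has a directed path from $X_1$ to $X_2$ (in particular if $X_1 \cap X_2 \ne \emptyset$), take a \emph{shortest} such path $P$; then $I \,\triangle\, V(P)$ is a common independent set of size $|I| + 1$, so we augment and repeat, a process that terminates since $|E|$ is finite. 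If no such path exists, let $R$ be the set of vertices reachable from $X_1$ in $D_I$, and set $B = R$, $A = E \setminus R$; then $R \cap X_2 = \emptyset$. A short circuit-exchange argument now shows $I \cap A$ spans $A$ in $M_1$ and $I \cap B$ spans $B$ in $M_2$: an element of $A$ (resp. $B$) witnessing failure to span would lie outside $X_1$ (resp. would lie in $R$ but not $X_2$), so the $M_1$-circuit (resp. $M_2$-circuit) it creates with $I$ supplies an arc forcing a vertex out of, or into, $R$ in a way that contradicts the definition of $R$. Therefore $r_1(A) + r_2(B) = |I \cap A| + |I \cap B| = |I|$, so when the process halts the final set $I^\ast$ is simultaneously a maximum common independent set and is certified optimal by a partition of value $|I^\ast|$, which is the identity.

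The step I expect to be the main obstacle is verifying that $I \,\triangle\, V(P)$ is independent in \emph{both} matroids when $P$ is a shortest $X_1$--$X_2$ path; this is exactly where minimality of $P$ is needed, to exclude the ``chord'' arcs that would otherwise wreck independence. The standard tool is the exchange lemma: if $J$ is independent in a matroid $M$ and $Z \subseteq E$ induces (between $Z \cap J$ and $Z \setminus J$, with $xy$ an edge iff $J - x + y$ is independent in $M$) a bipartite graph with a \emph{unique} perfect matching, then $J \,\triangle\, Z$ is independent in $M$. Applying this to $M_1$ and to $M_2$ along the two alternating families of arcs on $P$ — where shortest-ness guarantees the relevant exchange graphs have no edges beyond the path's own, hence a unique matching — yields the augmentation step. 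Alternatively, one can sidestep the exchange-graph machinery by deducing Matroid Intersection from the Matroid Union theorem applied to $M_1$ and the dual $M_2^\ast$ via the rank formula for $M_1 \vee M_2^\ast$; but the augmenting-path route is the most self-contained and is Edmonds' original one.
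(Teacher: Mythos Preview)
The paper does not prove this theorem: it is stated in the matroid primer (Section~\ref{sec matroids}) as a classical result of Edmonds and is then invoked as a black box in the proof of Theorem~\ref{edmondssplitting}. So there is no ``paper's own proof'' to compare against.

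Your proposal is the standard Edmonds augmenting-path proof and is correct in outline. The reduction of the dichotomy to the min--max identity is clean, and you correctly isolate the only genuinely delicate point: that symmetric-differencing along a \emph{shortest} $X_1$--$X_2$ path preserves independence in both matroids. Your invocation of the unique-perfect-matching exchange lemma is the right tool here, and the spanning argument for the halting case (reachable set $R$ giving $r_1(A)+r_2(B)=|I|$) is the standard certificate. One minor remark: in the halting analysis you want to be slightly careful with the direction of the contradiction---an element of $A\setminus I$ not spanned by $I\cap A$ in $M_1$ would either lie in $X_1\subseteq R$ (contradicting $A=E\setminus R$) or create an $M_1$-circuit with some $x\in I\cap R$ giving an arc $x\to y$ into $A$, again contradicting $y\notin R$; and dually for $B$. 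You have this essentially right but the phrasing ``forcing a vertex out of, or into, $R$'' could be sharpened. Since the paper treats the theorem as background, your write-up would in any case be an addition rather than a comparison.
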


\subsection{Matroid separations and caterpillar width}

In this section we introduce the notion of a separation of a matroid, and introduce a notion of width for matroids.  

\begin{definition} Let $M$ be a matroid on $E$ with rank function $r$. A \emph{separation} of $M$ is a pair $(A,B)$, $A,B \subseteq E$ such that $A \cap B = \emptyset$ and $A \cup B = E$. The \emph{order} of the separation is $r(A) + r(B) - r(M) + 1$. 
\end{definition}

Now let us consider a connected graph $G=(V,E)$ and its cycle matroid $M(G)$.  Let $A,B \subseteq E$ satisfy $A \cap B = \emptyset$ and $A \cup B = E$.  Then $(A,B)$ is a separation of the graph $G$ with order $|V(G_A) \cap V(G_B)|$.  Further, $(A,B)$ is a separation of the matroid $M(G)$ with order $|V(G_A) \cap V(G_B)| + 2 - \comp(G_A) - \comp(G_B)$ where $\comp$ gives the number of connected components (this follows from the rank formula $r(S) = |V| - \comp(V,S)$ for every $S \subseteq E$).  So, whenever $A,B \neq \emptyset$ the order of $(A,B)$ in the graph $G$ is greater than or equal to the or its order in the matroid $M(G)$.  

Next we introduce the important notion of branch width.  If $T$ is a tree, we say that $T$ is \emph{cubic} if every vertex of $G$ has degree $1$, or $3$.

\begin{definition}
A \emph{branch decomposition} of a matroid $M$ with ground set $E$ consists of a cubic tree $T$ and an injection $\phi : E \rightarrow V(T)$ with 
range a subset of leaf vertices.  If $T_1,T_2$ are the two components of $T \setminus e$, then the \emph{width} of $e$ is the order of the
separation $( \phi^{-1} (V(T_1)), \phi^{-1}(V(T_2)) )$.  The \emph{width} of $T$ is the maximum width over all of its edges.  
The \emph{branch width} of $M$ is the minimum width over all branch decompositions of $M$.
\end{definition}

For a graph $G$ with cycle matroid $M(G)$, the standard graph theoretical notion of tree width of $G$ is closely related to the branch width of $M(G)$ \cite{RSX}.
The problem of this paper naturally gives rise to decompositions which are linear, rather than tree-like.  The most linear cubic trees are a type of \emph{caterpillar} graphs, that is, trees whose nonleaves form a path.  The \emph{caterpillar width} of a matroid is the minimum width of branch decomposition whose underlying tree is a cubic caterpillar.

\begin{figure}[h]
  \centering
    \includegraphics[scale=1.0]{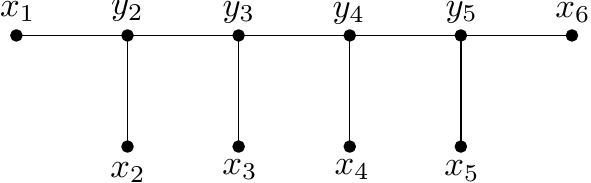}
  \caption{A caterpillar}
\label{caterpillar1}
\end{figure}

\begin{theorem}
If $G = (V,E)$ is a connected graph with $|E| \ge 2$, then the caterpillar width of $M(G)$ is at most the width of $G$.    
\end{theorem}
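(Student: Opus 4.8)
The plan is to turn a width-realizing edge ordering of $G$ directly into a caterpillar branch decomposition of $M(G)$. Fix an ordering $e_1,\ldots,e_n$ of $E(G)$ of width equal to the width $w$ of $G$, and let $T$ be the cubic caterpillar whose nonleaf vertices form a path $p_1,\ldots,p_{n-2}$, with $p_1$ and $p_{n-2}$ each carrying two pendant leaves and each interior $p_i$ carrying one pendant leaf (for $n=2$ take $T$ to be a single edge, and for $n=3$ a single nonleaf vertex with three pendant leaves). Then $T$ has exactly $n$ leaves, so there is a bijection $\phi$ from $\{e_1,\ldots,e_n\}$ to the leaves of $T$; I choose $\phi$ so that, reading the leaves in the natural order along the spine, they receive $e_1,e_2,\ldots,e_n$ in order. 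With this choice, deleting a spine edge of $T$ separates a prefix $\{e_1,\ldots,e_k\}$ of the ordering from its complement.

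Deleting an edge of $T$ yields a separation of $M(G)$ of one of two kinds: a prefix separation $(\{e_1,\ldots,e_k\},\{e_{k+1},\ldots,e_n\})$ with $1\le k\le n-1$, coming from a spine edge or from a pendant edge at one of the spine ends, or a single-edge separation $(\{e_j\},E(G)\setminus\{e_j\})$, coming from a pendant edge. I would bound the prefix separations at once: such a pair is one of the graph separations appearing in the definition of the width of the ordering $e_1,\ldots,e_n$, so it has graph order at most $w$; and since $G$ is connected and both sides are nonempty, the comparison between graph and matroid separations established earlier in this section gives that its order as a matroid separation is at most its order as a graph separation, hence at most $w$.

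The single-edge separations are the only real work, and I expect this to be the main obstacle. By submodularity of the rank function of $M(G)$, the order of $(\{e_j\},E(G)\setminus\{e_j\})$ is either $1$ or $2$, and it is $2$ exactly when $e_j$ is neither a loop nor a bridge, that is, when $e_j$ lies on a cycle of $G$ of length at least two. Since every edge ordering of a connected graph with at least two edges has width at least $1$, the order-$1$ case is immediate; for the order-$2$ case I would show that $w\ge 2$. Given an arbitrary edge ordering, choose a cycle $D$ of length at least two through $e_j$ and let $f$ be the last edge of $D$ in that ordering; the edges preceding $f$ then include $E(D)\setminus\{f\}$, which is a path of $G$ joining the two distinct endpoints of $f$, so the corresponding prefix separation has both of those vertices in its boundary and hence order at least $2$. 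As the ordering was arbitrary, $w\ge 2$, and again the matroid order is at most $w$.

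Combining the two cases, every edge of $T$ has width at most $w$, so $T$ is a caterpillar branch decomposition of $M(G)$ of width at most $w$; therefore the caterpillar width of $M(G)$ is at most the width of $G$. The only points requiring care are the precise rank computation for $(\{e_j\},E(G)\setminus\{e_j\})$ and the short cycle argument in the last step; the prefix case follows directly from the earlier comparison of graph and matroid separations.
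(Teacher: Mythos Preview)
Your proof is correct and follows essentially the same route as the paper's: build the cubic caterpillar from a width-realizing edge ordering, bound the spine edges by comparing graph and matroid orders of prefix separations, and handle the pendant (singleton) edges by the loop/bridge rank computation together with the observation that a cycle forces width at least $2$. The paper phrases the last step contrapositively (if $k=1$ then $G$ is a tree) whereas you argue directly via the last edge of a cycle in an arbitrary ordering, but the content is the same.
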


\begin{proof}
Suppose $k$ is the width of $G$ and let $e_1, \ldots, e_m$ be an ordering of $E$ with width $k$.  If $|E| \ge 3$ then we construct a cubic caterpillar $F$ with vertex set $\{ x_1, \ldots, x_m \} \cup \{ y_2, \ldots, y_{m-1} \}$ and edges $y_i y_{i+1}$ for $2 \le i \le m-2$ and $x_i y_i$  for  $2 \le i \le m-1$ and also $x_1 y_2$ and $x_m y_{m-1}$.  If $|E| = 2$ then we let $F$ be the one edge graph with vertex set $x_1,x_2$.  Now define a branch decomposition of $M(G)$ by mapping $E$ to the leaves of $F$ by the rule that $e_i$ maps to $x_i$.  

For every edge $y_i y_{i+1}$ of $F$, the width of this edge is the order of the separation $(\{e_1, \ldots, e_i\}, \{e_{i+1} , \ldots, e_m \} )$ in $M(G)$ which is at most its order in $G$, which is at most $k$.  Every other edge of $F$ corresponds to a separation of the form $(\{e_i\}, E \setminus \{e_i\})$ for some $1 \le i \le m$ and therefore has order at most $2$.  Since $k \ge 1$, we have the desired outcome unless $k=1$ and there exists an edge $e_i$ for which $(\{e_i\}, E \setminus \{e_i\})$ has order 2 in the matroid.  However, if $k = 1$ then $G$ cannot contain a cycle, so $G$ itself is a tree, but in this case the order of $(\{e_i\}, E \setminus \{e_i\})$ in $M(G)$ will be 1 for every $e_i$.  
\end{proof}

On the other hand, there do exist graphs $G$ for which the width of $G$ is greater than the caterpillar width of $M(G)$.  One such graph is depicted in Figure \ref{k13s}.  This graph has caterpillar width 1 (in fact every such branch decomposition achieves this) but has width 2.  To see this latter claim, note that any edge ordering which does not begin with the form $e_i', e_i$ has order at least two.  Now whatever edge appears next gives rise to a separation of order at least two.  

\begin{figure}[h]
  \centering
    \includegraphics[scale=1.0]{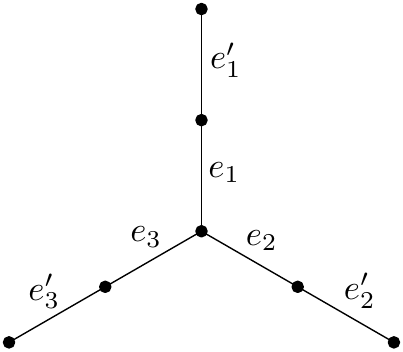}
  \caption{A small tree}
\label{k13s}
\end{figure}

Note that whenever a graph $G$ has an edge ordering $e_1, \ldots, e_m$ of minimum width which has the added property that both 
$G_{ \{e_1, \ldots, e_i\} }$ and $G_{ \{e_{i+1}, \ldots, e_m\} }$ are always connected, the width of $G$ and the caterpillar width of $M(G)$ will be equal.  
In fact, this phenomena is closely related to the difference between two other graph parameters.  Just as the caterpillar width is a linear version of branch width, the path width of a graph is a linear version of tree width.  In the study of path width, there is a somewhat stronger notion, that of connected path width (where the graphs on either side of any separation must always be connected), and the discrepancy between the width of $G$
and caterpillar width of $M(G)$ is closely related to the discrepancy between the path width and connected path width of $G$.

\subsection{A matroidal look at splitting} \label{matroid splitting}

In this section we use the matroid intersection theorem to show that certain natural necessary conditions for non-splitting  are in fact sufficient.  
In fact, the matroid intersection theorem immediately gives us a necessary and sufficient condition in terms of matroidal order for an arbitrary 
Dodgson polynomial to vanish.  Here we will translate this into a more natural graph theoretic condition which will be easy to work with.

\begin{theorem} \label{edmondssplitting} Let $G = (V,E)$ be a connected graph and let $S \subseteq E$ with $|S| = 4$. There is a partition $S = S_1 \cup S_2$, $|S_1| = |S_2| =2$ such that $\Psi^{S_1,S_2} = 0$ if and only if there exist edge disjoint subgraphs $G_1$ and $G_2$ with $G_1 \cup G_2 = G$ such that one of the following holds; 
\begin{enumerate} 
\item $|V(G_1) \cap V(G_2)| \leq 2$ and $|E(G_1) \cap S| = 2$.
\item $|V(G_1) \cap V(G_2)| = 1$ and $|E(G_1) \cap S| = 1$. \end{enumerate} 
\end{theorem}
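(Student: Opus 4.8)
The plan is to run the Matroid Intersection Theorem on the pair of cycle matroids obtained from $G$ by deleting one block of a partition of $S$ and contracting the other. Throughout I would write $\rho$ for the rank function of $M(G)$ and, for $X\subseteq E$, let $\comp(X)$ be the number of components of the spanning subgraph $(V,X)$, so that $\rho(X)=|V|-\comp(X)$.

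First I would reformulate the vanishing of a single Dodgson. Fix a partition $S=S_1\cup S_2$ with $|S_1|=|S_2|=2$. By Proposition~\ref{calcdodgson}, $\Psi^{S_1,S_2}$ is up to sign the sum over all $T\subseteq E\setminus S$ that are spanning trees of both minors $G\setminus S_1/S_2$ and $G\setminus S_2/S_1$, so $\Psi^{S_1,S_2}=0$ exactly when no such common $T$ exists. A minor $G\setminus S_i/S_j$ has a spanning tree only when $G\setminus S_i$ is connected, and then such a tree is the same thing as a basis of its cycle matroid; so the condition reduces to: either one of $S_1,S_2$ contains an edge cut of $G$, or else both minors are connected but the cycle matroids $M(G\setminus S_1/S_2)$ and $M(G\setminus S_2/S_1)$ --- matroids on the common ground set $E\setminus S$ --- have no common basis. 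In the latter case, provided $\rho(S_1)=\rho(S_2)$ (if not, the two matroids have different ranks and trivially no common basis, which is exactly the situation where one of $S_1,S_2$ is a pair of parallel edges), the Matroid Intersection Theorem says a common basis fails to exist exactly when $E\setminus S$ has a partition $A\cup B$ with $r_1(A)+r_2(B)$ strictly below the common rank $\rho(E)-\rho(S_1)$. Setting $\widehat A=A\cup S_2$ and $\widehat B=B\cup S_1$ and unwinding the rank formulas $r_1(A)=\rho(\widehat A)-\rho(S_2)$, $r_2(B)=\rho(\widehat B)-\rho(S_1)$, I expect this to say precisely that $(\widehat A,\widehat B)$ is a separation of $G$ with two edges of $S$ on each side ($S_2\subseteq\widehat A$, $S_1\subseteq\widehat B$) whose \emph{matroid} order $\rho(\widehat A)+\rho(\widehat B)-\rho(E)+1$ is at most $\rho(S_1)\le 2$, and that this correspondence is reversible. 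So, away from the edge-cut case, a fixed partition has $\Psi^{S_1,S_2}=0$ if and only if $G$ has a separation of matroid order at most $2$ with two edges of $S$ on each side.

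The remaining work --- and where I expect the real effort to be --- is to translate between ``matroid order at most $2$ with two edges of $S$ on each side'' and the graph-theoretic conditions of cases 1 and 2, in both directions. For a separation $(C,D)$ the matroid order equals $|V(G_C)\cap V(G_D)|+2-\comp(C)-\comp(D)$, so the two notions of order agree when both sides are connected, giving case 1 directly. When a side is disconnected, matroid order at most $2$ forces $|V(G_C)\cap V(G_D)|\le\comp(C)+\comp(D)$, so some component attaches to the other side at a single vertex; I would repeatedly push across any component meeting no edge of $S$ and otherwise split off such a singly-attached component, arriving either at a connected separation with two edges of $S$ on each side (case 1) or at an order-one separation carrying exactly one edge of $S$ on one side (case 2). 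The edge-cut subcase I would handle in the same spirit: an edge cut inside $S_1=\{e,f\}$ yields a separation of order at most $2$, and a suitable placement of $e$, $f$ and the two edges of $S_2$ on the two sides produces case 1 or case 2. For the converse, from a case-1 configuration (after first trimming $G_1,G_2$ to be connected, which either succeeds or already exhibits case 2) the choice $S_2=E(G_1)\cap S$, $S_1=E(G_2)\cap S$ reproduces the separation of the previous paragraph, so $\Psi^{S_1,S_2}=0$; and from a case-2 configuration with cut vertex $v$ and $a=E(G_1)\cap S$, I would take $S_1=\{a,b\}$ for any other $b\in E(G_2)\cap S$, use $M(G)=M(G_1)\oplus M(G_2)$, and note that a common spanning tree of the two minors would restrict on $G_1$ to a common basis of $M(G_1)\setminus a$ and $M(G_1)/a$, which is impossible by an edge count unless $a$ is a bridge of $G_1$ --- and in that case $G\setminus S_1/S_2$ is outright disconnected; either way $\Psi^{S_1,S_2}=0$.

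The hard part is this last bookkeeping: the matroid order and the graph order of a separation diverge exactly when its sides are disconnected, and one must keep careful control of where the four edges of $S$ land while cleaning up such a separation. Every degenerate configuration --- two parallel edges in $S$, a bridge in $S$, a $2$-edge cut contained in $S$ --- enters through this step, and these are precisely what force case 2 to appear in addition to case 1.
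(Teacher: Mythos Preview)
Your approach is essentially the same as the paper's: both apply Matroid Intersection to the pair $M/S_1\setminus S_2$ and $M/S_2\setminus S_1$, obtain a separation of matroid order at most~$2$ with two edges of $S$ on each side, and then translate this into the graph-theoretic conditions by analyzing components. The only organizational difference is in the cleanup step. Where you propose to iteratively push singly-attached components without $S$-edges across the separation, the paper instead chooses among all matroid-order-$\le 2$ separations with two $S$-edges per side one minimizing $\comp(G_{A^*})+\comp(G_{B^*})$; it then argues that every component of either side meets the boundary $X$ in at least two vertices (else it contains an $S$-edge, giving case~1 or~2, or it can be moved across, contradicting minimality), and since $|X|\le\comp(G_{A^*})+\comp(G_{B^*})$ this forces a cyclic alternating structure from which case~1 drops out. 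These two arguments are equivalent in content; the paper's minimality device just packages the iteration more cleanly and avoids having to track what happens to the boundary and to the $S$-edges through successive pushes.
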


\begin{proof} If either numbered condition holds, then there cannot not exist a partition $\{S_1,S_2\}$ of $S$ and a set $T \subseteq E \setminus S$
so that both $T \cup S_1$ and $T \cup S_2$ are edge sets of spanning trees.  It then follows from Proposition \ref{calcdodgson} that $\Psi^{S_1, S_2}$ is zero.  

Now, choose a partition of $S$ that meets the stated conditions. Then, there does not exist a $T \subseteq E \cut S$ such that $T \cup S_1$ and $T \cup S_2$ are spanning trees. By assumption, $G$ is connected, so setting $|V(G)| = n$ and $M = M(G)$ we have $r(M) = n-1$. Define matroids $M_1 = M \contract S_1 \cut S_2$ and $M_2 = M \contract S_2 \cut S_1$. If either $S_1$ or $S_2$ contains an edge cut or cycle, the proof is complete. If we assume not, then both of these are cycle matroids on connected graphs with $n-2$ vertices, and thus have rank $n-3$. Furthermore, it follows immediately from our choice of which edges to contract that a set $T \subseteq E \cut S$ satisfies $T \cup S_i$ a spanning tree of $G$ if and only if $T$ is a basis of $M_i$ for $i = 1,2$. By Edmonds' Matroid Intersection Theorem, there exist sets A and B with $A \cap B = \emptyset$ and $A \cup B = E \cut S$ so that $r_1(A) + r_2(B) < n-3$. 

Setting $A^* = A \cup S_1$ and $B^* = B \cup S_2$, we have that $(A^*,B^*)$ is a separation in the original matroid $M$ satisfying \begin{align*} o(A^*,B^*) &= r(A^*) + r(B^*) - r(M) +1 \\ &= (r_1(A)+2) + (r_2(B)+2) - (n-1)+1 \\ &< 3. \end{align*} So, there exists a separation $(A^*,B^*)$ of order at most two (in the matroid) such that $A^*$ and $B^*$ each contain exactly two edges in $S$. Among all such separations, choose one $(A^*,B^*)$ such that $\comp(G_{A^*}) + \comp(G_{B^*})$ is minimum. Let $X = V(G_{A^*}) \cap V(G_{B^*})$, $F_1,...,F_l$ be the components of $G_{A^*}$, and $H_1,...,H_m$ be the components of $G_{B^*}$. By the formula for the order of the separation, we have $|X| \leq l+m$. As $G$ is connected and $F_1,...,F_l$ are distinct components of $G_{A^*}$, the sets $X\cap V(F_1)$ through $X \cap V(F_l)$ are disjoint and nonempty. Suppose there is a component with $|X \cap V(F_i)| =1$. If $E(F_i) \cap S \neq \emptyset$, then the graph induced by $E(G) - E(F_i)$ and $F_i$ must satisfy either (1) or (2). Otherwise, replacing $A^*$ by $A^* \cut E(F_i)$ and $B^*$ by $B^* \cup E(F_i)$ contradicts our choice of $(A^*,B^*)$. So, we may assume that every $F_i$ contains at least two vertices of $X$, and similarly every $H_j$ contains at least two vertices in $X$. By disjointedness and $|X| \leq l +m$, we now have $l = m$, each $F_i$ and $H_j$ contains precisely two points in $X$, and further the vertices in $X$ may be cyclically ordered as $v_1,v_2,...,v_{2m}$ so that every $F_i$ contains $v_{2i-1}$ and $v_{2i}$ and every $H_i$ contains $v_{2i}$ and $v_{2i+1}$ modulo $2m$. As $A^*$ and $B^*$ both contained precisely two edges of $S$, each $F_i$ and $H_j$ may contain at most two edges in $S$, and hence there must be subgraphs $G_1$ and $G_2$ satisfying the first property above. This completes the proof. \end{proof}

The following corollary relates Theorem \ref{edmondssplitting} to our interests in Dodgson polynomials and 5-configurations.

\begin{corollary} \label{whatsplitmeans} Let $G$ be a graph and $S$ a 5-configuration. Then $S$ splits if and only if there exists an $e \in S$ so that, setting $G'$ equal to one of $G \cut e$ or $G \contract e$, there exist edge-disjoint subgraphs $G_1$ and $G_2$ of $G'$ with $G' = G_1 \cup G_2$ satisfying one of the following; \begin{enumerate} \item $|V(G_1) \cap V(G_2)| \leq 2$ and $|E(G_1) \cap S| = 2$. \item $|V(G_1) \cap V(G_2)| = 1$ and $|E(G_1) \cap S| = 1$. \end{enumerate} \end{corollary}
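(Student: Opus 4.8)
The plan is to read the corollary off Corollary \ref{split-tree-cor} and Theorem \ref{edmondssplitting} with essentially no new work beyond some bookkeeping. First I would unwind Corollary \ref{split-tree-cor}: $S$ splits precisely when there is an edge $e \in S$, a choice of $G'$ equal to one of $G \setminus e$ or $G/e$, and a partition of the four-element set $S' := S \setminus e$ into two pairs $S_1, S_2$, such that $G'$ has no edge set $T$ making both $T \cup S_1$ and $T \cup S_2$ spanning trees of $G'$. By Proposition \ref{calcdodgson} this last condition is equivalent to the vanishing of $\Psi^{S_1, S_2}$ computed in $G'$; so, for each fixed $e$ and $G'$, the statement ``some partition of $S'$ admits no such $T$'' is exactly the hypothesis of Theorem \ref{edmondssplitting} applied to the pair $(G', S')$. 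That theorem then translates it into the existence of edge-disjoint $G_1, G_2$ with $G_1 \cup G_2 = G'$ satisfying (1) or (2). Finally, since $e \notin E(G')$ we have $|E(G_i) \cap S| = |E(G_i) \cap S'|$ for $i = 1, 2$, so the edge counts in (1) and (2) may be taken with respect to $S$ or to $S'$ interchangeably, and the two equivalences compose to give the corollary. The reverse direction --- conditions (1)/(2) imply $S$ splits --- uses only the elementary half of Theorem \ref{edmondssplitting} (subgraphs as in (1) or (2) plainly obstruct a common $T$), then Proposition \ref{calcdodgson} and Corollary \ref{split-tree-cor}, and needs nothing further.

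The one point that is not pure composition is the hypothesis of Theorem \ref{edmondssplitting} that its graph be connected: we are applying it to $G' \in \{G \setminus e, G/e\}$, and $G \setminus e$ may be disconnected. If $G$ itself is disconnected it has no spanning tree, so every $5$-configuration splits vacuously, and one can still exhibit a decomposition as in (1) or (2) for a suitable choice of $e$ and $G'$; so assume $G$ is connected. Then $G/e$ is always connected, and $G \setminus e$ fails to be connected only when $e$ is a bridge. The remaining case is thus: the witness handed us by Corollary \ref{split-tree-cor} is $(e, G \setminus e)$ with $e$ a bridge --- equivalently, $S$ contains a bridge $b$, in which case $S$ splits automatically (as $G \setminus b$ has no spanning tree). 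Here I would verify (1)/(2) by hand, splitting on how the four edges of $S \setminus b$ distribute between the two components $C_1, C_2$ of $G \setminus b$: a $2$-$2$ split makes $(C_1, C_2)$ work for $G' = G \setminus b$ under (1); a $1$-$3$ split is fixed by moving a single $S$-edge across the cut, again under (1); and if one component holds all four, deleting a suitable $S$-edge from it keeps $b$ a bridge and makes ``the other component together with $b$'' satisfy (2).

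I expect the main obstacle to be exactly this last bit of connectivity bookkeeping; the core equivalence is just Corollary \ref{split-tree-cor} followed by Theorem \ref{edmondssplitting}. It is worth noting that in every subsequent application of the corollary the graphs in play are $2$-connected, so the bridge and disconnected cases never actually intervene there --- but since the statement is made for arbitrary graphs, the proof should dispose of them, and the sketch above does.
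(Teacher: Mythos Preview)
Your approach is exactly the one the paper intends: the corollary is stated without proof, as it is meant to follow immediately by composing Corollary~\ref{split-tree-cor} with Theorem~\ref{edmondssplitting} applied to $G' \in \{G\setminus e, G/e\}$ and $S' = S \setminus \{e\}$. Your observation that $|E(G_i) \cap S| = |E(G_i) \cap S'|$ is the only bookkeeping needed for the core case, and you are right that the connectivity hypothesis of Theorem~\ref{edmondssplitting} is a genuine (if minor) loose end that the paper does not address; your case analysis for bridges and disconnected $G$ is sound and, as you note, never actually needed in the applications.
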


With this corollary in hand, we can give a simple proof that the graphs in $\mathcal{F}_0$ are non-split.  All of these graphs have the property 
that whenever $(A,B)$ is a 3-separation, $\min\{ |A|, |B| \} \le 3$.  It follows from this and Corollary \ref{whatsplitmeans} that whenever $S$ is any set 
of edges in one of these graphs which does not contain a triangle or all three edges incident with a vertex of degree 3, then $S$ does not 
split.  So, in particular, the 5-configurations highlighted in Figure \ref{nonsplitfives} do not split.

\section{Enhanced Graphs}\label{wands}

As we shall shortly prove, every minor-minimal non-split graph is rather close to being 3-connected in the sense that it may only have
a small number of specially structured 2-separations.  It is natural then to replace the small side of such a separation by a single edge encoded with some added information so as to operate in the setting of 3-connected graphs.  In this section we adopt this approach, and reduce our 
problem to that of finding excluded minors in this new setting.

An \emph{enhanced graph} consists of a graph $\widetilde{G}$ together with two distinguished subsets 
$\widetilde{C}, \widetilde{D} \subseteq E( \widetilde{G} )$.  The edges in $\widetilde{C}$ are called \emph{contract-proof} and the edges in 
$\widetilde{D}$ are \emph{delete-proof}.  As before, a set $\widetilde{S}$ of five edges is called a 5-configuration,
However, we shall define splitting differently for enhanced graphs.  We say that a 5-configuration $\widetilde{S}$ \emph{splits} if there is an
enhanced graph which is either equal to $\widetilde{G}$ or obtained from it by deleting an edge in $\widetilde{S} \setminus \widetilde{D}$ or 
contracting an edge in $\widetilde{S} \setminus \widetilde{C}$ which has a separation $(A,B)$ satisfying one of the following:
\begin{itemize}
\item $(A,B)$ has order at most 1 and $A \cap \widetilde{S} \neq \emptyset \neq B \cap \widetilde{S}$.
\item $(A,B)$ has order 2 and $\min\{ |A \cap \widetilde{S}|, |B \cap \widetilde{S}| \} = 2$.
\end{itemize}
Note that in the special case when $\widetilde{C} = \widetilde{D} = \emptyset$ this definition aligns with the notion of splitting in ordinary graphs 
thanks to Corollary \ref{whatsplitmeans}.  We say that a 2-separation $(A,B)$ satisfying the second property above is a \emph{bad} $2$-separation 
(this will generally be the obstruction we encounter).  We will refer to both contract-proof and delete-proof edges as forms of \emph{protection}.

 Let $G$ be an ordinary connected graph and let $S$ be a non-split 5-configuration in $G$.  It follows from the definitions that 
every 1-separation $(A,B)$ of $G$ must satisfy $A \cap S = \emptyset$ or $B \cap S = \emptyset$.  So, there exists a block $G'$ of 
$G$ with $S \subseteq E(G')$.  Now consider $G'$ and note that it is 2-connected and non-split.  It follows from the definitions
that every 2-separation $(A,B)$ of $G'$ must satisfy $\min \{ |A \cap S|, |B \cap S| \} \le 1$.  Define a subset $\emptyset \neq L \subset E(G')$ to be a 
\emph{lobe} if $(L, E(G') \setminus L)$ is a 2-separation with $|L \cap S| \le 1$.  It follows from the 2-connectivity of $G'$ that whenever $L,L'$ are 
lobes with $L \cap L' \neq \emptyset$ the set $L \cup L'$ is also a lobe.  Furthermore, every edge $e$ must be contained in a lobe, since 
$( \{e\}, E(G') \setminus \{e\} )$ is a 2-separation.  Therefore, the maximal lobes give us a partition of $E(G')$.  Next 
we construct an enhanced graph $\widetilde{G}$ with $\widetilde{C}$, $\widetilde{D}$ and a 5-configuration $\widetilde{S}$ as follows.  We 
begin with $\widetilde{G} = G'$ and with $\widetilde{C} = \widetilde{D} = \widetilde{S} = \emptyset$.  Now, for every maximal lobe $L$ in the graph $G'$ with $\partial(L) = \{x,y\}$, we replace $L$ in $\widetilde{G}$ by the single edge $xy$.  
If $L \cap S \neq \emptyset$ then we add the edge $xy$ to $\widetilde{S}$.  If in addition $L \setminus S$ contains a path from $x$ to $y$ then we add $xy$ to $\widetilde{D}$ and if the unique edge in $S \cap L$ does not have ends $x,y$ then we add $xy$ to $\widetilde{C}$.  We say that 
the enhanced graph $\widetilde{G}$ together with $\widetilde{S}$ are \emph{associated with} $G$ and $S$.  The following proposition captures the 
key properties of this operation.

\begin{prop}
Let $S$ be a non-splitting 5-configuration in the connected graph $G$ and let $\widetilde{G}$ together with $\widetilde{S}$ be associated with $G$ and $S$.  Then we have:
\begin{enumerate}
\item $\widetilde{G}$ is 3-connected.
\item $\widetilde{S}$ does not split in $\widetilde{G}$.
\end{enumerate}
\end{prop}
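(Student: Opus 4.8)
The plan is to carry out everything inside the block $G'$ of $G$ containing $S$ --- which, as established just above, is $2$-connected with $S$ non-split --- and to exploit the bijection between the edges of $\widetilde G$ and the maximal lobes of $G'$. Write $L_e$ for the maximal lobe of $G'$ collapsed to the edge $e$ of $\widetilde G$, and recall $|L_e\cap S|\le 1$, that $e\in\widetilde S$ exactly when $|L_e\cap S|=1$, and that the maximal lobes partition $E(G')$. For $\widetilde A\subseteq E(\widetilde G)$ call $\widetilde A^{+}=\bigcup_{e\in\widetilde A}L_e\subseteq E(G')$ its \emph{expansion}. The two facts I will use repeatedly are: a vertex of $G'$ lying in both $V(G'_{\widetilde A^{+}})$ and $V(G'_{\widetilde B^{+}})$ has edges in a lobe $L_e$ with $e\in\widetilde A$ and in a lobe $L_f$ with $f\in\widetilde B$, hence is a boundary vertex of both and so an endpoint of both $e$ and $f$ in $\widetilde G$, so it lies in $V(\widetilde G_{\widetilde A})\cap V(\widetilde G_{\widetilde B})$ (thus expansion never raises the order of a separation, nor destroys its properness); and $|\widetilde A^{+}\cap S|=|\widetilde A\cap\widetilde S|$, since a lobe meeting $S$ meets it in a single edge.

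For (1), I would first note $\widetilde G$ is simple: if distinct maximal lobes $L_1,L_2$ had a common boundary $\{x,y\}$ then $L_1\cup L_2$ would be one side of a $2$-separation of $G'$ (there are $S$-edges outside it, since $|L_1\cap S|+|L_2\cap S|\le 2<5$) with $|(L_1\cup L_2)\cap S|\le 2$; if this is $\le 1$ then $L_1\cup L_2$ is a lobe properly containing $L_1$, contradicting maximality, and if it is $2$ then this $2$-separation splits $S$ as $2$--$3$, and deleting an $S$-edge from the three-side gives, by Corollary \ref{whatsplitmeans}, a split of $S$ in $G'$ --- a contradiction either way. ($2$-connectivity of $G'$ also makes every lobe separation have order exactly $2$, so $\widetilde G$ has no loops, and being simple with $|E(\widetilde G)|\ge|\widetilde S|=5$ it has at least four vertices.) A $1$-separation of $\widetilde G$ (necessarily with both sides nonempty) expands to a separation of $G'$ of order $\le 1$, contradicting $2$-connectivity; so $\widetilde G$ is $2$-connected. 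Finally a proper $2$-separation $(\widetilde A,\widetilde B)$ of $\widetilde G$ expands to a proper separation of $G'$ of order exactly $2$; since $S$ is non-split one side, say $\widetilde A^{+}$, has $|\widetilde A^{+}\cap S|\le 1$, so $\widetilde A^{+}$ is a lobe, hence contained in a maximal lobe, hence --- being a union of maximal lobes --- equal to a single maximal lobe, so $|\widetilde A|=1$ and (both its endpoints have further edges in $\widetilde G$) the separation was not proper. Thus $\widetilde G$ is a simple $2$-connected graph on $\ge 4$ vertices with no proper $2$-separation, i.e.\ it is $3$-connected.

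For (2), suppose $\widetilde S$ splits in $\widetilde G$. Unwinding the definition, there is an edge move --- do nothing, delete some $e\in\widetilde S\setminus\widetilde D$, or contract some $e\in\widetilde S\setminus\widetilde C$ --- after which the resulting graph $\widetilde G'$ carries a separation $(\widetilde A,\widetilde B)$ that is either of order $\le 1$ with $\widetilde A\cap\widetilde S\ne\emptyset\ne\widetilde B\cap\widetilde S$, or of order $2$ with $\min\{|\widetilde A\cap\widetilde S|,|\widetilde B\cap\widetilde S|\}=2$. I would expand this to a configuration in $G'$, $G'\setminus s$, or $G'/s$ (where $s$ is the unique $S$-edge of $L_e$) that, via Corollary \ref{whatsplitmeans}, witnesses a split of $S$ in $G'$, contradicting the hypothesis. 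In the ``do nothing'' case order $\le 1$ is impossible by $2$-connectivity of $G'$, while order $2$ gives a $2$-separation of $G'$ splitting $S$ as $2$--$3$, whence deleting an $S$-edge from the three-side gives condition (1) of Corollary \ref{whatsplitmeans}. In the deletion case $e\notin\widetilde D$ says precisely that $L_e\setminus\{s\}$ has no $x$--$y$ path, so each of its components attaches to exactly one of $x,y$; as $x,y$ are boundary vertices of other maximal lobes they still carry edges in $\widetilde G\setminus e$, so these components can be assigned to the side of $(\widetilde A,\widetilde B)$ meeting $x$, resp.\ $y$, giving a separation of $G'\setminus s$ of no larger order --- of order exactly $1$ in the first subcase since $G'$ is bridgeless --- whose $2$--$2$ or $1$--$3$ split of $S$ gives condition (1) or (2) of Corollary \ref{whatsplitmeans}. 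The contraction case is parallel: $e\notin\widetilde C$ says $s=xy$, so contracting $s$ in $G'$ leaves the collapsed lobe $L_e\setminus\{s\}$ attached to the rest only at the merged vertex, and parking it on the side of $(\widetilde A,\widetilde B)$ meeting that vertex again produces a separation of $G'/s$ of no larger order exhibiting a split of $S$. In every case the hypothesis is contradicted, so $\widetilde S$ does not split in $\widetilde G$.

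The real work, and the only place genuine care is needed, is the bookkeeping in (2): checking that expanding a separation of the (possibly deleted or contracted) enhanced graph never raises its order. This is exactly what the two kinds of protection are engineered to guarantee --- an edge is delete-proof precisely when $L\setminus S$ still joins its two endpoints, so that deleting it in $\widetilde G$ is not mirrored by deleting $s$ in $G'$, and contract-proof precisely when $s$ is not the edge between those endpoints, so that contracting it is not mirrored by contracting $s$ --- so one must match the cases against the definitions of $\widetilde D$ and $\widetilde C$ honestly. The remaining points (that the order-$\le 1$ cases can only yield order exactly $1$, using that $G'$ is connected and bridgeless, and that after expansion each side still carries the expected number of $S$-edges) are immediate from the correspondence fixed in the first paragraph.
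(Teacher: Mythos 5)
Your proof is correct and follows the same approach as the paper, namely transferring separations between $\widetilde G$ and the block $G'$ via the lobe correspondence; the paper's own proof is a three-sentence sketch, and you have filled in the bookkeeping it elides (simplicity and $3$-connectivity of $\widetilde G$ via the ``expansion never raises order'' observation, and the case analysis in part (2) tracking how $C_x$, $C_y$ or the collapsed lobe are re-attached after deleting or contracting $s$).
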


\begin{proof} It follows immediately from the construction that $\widetilde{G}$ is 3-connected.  Therefore, the enhanced graph 
$\widetilde{G}$ has no bad 2-separation.  If $e \in \widetilde{S}$ is not delete-proof and $\widetilde{G} \setminus e$ has a bad 2-separation, then
it follows immediately that the graph obtained from $G$ by deleting the edge in $S$ associated with $e$ also has a bad 2-separation.  
A similar argument for contraction reveals that $\widetilde{S}$ does not split in $\widetilde{G}$, as desired. \end{proof}

\begin{center}
\begin{figure}[h]
\centerline{\includegraphics[width=10cm]{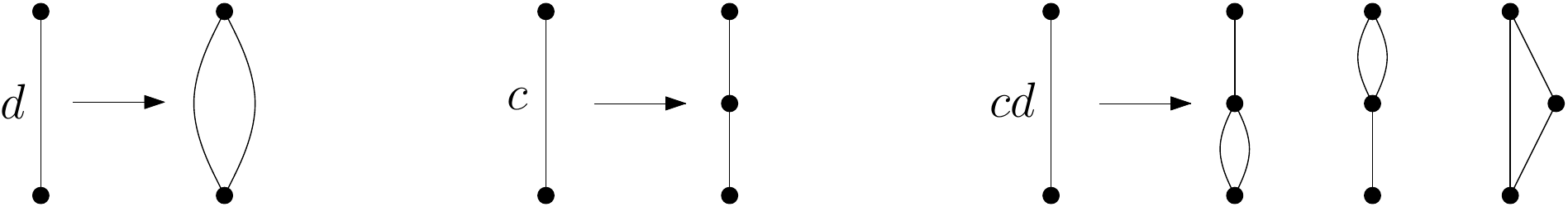}}
\caption{}
\label{ordinary-enhanced}
\end {figure}
\end{center}

We can also reverse this process.  Let $\widetilde{S}$ be a non-split 5-configuration in the enhanced graph $\widetilde{G}$.  Now, form an ordinary graph $G$ and 5-configuration $S$ as follows.  We begin with $G = \widetilde{G}$ and $S = \emptyset$.  Now for every edge $e = xy \in \widetilde{S}$ we proceed as follows: if $e \not\in \widetilde{C} \cup \widetilde{D}$ then we add $e$ to $S$.  If $e \in \widetilde{D} \setminus \widetilde{C}$ then we replace $e$ by a pair of edges in parallel between $x$ and $y$ and add one of these to $S$.  If $e \in \widetilde{C} \setminus \widetilde{D}$, then we subdivide $e$ and add one of these two newly formed edges to $S$.  Finally, if 
$e = xy \in \widetilde{C} \cap \widetilde{D}$ then we replace the edge $e$ by one of the three graphs shown in the last transformation of Figure \ref{ordinary-enhanced}, call it $F$, and add an edge $f \in E(F)$ to $S$ so that $f \neq xy$ and so that $F \setminus f$ contains a path from $x$ to $y$.  We say that $G,S$ are \emph{associated with} $\widetilde{G},\widetilde{S}$.  

\begin{prop}
Let $\widetilde{S}$ be a non-split 5-configuration in the enhanced graph $\widetilde{G}$.  If $G,S$ is associated with $\widetilde{G}, \widetilde{S}$, then $S$ is a non-split 5-configuration in $G$.
\end{prop}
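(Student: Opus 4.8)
The plan is to show that the reverse transformation undoes the forward one up to an operation (lobe replacement) that, by the earlier analysis, preserves non-splitting. More precisely, I would argue directly: suppose for contradiction that $S$ splits in $G$. Then by Corollary \ref{whatsplitmeans}, there is an edge $f \in S$ such that, setting $G'$ equal to $G \setminus f$ or $G/f$, there are edge-disjoint subgraphs $G_1, G_2$ with $G' = G_1 \cup G_2$ and either $|V(G_1)\cap V(G_2)| \le 2$ with $|E(G_1)\cap S| = 2$, or $|V(G_1)\cap V(G_2)| = 1$ with $|E(G_1)\cap S| = 1$. The goal is to push this separation back through the local replacements to produce, in the enhanced graph $\widetilde G$, either a deletion of some non-contract-proof edge, a contraction of some non-delete-proof edge, or $\widetilde G$ itself, carrying a separation of the type in the definition of enhanced splitting — contradicting that $\widetilde S$ does not split.

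The key steps, in order: First I would record that each edge $e = xy \in \widetilde S$ has been replaced by a small gadget $F_e$ (a single edge, a digon, a subdivided edge, or one of the three graphs of Figure \ref{ordinary-enhanced}), and the chosen edge $f_e \in S$ sits inside $F_e$; the edges of $G$ not lying in any $F_e$ are exactly the edges of $\widetilde G$ outside $\widetilde S$. Second, I would examine which edge $f \in S$ is used in the split of $G$, say $f = f_e$ inside gadget $F_e$. I would then do a case analysis on the gadget type and on whether $G'$ is $G \setminus f$ or $G/f$: the crucial observation is that deleting $f_e$ from $F_e$ corresponds, at the level of $\widetilde G$, to deleting $e$ when $e \notin \widetilde D$ (and when $e \in \widetilde D$ the gadget was built so that $F_e \setminus f_e$ still has an $x$–$y$ path, so deleting $f_e$ does \emph{not} separate $x$ from $y$ and the rest of the gadget can be absorbed into whichever of $G_1, G_2$ it attaches to); dually, contracting $f_e$ corresponds to contracting $e$ when $e \notin \widetilde C$, and when $e \in \widetilde C$ the gadget has the edge in $S$ avoiding the pair $\{x,y\}$, so contraction of $f_e$ leaves the two ends $x,y$ distinct and the residual gadget edges can again be absorbed. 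Third, having identified the corresponding operation on $\widetilde G$ (or concluded that the split already occurs in $\widetilde G$ itself), I would show the separation $(G_1, G_2)$ of $G'$ restricts to a separation $(A,B)$ of the enhanced graph: collapse each surviving gadget $F_{e'}$ to the single edge $e'$, noting a gadget lies entirely on one side of $(G_1,G_2)$ because its attachment set has size at most $2$ and a bad $2$-separation would otherwise already be present in $G$ (which would re-split $S$, handled by an even easier version of the same reduction). The membership counts transfer: $|A \cap \widetilde S|$ and $|B \cap \widetilde S|$ agree with $|E(G_1)\cap S|$ and $|E(G_2)\cap S|$ because $e' \in \widetilde S$ iff $F_{e'}$ meets $S$, and the order of $(A,B)$ is at most that of $(G_1,G_2)$. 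This yields a split of $\widetilde S$ in $\widetilde G$, the desired contradiction.

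The main obstacle I anticipate is the bookkeeping in the third gadget case, $e \in \widetilde C \cap \widetilde D$, where $F_e$ is one of three genuinely two-dimensional graphs rather than a single edge or digon: here I must check that no matter which of the three gadgets was chosen and no matter whether $f_e$ is deleted or contracted, the resulting piece still behaves like ``$e$ untouched'' (it retains an $x$–$y$ path after deleting $f_e$, and keeps $x \neq y$ after contracting $f_e$), so that in fact the split must already be visible in $\widetilde G$ without deleting or contracting $e$ at all — i.e.\ this case should collapse to the ``$\widetilde G$ itself has the separation'' branch. The other delicate point is ensuring a gadget cannot be split \emph{across} the separation $(G_1,G_2)$; this needs the $3$-connectivity of $\widetilde G$ together with the fact that each gadget has exactly two attachment vertices, so any separation of $G$ of order $\le 2$ that cuts a gadget would have to cut it at its two attachment vertices, leaving one ``side'' inside the gadget — and a short check of each gadget shows that side carries at most one edge of $S$, so it is not a bad $2$-separation and can be re-absorbed. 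Once those two points are dispatched, everything else is a routine transfer of a separation across a local replacement.
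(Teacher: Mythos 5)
Your proposal is correct in approach and is essentially an expansion of the paper's proof, which the authors compress to the single sentence ``This follows from the structure of $G$ and the definition of splitting'' — i.e.\ they leave exactly the bookkeeping you spell out to the reader. Your main-line argument (apply Corollary \ref{whatsplitmeans} to $G$, note that deleting or contracting $f_e$ inside its gadget mirrors either an allowed operation on $e$ in $\widetilde G$ or leaves the gadget functionally intact, then push the separation down to $\widetilde G$) is the intended one, and you correctly identify the two delicate points. One small inaccuracy worth flagging: it is not true that a $\le 2$-separation of $G'$ that cuts a gadget must cut at the gadget's two attachment vertices — a separator may well use an interior gadget vertex (e.g.\ the middle vertex $z$ of a subdivided edge, with separator $\{z,w\}$, splitting the two half-edges). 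This does not break the argument, because when you contract the gadget down to the single edge $e'$ the interior vertex is identified with a boundary vertex, so the projected separator still has size at most the original and $f_{e'}$ still lands in $\widetilde S$ on the correct side; but the cleaner phrasing is ``project first, then observe order and $\widetilde S$-counts transfer'' rather than ``show the original separation never cuts a gadget.''
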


\begin{proof} This follows from the structure of $G$ and the definition of splitting.
\end{proof}

It follows immediately from the previous two propositions that whenever $(A,B)$ is a non-trivial 2-separation in a minor-minimal non-split (ordinary) 
graph $G$, one of the sets must either consist of two edges in parallel or in series with exactly one in $S$, or must consist of three edges arranged
as in one of the three graphs on the right in Figure \ref{ordinary-enhanced}.  

To take full advantage of enhanced graphs, we will need to allow a slight extension of the usual graph minor operations in this setting.  
A \emph{minor} of an enhanced graph $\widetilde{G}$ is an enhanced graph obtained from $\widetilde{G}$ by any sequence of the following operations:
\begin{enumerate}
\item Vertex deletion.
\item Edge deletion and contraction.
\item removal of an edge from either $\widetilde{C}$ or $\widetilde{D}$ while leaving it in the graph.
\item If $e,e'$ are parallel edges,  delete $e'$ and add $e$ to $\widetilde{D}$.
\item If $e,e'$ are the edges incident with a vertex of degree two,  contract $e'$ and add $e$ to $\widetilde{C}$.
\end{enumerate}
These operations respect our framework, as indicated by the following easy proposition.

\begin{prop}
Let $G$ be a graph with a non-split 5-configuration $S$ and let $\widetilde{G}$ together with $\widetilde{S}$ be the associated enhanced graph and 5-configuration.  Then $G$ is minor-minimal subject to $S$ not splitting if and only if $\widetilde{G}$ is minor-minimal subject to $\widetilde{S}$ not splitting.
\end{prop}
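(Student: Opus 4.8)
The plan is to upgrade the two preceding propositions---which say that each of the two ``associated'' constructions carries a non-split configuration to a non-split configuration---into a statement about minor-\emph{minimality}, by verifying that these constructions are compatible with passing to minors.

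The core of the argument is a pair of one-step translation lemmas. In the ordinary-to-enhanced direction: if $(G',S)$ is obtained from $(G,S)$ by deleting or contracting a single edge outside $S$, and $S$ remains a non-split $5$-configuration, then $(\widetilde{G'},\widetilde S)$ is obtained from $(\widetilde G,\widetilde S)$ by a---possibly empty---sequence of the five enhanced minor operations. I would prove this by a short case analysis on where the affected edge sits relative to the maximal lobes of $G$. If it lies in the interior of a maximal lobe that was collapsed to a (possibly protected) edge $xy$, then $\widetilde G$ is either left unchanged, or loses the delete-proof label on $xy$ (once the last $S$-avoiding path from $x$ to $y$ inside that lobe has been destroyed), or loses the contract-proof label on $xy$ (once the $S$-edge of that lobe comes to have ends $x$ and $y$)---each of the latter two being an instance of operation (3). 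If instead collapsing an entire maximal lobe of $G$ amounts, in $\widetilde G$, to deleting or contracting the single edge representing it, then operations (4) and (5) first shrink that lobe down to one edge and then operation (1) or (2) finishes. The symmetric enhanced-to-ordinary lemma is proved by running through the five operations: operations (1), (2), (4), (5) already decrease the edge count of the enhanced graph---and hence of the ordinary graph produced by the reverse construction, whose number of edges is determined by the enhanced graph together with its protection---while operation (3) simply causes the reverse construction to omit the parallel edge, the subdivided edge, or the gadget that had encoded the discarded protection. In every case the reverse construction of the smaller enhanced graph is realized inside $G$ by deleting or contracting within the relevant lobe(s), and it is a \emph{proper} minor of $G$ whenever the enhanced step was proper.

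Granting the two lemmas, the proposition follows by contraposition in each direction, using the two preceding propositions to keep the configurations non-split. If $\widetilde G$ is not minor-minimal, one pulls a reducing enhanced minor back to a reducing ordinary minor via the second lemma, so $G$ is not minor-minimal. Conversely, if $G$ is not minor-minimal, one passes to a minor-minimal proper minor $(G^{\ast},S^{\ast})$ of $(G,S)$ whose configuration does not split; by the structural remark immediately preceding the definition of enhanced minors, every nontrivial $2$-separation of $G^{\ast}$ has one of the listed shapes, so $G^{\ast}$ coincides with the reverse construction of $(\widetilde{G^{\ast}},\widetilde{S^{\ast}})$, and the first lemma exhibits $(\widetilde{G^{\ast}},\widetilde{S^{\ast}})$ as an enhanced minor of $(\widetilde G,\widetilde S)$ with non-splitting configuration; a short further argument shows that this enhanced minor is proper, so $\widetilde G$ is not minor-minimal. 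The step I expect to require the most care is the case $e \in \widetilde C \cap \widetilde D$ throughout both lemmas---there a single enhanced edge stands in for a three-edge gadget with its own internal minor structure, and one must make sure that the ``possibly empty sequence'' claim and the properness bookkeeping survive---together with pinning down exactly why the enhanced minor produced in the converse direction cannot fail to be proper.
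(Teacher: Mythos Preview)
The paper does not supply a proof of this proposition at all: it is introduced with the phrase ``these operations respect our framework, as indicated by the following easy proposition'' and is left to the reader. So there is no argument in the paper to compare yours against; your plan is essentially the natural unpacking of what the authors regard as routine.

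Your two translation lemmas are the right idea, and the case analysis you sketch (edge inside a lobe: possibly drop a protection via operation (3); collapse of a whole lobe: operations (4)/(5) then (1)/(2)) is exactly how one verifies that the five enhanced-minor moves were chosen to mirror ordinary minor moves on the lobes. The forward direction (if $\widetilde G$ is not minor-minimal then $G$ is not) goes through cleanly: a proper enhanced minor of $\widetilde G$ has strictly smaller weight, hence the ordinary graph it produces via the reverse construction has strictly fewer edges than $G$, and it sits inside $G$ because each lobe of $G$ contains the corresponding gadget as a minor.

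You are right to single out properness in the converse direction as the delicate point, and in fact you should not expect to discharge it without using the structural remark you cite. The difficulty is real: if $G$ has a lobe strictly larger than its gadget, one can shrink that lobe by an ordinary minor move that leaves $\widetilde G$ \emph{unchanged}, so a single step need not produce a proper enhanced minor. What rescues the argument is exactly your move of passing all the way down to a minor-minimal $G^\ast$: by the paragraph immediately preceding the definition of enhanced minors, every nontrivial $2$-separation of $G^\ast$ already has one of the small gadget shapes, so $G^\ast$ coincides with the reverse construction of $\widetilde{G^\ast}$; since $|E(G^\ast)| < |E(G)|$ forces the weight of $\widetilde{G^\ast}$ to be strictly less than the weight of $\widetilde G$, the enhanced minor is proper. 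You should make this weight-counting step explicit rather than leaving it as ``a short further argument''---it is the only place where the converse genuinely needs the minor-minimality of $G^\ast$, and without it the implication as literally stated can fail (a $G$ with oversized lobes is not minor-minimal even when $\widetilde G$ is).
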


This permits us to work in the setting of enhanced graphs, and this offers us a couple of key advantages.  First, the enhanced graphs
of interest to us will be 3-connected, and this has technical advantages.  Second, one non-splitting enhanced graph may in general give rise to many 
non-splitting graphs, so working in the context of enhanced graphs gives us a more compact description of the minimal forbidden graphs.  

We are now ready to introduce the family of enhanced graphs which we will prove are the excluded minors for non-splitting (i.e. the full family of minor-minimal
 non-split enhanced graphs).  
Define $\mathcal{F}$ to be the set of enhanced graphs consisting of the graphs in $\mathcal{F}_0$ (with no contract or delete-proof edges), 
together with the enhanced graphs appearing in Appendix \ref{app all_excl} (here an edge marked ``c'' is contract-proof and an edge marked ``d'' 
is delete-proof).  The main result established over the course of the following two sections is as follows.

\begin{theorem}
\label{mainskeleton}
The set $\mathcal{F}$ consists of all minor minimal non-split enhanced graphs.
\end{theorem}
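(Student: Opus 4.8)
The plan is to prove two inclusions: that every enhanced graph in $\mathcal{F}$ is minor-minimal subject to non-splitting, and that every minor-minimal non-split enhanced graph lies in $\mathcal{F}$. The first is a finite verification. For the five members of $\mathcal{F}_0$ it is essentially settled: Corollary~\ref{whatsplitmeans} shows each is non-split, and one checks that none of $K_{3,3}$, $K_5$, $C$, $O$, $H$ has a proper minor admitting a member of $\mathcal{F}_0$ as a minor, so each proper minor splits by Theorem~\ref{heyguesswhatsplits} applied to its $3$-connected simple proper minors (a graph with at most four edges splits vacuously). For each enhanced graph $\widetilde{G}$ listed in Appendix~\ref{app all_excl}, with its marked configuration $\widetilde{S}$, I would verify directly from the definition of enhanced splitting that no legal operation---doing nothing, deleting an edge of $\widetilde{S}\setminus\widetilde{D}$, or contracting an edge of $\widetilde{S}\setminus\widetilde{C}$---produces a separation of order at most $1$ meeting $\widetilde{S}$ on both sides, nor a bad $2$-separation, and then that each of the five enhanced minor operations applied to $\widetilde{G}$ yields an enhanced graph in which the image of $\widetilde{S}$ does split. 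Since enhanced minors are generated by these single operations, this gives minor-minimality; the checks are routine, finite, and naturally done by computer.

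For the second inclusion, let $(\widetilde{G},\widetilde{S})$ be minor-minimal subject to $\widetilde{S}$ not splitting. The propositions of Section~\ref{wands} show that the correspondence between ordinary graphs with a non-split configuration and associated enhanced graphs is minor-compatible and always lands on a $3$-connected enhanced graph, while minor operations~$(4)$ and~$(5)$ remove parallel edges and degree-$2$ vertices; minimality therefore forces $\widetilde{G}$ to be $3$-connected, hence simple, and forces every protected edge to lie in $\widetilde{S}$ (a protection on an edge outside $\widetilde{S}$ never affects the legal moves for $\widetilde{S}$, so operation~$(3)$ would contradict minimality). If $\widetilde{S}$ contains no protected edge, then $\widetilde{G}$ carries no protection and is an ordinary $3$-connected simple minor-minimal non-split graph, so by Theorems~\ref{decomp} and~\ref{heyguesswhatsplits} it has an $\mathcal{F}_0$-minor; since the members of $\mathcal{F}_0$ are non-split this minor cannot be proper, and $\widetilde{G}\in\mathcal{F}_0$.

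It remains to treat the case that $\widetilde{S}$ contains a protected edge, which is where the appendix graphs arise and which I expect to be the main obstacle. Here I would use minimality sharply: removing the delete-protection (respectively contract-protection) of a protected edge $e\in\widetilde{S}$ gives a proper minor, hence a splitting configuration, and since $\widetilde{S}$ did not split in $\widetilde{G}$ the only newly available witness must be ``delete $e$'' (respectively ``contract $e$''), so $\widetilde{G}\setminus e$ (respectively $\widetilde{G}/e$) contains a separation of order at most $1$ meeting $\widetilde{S}$ on both sides or a bad $2$-separation, while $\widetilde{G}/e$ (respectively $\widetilde{G}\setminus e$) must itself split. Feeding these forced separations back through the $3$-connectivity of $\widetilde{G}$, together with the width-$3$ structure theory of Theorem~\ref{decomp} and the planar duality of Proposition~\ref{closedunderdual} (the appendix list should be closed under planar duality and consist of small perturbations of the $\Delta-Y$ relatives of the cube and octahedron), I would reduce to a finite enumeration: control simultaneously the number of protected edges of $\widetilde{S}$, whether each is delete- or contract-proof, their incidence pattern, and the bad $2$-separations forced in $\widetilde{G}\setminus e$ and $\widetilde{G}/e$, and then check that the graphs surviving all these constraints are exactly those appearing in the appendix. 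Making this case analysis simultaneously exhaustive and manageable is the real work.
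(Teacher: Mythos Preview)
Your outline is sound at the top level---two inclusions, the first a finite check, the second a structural argument on a minimal object---and your handling of the unprotected case via Theorems~\ref{decomp} and~\ref{heyguesswhatsplits} is essentially the paper's argument. But the protected-edge case, which you rightly flag as ``the real work,'' is where your proposal and the paper diverge, and where your plan has a genuine gap.

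Your proposed strategy there is to record, for each protected edge $e\in\widetilde{S}$, the bad $2$-separation forced in $\widetilde{G}\setminus e$ or $\widetilde{G}/e$ when the protection is removed, and then to enumerate over protection patterns and incidence data. The problem is that this does not by itself bound the size of $\widetilde{G}$: you have at most five protected edges, but the unprotected part of $E(\widetilde{G})$ is a~priori unbounded, and the local separation data you collect around $\widetilde{S}$ does not obviously force a global bound. You allude to ``the width-$3$ structure theory of Theorem~\ref{decomp},'' but you never say how it enters, and without a concrete mechanism the enumeration is not finite.

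The paper supplies exactly that mechanism. From the width-$3$ ordering one extracts (Corollary~\ref{init_struc}) a partition $\{A_0,\{e\},A_1\}$ of $E(\widetilde{G})$ with $e\in\widetilde{S}$, $|A_i\cap\widetilde{S}|=2$, and $\partial(A_0)=\partial(A_1)=\{v_1,v_2,v_3\}$; one may further arrange $|A_0|$ minimal. The entire analysis is then organised not by protection patterns but by the shape of the two sides $\widetilde{G}_0,\widetilde{G}_1$: each is shown to be one of a short list of rooted structures (a $(v_1,v_2,v_3)$-doublefan, a $(v_1,v_2,v_3)$-ladder, or something containing a small rooted ``triad''), and the proof is a case analysis on which pair occurs, repeatedly exhibiting an $\mathcal{F}$-minor. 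The protections enter only locally, inside each case, to pin down which $\mathcal{F}$-member appears. This is the missing idea in your plan: the width-$3$ partition is not background context but the organising device that makes the case analysis finite and tractable.
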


\begin{corollary}
The minor minimal non-split graphs are precisely all graphs associated with an enhanced graph in $\mathcal{F}$.
\end{corollary}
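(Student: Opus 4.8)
The plan is to obtain this as a corollary of Theorem \ref{mainskeleton} by transporting the classification through the dictionary between ordinary and enhanced graphs developed in Section \ref{wands}. First I would record two preliminary facts. Since splitting is minor-closed, non-splitting is inherited by any graph that has a non-split minor; and since a disconnected graph has no spanning trees, Corollary \ref{split-tree-cor} shows that every disconnected graph is split, so any minor-minimal non-split graph is connected (and indeed, applying this reasoning to each block together with minor-minimality forces it to be $2$-connected, though connectedness alone is all that is needed). Thus the construction of Section \ref{wands}, which takes a connected graph $G$ with a non-split $5$-configuration $S$ and produces an associated enhanced graph $\widetilde{G}$ with $5$-configuration $\widetilde{S}$, applies to every minor-minimal non-split graph.

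For the inclusion ``every minor-minimal non-split graph is associated with some member of $\mathcal{F}$'', I would start from such a $G$, fix a non-split $S$, and form $(\widetilde{G},\widetilde{S})$. By the propositions of Section \ref{wands}, $\widetilde{S}$ does not split in $\widetilde{G}$, and by the proposition comparing minor-minimality in the two settings, $\widetilde{G}$ is minor-minimal subject to $\widetilde{S}$ not splitting. The observation recorded after the introduction of enhanced minor operations — that every non-trivial $2$-separation of a minor-minimal non-split graph has one side equal to one of the small gadgets of Figure \ref{ordinary-enhanced} — identifies the maximal lobes of $G$ with exactly those gadgets, so the reverse construction applied to $(\widetilde{G},\widetilde{S})$ recovers $(G,S)$; hence $G$ is associated with $\widetilde{G}$. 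It then remains to promote ``minor-minimal subject to $\widetilde{S}$ not splitting'' to ``minor-minimal non-split enhanced graph'', which by Theorem \ref{mainskeleton} is exactly the statement $\widetilde{G}\in\mathcal{F}$: if some proper enhanced minor $\widetilde{H}$ of $\widetilde{G}$ were still non-split, with witnessing configuration $\widetilde{T}$, then translating its defining sequence of enhanced minor operations into ordinary minor operations on the reverse constructions would exhibit a proper ordinary minor of $G$ carrying a non-split $5$-configuration (using the proposition that the reverse construction preserves non-splitting), contradicting the minimality of $G$.

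For the reverse inclusion, I would take $\widetilde{G}\in\mathcal{F}$, which is a minor-minimal non-split enhanced graph by Theorem \ref{mainskeleton}, choose a non-split $\widetilde{S}$, and let $(G,S)$ be associated with $(\widetilde{G},\widetilde{S})$ via the reverse construction. Then $S$ does not split in $G$, and the minimality-comparison proposition gives that $G$ is minor-minimal subject to $S$ not splitting. The symmetric translation argument — now pushing a hypothetical proper non-split ordinary minor of $G$ forward through the association construction to a proper non-split enhanced minor of $\widetilde{G}$ — upgrades this to: $G$ is minor-minimal non-split. Combining the two inclusions proves the corollary.

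The hard part will be these ``translation'' steps: verifying that each of the five enhanced minor operations corresponds, on the ordinary graphs produced by the forward and reverse constructions, to a genuine ordinary minor operation (deletion, contraction, or parallel/series reduction), and conversely that ordinary deletions and contractions on an associated graph are mirrored by enhanced minor operations, with properness of the minor preserved in both directions. This is a finite but fiddly case analysis, the delicate points being the interaction of deleting or contracting a gadget edge with the contract-proof and delete-proof labels, and the bookkeeping that removing a protection label strictly decreases the edge count of the reverse construction, so that a proper enhanced minor really does yield a proper ordinary minor. Granting this dictionary, the corollary is immediate from Theorem \ref{mainskeleton} together with the connectivity and $2$-separation structure of minor-minimal non-split graphs noted at the outset.
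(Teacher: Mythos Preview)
Your proposal is correct and follows precisely the route the paper intends: the paper states the corollary without proof, relying implicitly on the three propositions of Section~\ref{wands} together with Theorem~\ref{mainskeleton}, and you have carefully unpacked that implicit argument. In particular, you are right to distinguish ``minor-minimal non-split'' from ``minor-minimal subject to $S$ not splitting'' and to observe that the proposition comparing minimality in the two settings is phrased in terms of the latter, so a short extra step is needed; you are also right that the bridge is the translation of the five enhanced-minor operations into ordinary minor operations on the associated graphs (with the weight function guaranteeing properness), which is indeed a finite check the paper does not spell out.
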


Since we shall now focus on enhanced graphs, let us close this section with a few simple properties of non-split 5-configurations in enhanced graphs.

\begin{lemma}
\label{mm3sep}
Let $\widetilde{G}$ be an enhanced graph and let $\widetilde{S}$ be a non-split 5-configuration.
\begin{enumerate}
\item If $\widetilde{S}$ contains a triangle, then all of these edges must be contract-proof.
\item If $\widetilde{S}$ contains all three edges incident with some vertex of degree three, then all of these edges must be delete-proof.
\item If $\widetilde{G}$ is minor-minimal subject to  $\widetilde{S}$ being non-split, then $\widetilde{C} \cup \widetilde{D} \subseteq \widetilde{S}$.  
\item If $\widetilde{G}$ is minor-minimal subject to $\widetilde{S}$ being non-split, then every separation $(A,B)$ of $\widetilde{G}$ of order at most 3 with $A \cap \widetilde{S} = \emptyset$ must satisfy $|A| \le 3$.
\end{enumerate}
\end{lemma}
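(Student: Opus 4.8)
The plan is to prove the four parts of Lemma~\ref{mm3sep} largely independently, each by a short argument that unwinds the definition of splitting for enhanced graphs and, for parts (3) and (4), invokes the minor operations (3)--(5) introduced just above.

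\textbf{Parts (1) and (2).} First I would prove the contrapositive of (1). Suppose $\widetilde{S}$ contains a triangle $T = \{e_1,e_2,e_3\}$ and at least one of these edges, say $e_1$, is not contract-proof. Then $\widetilde{G}/e_1$ is among the enhanced graphs allowed in the definition of splitting, and in it the two remaining triangle edges $e_2,e_3$ become parallel edges; taking $A = \{e_2\}$ (or rather $A$ the single edge that is the image of $e_2$, with $B$ its complement) gives a separation of order at most $1$ with $A \cap \widetilde{S} \neq \emptyset \neq B \cap \widetilde{S}$ (the other four edges of $\widetilde{S}$ lie in $B$, using $|\widetilde{S}| = 5 > 1$). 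Hence $\widetilde{S}$ splits, a contradiction. Part (2) is the dual statement: if $\widetilde{S}$ contains the three edges $e_1,e_2,e_3$ at a degree-three vertex $v$ and $e_1$ is not delete-proof, then in $\widetilde{G}\setminus e_1$ the vertex $v$ has degree two, so $e_2$ and $e_3$ are in series; again we get an order-$1$ separation isolating one of them, so $\widetilde{S}$ splits.

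\textbf{Part (3).} I would argue that a protected edge not in $\widetilde{S}$ can simply have its protection removed via minor operation~(3) without affecting the fact that $\widetilde{S}$ is non-split, contradicting minor-minimality. The point is that the definition of splitting only ever deletes edges of $\widetilde{S}\setminus\widetilde{D}$ and contracts edges of $\widetilde{S}\setminus\widetilde{C}$, so protection on an edge outside $\widetilde{S}$ is never used. Concretely: if $e \in \widetilde{C}\setminus\widetilde{S}$ (the case $e \in \widetilde D$ is symmetric), let $\widetilde{G}'$ be $\widetilde{G}$ with $e$ removed from $\widetilde{C}$. Any enhanced graph witnessing that $\widetilde{S}$ splits in $\widetilde{G}'$ is obtained by deleting/contracting edges of $\widetilde{S}$, none of which is $e$, so the same sequence of operations is legal starting from $\widetilde{G}$ and yields the same bad separation; hence $\widetilde S$ would split in $\widetilde{G}$ too. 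So $\widetilde S$ is non-split in the proper enhanced minor $\widetilde{G}'$, contradicting minimality. Thus $\widetilde{C}\cup\widetilde{D}\subseteq\widetilde{S}$.

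\textbf{Part (4).} This is the step I expect to be the main obstacle, since it genuinely uses the structure of separations rather than just the definition. Suppose $(A,B)$ is a separation of order $\le 3$ with $A\cap\widetilde S = \emptyset$ and $|A| \ge 4$. The idea is to replace the $A$-side by something smaller while preserving non-splitting, contradicting minimality. Since $A$ contains no edge of $\widetilde S$, deleting or contracting an edge of $A$ is always a legal enhanced-minor operation and never interferes with the deletions/contractions of $\widetilde S$-edges used when testing splitting; the only thing that can change is the order of separations that cut through $A$. So I would contract $A$ down edge by edge, checking that we can always reach a state where $|A|\le 3$ while keeping $\partial(A)$ of size $\le 3$ and keeping $\widetilde S$ non-split --- if at some intermediate stage $\widetilde S$ became split, the witnessing bad separation $(A',B')$ of the minor would, by the fact that the new $A$-part is a minor of the old one and contains no $\widetilde S$-edge, pull back to a bad separation of $\widetilde G$ itself, a contradiction. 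The delicate bookkeeping is to make sure the contractions/deletions inside $A$ can be chosen so that $G_A$ stays connected enough that $|\partial(A)|$ never exceeds $3$ and $A$ shrinks to $\le 3$ edges; here one uses that a connected graph on $\le 3$ boundary vertices with $\ge 4$ edges always has an edge whose contraction keeps it loopless-and-connected (or a parallel/series pair to simplify), mirroring the arguments in the proof of Theorem~\ref{decomp} and Theorem~\ref{edmondssplitting}. I would phrase the pull-back formally as: if $\widetilde{H}$ is an enhanced minor of $\widetilde G$ obtained by operations entirely within $A$, and $\widetilde S$ splits in $\widetilde H$ via a graph $\widetilde{H}''$ and separation $(A'',B'')$, then applying the \emph{same} $\widetilde S$-edge operations to $\widetilde G$ and taking the separation that puts all of (the untouched) $B$ on the $B''$-side yields a bad separation of the resulting graph, so $\widetilde S$ splits in $\widetilde G$ --- which is the contradiction we want.
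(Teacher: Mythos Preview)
The paper does not actually prove this lemma; it is stated as ``a few simple properties'' and left to the reader. So there is no paper-proof to compare against directly, and your overall strategy (unwind the definition for (1)--(2), strip unused protection for (3), reduce the $A$-side for (4)) is the natural one.

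That said, your arguments for (1) and (2) contain a concrete error. After contracting $e_1$, the edges $e_2,e_3$ become \emph{parallel}, not a loop, so the separation $(\{e_2\},\,E\setminus\{e_2\})$ has order $2$, not at most $1$: both endpoints of $e_2$ lie in the complement (via $e_3$). Likewise, after deleting $e_1$ in part (2), the edges $e_2,e_3$ are in series, but $(\{e_2\},\,E\setminus\{e_2\})$ again has order $2$. The fix is immediate: use the separation $(\{e_2,e_3\},\,E\setminus\{e_2,e_3\})$, which has order at most $2$ and satisfies the second bullet in the definition of splitting (two edges of $\widetilde S$ on each side, since $e_4,e_5$ lie on the other side). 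Part (3) is correct as written.

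For part (4) your pull-back step is the right idea but is not yet justified: contracting an edge inside $A$ can identify two vertices of $\partial(A')$ for a separation $(A',B')$, so a separation of order $2$ in the minor may correspond to a separation of order $3$ in $\widetilde G$, and your argument as stated does not rule this out. One clean way through is to note that, since $A\cap\widetilde S=\emptyset$, every separation tested in the definition of splitting leaves $G_A$ intact; thus whether $\widetilde S$ splits depends on $G_A$ only through the partition of $\partial(A)$ it induces and the paths it provides between boundary vertices. Replacing $G_A$ by a minimal minor on $\partial(A)$ realizing the same connections (a tree on at most three terminals, hence at most three edges) therefore preserves non-splitting and yields a strictly smaller enhanced graph when $|A|\ge 4$. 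You should make this replacement-and-invariance argument explicit rather than relying on a generic pull-back.
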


Enhanced graphs also have a natural concept of planar duality (as suggested by the first two properties in the preceding lemma) which we will exploit to simplify our proof.  If $\widetilde{G}$ together with $\widetilde{C},\widetilde{D}$ is an enhanced graph embedded in the plane, then we define its \emph{dual} to be the enhanced graph consisting of the planar dual of $\widetilde{G}$ (on the same edge set), which we denote by $(\widetilde{G})^*$, together with the contract-proof edges $\widetilde{C}^* = \widetilde{D}$ and the delete-proof edges $\widetilde{D}^* = \widetilde{C}$.  So, the dual of a contract-proof edge is delete-proof, and vice-versa.  The next proposition shows that the notion of splitting is invariant under duality.

\begin{prop}\label{enhanced dual}
Let $\widetilde{G}$ be an enhanced planar graph and $(\widetilde{G})^*$ its dual.  Then a 5-configuration $\widetilde{S}$ splits in $\widetilde{G}$ if and only if it splits in $(\widetilde{G})^*$.  
\end{prop}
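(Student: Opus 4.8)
The plan is to reduce the statement for enhanced graphs to the corresponding statement for ordinary graphs, which is already available as Proposition \ref{closedunderdual} together with Corollary \ref{whatsplitmeans}. The key observation is that the notion of splitting for an enhanced graph $\widetilde{G}$ with a 5-configuration $\widetilde{S}$ is, by definition, a statement about separations $(A,B)$ of order at most $2$ in $\widetilde{G}$ or in a graph obtained from $\widetilde{G}$ by deleting an edge of $\widetilde{S}\setminus\widetilde{D}$ or contracting an edge of $\widetilde{S}\setminus\widetilde{C}$, with the appropriate conditions on $|A\cap\widetilde{S}|$ and $|B\cap\widetilde{S}|$. Under planar duality, separations are preserved on the same edge set, the order of a separation of a connected planar graph is unchanged (this follows from the matroid order formula and the fact that $M(G^*)=M(G)^*$, as recorded in the matroid subsection), edge deletion in $\widetilde{G}$ corresponds to edge contraction in $(\widetilde{G})^*$ and vice versa, and by definition of the dual enhanced graph the roles of contract-proof and delete-proof edges swap: $\widetilde{C}^*=\widetilde{D}$ and $\widetilde{D}^*=\widetilde{C}$.

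First I would make the bookkeeping explicit. Fix a planar embedding of $\widetilde{G}$ and let $(\widetilde{G})^*$ be its dual on the same edge set $E=E(\widetilde{G})=E((\widetilde{G})^*)$. For any partition $E = A\cup B$ with $A\cap B=\emptyset$, note that $(A,B)$ is a separation of $\widetilde{G}$ if and only if it is a separation of $(\widetilde{G})^*$; moreover the order of $(A,B)$ as a separation of the cycle matroid $M(\widetilde{G})$ equals the order of $(A,B)$ as a separation of $M((\widetilde{G})^*)=M(\widetilde{G})^*$, since dual matroids have complementary bases and hence $r^*(A)+r^*(B)-r(M^*)+1 = r(A)+r(B)-r(M)+1$ after using $r^*(X) = |X| - r(M) + r(E\setminus X)$. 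Thus ``order at most $1$'' and ``order exactly $2$'' in the matroid sense are dual-invariant. (One should note here that in this subsection "order'' of a separation in an enhanced graph means the matroid order — i.e. the quantity appearing in the bad-$2$-separation definition is really the matroid connectivity — so the translation is clean; if instead the graph order is intended, one restricts to $\widetilde{G}$ $2$-connected, which is the only case that matters since the definition of splitting for enhanced graphs passes to the block containing $\widetilde{S}$.)

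Next I would run the equivalence directly. Suppose $\widetilde{S}$ splits in $\widetilde{G}$. Then there is an enhanced graph $\widetilde{G}'$ obtained from $\widetilde{G}$ by (possibly) deleting an edge $e\in\widetilde{S}\setminus\widetilde{D}$ or contracting an edge $e\in\widetilde{S}\setminus\widetilde{C}$, together with a separation $(A,B)$ of $\widetilde{G}'$ of the required type. Passing to duals: if $\widetilde{G}'=\widetilde{G}\setminus e$ with $e\notin\widetilde{D}$, then $(\widetilde{G}')^* = (\widetilde{G})^*/e$ with $e\notin\widetilde{C}^* = \widetilde{D}$ — wait, we need $e\notin \widetilde{C}^*$, and $\widetilde{C}^*=\widetilde{D}$, so $e\notin\widetilde{D}$ is exactly what we have, so contraction of $e$ is permitted in $(\widetilde{G})^*$; symmetrically contraction of $e\notin\widetilde{C}$ in $\widetilde{G}$ dualizes to deletion of $e\notin\widetilde{D}^*=\widetilde{C}$, again permitted; and $\widetilde{G}'=\widetilde{G}$ dualizes to $(\widetilde{G})^*$ itself. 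In every case $(\widetilde{G}')^*$ is an enhanced graph legitimately reachable from $(\widetilde{G})^*$ in the sense of the splitting definition, and the separation $(A,B)$ of $\widetilde{G}'$ has the same matroid order and the same intersection sizes $|A\cap\widetilde{S}|$, $|B\cap\widetilde{S}|$ when regarded as a separation of $(\widetilde{G}')^*$ (the edge set and $\widetilde{S}$ are untouched by dualization). Hence $(A,B)$ witnesses that $\widetilde{S}$ splits in $(\widetilde{G})^*$. The converse is identical using $((\widetilde{G})^*)^* = \widetilde{G}$ (on the same edge set) with the contract/delete-proof labels restored.

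The only genuine subtlety — the step I expect to be the main obstacle — is confirming that the order used in the enhanced-graph splitting definition behaves correctly under duality when $\widetilde{G}'$ is disconnected (which can happen after a deletion) or non-planar after contraction (which cannot — contraction preserves planarity — but deletion can disconnect). If $\widetilde{G}'$ is connected one invokes the matroid order identity above directly. If $\widetilde{G}'$ is disconnected, one works with the matroid $M(\widetilde{G}')$, which is still well-defined, and uses the same rank-function identity for $M(\widetilde{G}')^*$; the "dual graph'' of a disconnected planar graph requires a fixed embedding, but since we only ever need the matroid and not a specific plane dual, we may simply observe that $M(\widetilde{G})^* = M((\widetilde{G})^*)$ for the chosen embedding and that deletion/contraction commute with matroid duality ($M^*\setminus e = (M/e)^*$ and $M^*/e = (M\setminus e)^*$), so the whole argument can be phrased at the level of matroids with the planar embedding used only to define $(\widetilde{G})^*$ as a graph at the very end. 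I would state this reduction to matroid duality up front, cite Proposition \ref{closedunderdual} and Corollary \ref{whatsplitmeans} as precedent for the ordinary-graph case, and then the enhanced case is a short check that the contract-proof/delete-proof swap $\widetilde{C}^*=\widetilde{D}$, $\widetilde{D}^*=\widetilde{C}$ is exactly what makes the permitted reduction moves match up under $M^*\setminus e = (M/e)^*$.
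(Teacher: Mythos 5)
Your proposal is correct and follows essentially the same route as the paper: the paper's proof is a one-line observation that the definitions dualize (deletion $\leftrightarrow$ contraction, $\widetilde{C}\leftrightarrow\widetilde{D}$) and that separation order is preserved under planar duality, which is exactly the content of your argument. You have simply supplied the justification (via the matroid rank identity $r^*(X)=|X|-r(M)+r(E\setminus X)$ and the minor-duality rules $M^*\setminus e=(M/e)^*$) that the paper declares "immediate," together with the sensible caveat about matroid versus graph order; none of this changes the logical structure.
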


\begin{proof} This follows immediately from our definitions and the fact that the order of a separation $(A,B)$ in $\widetilde{G}$ is the same as the order of the corresponding separation in $(\widetilde{G})^*$.  
\end{proof}

\section{Excluded minors}

Throughout this section and the next we work exclusively with enhanced graphs.  The purpose of these two sections is to prove 
that $\mathcal{F}$ is the set of all minor-minimal non-split enhanced graphs (Theorem \ref{mainskeleton}).  Our proof of this divides
naturally into two parts.  First we show that the enhanced graphs in $\mathcal{F}$ do not split, and no one has another as a minor.  
Then we will show that every non-split enhanced graph has a minor in $\mathcal{F}$.  These are the respective purposes of this 
and the next section.  Let us note here that this list $\mathcal{F}$ of excluded minors was arrived at both by analysis of minimal counterexample
and independently by way of a computer search which was exhaustive up to all enhanced graphs with 11 edges.

\subsection{Wheels}\label{secwheel}

In this subsection we investigate those enhanced graphs which are isomorphic to a wheel.  For $k \ge 3$ we define the $k$-\emph{wheel}, denoted $W_k$, to be the simple graph obtained from a cycle of length $k$ by adding a single new vertex, called the \emph{centre} vertex, adjacent to all existing vertices.  We call the edges on this original cycle \emph{rim edges} and the newly added edges \emph{spoke edges}.  Note that the dual of a wheel is another wheel isomorphic to the original where the roles of rim edges and spoke edges have been reversed.  We begin with a key observation.

\begin{observation}
\label{easy_break}
Every $e \in E(W_k)$ satisfies:
\begin{enumerate}
\item If $e$ is a rim edge, then $G \setminus e$ has width $2$.
\item If $e$ is a spoke edge, then $G / e$ has width $2$.
\end{enumerate}
\end{observation}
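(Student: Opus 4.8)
The plan is to exhibit, for each case, an explicit edge ordering of the modified graph achieving width $2$, and to verify that $2$ is actually attained (it cannot be less since any graph with a cycle has width at least $2$, and after deleting one rim edge or contracting one spoke edge a wheel $W_k$ with $k\ge 3$ still contains a cycle). So the real content is producing an ordering of width at most $2$.

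First I would handle case (1). Label the rim cycle $v_1 v_2 \cdots v_k v_1$ with centre $u$, and suppose the deleted edge is $v_k v_1$. Then $W_k \setminus v_k v_1$ consists of the path $v_1 v_2 \cdots v_k$ together with the star from $u$ to every $v_i$; this is exactly a fan. I would order the edges ``by triangles from one end'': first the spoke $uv_1$, then alternately the rim edge $v_iv_{i+1}$ and the spoke $uv_{i+1}$ for $i = 1,\dots,k-1$. After the prefix ending at $uv_{j+1}$, the induced subgraph is $u v_1 \cdots v_{j+1}$ and the boundary with the remainder is contained in $\{u, v_{j+1}\}$; after a prefix ending at a rim edge $v_jv_{j+1}$ the boundary is contained in $\{u, v_j, v_{j+1}\}$ minus the fact that $v_j$ already has all its incident edges consumed, so in fact it is $\{u,v_{j+1}\}$ as well. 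Hence every separation in this ordering has order at most $2$.

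For case (2), I would use planar duality together with Proposition \ref{closedunderdual}-style reasoning — but more directly, the construction from the excerpt already records that the dual of $W_k$ is a wheel with rim and spoke roles reversed, and width is a self-dual notion for the relevant separations (the order of a separation $(A,B)$ in a planar graph equals that of the corresponding separation in the dual). Concretely, contracting a spoke edge of $W_k$ corresponds under planar duality to deleting a rim edge of the dual wheel $W_k$, which by case (1) has an edge ordering of width $2$; transporting that ordering back through the duality (using the same edge set) gives an ordering of $W_k / e$ of width $2$. Alternatively, and to avoid invoking duality of the width parameter, I would just give the ordering directly: contracting the spoke $uv_1$ identifies $u$ with $v_1$, and the resulting graph is $W_{k-1}$ with one doubled rim edge (or, for $k=3$, a small multigraph); one then peels off triangles around the rim exactly as in case (1).

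The main obstacle is purely bookkeeping: making sure the boundary sets stay size $\le 2$ at the rim-edge steps, where a priori three vertices $u, v_j, v_{j+1}$ are involved, by checking that $v_j$ has no further incident edges in the suffix. I expect no genuine difficulty, only care with the endpoints (the edges $v_kv_1$ in case (1) and the treatment of small $k$, especially $k=3$, in case (2)).
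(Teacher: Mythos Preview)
Your proposal is correct. The paper states this result as an Observation with no proof at all, so there is no argument to compare against; the explicit fan orderings you describe are exactly the natural ones and go through as you say.

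One caution worth flagging: the duality shortcut you offer first for case~(2) relies on graph width (in the sense of the paper's Definition, i.e.\ using $|V(G_A)\cap V(G_B)|$) being invariant under planar duality. That is not asserted anywhere in the paper, and in general the graph-theoretic order of a separation differs from the matroidal one by the connectivity defect $2-\comp(G_A)-\comp(G_B)$, so self-duality is not automatic. Your direct alternative---contracting $uv_1$ and then peeling triangles starting from the doubled edge at the merged vertex---sidesteps this entirely and is the cleaner way to finish. The bookkeeping you anticipate at the rim-edge steps is indeed the only thing to watch, and your check that $v_j$ has all incident edges consumed by that point is exactly right, including at $j=1$ since $v_kv_1$ has been deleted.
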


It is an immediate consequence of the above observation that whenever $\widetilde{G}$ is a wheel with a non-split 5-configuration $\widetilde{S}$, 
every rim edge in $\widetilde{S}$ must be delete-proof, and every spoke edge in $\widetilde{S}$ must be contract proof.

\begin{center}
\begin{figure}[ht]
\centerline{\includegraphics[height=3cm]{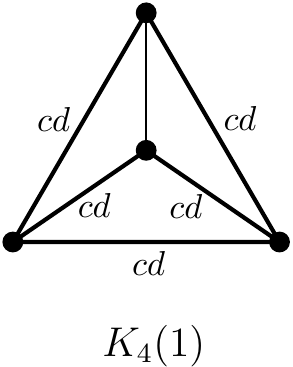}}
\caption{Excluded Minors isomorphic to $K_4$}
\label{excl_k4}
\end {figure}
\end{center}

\begin{lem}
\label{k4}
The unique excluded minor for splitting isomorphic to $K_4$ is $K_4(1)$.
\end{lem}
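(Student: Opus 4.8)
The plan is to work with $K_4$ and classify all ways to choose a 5-configuration $\widetilde{S}$ together with contract-proof and delete-proof labels so that $\widetilde{S}$ does not split, then determine which of the resulting enhanced graphs is minor-minimal. Since $K_4$ has exactly six edges, a 5-configuration leaves out a single edge $f$, so up to the (vertex-transitive and edge-transitive) symmetry of $K_4$ there is essentially one choice of the underlying five-element set $\widetilde{S} = E(K_4) \setminus \{f\}$. The combinatorics of $K_4$ are rigid: the five edges of $\widetilde{S}$ consist of two triangles sharing the edge opposite to $f$, and they include all three edges at each of the two endpoints of $f$. So by Lemma \ref{mm3sep}(1), every edge lying in one of these triangles must be contract-proof if $\widetilde{S}$ is to be non-split, and by Lemma \ref{mm3sep}(2) every edge in a degree-3 star contained in $\widetilde{S}$ must be delete-proof. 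Tracking which edges of $\widetilde{S}$ lie in triangles versus stars within the configuration forces a specific pattern of ``c'' and ``d'' labels; I expect this to pin down $\widetilde{C}$ and $\widetilde{D}$ essentially uniquely (up to the residual symmetry fixing $f$), yielding the candidate enhanced graph called $K_4(1)$ in Figure \ref{excl_k4}.

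Next I would verify that $K_4(1)$ genuinely does not split. Here I would go through the definition of splitting for enhanced graphs directly: for each edge $e \in \widetilde{S}$, I must check that neither deleting $e$ (if $e \notin \widetilde{D}$) nor contracting $e$ (if $e \notin \widetilde{C}$) produces a separation of order $\le 1$ meeting $\widetilde{S}$ on both sides, nor a bad 2-separation (order 2 with $\min\{|A\cap\widetilde{S}|,|B\cap\widetilde{S}|\}=2$). Because every edge of $\widetilde{S}$ that could be deleted is protected from deletion exactly when it sits in a triangle, and similarly for contraction and stars, each admissible single-edge move leaves a graph that is still 2-connected (indeed close to 3-connected) with no small separation splitting off two edges of $\widetilde{S}$ — this is a short finite check on the handful of graphs obtained from $K_4$ by one deletion or contraction. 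The labels in $K_4(1)$ are precisely calibrated so that the dangerous moves are forbidden; I would also note that Proposition \ref{enhanced dual} can cut the casework roughly in half since $K_4$ is self-dual and the dual swaps $\widetilde{C} \leftrightarrow \widetilde{D}$ and spokes $\leftrightarrow$ rim.

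Then I would establish minimality: no proper minor of $K_4(1)$ (in the enhanced-minor sense of the five operations listed before Theorem \ref{mainskeleton}) still has a non-split 5-configuration. A proper enhanced minor either removes a ``c'' or ``d'' label, or deletes/contracts an edge, or performs the parallel/series reductions. Removing any protection from $K_4(1)$ immediately re-enables one of the splitting moves checked above, so the configuration splits. Deleting or contracting an edge of $K_4$ drops below six edges, so there is no longer room for a 5-configuration at all. Thus $K_4(1)$ is minor-minimal. Finally I would argue uniqueness among $K_4$-shaped excluded minors: any excluded minor isomorphic to $K_4$ carries a non-split 5-configuration, which by the forcing argument of the first paragraph must be (an isomorphic copy of) the labelling of $K_4(1)$, and any strictly larger set of protections is not minor-minimal since dropping the extra labels keeps the configuration non-split. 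The main obstacle I anticipate is not any single step but the bookkeeping in the first paragraph: correctly intersecting the constraints from Lemma \ref{mm3sep}(1) and (2) — every edge of $\widetilde{S}$ lies in both a triangle of $\widetilde{S}$ and a star of $\widetilde{S}$ in some cases — to confirm that the forced labels are consistent and produce exactly one enhanced graph rather than several or none.
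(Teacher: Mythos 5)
Your forcing argument is a genuinely different route from the paper's: the paper observes that $K_4 = W_3$ is a wheel in which every vertex can serve as centre, so every edge is simultaneously a rim edge and a spoke edge, whence Observation \ref{easy_break} forces every edge of $\widetilde{S}$ to be both contract- and delete-proof in two lines. You instead invoke Lemma \ref{mm3sep}(1) and (2), covering $\widetilde{S}$ by the two triangles avoiding the omitted edge $f$ and by the two degree-3 stars that lie entirely inside $\widetilde{S}$. That covering works and reaches the same conclusion, so this part is a legitimate alternative; note, though, that the two stars fully contained in $\widetilde{S}$ are centred at the endpoints of the edge \emph{opposite} $f$, not at the endpoints of $f$ itself (the star at an endpoint of $f$ contains $f \notin \widetilde{S}$). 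The forcing does go through once this is corrected.

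There is, however, a genuine error in your minimality argument: you claim that ``deleting or contracting an edge of $K_4$ drops below six edges, so there is no longer room for a 5-configuration at all.'' This is false: $K_4$ has exactly six edges, so deleting or contracting one leaves exactly five, which is still a 5-configuration. So minor-minimality against edge deletion and contraction is not automatic and has to be argued. The correct reason is that any single deletion or contraction applied to $K_4$ produces a graph of width at most 2 (a diamond, or a triangle with doubled edges), and the surviving five-edge configuration then admits a 2-separation with two edges of the configuration on each side — a bad 2-separation — so the resulting enhanced graph splits regardless of which protections survive. As written, your proof has a hole here; your appeal to ``no longer room for a 5-configuration'' would only be valid if $K_4$ had five edges.
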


\begin{proof} Let $\widetilde{G}$ be an enhanced graph isomorphic to $K_4$ with a non-split 5-configuration $\widetilde{S}$.  Since $\widetilde{G}$ 
is a wheel with any vertex playing the role of centre, it follows from the previous observation that every edge in $\widetilde{S}$ must be both contract
and delete-proof.  In a minor-minimal non-split enhanced graph, every protected edge must appear in every non-split 5-configuration (otherwise a protection 
could be removed), and the result follows immediately from this.  \end{proof}

\begin{lem}
\label{nonsplit_wheel}
Assume $\widetilde{G}$ is isomorphic to a wheel $W_k$ with $k \ge 4$ and centre vertex $w$.  Then $\widetilde{S}$ is non-splitting if and only if it satisfies all of the following conditions.
\begin{enumerate}
\item Every rim edge in $\widetilde{S}$ is delete-proof,
\item Every spoke edge in $\widetilde{S}$ is contract-proof.
\item If $v$ is a non-centre vertex and all edges incident with $v$ are in $\widetilde{S}$, then $vw$ is delete-proof.
\item If the edges $vw$, $v'w$, $vv'$ are all in $\widetilde{S}$ then $vv'$ is contract-proof.
\end{enumerate}
\end{lem}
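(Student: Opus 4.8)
The plan is to prove the two implications separately. To see that (1)--(4) are necessary we argue by contraposition: assuming some condition fails, we will exhibit an admissible deletion or contraction of an edge of $\widetilde S$ together with a splitting separation. To see that they are sufficient we will run through the short list of enhanced graphs in which a splitting separation could possibly live --- namely $\widetilde G=W_k$ itself, and each graph obtained from it by deleting one edge of $\widetilde S\setminus\widetilde D$ or contracting one edge of $\widetilde S\setminus\widetilde C$ --- and check, using (1)--(4), that none of them has one. Two tools carry the argument. The first is Observation \ref{easy_break}, which says $W_k\setminus e$ has width $2$ for a rim edge $e$ and $W_k/e$ has width $2$ for a spoke edge $e$. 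The second is the ``third edge'' trick from the proof of Theorem \ref{heyguesswhatsplits}, which in this setting says that if $G'$ has width at most $2$, has at least five edges, and $T\subseteq E(G')$ has exactly four elements, then cutting $G'$ in a width-$2$ ordering just before the third element of $T$ gives a separation of order at most $2$ with exactly two edges of $T$ on each side and both sides nonempty --- that is, a splitting separation.

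For necessity, suppose a rim edge $e\in\widetilde S$ is not delete-proof; then we delete it, and by Observation \ref{easy_break} the result has width $2$ and more than five edges, while $\widetilde S\setminus\{e\}$ has four edges, so the third-edge trick produces a splitting separation, a contradiction. This gives (1), and (2) is its mirror image under contraction of a non-contract-proof spoke edge. For (3), if all three edges $v_{i-1}v_i, v_iv_{i+1}, v_iw$ at a non-centre vertex $v_i$ lie in $\widetilde S$ but $v_iw\notin\widetilde D$, we delete $v_iw$; in $W_k\setminus v_iw$ the separation isolating $v_i$, namely $(\{v_{i-1}v_i,v_iv_{i+1}\},\text{rest})$, has order $2$ and carries exactly two of the four remaining edges of $\widetilde S$ on each side, hence is a bad $2$-separation. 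For (4), if $v_iw,v_{i+1}w,v_iv_{i+1}\in\widetilde S$ but $v_iv_{i+1}\notin\widetilde C$, we contract $v_iv_{i+1}$; the two spokes become a parallel pair, and $(\{v_iw,v_{i+1}w\},\text{rest})$ is a bad $2$-separation of $W_k/v_iv_{i+1}$.

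For sufficiency, assume (1)--(4) and suppose $\widetilde S$ splits. By (1) any deletable edge of $\widetilde S$ is a spoke and by (2) any contractible edge of $\widetilde S$ is a rim edge, so the witnessing separation lives in $W_k$, in $W_k\setminus v_iw$ for some spoke $v_iw\in\widetilde S\setminus\widetilde D$, or in $W_k/v_iv_{i+1}$ for some rim edge $v_iv_{i+1}\in\widetilde S\setminus\widetilde C$. The first case is impossible: $W_k$ is simple and $3$-connected (here $k\ge 4$), so it has no order-$\le 1$ separation with both sides nonempty and no $2$-separation with two or more edges on each side. In the second case, suppressing the degree-$2$ vertex $v_i$ recovers the $3$-connected graph $W_{k-1}$ (using $k-1\ge 3$), so $W_k\setminus v_iw$ is $2$-connected and its only proper $2$-separation is $(\{v_{i-1}v_i,v_iv_{i+1}\},\text{rest})$; this is the only separation that could be a splitting separation, and it would be bad only if both $v_{i-1}v_i$ and $v_iv_{i+1}$ lay in $\widetilde S$, which together with $v_iw\in\widetilde S$ forces $v_iw\in\widetilde D$ by (3) --- a contradiction. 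In the third case, contracting $v_iv_{i+1}$ turns $v_iw,v_{i+1}w$ into a parallel pair and otherwise yields $W_{k-1}$, so the only separation of order $2$ with two or more edges on each side is $(\{v_iw,v_{i+1}w\},\text{rest})$, which would be bad only if both $v_iw,v_{i+1}w$ lay in $\widetilde S$, forcing $v_iv_{i+1}\in\widetilde C$ by (4) --- again a contradiction. Hence $\widetilde S$ does not split.

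The step I expect to be the main obstacle is the connectivity bookkeeping in the sufficiency direction: one must be sure that after deleting a spoke, or contracting a rim edge, of $W_k$ the only separation that can possibly serve as a splitting separation is the single obvious one, so that conditions (3) and (4) genuinely block every bad separation. This reduces to the routine fact that subdividing one edge of a $3$-connected graph, or doubling one of its edges, creates exactly one new small separation.
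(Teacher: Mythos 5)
Your proof is correct and follows essentially the same approach as the paper's: necessity comes from the width-2 consequence of Observation~\ref{easy_break} together with the two standard obstructions (all edges of a triangle, all edges at a degree-3 vertex), and sufficiency comes from the observation that deleting a spoke or contracting a rim edge of $W_k$ leaves exactly one nontrivial 2-separation, which conditions (3) and (4) block. The only cosmetic difference is that where the paper invokes Lemma~\ref{mm3sep} and treats the first two conditions as immediate from Observation~\ref{easy_break}, you inline the underlying arguments (the ``third-edge'' cut through a width-2 ordering, and the explicit bad 2-separations isolating a degree-2 vertex or a parallel pair), which is a fair expansion rather than a different route.
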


\begin{proof} First suppose that $\widetilde{S}$ is non-splitting.  Then Observation \ref{easy_break} implies the first two conditions, 
and the last two follow from Lemma \ref{mm3sep}.  Next assume that $\widetilde{S}$ satisfies the above conditions, and consider 
an edge $e \in \widetilde{S}$.  First suppose $e = vw$ is a spoke edge, and note that the second condition implies it is contract proof.  
Deleting $e$ gives an enhanced graph with a single non-trivial 2-separation (up to switching $(A,B)$ with $(B,A)$), and the third condition implies 
that this is not a bad 2-separation.  Next assume that $e = vv'$ is a rim edge, and note that the first condition implies it is delete-proof.  Contracting $e$ yields an enhanced graph with a single non-trivial 2-separation, and the fourth condition above implies this separation is not bad.  It follows from this analysis that $\widetilde{S}$ does not split, as desired. \end{proof}

Before our next lemma, let us introduce a useful definition.  For an enhanced graph $\widetilde{G}$ with contract-proof and delete-proof 
edges given by the sets $\widetilde{C}, \widetilde{D} \subseteq E(G)$, we define the \emph{weight} of $\widetilde{G}$ to be 
$|E(\widetilde{G})| + |\widetilde{C}| + |\widetilde{D}|$.  So, the weight of $\widetilde{G}$ is 
precisely the number of edges in any graph associated with $\widetilde{G}$.  
Weight will be a nice parameter for us since it is minor monotone, but certain small graphs such as $K_4$ must have large weight in order
to be non-split enhanced graphs.  This section shall feature numerous figures which list excluded enhanced graph minors isomorphic to the same graph $G$.  In these cases, we will, by convention, number these enhanced graphs $G(1), G(2), \ldots$ in order of non-decreasing weight.  Furthermore, when $G$ is self-dual, we will list the enhanced graphs so that $G(i)$ is dual to one of $G(i-1)$, $G(i)$, or $G(i+1)$.  For instance, in the following figure $W_4(2k-1)$ and $W_4(2k)$ are 
dual for every $1 \le k \le 5$.

\begin{center}
\begin{figure}[ht]
\centerline{\includegraphics[scale=0.65]{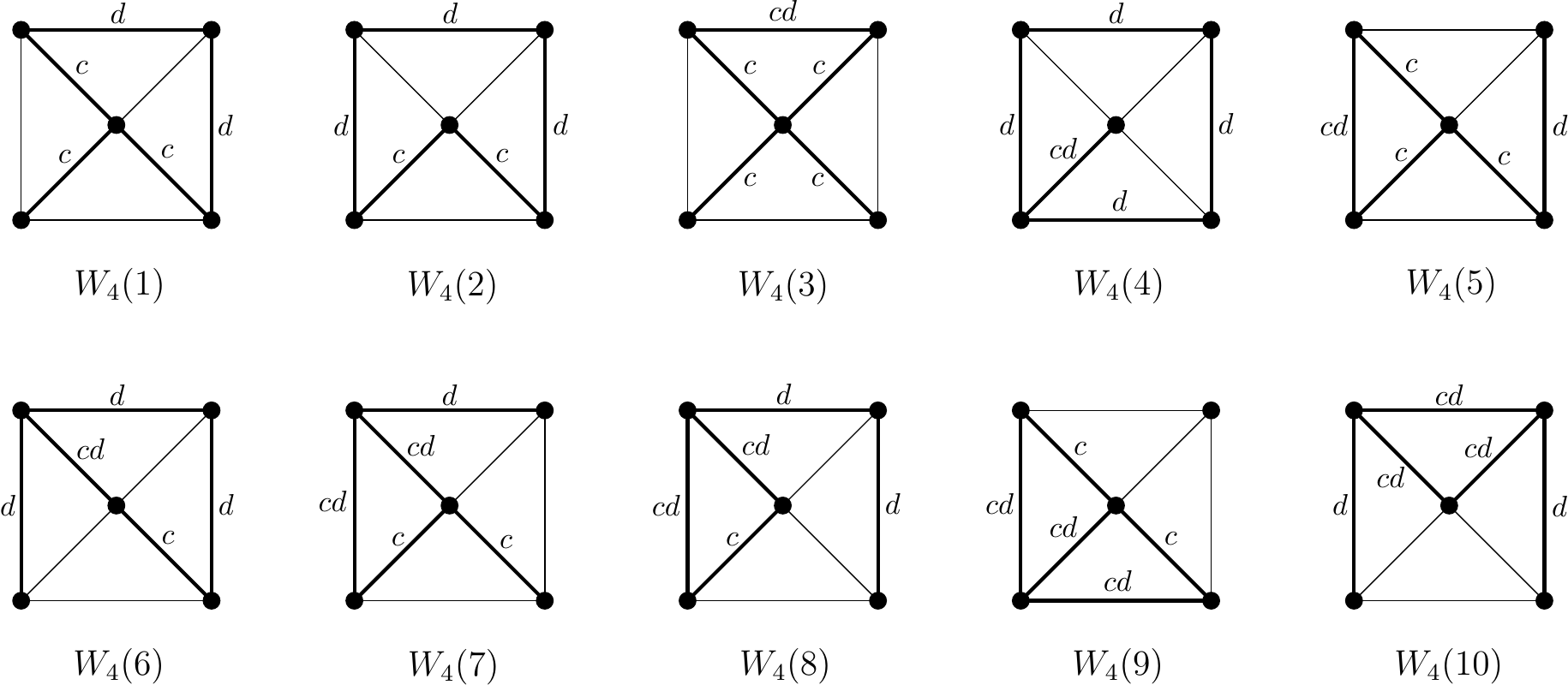}}
\caption{Excluded Minors isomorphic to $W_4$}
\label{excl_w4}
\end {figure}
\end{center}

\begin{lem}
\label{w4}
The excluded minors isomorphic to $W_4$ are $W_4(1), \ldots, W_4(10)$.
\end{lem}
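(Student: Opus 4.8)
The plan is to characterize all enhanced graphs isomorphic to $W_4$ that are minor-minimal subject to being non-split, and verify that these are exactly the ten graphs $W_4(1),\dots,W_4(10)$ depicted in Figure \ref{excl_w4}. The starting point is Lemma \ref{nonsplit_wheel}, which gives necessary and sufficient conditions on a 5-configuration $\widetilde{S}$ in $W_4$ for it to be non-splitting, in terms of which edges of $\widetilde{S}$ must be delete-proof or contract-proof. Since $W_4$ has exactly eight edges and a 5-configuration picks five of them, I would begin by observing that, up to the symmetry group of $W_4$, there are only a few combinatorial types of 5-subset of $E(W_4)$ (classified by how many spokes versus rim edges are chosen, and how they are arranged around the rim). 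For each such type I would write down the forced protections coming from conditions (1)--(4) of Lemma \ref{nonsplit_wheel}; by Lemma \ref{mm3sep}(3), in a minor-minimal example we moreover have $\widetilde{C}\cup\widetilde{D}\subseteq\widetilde{S}$, so the protected edges are exactly the forced ones and no more. This produces a finite list of candidate enhanced graphs $(W_4,\widetilde C,\widetilde D)$ together with their distinguished 5-configuration.

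Next I would prune this list for minor-minimality. An enhanced graph on this list fails to be an excluded minor precisely when it has a proper minor (in the extended sense of the five enhanced-minor operations from Section \ref{wands}) that is still non-split. The relevant proper minors here are: deleting or contracting an edge outside $\widetilde S$ (which for $W_4$ with a 5-element $\widetilde S$ means operating on one of the three edges not in $\widetilde S$), the operations that move a parallel/series edge into $\widetilde D$ or $\widetilde C$, and — crucially — comparison with the smaller wheel $K_4$ and with the graphs already shown to be excluded minors, namely $K_4(1)$ from Lemma \ref{k4} and the members of $\mathcal F_0$. Contracting a rim edge of $W_4$ yields (a multigraph on) $K_4$, and deleting a spoke yields a smaller wheel-like graph; so for each candidate I would check whether such a reduction lands inside an already-identified excluded minor, in which case the candidate is not itself minimal. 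The weight parameter $|E|+|\widetilde C|+|\widetilde D|$ is monotone under all minor operations, so it both orders the final list (as the paper's convention dictates) and bounds the search: a candidate of weight $w$ can only reduce to excluded minors of weight $<w$. I would also use the duality of Proposition \ref{enhanced dual} — $W_4$ is self-dual with rim and spoke edges swapped and $\widetilde C,\widetilde D$ swapped — to halve the casework, pairing $W_4(2k-1)$ with $W_4(2k)$ as noted before the lemma.

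Finally, I would confirm in the other direction that each of the ten graphs in Figure \ref{excl_w4} genuinely is non-split (directly from Lemma \ref{nonsplit_wheel}) and genuinely is minimal (no proper enhanced minor is non-split — checked by running through the finitely many one-step reductions and observing each becomes split, using Lemma \ref{nonsplit_wheel} again on the wheels and Corollary \ref{whatsplitmeans} on the non-wheel reducts). The main obstacle I anticipate is bookkeeping: organizing the symmetry reduction on 5-subsets of $E(W_4)$ so that no enhanced graph is counted twice and none is missed, and carefully tracking which protections are \emph{forced} (hence present in a minimal example) versus merely \emph{allowed}. The conceptual content is light — everything reduces to Lemma \ref{nonsplit_wheel}, Lemma \ref{mm3sep}, and minor-minimality bookkeeping — but getting a clean, complete case analysis that matches exactly the ten pictures requires care, and it is reassuring that this list was independently confirmed by the exhaustive computer search mentioned at the start of the section.
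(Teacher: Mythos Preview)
Your proposal is correct and follows essentially the same approach as the paper: enumerate the minimally protected enhanced $W_4$'s via Lemma \ref{nonsplit_wheel} (the paper organizes this by the number of spokes in $\widetilde{S}$, giving the ten graphs), then verify minor-minimality by using Observation \ref{easy_break} for one operation on each non-$\widetilde{S}$ edge and a weight bound (the paper makes this concrete: every resulting $K_4$-subdivision has weight $< 16 = \mathrm{weight}(K_4(1))$, so splits by Lemma \ref{k4}). Your anticipated ``pruning'' step turns out to be vacuous---all ten minimally protected candidates are in fact minor-minimal.
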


\begin{proof} For each enhanced graph $W_4(i)$ as given in the figure, we associate the 5-configuration indicated by the bold edges
(these are also precisely the edges which are either contract-proof or delete-proof).  It follows from inspection that these are 
precisely all minimally protected enhanced graphs isomorphic to $W_4$ which satisfy the conditions in Lemma \ref{nonsplit_wheel} 
(to see this, note that $W_4(3)$ is the unique such enhanced graph with 4 protected spokes, $W_4(1), W_4(5), W_4(7), W_4(9)$ are those with 
3 protected spokes, $W_4(2), W_4(6), W_4(8), W_4(10)$ are those with 2 protected spokes, and $W_4(4)$ is the unique such 
enhanced graph with 1 protected spoke).  Therefore, every minor-minimal non-splitting enhanced graph with $\widetilde{G} \cong W_4$ appears in our list.  To see that 
they are all minor minimal, note that Lemma \ref{nonsplit_wheel} implies we cannot remove a protection without causing the enhanced graph to split.  Next consider an edge $e \in \widetilde{E} \setminus \widetilde{S}$.  If $e$ is a rim edge, then Observation \ref{easy_break} implies that contracting $e$ 
results in a split enhanced graph.  Deleting $e$ results in an enhanced graph which is isomorphic to a subdivision of $K_4$ but has weight less than 16, which splits by Lemma \ref{k4}. Similarly, if $e$ is a spoke edge, then Observation \ref{easy_break} and Lemma \ref{k4} imply that either deleting or contracting $e$ results in a split enhanced graph. \end{proof}

\begin{center}
\begin{figure}[ht]
\centerline{\includegraphics[scale=0.8]{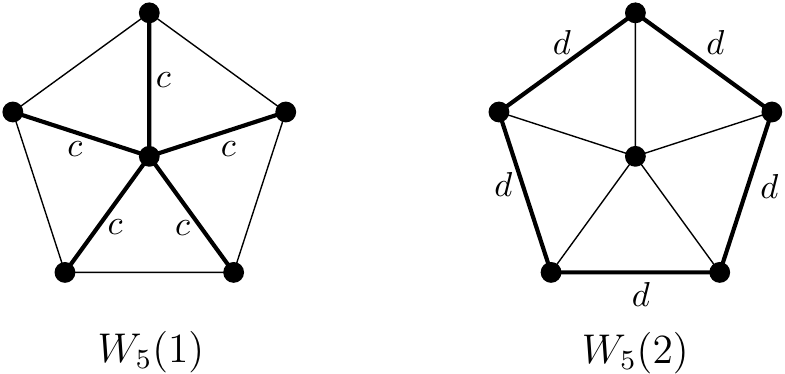}}
\caption{Excluded Minors isomorphic to $W_5$}
\label{excl_w5}
\end {figure}
\end{center}

\begin{lem} 
\label{w5}
The excluded minors isomorphic to $W_k$ for $k \ge 5$ are $W_5(1)$ and $W_5(2)$.
\end{lem}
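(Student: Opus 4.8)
The plan is to first use Lemma \ref{nonsplit_wheel} to identify exactly which protected subconfigurations of a wheel $W_k$, $k \ge 5$, are non-splitting, and then argue minor-minimality forces $k = 5$ with the two specific configurations shown in the figure. First I would observe that in a minor-minimal non-split enhanced graph isomorphic to $W_k$, the four conditions of Lemma \ref{nonsplit_wheel} pin down $\widetilde{C}, \widetilde{D}$ once $\widetilde{S}$ is known, so we are really classifying non-split 5-configurations $\widetilde{S}$ in $W_k$ up to symmetry. Since $|\widetilde{S}| = 5$ and $W_k$ has $2k$ edges, when $k \ge 6$ we have at least $12$ edges and at least $7$ edges outside $\widetilde{S}$; the goal is to show one of these can always be deleted or contracted while keeping the configuration non-split, contradicting minimality.

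The key steps, in order: (i) Fix a minor-minimal non-split $\widetilde{G} \cong W_k$ with $k \ge 5$ and non-split $\widetilde{S}$. By Observation \ref{easy_break} and Lemma \ref{mm3sep}, conditions 1--4 of Lemma \ref{nonsplit_wheel} hold, and by part (3) of Lemma \ref{mm3sep} we have $\widetilde{C} \cup \widetilde{D} \subseteq \widetilde{S}$, so every protected edge lies in $\widetilde{S}$ and cannot be unprotected without splitting. (ii) Consider an edge $e \notin \widetilde{S}$. If $e$ is a rim edge, contracting it gives a wheel $W_{k-1}$ (or a near-wheel) and by Observation \ref{easy_break}-style reasoning / Lemma \ref{nonsplit_wheel} applied to the smaller wheel, the induced configuration is still non-split unless some condition fails — one checks that conditions 1,2 are inherited automatically and conditions 3,4 can only be newly triggered in controlled ways; deleting $e$ instead gives a wheel with a missing rim edge which, being of smaller weight and 2-separable appropriately, splits by the earlier wheel lemmas. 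If $e$ is a spoke edge, deletion creates a single 2-separation which by condition 3 is not bad, and contraction creates one which by condition 4 is not bad — so removing $e$ preserves non-splitting, contradicting minimality, unless removing it destroys 3-connectivity or forces a protection off. (iii) Conclude that every edge of $\widetilde{G}$ lies in $\widetilde{S}$, forcing $2k = |E(\widetilde{G})| \le 5$... which is false for $k \ge 5$; so instead the argument must be that enough edges are in $\widetilde{S}$ that $\widetilde{S}$ meets every rim edge or every spoke, which combined with $|\widetilde{S}|=5$ forces $k \le 5$. (iv) For $k = 5$ exactly, enumerate: the $5$-configurations in $W_5$ that survive all four conditions of Lemma \ref{nonsplit_wheel} and are minor-minimal; by the duality convention (rim $\leftrightarrow$ spoke, $\widetilde{C}\leftrightarrow\widetilde{D}$) these come in dual pairs, and one finds exactly $W_5(1)$ and its dual $W_5(2)$.

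The main obstacle I expect is step (ii)--(iii): ruling out $k \ge 6$ rigorously. The delicate point is that deleting or contracting an edge $e \notin \widetilde{S}$ might produce an enhanced graph that is no longer $3$-connected (e.g. contracting a rim edge adjacent to another degree-3 vertex), so one cannot directly cite Lemma \ref{nonsplit_wheel}; instead one has to verify by hand, using the definition of splitting for enhanced graphs and the structure of the resulting $2$-separations, that no \emph{bad} $2$-separation appears after the move. The cleanest route is probably a counting argument: show that if $k \ge 6$ then $\widetilde{S}$ omits some rim edge $e_r$ \emph{and} the two spoke edges at one end of some rim edge not in $\widetilde{S}$, and argue that contracting $e_r$ leaves a non-split configuration in $W_{k-1}$ by checking the four conditions survive — the only risk is condition 3 or 4 activating, which requires a specific local pattern of three $\widetilde{S}$-edges around a vertex, and with $k \ge 6$ and only five edges in $\widetilde{S}$ there is always a rim edge whose contraction avoids creating such a pattern. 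Once $k = 5$ is established, the remaining enumeration is a finite check of manageable size, streamlined by planar duality.
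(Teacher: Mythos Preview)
Your overall framework is sound and matches the paper's: use Lemma \ref{nonsplit_wheel} to pin down the protections from $\widetilde{S}$, then exploit minor-minimality by passing to a smaller wheel. But the execution has a real gap.

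In step (ii) you assert that if $e \notin \widetilde{S}$ is a spoke, then deleting $e$ ``creates a single 2-separation which by condition 3 is not bad,'' and hence removing $e$ preserves non-splitting. This is wrong on two counts. First, condition 3 of Lemma \ref{nonsplit_wheel} concerns spokes that \emph{are} in $\widetilde{S}$, so it says nothing here. Second, deleting a spoke $wv_i \notin \widetilde{S}$ leaves $v_i$ of degree two, giving a 2-separation on $\{v_{i-1},v_{i+1}\}$ whose small side is $\{v_{i-1}v_i,\,v_iv_{i+1}\}$; if both of these rim edges happen to lie in $\widetilde{S}$ this is a bad 2-separation and $\widetilde{S}$ \emph{does} split in $\widetilde{G}\setminus e$. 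So you cannot simply remove an arbitrary non-$\widetilde{S}$ edge. Your fallback counting sketch (``there is always a rim edge whose contraction avoids creating such a pattern'') is exactly the right instinct, but it is not proved, and it is where all the content lies.

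What the paper does is make this precise with a local claim. Call two edges \emph{close} if they are both adjacent and cofacial (so each edge is close to two rims and two spokes, and the notion is self-dual). The claim is: if $e \notin \widetilde{S}$ is a rim (resp.\ spoke), then either both close spokes (resp.\ rims) lie in $\widetilde{S}$, or at least three of the four edges close to $e$ lie in $\widetilde{S}$. This is proved by exactly the reduction you have in mind---contract the rim edge, delete one of the resulting parallel spokes (using the enhanced-minor operation that trades a parallel pair for a delete-proof edge), land in $W_{k-1}$, and invoke minimality plus Lemma \ref{nonsplit_wheel} to force the missing edges into $\widetilde{S}$. Once this claim is in hand, a short combinatorial chase finishes: if every non-$\widetilde{S}$ rim has both close spokes in $\widetilde{S}$, then counting forces either all five edges of $\widetilde{S}$ to be spokes or all five to be rims, and in either case $k=5$ with $\widetilde{G}$ equal to $W_5(1)$ or $W_5(2)$; otherwise there is a non-$\widetilde{S}$ rim adjacent to a non-$\widetilde{S}$ spoke, and applying the claim to both of them and to their neighbours forces more than five edges into $\widetilde{S}$, a contradiction. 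Your proposal is missing this claim and the chase that follows; without them step (iii) remains a wish rather than an argument.
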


\begin{proof} It follows from Lemma \ref{nonsplit_wheel} that the two enhanced graphs in Figure \ref{excl_w5} do not split.  Deleting or contracting 
an edge not in $\widetilde{S}$ from either of these enhanced graphs immediately yields a bad 2-separation, so they are indeed minimal.  

Now let $\widetilde{G}$ be an enhanced graph which is isomorphic to a wheel with at least 6 vertices which is minor minimal non-splitting.  
Let $w \in V(\widetilde{G})$ correspond to the centre.  Define two (distinct) edges $e,f \in \widetilde{E}$ to be \emph{close} if they are both adjacent and cofacial.  Note that every edge is close to two rim edges and two spoke edges,  and that this notion is invariant under duality.  

\medskip

\noindent{\it Claim:} If $e \in \widetilde{E} \setminus \widetilde{S}$ is a rim (spoke) edge, then either both spokes (rims) close to $e$ are in $\widetilde{S}$ or there are three edges in $\widetilde{S}$ close to $e$.  

\smallskip

By duality, it suffices to prove the claim when $e$ is a rim edge.  
Let $v_0 v_1, v_1 v_2, v_2 v_3$ be rim edges with $e = v_1 v_2$.  If $w v_1, w v_2 \in \widetilde{S}$ then we are done, so by symmetry we may 
assume $w v_2 \not\in \widetilde{S}$.  Let $\widetilde{G_0}$ denote the enhanced graph obtained from $\widetilde{G}$ by contracting $e = v_1 v_2$ and 
deleting $w v_2$ and then, if $w v_1 \in \widetilde{S}$ we make this edge delete-proof in $\widetilde{G_0}$.  It follows that $\widetilde{G_0}$ is a proper minor of $\widetilde{G}$, so it must split.  It then follows from Lemma \ref{nonsplit_wheel} that both $w v_3$ and $v_2 v_3 \in \widetilde{S}$.  Now let us consider the enhanced graph $\widetilde{G_1}$ obtained from $\widetilde{G}$ by deleting the edge $w v_2$ and contracting the edge $v_1 v_2$ and then making the edge $v_2 v_3$ contract-proof in $\widetilde{G_1}$.  Again $\widetilde{G_1}$ is a proper minor of $\widetilde{G}$, so it must split and then Lemma \ref{nonsplit_wheel} implies that both $v_0 v_1$ and $w v_1 \in \widetilde{S}$.  

\medskip

Suppose that every rim edge in $\widetilde{E} \setminus \widetilde{S}$ has both spokes close to it in $\widetilde{S}$ and let $a$ be the number of 
rim edges in $\widetilde{S}$.  It then follows that either $a= 0$ and $\widetilde{G}$ contains the first enhanced graph in Figure \ref{excl_w5} as a minor, or 
$a=5$ and $\widetilde{G}$ is isomorphic to the second enhanced graph in the figure.  So, we may assume that there exists a rim edge in $\widetilde{E} \setminus \widetilde{S}$ which is close to a spoke in $\widetilde{E} \setminus \widetilde{S}$.  By the claim, we may then assume that $v_i v_{i+1}$ is a 
rim edge for $0 \le i \le 5$ (here $v_0 = v_5$ is permitted) and that $v_1 v_2, w v_2 \not\in \widetilde{S}$ and $v_0 v_1, w v_1, v_2 v_3, w v_3 \in \widetilde{S}$.  Since at most one of $w v_4$, $w v_5$, $v_3 v_4$ is in $\widetilde{S}$, the claim implies $v_4 v_5 \in \widetilde{S}$.  However, now applying the claim to the edge $w v_4$ brings us to a contradiction. \end{proof}

\subsection{$K_5^-$ and Prism}

In this subsection we will determine all excluded minors isomorphic to either $K_5^-$, the (unique up to isomorphism) graph 
obtained from $K_5$ by removing an edge, or $P$ (short for Prism), the planar dual of $K_5^-$.  Thanks to duality, it suffices to find all 
excluded minors with enhanced graph $K_5^-$ and then dualize.  We say that an edge of $K_5^-$ is \emph{longitudinal} if it is incident with a vertex of degree 3 and \emph{equatorial} otherwise.  We begin with a simple lemma.

\begin{lem}
\label{k5m_split}
Let $\widetilde{G}$ be isomorphic to $K_5^-$.  A 5-configuration $\widetilde{S}$ is non-splitting if and only if all of the following conditions are satisfied.
\begin{enumerate}
\item Every equatorial edge in $\widetilde{S}$ is contract-proof.
\item Every longitudinal edge contained  in a triangle within $\widetilde{S}$ is contract-proof.
\item If $uv \in \widetilde{S}$ is longitudinal where ${\mathit deg}(u) = 3$ and the unique triangle containing $u$ but not $uv$ contains 
at least two edges in $\widetilde{S}$, then $uv$ is delete-proof.
\end{enumerate}
\end{lem}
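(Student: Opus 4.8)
The plan is to characterize non-splitting $5$-configurations in $\widetilde{G} \cong K_5^-$ by examining which single-edge deletions and contractions can produce a bad separation, just as was done for wheels. Recall that $K_5^-$ is $3$-connected, so it has no bad $2$-separation itself; hence a $5$-configuration $\widetilde{S}$ splits precisely when some legal deletion of an edge of $\widetilde{S}\setminus\widetilde{D}$ or contraction of an edge of $\widetilde{S}\setminus\widetilde{C}$ creates either a separation of order $\le 1$ meeting $\widetilde{S}$ on both sides, or a bad $2$-separation. Since $K_5^-$ has no proper $1$-separation and no proper $2$-separation, the only way to create a low-order separation is by a deletion or contraction that drops the connectivity. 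First I would enumerate exactly these: for each edge $e$, determine the structure of $\widetilde{G}\setminus e$ and $\widetilde{G}/e$ and whether the resulting graph has a $2$-separation that can be ``bad'' for $\widetilde{S}$.

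The key structural facts are: (i) if $e=uv$ is equatorial, then $\widetilde{G}\setminus e$ is still $3$-connected (it is the prism minus... actually $K_5^-$ minus an equatorial edge is $K_4$ with a subdivided edge or similar — in any case still essentially $3$-connected with no bad $2$-separation), whereas $\widetilde{G}/e$ creates a multigraph with a genuine $2$-separation, which is bad for $\widetilde{S}$ iff both sides contain two edges of $\widetilde{S}$; (ii) if $e=uv$ is longitudinal with $\deg(u)=3$, then contracting $e$ merges $u$ into $v$ and can create a bad $2$-separation exactly when $u$ lay in a triangle with two $\widetilde{S}$-edges other than $e$ (this is condition (3) read contrapositively — but note (3) concerns deletion; I need to be careful here), while deleting $e$ makes $u$ a degree-$2$ vertex, so the two remaining edges at $u$ become a path, and this yields a bad $2$-separation exactly when the ``far'' triangle through $u$ carries enough of $\widetilde{S}$. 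For each edge $e\in\widetilde{S}$ I would check: is the problematic operation on $e$ blocked by protection? This directly yields the three listed conditions as necessary. For sufficiency, I would assume all three conditions hold and verify that for every $e\in\widetilde{S}$, neither the deletion (if permitted) nor the contraction (if permitted) produces a bad separation — going through the small number of cases (equatorial edge, longitudinal edge in a triangle of $\widetilde{S}$, longitudinal edge with exactly one $\widetilde{S}$-edge in each incident triangle, etc.).

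The main obstacle I anticipate is the bookkeeping in the longitudinal case: there are two triangles through a degree-$3$ vertex $u$, and one must track how many edges of $\widetilde{S}$ lie in each, distinguish the triangle containing $uv$ from the one not containing it, and correctly match this against which operation (delete vs.\ contract) creates a bad $2$-separation and which protection (delete-proof vs.\ contract-proof) blocks it. In particular I expect that contracting a longitudinal $uv$ is harmless (the resulting graph is a minor of a wheel or a small graph with no room for a bad $2$-separation given only $5$ edges in $\widetilde{S}$ and $\deg u=3$) unless $uv$ itself sits in a triangle fully inside $\widetilde{S}$, which is exactly condition (2); and that deleting $uv$ is the operation that interacts with the far triangle, giving condition (3). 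Sorting out precisely why condition (2) is phrased with ``triangle within $\widetilde{S}$'' (meaning all three edges of some triangle through that longitudinal edge lie in $\widetilde{S}$) versus the weaker hypothesis in (3) will require drawing the small separations explicitly and counting $\widetilde{S}$-edges on each side.

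A clean way to organize the whole argument is to first prove the ``only if'' direction by exhibiting, for each violated condition, the specific edge and operation producing a split (a bad $2$-separation with $|E(G_1)\cap\widetilde{S}|=2$ on each side, or an order-$\le 1$ separation meeting $\widetilde{S}$), and then prove the ``if'' direction by a short uniform check over the at most five edges of $\widetilde{S}$, using that any $2$-separation of $\widetilde{G}\setminus e$ or $\widetilde{G}/e$ must have the small side equal to a single edge, a parallel pair, or the two edges at a former degree-$3$ vertex, so it can be bad only in the explicitly identified configurations. I would also invoke Lemma \ref{mm3sep} where convenient: parts (1) and (2) of that lemma already force contract-proofness of triangle edges and delete-proofness of edges at a degree-$3$ vertex inside $\widetilde{S}$, which covers pieces of conditions (2) and (3) for free, leaving only condition (1) and the non-triangle longitudinal cases to be handled by the direct separation analysis.
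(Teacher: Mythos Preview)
Your proposal is correct and follows essentially the same approach as the paper: a case analysis over each $e \in \widetilde{S}$ by type (equatorial vs.\ longitudinal), checking which of $\widetilde{G}/e$ and $\widetilde{G}\setminus e$ can carry a bad $2$-separation and matching this to the three conditions. The only minor difference is that for an equatorial $e$ the paper dispatches condition~(1) in one stroke by observing that $\widetilde{G}/e$ has width~$2$, whereas you plan to inspect the individual $2$-separations; both routes give the same conclusion.
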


\begin{proof} First suppose that $\widetilde{G}$ satisfies the numbered properties above and let $e \in \widetilde{S}$.  If $e$ is 
equatorial, then it is contract-proof, and $\widetilde{G} \setminus e$ is 3-connected, so it has no bad 2-separation.  If $e$ is longitudinal, 
and not contract-proof, the second property implies $\widetilde{G} / e$ has no bad 2-separation, and if it is not delete-proof, the third property 
implies $\widetilde{G} \setminus e$ has no bad 2-separation.  Thus $\widetilde{G}$ is non-splitting.

If $\widetilde{G}$ violates the first property for $e$, then $\widetilde{G} / e$ has width 2, so $\widetilde{G}$ must split.  If it violates the second or 
third properties, these conditions indicate separations which show $\widetilde{G}$ splits. \end{proof}

\begin{center}
\begin{figure}[ht]
\centerline{\includegraphics[scale=0.8]{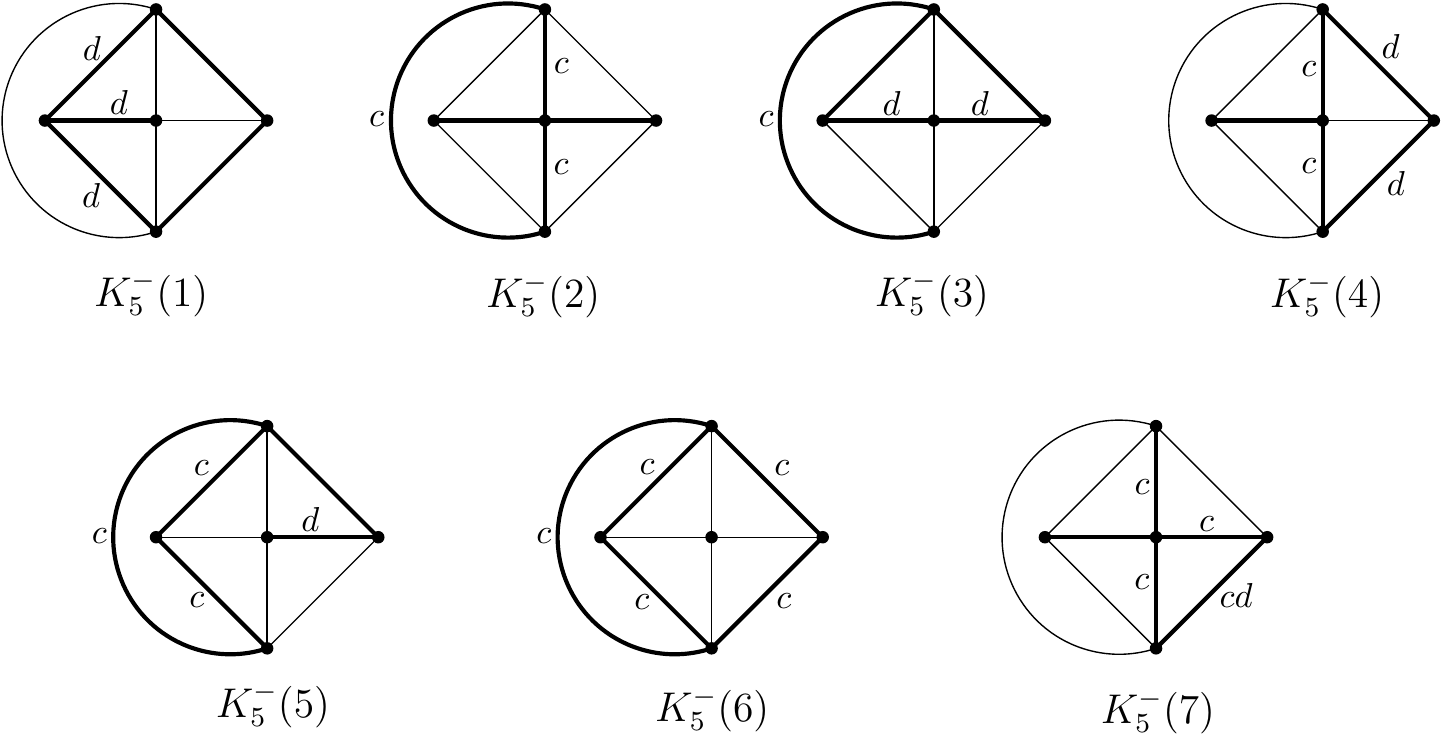}}
\caption{Excluded Minors isomorphic to $K_5^-$}
\label{excl_k5m}
\end {figure}
\end{center}

\begin{lem}
\label{exclk5mlem}
The excluded minors isomorphic to $K_5^-$ are $K_5^-(1), \ldots, K_5^-(7)$.
\end{lem}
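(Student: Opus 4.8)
The plan is to mirror the structure of the previous wheel lemmas: use Lemma~\ref{k5m_split} to characterize exactly which protections a non-split $5$-configuration forces on a graph isomorphic to $K_5^-$, enumerate all minimally-protected such enhanced graphs up to symmetry, and then verify minor-minimality for each by checking that no protection can be dropped and no edge outside $\widetilde{S}$ can be deleted or contracted without splitting.

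First I would set up notation: $K_5^-$ has two degree-$3$ vertices, call them $u_1,u_2$, joined by an equatorial edge to each of the three degree-$4$ ``equator'' vertices; the equator is a triangle, and there are two edges between $\{u_1,u_2\}$ and each equator vertex (the six longitudinal edges), plus three equatorial edges. A $5$-configuration $\widetilde{S}$ omits exactly $4$ of the $9$ edges. By Lemma~\ref{k5m_split}, every equatorial edge in $\widetilde{S}$ is contract-proof, every longitudinal edge lying in a triangle inside $\widetilde{S}$ is contract-proof, and a longitudinal edge $u_iv$ is delete-proof whenever the opposite triangle at $u_i$ carries two edges of $\widetilde{S}$. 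For minor-minimality we additionally need $\widetilde{C}\cup\widetilde{D}\subseteq\widetilde{S}$ and that every protection listed above is actually forced (Lemma~\ref{mm3sep}), so the candidate enhanced graphs are precisely those where $\widetilde{S}$ is a $5$-subset of $E(K_5^-)$, the protections are exactly those dictated by Lemma~\ref{k5m_split}, and no smaller $\widetilde{S}$ (with correspondingly fewer protections) already fails to split.

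The combinatorial core is the case analysis by the number of equatorial edges in $\widetilde{S}$, which can be $0,1,2,$ or $3$. When all three equatorials are in $\widetilde{S}$ we must choose two of the six longitudinals; the automorphism group (which swaps $u_1\leftrightarrow u_2$ and permutes the three equator vertices) cuts this down to a few cases, and the protections are then read off. When two equatorials are present we pick three longitudinals, and so on down to zero equatorials (all five edges longitudinal). For each surviving candidate I would check two things: (i) every protected edge is genuinely needed --- removing a ``c'' or ``d'' label produces, via Lemma~\ref{k5m_split}, a split configuration --- and (ii) for each $e\notin\widetilde{S}$, both $\widetilde{G}\setminus e$ and $\widetilde{G}/e$ split; here one deletes/contracts an equatorial to get a wheel ($W_4$, handled by Lemma~\ref{w4} and weight comparison) or contracts a longitudinal to get $K_4$-like graphs (Lemma~\ref{k4}), or uses Observation-style width-$2$ arguments directly. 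The non-decreasing-weight convention then orders the survivors as $K_5^-(1),\dots,K_5^-(7)$.

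The main obstacle I anticipate is the bookkeeping in (i) and (ii): ensuring the enumeration is genuinely exhaustive up to the symmetry group and that no two listed enhanced graphs are minors of one another. The minor-check is subtle because deleting an edge outside $\widetilde{S}$ may drop the weight below the threshold at which a $K_4$- or $W_4$-subdivision can be non-split, so one must carefully track weights (recall weight $=|E|+|\widetilde{C}|+|\widetilde{D}|$) against the bounds from Lemmas~\ref{k4} and~\ref{w4}; and a contraction of a longitudinal edge can merge a degree-$3$ vertex into the equator, changing which triangles exist, so Lemma~\ref{k5m_split} must be reapplied to the contracted graph rather than guessed. I would organize this as a short table: rows indexed by the seven candidate $5$-configurations, columns recording the forced protections and the split-certificate for each non-$\widetilde{S}$ edge under deletion and contraction, citing Lemma~\ref{k5m_split}, Lemma~\ref{k4}, and Lemma~\ref{w4} as appropriate. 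Finally, since $K_5^-$ is not self-dual, the dual statement yields the excluded minors isomorphic to the prism $P$ by applying Proposition~\ref{enhanced dual}, so this lemma does double duty.
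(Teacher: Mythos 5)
Your proposal is essentially the paper's argument. You set up the same characterization via Lemma~\ref{k5m_split}, run the same case analysis on how many equatorial edges lie in $\widetilde{S}$ (the paper phrases it as: all three edges at a degree-$3$ vertex, then all three equatorials, then two equatorials, then one), and verify minimality by the same mechanism — removing a protection is handled by Lemma~\ref{k5m_split}, while deletion/contraction of unprotected edges is handled by width-$2$ arguments plus weight comparisons against Lemmas~\ref{k4} and~\ref{w4}. The only stylistic difference is your explicit ``table'' bookkeeping, which the paper presents as prose, and your note that the dual yields the Prism case (which the paper does in a separate Lemma~\ref{exclp}).
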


\begin{proof} We shall begin by proving that whenever $\widetilde{G}$ is a non-split enhanced graph isomorphic to $K_5^-$, 
then $\widetilde{G}$ contains one of $K_5^-(1), \ldots, K_5^-(7)$ as a minor.  To do this, let us choose a non-slitting 5-configuration 
$\widetilde{S}$ in $\widetilde{G}$.  If $\widetilde{S}$ contains all three edges incident with some vertex of degree 3, then these edges must be
delete-proof by Lemma \ref{k5m_split} so $\widetilde{G}$ contains $K_5^-(1)$.  If $\widetilde{S}$ contains all three equatorial edges, then all 
must be contract-proof so $\widetilde{G}$ contains $K_5^-(2)$.  So, we may now assume $\widetilde{S}$ contains neither of these configurations.  

Next suppose that $\widetilde{S}$ contains two equatorial edges $e,e'$ and two edges $f,f'$ incident with a vertex of degree 3.  If $e,e',f,f'$ forms a four cycle then it follows from Lemma \ref{k5m_split} that $e,e'$ are contract-proof and $f,f'$ are delete-proof so $\widetilde{G}$ contains $K_5^-(4)$.  Otherwise, we may assume $e,f,f'$ form a triangle and $f$ is incident with both $e$ and $e'$.  In this case $f,f'$ must be 
contract-proof, and in addition $f'$ must be delete-proof, so $\widetilde{G}$ contains $K_5^-(7)$.  

In the remaining case $\widetilde{S}$ contains exactly one equatorial edge $e$ and we may assume that it contains exactly two edges $f,f'$ incident 
with one vertex of degree 3, and exactly two edges $h,h'$ incident with the other vertex of degree 3.  If $e,f,f'$ form a triangle, then $f,f'$ must be 
contract-proof, and otherwise one of $f,f'$ must not be adjacent to $e$ and this edge must be delete-proof.  A similar argument for the edges $h,h'$ 
imply that $\widetilde{G}$ must contain one $K_5^-(3)$, $K_5^-(5)$, and $K_5^-(6)$.  

With the above argument in hand, we are ready to complete the proof.  A straightforward application of Lemma \ref{k5m_split} (using the indicated 
5-configurations) shows that $K_5^-(1), \ldots, K_5^-(7)$ do not split.  It follows from the above analysis that every enhanced graph obtained from one of these
by removing a protection does split.  So, to prove the lemma, it suffices to show that every enhanced graph $\widetilde{K}$ obtained from one in these figures by deleting or contracting an unprotected edge will split.  This is immediate whenever $\widetilde{K}$ has width 2 and follows from Lemma \ref{k4} whenever $\widetilde{K}$ can be obtained from $K_4$ by series parallel operations, since $\widetilde{K}$ has weight at most 14.  The only other possibility is that $\widetilde{K}$ is isomorphic to a graph obtained from $W_4$ by series parallel operations.  Now $\widetilde{K}$ has weight at most 12 unless it is obtained from $K_5^-(6)$ or $K_5^-(7)$ by deleting an unprotected equatorial edge.  On the other hand, apart from $W_4(1)$ and $W_4(2)$ which have weight 13, all $W_4(i)$ with $3 \le i \le 10$ have weight at least 14.  So, the only possibility for $\widetilde{K}$ to contain $W_4(i)$ is for one of $K_5^-(6)$ or $K_5^-(7)$ to contain either $W_4(1)$ or $W_4(2)$, and a quick check reveals this is impossible. \end{proof}

\begin{center}
\begin{figure}[ht]
\centerline{\includegraphics[scale=0.7]{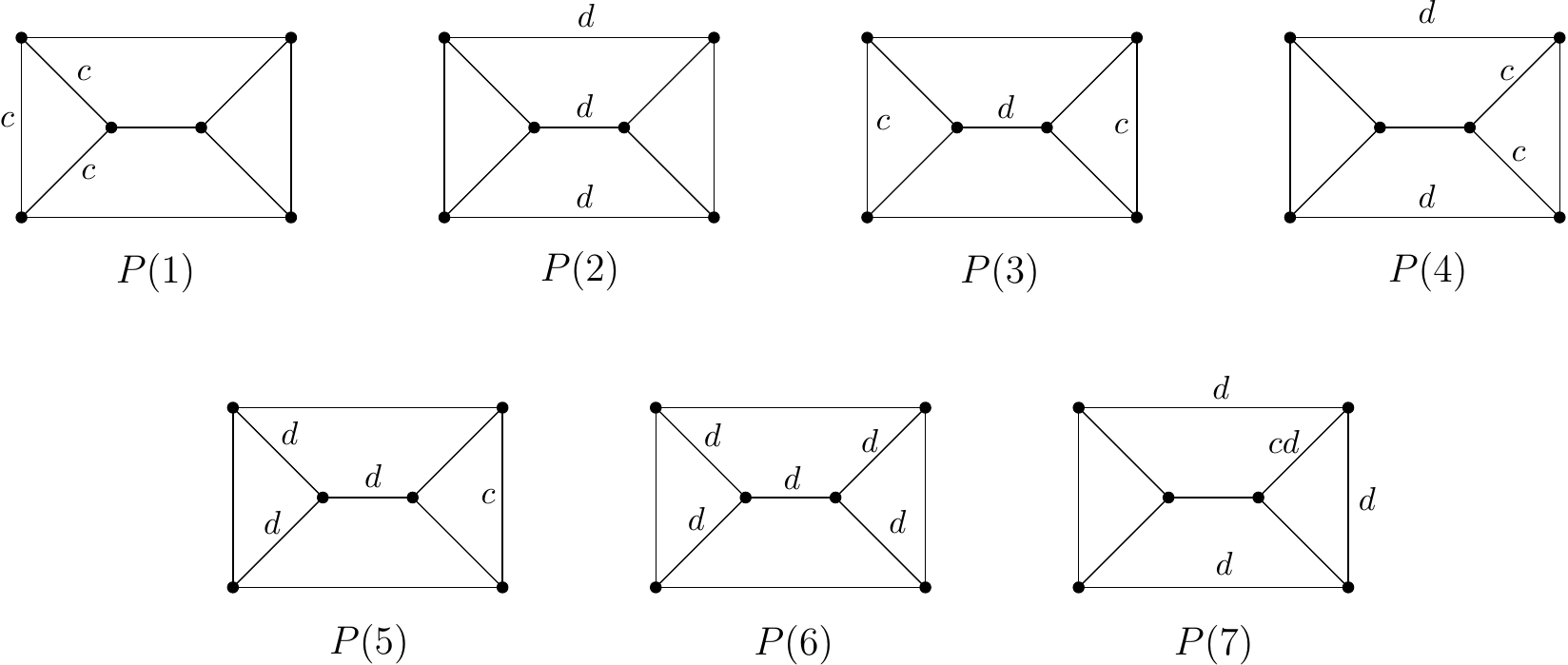}}
\caption{Excluded Minors isomorphic to $P$}
\label{excl_p}
\end {figure}
\end{center}

\begin{lem}
\label{exclp}
The excluded minors with enhanced graph $P$ are $P(1), \ldots, P(7)$.
\end{lem}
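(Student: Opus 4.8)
The plan is to deduce this lemma from Lemma~\ref{exclk5mlem} via planar duality, using Proposition~\ref{enhanced dual}. Since $P$ is by definition the planar dual of $K_5^-$, and weight is preserved under duality (duality fixes the edge set and swaps $\widetilde{C}$ with $\widetilde{D}$), I would first observe from Figure~\ref{excl_p} that $P(i)$ is the dual enhanced graph of $K_5^-(i)$ for each $1 \le i \le 7$; the non-decreasing-weight numbering conventions for the two families then agree. After that it remains to verify the three ingredients in the claim that $P(1),\dots,P(7)$ are exactly the excluded minors with enhanced graph $P$: (a) every $P(i)$ is non-split; (b) each $P(i)$ is minor-minimal, and none has another as a minor; (c) every non-split enhanced graph isomorphic to $P$ has some $P(i)$ as a minor. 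All three I would dispatch by transporting the corresponding statements for $K_5^-$ across the duality.

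For (a): $K_5^-(i)$ does not split by Lemma~\ref{exclk5mlem}, hence neither does $P(i) = (K_5^-(i))^*$ by Proposition~\ref{enhanced dual}. For (b) and minimality: the enhanced minor operations dualize cleanly on a $3$-connected plane graph --- deletion of an edge becomes contraction and vice versa, removing an edge from $\widetilde{C}$ (resp.\ $\widetilde{D}$) becomes removing it from $\widetilde{D}$ (resp.\ $\widetilde{C}$), and vertex deletion decomposes into edge deletions. So if $\widetilde{K}$ is obtained from $P(i)$ by deleting or contracting an unprotected edge not in $\widetilde{S}$, or by removing a protection, then $\widetilde{K}^*$ arises from $K_5^-(i)$ by the dual operation; by the minimality established in Lemma~\ref{exclk5mlem}, $\widetilde{K}^*$ splits, so $\widetilde{K}$ splits by Proposition~\ref{enhanced dual}. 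Since all the $P(i)$ share the underlying graph $P$, this also shows none is a minor of another. For (c): if $\widetilde{G} \cong P$ is non-split, then $(\widetilde{G})^* \cong K_5^-$ is non-split, so by Lemma~\ref{exclk5mlem} it has some $K_5^-(i)$ as a minor. Since $K_5^-$ has only $9$ edges and no isolated vertices, and every enhanced minor operation other than removal of a protection strictly decreases the edge count in this situation, the only way to pass between two enhanced graphs each isomorphic to $K_5^-$ is to remove protections; hence $(\widetilde{G})^*$ has the same underlying graph as $K_5^-(i)$ with at least the same protections. Dualizing, $\widetilde{G}$ has underlying graph $P$ with at least the protections of $P(i)$, so $\widetilde{G}$ contains $P(i)$ as a minor. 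These three items together give the lemma.

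The one genuinely delicate point is the interplay between planar duality and the two nonstandard enhanced minor operations (the parallel and series reductions), together with vertex deletion, since these do not individually dualize to single enhanced minor operations. I would sidestep it as follows: vertex deletion is subsumed by edge deletion; in the minimality argument only a single deletion, contraction, or protection removal is ever applied directly to $P(i)$, so the series and parallel reductions never enter; and in step (c) the edge-count bookkeeping collapses the minor relation to the trivial case of equal underlying graphs, so again only protection removal is involved. A safe but much longer alternative would be to re-run the proof of Lemma~\ref{exclk5mlem} verbatim on $P$, replacing every invocation of Lemma~\ref{k5m_split}, Lemma~\ref{k4}, and the $W_4$ case analysis by their duals; the duality route is the one I would actually present.
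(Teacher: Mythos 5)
Your proposal is correct and takes the same route as the paper, which simply observes that this follows from Lemma~\ref{exclk5mlem} together with duality (Proposition~\ref{enhanced dual}). You have merely spelled out the duality transfer in more detail than the paper bothers to.
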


\begin{proof} This follows from Lemma \ref{exclk5mlem} and duality (Proposition \ref{enhanced dual}). \end{proof}

\subsection{$P^+$, $D$ and $D^*$}

In this subsection we will first consider non-splitting enhanced graphs isomorphic to $P^+$ appearing in Figure \ref{pp_itself}.  Then we will consider the double fan graph $D$ which is the simple graph obtained from a 3-edge path by adding two new vertices $v$ and $v'$ each adjacent to all vertices of the path but not each other.  This also gives the result for its dual $D^*$.  

$P^+$ is the unique (up to isomorphism) graph which can be obtained from $P$ by adding a new edge.  It is worth noting that $P^+$ has a 2-fold symmetry which fixes both the edges $e$ and $f$ and interchanges the pairs $a_i$ and $b_i$ for $1 \le i \le 4$.  Furthermore, this graph is self-dual, and dualizing reveals that $e^* = f$ and $f^* = e$ while $a_1^* = a_3$ and $a_2^* = a_4$.  

\begin{center}
\begin{figure}[ht]
\centerline{\includegraphics[height=2.7cm]{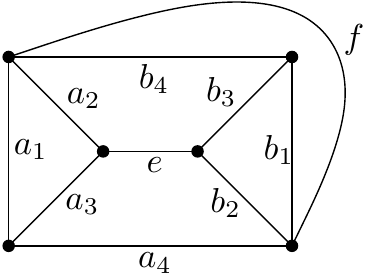}}
\caption{The graph $P^+$}
\label{pp_itself}
\end {figure}
\end{center}

\begin{center}
\begin{figure}[ht]
\centerline{\includegraphics[scale=0.75]{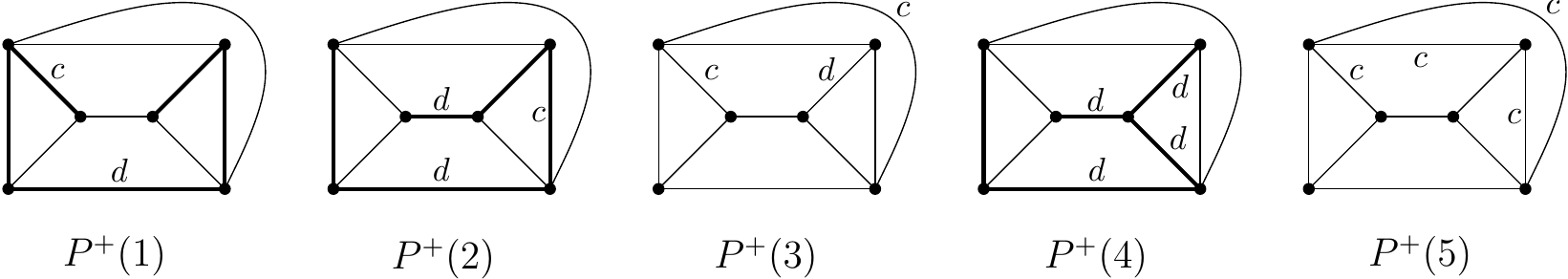}}
\caption{Excluded Minors isomorphic to $P^+$}
\label{excl_pp}
\end {figure}
\end{center}

\begin{lem}
\label{exclpp}
The enhanced graphs $P^+(1), \ldots, P^+(5)$ are minor-minimal non-split
\end{lem}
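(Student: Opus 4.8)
The plan is to follow the template of the proof of Lemma~\ref{exclk5mlem}: first determine exactly which protections a non-split $5$-configuration in an enhanced graph isomorphic to $P^+$ is forced to carry, and then read off from this both that $P^+(1),\dots,P^+(5)$ do not split and that every proper minor of each of them does split.

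The first and main step is to establish a ``$P^+$-splitting criterion'' in the spirit of Lemmas~\ref{nonsplit_wheel} and~\ref{k5m_split} (for the lemma we really only need it for the five $5$-configurations realised by the bold edges in Figure~\ref{excl_pp}, but stating it for all of them is no harder).  Since $P^+$ is $3$-connected it has no bad $2$-separation, so for a $5$-configuration $\widetilde S$ the only way to split is to delete an edge of $\widetilde S\setminus\widetilde D$ or contract an edge of $\widetilde S\setminus\widetilde C$ and thereby create a separation of order at most $1$ with both sides meeting $\widetilde S$, or a bad $2$-separation.  Using the $2$-fold automorphism of $P^+$ that fixes $e,f$ and swaps each $a_i$ with $b_i$, together with the self-duality exchanging $e\leftrightarrow f$, $a_1\leftrightarrow a_3$ and $a_2\leftrightarrow a_4$, one has only a few orbits of small separations of $P^+$ to examine; for each of these I would record when deleting or contracting a given $\widetilde S$-edge produces such an obstruction, and combine this with Lemma~\ref{mm3sep} (a triangle inside $\widetilde S$ forces all its edges contract-proof, a degree-$3$ star inside $\widetilde S$ forces all its edges delete-proof, $\widetilde C\cup\widetilde D\subseteq\widetilde S$ in the minor-minimal case, and the $|A|\le 3$ bound on a $\widetilde S$-free side of a $3$-separation).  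The output is a list, indexed by which five edges of $P^+$ are taken as $\widetilde S$, of the protections $\widetilde S$ must carry in order not to split.

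With the criterion in hand the two halves of the lemma are short.  For non-splitting one inspects that each $P^+(i)$, with its indicated $5$-configuration, carries exactly the protections demanded by the criterion, hence does not split.  For minor-minimality it suffices (since splitting of a fixed $5$-configuration is preserved under taking further minors, as in the proof that splitting is minor-closed) to check that every one-step proper minor splits.  Removing any protection violates the criterion, so the resulting enhanced graph splits.  For an unprotected edge $g\notin\widetilde S$, deleting or contracting $g$ and then performing any series--parallel reductions yields an enhanced graph $\widetilde K$ whose underlying simple $3$-connected graph is either of width at most $2$ (so $\widetilde S$ splits immediately), or is one of the prism $P$, $K_5^-$, $W_4$ or $K_4$; since weight is minor-monotone and each $P^+(i)$ has small weight, $\widetilde K$ is then too light to contain any of $K_4(1)$, the $W_4(j)$, the $K_5^-(j)$, or the $P(j)$, so by Lemmas~\ref{k4},~\ref{w4},~\ref{exclk5mlem} and~\ref{exclp} its $5$-configuration splits.  (Finally, minor-minimality together with the fact that every member of $\mathcal{F}$ is non-split gives for free that no $P^+(i)$ has another member of $\mathcal{F}$ as a proper minor.)

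I expect the bookkeeping in the criterion step to be the main obstacle: one must enumerate the relevant small separations of $P^+$ and, for each choice of $\widetilde S$, distinguish carefully which cuts become bad after contraction versus after deletion, while keeping the two kinds of protection — and the self-dual symmetry used to halve the work — straight.  Once the criterion is written down, the non-splitting check is pure inspection of Figure~\ref{excl_pp}, and the unprotected-edge step is essentially a weight count against the already-tabulated excluded-minor lists, easy because $P^+$ has only ten edges and every single deletion or contraction lands on a small, already-understood graph.
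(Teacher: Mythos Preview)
Your plan is sound, but it takes a deliberately more laborious route than the paper. Immediately before the lemma the paper notes that a full $P^+$-splitting criterion in the style of Lemmas~\ref{nonsplit_wheel} and~\ref{k5m_split} can indeed be written down by a lengthier case analysis, but that for the main theorem only minor-minimality (not completeness of the list) is needed at this stage, the latter falling out of Section~\ref{sec full char}. Accordingly the paper's proof is pure inspection: non-splitting of the five indicated $5$-configurations is checked directly from the matroidal definition, and for minimality each single edge deletion/contraction and each protection removal is examined individually. For $f$ (and dually $e$), contraction yields width~$2$ while deletion yields an enhanced prism that is a proper minor of one of the $P(j)$, hence splits; every other edge lands on a series--parallel extension of $K_4$ or $W_4$, checked directly.

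What your approach buys is the stronger statement that $P^+(1),\dots,P^+(5)$ are \emph{all} the excluded minors isomorphic to $P^+$; the paper gets this only as a corollary of the full classification. What the paper buys is brevity: no criterion lemma.

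Two cautions on your execution. First, the weight argument for the unprotected-edge step is optimistic: as in the proof of Lemma~\ref{exclk5mlem}, weight comparisons can leave borderline cases requiring direct checks (there, the possibility of a $W_4(1)$ or $W_4(2)$ minor inside $K_5^-(6)$ or $K_5^-(7)$ had to be excluded by hand), and the higher-weight $P^+(i)$ may behave similarly against the $P(j)$. Second, minor-minimality requires checking deletion and contraction of \emph{every} edge, not only those outside~$\widetilde S$; you will in particular need the paper's special treatment of $e$ and $f$ rather than folding them into the weight count.
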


In fact $P^+(1), \ldots, P^+(5)$ are exactly the excluded minors isomorphic to $P^+$.  This can be shown directly with a case analysis similar but more lengthy to those in the previous subsections.  However, for our purposes it suffices at this stage to simply observe the minimality since the proof of the full characterization in Section \ref{sec full char} will give the rest as a corollary.

\begin{proof}
We may check that these are non-splitting directly from the matroidal definition.

To prove minimality, we must consider smaller enhanced graphs obtained from these by deletion and contraction.  Contracting $f$ results in an enhanced graph of width 2 (which splits) while deleting $f$ results in an enhanced graph isomorphic to Prism which is a proper minor of one of these, and therefore splits.  A similar argument shows that deleting or contracting $e$ yields an enhanced graph which splits.  For every other edge, deletion and contraction result in enhanced graphs which are isomorphic to graphs obtained from either $K_4$ or $W_4$ by series parallel operations, and these can be checked directly.  To see that removing a protection results in a splitting graph, the possibilities can again simply be checked.\end{proof}


\begin{center}
\begin{figure}[ht]
\centerline{\includegraphics[width=7cm]{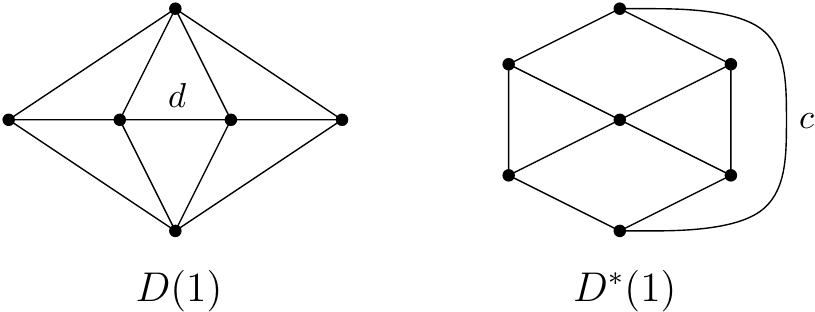}}
\caption{Excluded Minors isomorphic to $D$ and $D^*$}
\end {figure}
\end{center}

\begin{lem} 
\label{excld}
The enhanced graphs $D(1)$ and $D^*(1)$ are minor-minimal non-split.
\end{lem}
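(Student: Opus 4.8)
The plan is to follow the same two-phase template used for the wheel, $K_5^-$, and $P^+$ cases: first verify that $D(1)$ and $D^*(1)$ are non-split, and then verify minimality by checking all single deletions and contractions (and protection removals). Since $D^*$ is the planar dual of $D$, Proposition \ref{enhanced dual} reduces everything to the case of $D(1)$: once we know $D(1)$ is minor-minimal non-split, the same holds for $D^*(1)$ automatically. So the real work is entirely with $D(1)$.

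For the non-splitting half, I would work directly from the matroidal characterization (Corollary \ref{whatsplitmeans}), exactly as in Lemma \ref{exclpp}. Let $\widetilde S$ be the indicated 5-configuration. For each $e \in \widetilde S$ one must rule out that $\widetilde G$, $\widetilde G \setminus e$ (when $e \notin \widetilde D$), or $\widetilde G / e$ (when $e \notin \widetilde C$) admits either an order-$\le 1$ separation splitting $\widetilde S$ nontrivially or a bad $2$-separation. Since $D$ is $3$-connected, $\widetilde G$ itself has no such separation, so the content is in the deletions/contractions of the unprotected-direction edges of $\widetilde S$; the protections on the edges of $\widetilde S$ are precisely what is needed to kill the offending $2$-separations, and one checks this case by case using the structure of $D$ (the path $p_1p_2p_3p_4$ together with the two apex vertices $v,v'$). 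This is a short finite check once the figure is in hand.

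For minimality, I would argue that every proper minor obtained by deleting or contracting a single unprotected edge, or by removing a single protection, splits. Removing a protection: by the analysis in the non-splitting half (each protection is forced), dropping it produces a bad $2$-separation, hence a split configuration. Deleting/contracting an edge $e \notin \widetilde S$: I expect each such operation to yield an enhanced graph that is either of width $2$ (which splits by Theorems \ref{decomp}/\ref{heyguesswhatsplits} together with Corollary \ref{whatsplitmeans}), or a series-parallel extension of $K_4$ or of $W_4$ of weight too small to contain any of the already-determined excluded minors $K_4(1)$, $W_4(1),\dots,W_4(10)$ — here one invokes the weight bound (weight is minor-monotone, and $K_4(1)$ has weight $16$, the smallest $W_4(i)$ have weight $13$) exactly as in the proof of Lemma \ref{exclk5mlem}. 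One also has to handle the structurally distinguished edges of $D$ (those incident to an apex versus those on the central path) separately, much as $e$ and $f$ were treated specially for $P^+$.

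The main obstacle is bookkeeping rather than any conceptual difficulty: one must be careful to enumerate the orbits of edges of $D$ under its automorphism group (the central path reflection and the apex swap), correctly identify which of $\widetilde C, \widetilde D$ each edge of $\widetilde S$ lies in, and confirm that none of the resulting small minors sneaks in a copy of a previously-found excluded minor — the weight computation is what makes this clean, so the delicate point is making sure the weight bounds are tight enough in the borderline cases (deleting an edge from $D(1)$ that lowers the weight only slightly). As with Lemma \ref{exclpp}, a full converse (that $D(1)$, $D^*(1)$ are the *only* excluded minors isomorphic to $D$, $D^*$) is deferred; here we only claim and prove minimality, so the argument terminates after the finite case check.
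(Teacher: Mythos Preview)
Your overall strategy matches the paper's: reduce to $D(1)$ by duality (Proposition~\ref{enhanced dual}), verify non-splitting directly from the matroidal definition, then establish minimality by running through protection removals and single edge deletions/contractions. The difference is that you miss the one observation that collapses the paper's argument to a near one-liner: the underlying graph $D$ has width~$3$. Since $D(1)$ carries only a \emph{single} protected edge (not one per edge of~$\widetilde S$, as your phrasing ``the protections on the edges of $\widetilde S$'' suggests), removing that protection yields the unenhanced graph $D$, and $D$ splits outright by the width-$3$ argument of Theorem~\ref{heyguesswhatsplits}. That disposes of the protection-removal case in one stroke.

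Your alternative for that case --- ``dropping the protection produces a bad $2$-separation, hence a split configuration'' --- only shows that the particular $\widetilde S$ splits in the reduced graph; it does not show that \emph{every} $5$-configuration does, which is what ``the enhanced graph is split'' requires. So as written there is a genuine gap, and the width-$3$ observation is exactly what fills it. For the edge deletion/contraction cases your weight bookkeeping is defensible in principle (a single deletion or contraction drops the weight of $D(1)$ from~$12$ to at most~$11$, below every previously established excluded minor), but the paper does not bother: it just records that up to symmetry there are six such cases and checks each directly.
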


\begin{proof}
  By duality it suffices to check $D(1)$.

Check non-splitting directly from the matroidal definition.  For minimality note that $D$ is width 3 and so removing the protection yields a split graph.  Up to symmetry there are 6 other possibilities to check all of which split.
\end{proof}

\section{The full characterization}\label{sec full char}


In this section we prove that every non-split enhanced graph contains a minor in $\mathcal{F}$ thus completing the proof of our main result.  
This section is organized as follows. \\

\begin{tabular}{c|p{3.5in}}
Subsection	&	Content \\
\hline
1			&	We begin the proof by choosing a minimal counterexample and finding a special edge partition.  \\
2			&	We establish some basic properties of this partition. \\
3-6			&	We prove that our minimal counterexample cannot have certain particular structures. \\
7			&	We complete the proof of the main theorem.  
\end{tabular}

Our main goal in this section will be to establish the following lemma.

\begin{lemma}
\label{fminor}
If $\widetilde{G}$ is a non-split enhanced graph, then $\widetilde{G}$ contains a minor in $\mathcal{F}$.
\end{lemma}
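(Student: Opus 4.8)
The plan is to argue by contradiction: suppose $\widetilde{G}$ is a non-split enhanced graph which is minor-minimal subject to being non-split, and which does not belong to $\mathcal{F}$. The first observation is that, because every graph in $\mathcal{F}_0$ is non-split and $\widetilde{G}$ has none as an ordinary minor, Theorem \ref{decomp} applies: $\widetilde{G}$ has width at most $3$, hence (by the results of subsection \ref{sec matroids}) its cycle matroid has caterpillar width at most $3$. By the earlier reduction to enhanced graphs (Lemma \ref{mm3sep}), $\widetilde{G}$ is $3$-connected, $\widetilde{C}\cup\widetilde{D}\subseteq\widetilde{S}$ for the non-split $5$-configuration $\widetilde{S}$, and every small separation avoiding $\widetilde{S}$ has at most $3$ edges on the side disjoint from $\widetilde{S}$. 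Fixing a width-$3$ edge ordering $e_1,\dots,e_m$ yields a linear decomposition: each ``time'' $\ell$ gives a separation $(\{e_1,\dots,e_\ell\},\{e_{\ell+1},\dots,e_m\})$ of order at most $3$. I would use this ordering to split $E(\widetilde{G})\setminus\widetilde{S}$ into a bounded number of ``chunks'' lying between consecutive edges of $\widetilde{S}$, each chunk attaching to the rest of the graph through a bounded number of vertices; the minimality forces each chunk to be small and highly structured (the $|A|\le 3$ conclusion of Lemma \ref{mm3sep}(4) is exactly what prevents large chunks).

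The core of the argument is then a structural case analysis on how the five edges of $\widetilde{S}$ sit inside this linear decomposition together with the surrounding chunks. Because width is at most $3$, the ``cut'' at each of the (at most four) gaps between consecutive $\widetilde{S}$-edges is a separation of order $\le 3$, and since $\widetilde{S}$ does not split, none of these cuts, nor any cut obtained after deleting an un-$\widetilde{D}$-protected or contracting an un-$\widetilde{C}$-protected edge of $\widetilde{S}$, may be a bad $2$-separation or an order-$\le 1$ separation separating $\widetilde{S}$. This is a strong local constraint. Subsections 2--6 of the full-characterization section (which I am allowed to invoke) are precisely where one rules out the possible local configurations one at a time: for each way the $\widetilde{S}$-edges and chunks could be arranged, one either finds a bad separation (contradicting non-splitting) or exhibits one of the listed enhanced graphs $K_4(1)$, the $W_4(i)$, $W_5(1),W_5(2)$, $K_5^-(i)$, $P(i)$, $P^+(i)$, $D(1)$, $D^*(1)$, or a member of $\mathcal{F}_0$, as a minor of $\widetilde{G}$ (using the extended enhanced-minor operations so that series/parallel classes collapse correctly onto protected edges). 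Duality (Proposition \ref{enhanced dual}) roughly halves the work, since $\mathcal{F}$ is closed under the enhanced dual.

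Concretely I would proceed in this order: (1) set up the minimal counterexample, invoke Theorem \ref{decomp} to get a width-$3$ ordering, and record the consequences of Lemma \ref{mm3sep}; (2) turn the ordering into a partition of the edge set into $\widetilde{S}$ together with chunks between consecutive $\widetilde{S}$-edges, and show via minimality and Lemma \ref{mm3sep}(4) that each chunk is one of a short list of small gadgets (an edge, a path of length $\le 3$, a triangle, a triad, etc.) attached on $\le 3$ vertices; (3) analyze the ``skeleton'' obtained by contracting each chunk to its attachment structure — this skeleton is a small $3$-connected graph on few edges, so it must be one of $K_4$, $W_4$, $W_5$ (or larger wheels), $K_5^-$, $P$, $P^+$, $D$, $D^*$, or already contains a member of $\mathcal{F}_0$; (4) for each such skeleton, use the wheel / $K_5^-$ / prism / $P^+$ / $D$ lemmas (Lemmas \ref{nonsplit_wheel}, \ref{k5m_split}, \ref{exclk5mlem}, \ref{exclp}, \ref{exclpp}, \ref{excld}) to pin down exactly which protections $\widetilde{S}$ must carry, and conclude that $\widetilde{G}$ contains one of the listed enhanced graphs as a minor; (5) in the leftover cases where the skeleton is too big, use the width-$3$ bound together with Theorem \ref{octk5} and the Halin--Jung-type degree argument to force a low-degree vertex and reduce to a smaller skeleton, iterating. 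The main obstacle, as usual for excluded-minor theorems of this flavour, is the bookkeeping in step (4)--(5): controlling the interaction between the protected edges of $\widetilde{S}$ and the chunk gadgets across all skeletons simultaneously, and making sure the enhanced-minor operations (4) and (5) are applied so that each chunk collapses onto a protected edge with exactly the right protection label, so that no spurious non-split enhanced graph outside $\mathcal{F}$ survives. Handling the wheel case $W_k$ for $k\ge 6$ uniformly (via the ``close edges'' argument of Lemma \ref{w5}) and the possibility of several moderately large chunks at once are the points most likely to require care.
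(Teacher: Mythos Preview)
Your high-level setup (minimal counterexample, invoke Theorem \ref{decomp} for a width-$3$ ordering, use Lemma \ref{mm3sep}) matches the paper's Subsection \ref{subsec basic} setup, but from step (2) onward your strategy diverges substantially from the paper's, and the divergence introduces real gaps.

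The paper does \emph{not} partition $E(\widetilde{G})$ into $\widetilde{S}$ plus four ``chunks'' between consecutive $\widetilde{S}$-edges, and it does not form a ``skeleton'' by contracting such chunks. Instead, Corollary \ref{init_struc} picks out the \emph{third} $\widetilde{S}$-edge $e$ in the ordering and produces a single partition $\{A_0,\{e\},A_1\}$ with $\partial(A_0)=\partial(A_1)=\{v_1,v_2,v_3\}$ and $|A_i\cap\widetilde{S}|=2$. The entire remainder of the argument analyzes the two subgraphs $\widetilde{G}_0,\widetilde{G}_1$: Lemma \ref{basic_decomp} forces each to be tiny, to contain a cycle off $\{v_1,v_2,v_3\}$, or to be a $\{v_1,v_2,v_3\}$-doublefan; Lemmas \ref{notriangle}--\ref{df-lad} then systematically eliminate the doublefan/doublefan and doublefan/ladder combinations; and Lemmas \ref{no_triad} and \ref{triad_struc} (via the rooted ``triad'', ``Sail'', ``Hawk'', ``Half-Basket'' minors of Figure \ref{triad_halves}) close the remaining cases. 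The final proof of Lemma \ref{fminor} is then a short case split on which of $\widetilde{G}_0,\widetilde{G}_1$ contains triad. The Section-6 lemmas you cite (\ref{nonsplit_wheel}, \ref{k5m_split}, \ref{exclk5mlem}, \ref{exclp}, \ref{exclpp}, \ref{excld}) are used only to certify that the members of $\mathcal{F}$ are minor-minimal non-split; they play no role in the containment direction.

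Concretely, two claims in your plan do not go through. First, Lemma \ref{mm3sep}(4) bounds only the side of a $\le 3$-separation disjoint from $\widetilde{S}$; a ``chunk'' between two interior $\widetilde{S}$-edges attaches through the union of two $3$-boundaries, which can have up to six vertices and need not be one side of a $3$-separation, so the lemma does not force such a chunk to have at most three edges. In the paper's actual decomposition, neither $A_0$ nor $A_1$ is small: they are typically large doublefans or ladders, and it is precisely the doublefan/ladder structure theory (Lemmas \ref{basic_decomp}, \ref{central_doublefan}, \ref{general_doublefan}, \ref{df-lad}, \ref{triad_struc}) that substitutes for any size bound. Second, your assertion that the ``skeleton'' must be one of $K_4,W_4,W_5,K_5^-,P,P^+,D,D^*$ is unjustified: nothing you have set up forces the contracted graph to be $3$-connected, simple, or of bounded size, and the iterative reduction in your step (5) via Theorem \ref{octk5} does not interact with the enhanced structure in a way that would let you reduce skeleton size while preserving non-splitness. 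The paper avoids any such iteration by working directly with the two-piece partition and the doublefan/ladder taxonomy.
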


As the following proof shows, this lemma implies our full excluded minor result.

\bigskip

\noindent{\it Proof of Theorem \ref{mainskeleton}.} It follows from Lemma \ref{fminor} that every non-split enhanced graph contains a minor in 
$\mathcal{F}$.  It follows from Lemmas \ref{k4}, \ref{w4}, \ref{w5}, \ref{exclk5mlem}, \ref{exclp}, \ref{exclpp}, and \ref{excld} that these 
enhanced graphs do not split and no one contains another as a minor.  This completes the proof.
\quad\quad$\Box$

\subsection{Proof Setup}

We begin the proof of Lemma \ref{fminor} by supposing (for a contradiction) that it does not hold.  Choose a minor-minimal enhanced graph $\widetilde{G}$ with a non-split 5-configuration $\widetilde{S}$ which is a counterexample.  The starting point for our analysis is the following corollary.

\begin{center}
\begin{figure}[h]
\centerline{\includegraphics[width=4.5cm]{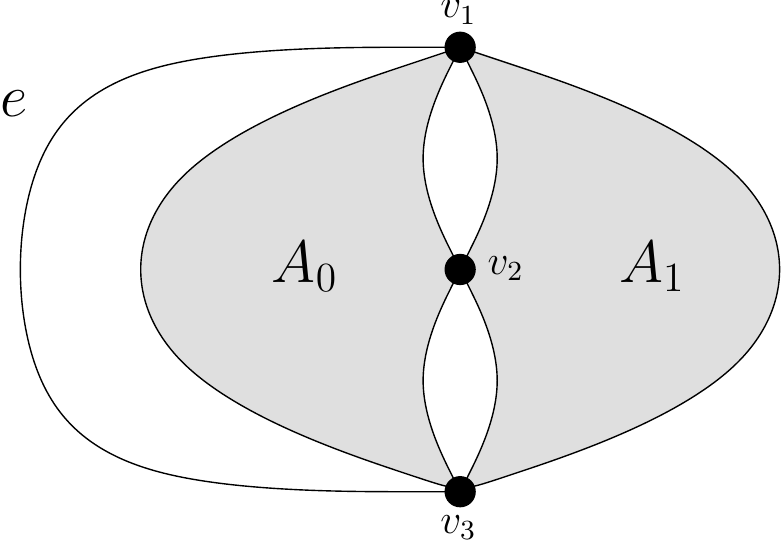}}
\caption{Our starting configuration}
\end {figure}
\end{center}

\begin{corollary}
\label{init_struc}
Either $\widetilde{G}$ or $(\widetilde{G})^*$ has an edge partition $\{ A_0, \{e\}, A_1 \}$ satisfying:
\begin{enumerate}
\item $e \in \widetilde{S}$
\item $|A_i \cap \widetilde{S}| = 2$ for $i=0,1$.
\item $\partial(A_0) = \partial(A_1) = \{v_1,v_2,v_3\}$
\item $e = v_1 v_3$.  
\item At least one of $v_1,v_2,v_3$ has at most one neighbour in $V(A_i) \setminus \{v_1,v_2,v_3\}$ for $i=0,1$.
\end{enumerate}
\end{corollary}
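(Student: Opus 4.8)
\textbf{Proof plan for Corollary \ref{init_struc}.}

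The plan is to extract this starting configuration from the minimal counterexample $\widetilde{G}$ by unwinding what it means for $\widetilde{S}$ to be a non-split $5$-configuration. Since $\widetilde{G}$ is minor-minimal non-split with $\widetilde{S}$ non-splitting, Lemma \ref{mm3sep}(3) gives $\widetilde{C}\cup\widetilde{D}\subseteq\widetilde{S}$, and earlier structural results (in particular the reduction to $3$-connected enhanced graphs in Section \ref{wands}) tell us $\widetilde{G}$ is $3$-connected. I would first argue that $\widetilde{G}$ has width at least $4$: otherwise, by the argument in the proof of Theorem \ref{heyguesswhatsplits} (a width-$3$ ordering produces, for the third edge of $\widetilde{S}$ in the order, either a $2$-separation of $\widetilde{G}\setminus e_\ell$ or of $\widetilde{G}/e_\ell$ with two edges of $\widetilde{S}$ on each side, i.e. a split — and this argument carries over to the enhanced setting since the splitting definition for enhanced graphs coincides with the graph notion when we are allowed to delete/contract), $\widetilde{S}$ would split. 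Hence every edge ordering of $\widetilde{G}$ contains a separation of order at least $4$ among its initial segments.

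Next I would locate a $3$-separation that is ``balanced'' with respect to $\widetilde{S}$. Consider an ordering $e_1,\dots,e_m$ of $E(\widetilde G)$ in which the five edges of $\widetilde S$ come in a convenient position, and track $\partial(\{e_1,\dots,e_j\})$ as $j$ increases. Because $\widetilde G$ is $3$-connected and non-split, one can choose the third edge $e$ of $\widetilde S$ to encounter, at the moment it is added, a separation $(\,\{e_1,\dots,e_{\ell-1}\},\,\{e_\ell,\dots,e_m\}\,)$ of order $3$ on a set $X=\{v_1,v_2,v_3\}$ with exactly two edges of $\widetilde S$ before $e$ and two after; if instead no such ordering keeps the order at $3$, that produces a width-$\le 3$-style split as above, contradicting non-splitting. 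Setting $e=e_\ell$ one gets $A_0=\{e_1,\dots,e_{\ell-1}\}$ and $A_1=\{e_\ell,\dots,e_m\}\setminus\{e\}$; conditions (1) and (2) are then immediate, and $\partial(A_0)=\partial(A_1)=\{v_1,v_2,v_3\}$ provided we arrange that $e$ has both ends in $X$ on the $A_0$ side (if $e$ had an endpoint outside $X$, $\widetilde G\setminus e$ would inherit a bad $2$-separation from the order-$3$ separation, forcing a split), which gives (3) and (4) after relabelling so that $e=v_1v_3$.

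For condition (5) — the genuinely new assertion — I would invoke minimality of $\widetilde G$ together with Lemma \ref{mm3sep}(4). Suppose, for contradiction, that on side $A_0$ every vertex of $X$ has at least two neighbours in $V(A_0)\setminus X$ (and likewise on $A_1$ — but by taking duals, via Proposition \ref{enhanced dual} and the duality of Lemma \ref{mm3sep}, it suffices to handle one side, which is exactly why the statement is ``$\widetilde G$ or $(\widetilde G)^*$''). Then each $v_i$ has degree $\ge 3$ inside $G_{A_0}$ in a suitable sense, and I want to run the argument of Lemma \ref{getsrich}: the hypotheses of Lemma \ref{getsrich} are met (or their enhanced analogue), so one side becomes poor and one gets a refined $3$-separation on $\{w,v_1,u\}$ with both sides rich; feeding the ``rich'' side structure back in, together with the greedy re-sequencing used in the proof of Theorem \ref{decomp}, produces either a smaller non-split enhanced graph (contradicting minimality) or forces a width-$3$ ordering of $\widetilde G$, again contradicting that $\widetilde G$ is non-split. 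The main obstacle is precisely this last step: making the planar/rich-poor machinery of Lemma \ref{getsrich} interact correctly with the enhanced-graph minor operations and with the requirement that two edges of $\widetilde S$ stay on each side throughout the re-sequencing, so that the contradiction with minimality is clean; everything else is a careful but routine bookkeeping of separations and of the reductions already established in Section \ref{wands}.
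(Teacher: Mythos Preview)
Your approach has a fundamental error at the very first step: you claim that $\widetilde{G}$ has width at least $4$, but in fact it has width at most $3$. Recall that $\widetilde{G}$ is a minor-minimal counterexample to Lemma~\ref{fminor}, so it has \emph{no} minor in $\mathcal{F}$; in particular it has no minor in $\mathcal{F}_0\subseteq\mathcal{F}$. Since $\widetilde{G}$ is simple and $3$-connected, Theorem~\ref{decomp} applies directly and gives a width-$3$ edge ordering. Your argument that width $\le 3$ would force $\widetilde{S}$ to split does \emph{not} carry over to the enhanced setting: the third edge $e$ of $\widetilde{S}$ in the ordering may be contract-proof, delete-proof, or both, so the deletion/contraction move used in Theorem~\ref{heyguesswhatsplits} is simply unavailable. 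That is precisely why non-split enhanced graphs of width $3$ exist (e.g.\ $K_4(1)$, all the $W_4(i)$, etc.), and why the whole of Section~\ref{sec full char} is needed.

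The paper's proof is almost the mirror image of your plan. Take a width-$3$ ordering, let $e=e_i$ be the third edge of $\widetilde{S}$ in it, and set $A_0=\{e_1,\dots,e_{i-1}\}$, $A_1=\{e_{i+1},\dots,e_m\}$; properties (1)--(2) are immediate. Both $(A_0,A_1\cup\{e\})$ and $(A_0\cup\{e\},A_1)$ are $3$-separations. If the two boundaries coincide then $e$ has both ends in $\partial(A_0)=\partial(A_1)$, giving (3)--(4) in $\widetilde{G}$; if they differ by one vertex, the dual picture (contraction and deletion swap) puts us in the coinciding case for $(\widetilde{G})^*$, which is why the statement reads ``$\widetilde{G}$ or $(\widetilde{G})^*$''. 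Property (5) is then a direct consequence of the width being $3$: moving one more edge across in the ordering keeps the boundary of size $3$, which forces some $v_j$ to have at most one neighbour outside $\{v_1,v_2,v_3\}$ on each side. None of the rich/poor machinery from Lemma~\ref{getsrich} or re-sequencing arguments are needed here.
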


\begin{proof} Use Theorem \ref{decomp} to create an edge ordering $e_1, e_2, \ldots, e_m$ of width 3, and choose $1 \le i \le m$ so that $e_i$ is the third edge of $\widetilde{S}$ in this ordering. Set $A_0 = \{ e_1, \ldots, e_{i-1} \}$, $e = e_i$, and $A_1 = \{ e_{i+1}, \ldots, e_m \}$.  It follows immediately that this partition satisfies the first two properties.  By construction, both $(A_0 \cup \{e\}, A_1)$ and $(A_0, A_1 \cup \{e\})$ are 3-separations in both $\widetilde{G}$ and $(\widetilde{G})^*$, so by possibly interchanging $\widetilde{G}$ with its dual, we may assume that the third and fourth properties holds.  Finally, the fifth is a consequence of width 3.
\end{proof}

Since $\mathcal{F} \setminus \mathcal{F}_0$ is closed under plane duality, by possibly interchanging $\widetilde{G}$ with its dual, we may assume it has an edge partition $\{ A_0,\{e\},A_1 \}$ in accordance with the above corollary.  Furthermore, we may assume that among all such partitions we have chosen $\{A_0,\{e\},A_1\}$ so as to minimize $|A_0|$.  For $i=0,1$ we let $\widetilde{G}_i$ denote the graph induced by $A_i$.

\subsection{Basic Properties}\label{subsec basic}

In this subsection we establish two lemmas which give basic properties of our minimal counterexample $\widetilde{G}$ relative to the partition $\{ A_0, \{e\}, A_1 \}$.  We begin by introducing a structure which will feature prominently in our analysis.

\begin{center}
\begin{figure}[h]
\centerline{\includegraphics[height=4cm]{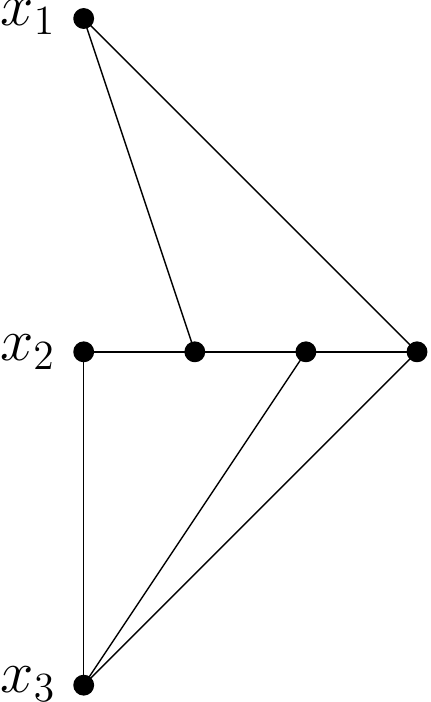}}
\caption{an $(x_1,x_2,x_3)$-doublefan}.  
\end {figure}
\end{center}

Let $G'$ be a graph and let $x_1,x_2,x_3 \in V(G')$ be distinct.  We define $G'$ to be an $(x_1,x_2,x_3)$-\emph{doublefan} 
if $G' \setminus \{x_1,x_3\}$ is a path beginning at $x_2$ and $x_1 x_3$ is not an edge.  We say that $G'$ is a $\{x_1,x_2,x_3\}$-\emph{doublefan} if it is a 
$(x_i,x_j,x_k)$-doublefan for some $i,j,k$ with $\{i,j,k\} = \{1,2,3\}$. If $G'$ is an $(x_1,x_2,x_3)$-doublefan with vertex sequence $w_1, w_2, \ldots, w_n$ for the path $G' \setminus \{x_1,x_2\}$ with $x_2 = w_1$, we call any edge not incident with $x_1$ or $x_3$ a \emph{spine} edge and any edge incident with $x_1$ or $x_3$ a \emph{rib} edge. If needed, we may say an $x_1$-rib or $x_3$-rib, based on incidence.  A rib edge not incident with $w_n$ is called an \emph{inner} rib.  

Now we will define a total preorder on the edges of $G'$.  A preorder is a binary relation which is reflexive and transitive, but it is not required to be anti-symmetric, that is there may be distinct elements $a$ and $b$ with $a\leq b$ and $b\leq a$.  A total preorder is a preorder in which every two elements are comparable.

Returning to the edges of $G'$, if $e,f \in E(G')$ are both rib edges with $e$ incident with $w_i$ and $f$ incident with $w_j$ then we define $e \le f$ if $i \le j$.  If $e,f$ are both spine edges, say $e = w_i w_{i+1}$ and $f = w_j w_{j+1}$ then we define $e \le f$ if $i \le j$.  Finally, if $e = w_i w_{i+1}$ is a spine edge and $f$ is a rib edge incident with $w_j$ then we put $e \le f$ if $i < j$ and $f \le e$ otherwise.  

Note that this is a total preorder rather than a total order because a given spine vertex may have two ribs coming off of it both of which occupy the same place in the order.

\begin{lemma} \label{spinerib} 
Suppose $G_i$ is a $(v_1,v_2,v_3)$-doublefan and $f\leq f'$ with $f,f' \in A_i \cap S$.  Then if $f$ is a rib it must be contract-proof and if $f$ is a spine it must be delete-proof.
\end{lemma}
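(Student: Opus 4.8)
The plan is to prove the contrapositive in both cases: if $f$ is a rib that is \emph{not} contract-proof, I will produce a bad $2$-separation of $\widetilde{G}/f$, and if $f$ is a spine edge that is \emph{not} delete-proof, I will produce a bad $2$-separation of $\widetilde{G}\setminus f$; either conclusion contradicts the assumption that $\widetilde{S}$ does not split. Throughout, write $A_i\cap\widetilde{S}=\{f,f'\}$ with $f\le f'$ (and $f\ne f'$, which is forced since $|A_i\cap\widetilde{S}|=2$), write $A_{1-i}\cap\widetilde{S}=\{s_1,s_2\}$, recall $e=v_1v_3\in\widetilde{S}$ and $\partial(A_0)=\partial(A_1)=\{v_1,v_2,v_3\}$, and let $v_2=w_1,w_2,\ldots,w_n$ be the path $\widetilde{G}_i\setminus\{v_1,v_3\}$. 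The only fact about the other side I will use is that $V(\widetilde{G}_{1-i})\cap V(\widetilde{G}_i)=\{v_1,v_2,v_3\}$, so since the path vertices $w_k$ are distinct from the fan centres $v_1,v_3$, the set $A_{1-i}$ contributes no vertex $w_k$ with $k\ge 2$.

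Suppose first $f$ is a rib; by the symmetry $v_1\leftrightarrow v_3$ of the doublefan we may assume $f$ is a $v_1$-rib incident with $w_j$, and suppose $f\notin\widetilde{C}$. In $\widetilde{G}/f$ the vertices $v_1,w_j$ are merged to a vertex $u$. Let $B$ consist of $e$ together with every edge of $\widetilde{G}_i$ whose preorder position is $\ge$ that of $f$, minus $f$ itself (concretely the spine edges $w_kw_{k+1}$ with $k\ge j$ and the ribs at $w_k$ with $k\ge j$, excluding $f$), and let $A$ be all remaining edges. Then $V(B)=\{u,v_3\}\cup\{w_{j+1},\ldots,w_n\}$, while $V(A)$ contains $u$, $v_3$, $\{w_1,\ldots,w_{j-1}\}$ and all of $V(\widetilde{G}_{1-i})$, and by the remark above no $w_k$ with $k>j$ lies in $V(A)$; hence $V(A)\cap V(B)=\{u,v_3\}$ and $(A,B)$ has order $2$. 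By the definition of the preorder every element of $A_i\cap\widetilde{S}$ above $f$ — in particular $f'$ — lies in $B$, so $B\cap\widetilde{S}=\{e,f'\}$ and $A\cap\widetilde{S}=\{s_1,s_2\}$. Thus $(A,B)$ is a bad $2$-separation of $\widetilde{G}/f$, so $\widetilde{S}$ splits, a contradiction; therefore $f$ is contract-proof.

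The spine case is parallel. If $f=w_jw_{j+1}$ is a spine edge and $f\notin\widetilde{D}$, form $\widetilde{G}\setminus f$; deleting $f$ severs the path, so $w_{j+1},\ldots,w_n$ are now attached to the rest of $\widetilde{G}$ only through $v_1$ and $v_3$. Take $B=\{e\}\cup\{w_kw_{k+1}:k\ge j+1\}\cup\{\text{ribs at }w_k:k\ge j+1\}$ and $A$ the rest; then the only vertices shared by $A$ and $B$ are $v_1$ and $v_3$ (using again that $A_{1-i}$ contributes no $w_k$ with $k\ge 2$), so $(A,B)$ has order $2$, and the preorder again forces $f'\in B$, giving $B\cap\widetilde{S}=\{e,f'\}$ and $A\cap\widetilde{S}=\{s_1,s_2\}$ — a bad $2$-separation of $\widetilde{G}\setminus f$, hence a contradiction, so $f$ is delete-proof.

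The conceptual content is exactly the choice of $B$: pairing $e=v_1v_3$ with the ``far'' part of the doublefan keeps the separation order equal to $2$ (the two fan centres are precisely the shared vertices), and because $f\le f'$ it automatically pulls $f'$ into $B$, yielding the required $2$--$2$ split of $\widetilde{S}$. I do not expect a genuine obstacle; what remains is routine bookkeeping, namely the two vertex-set computations and a short case split (according to whether $f'$ is a rib or a spine edge) showing the preorder places $f'$, and from $\widetilde{S}$ only $e$ and $f'$, on the $B$ side. The only points needing mild care are the degenerate positions ($j=1$ or $j=n$ in the rib case, $j=n-1$ in the spine case), where one checks that the identifications forced by contracting or deleting $f$ — together with $3$-connectivity, which guarantees each $w_k$ has a rib — do not enlarge $V(A)\cap V(B)$.
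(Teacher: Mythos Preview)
Your proof is correct and follows exactly the same idea as the paper: after contracting (rib case) or deleting (spine case) $f$, the pair $\{v_1,v_3\}$ (or its contraction image) yields a $2$-separation with $\{e,f'\}$ on one side and the two edges of $\widetilde{S}\cap A_{1-i}$ on the other, contradicting non-splitting. The paper states this in a single sentence; your version simply unpacks the vertex-set bookkeeping and the preorder check that $f'$ lands on the $B$ side, and the degenerate positions you flag cause no trouble.
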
 

\begin{proof} This is immediate, as otherwise vertices $v_1,v_3$ create a 2-separation that partitions the edges of $S$ into $\{e,f'\}$ and $S \smallsetminus \{e,f,f'\}$.  \end{proof}

\begin{lemma}
\label{basic_decomp}
For both $i=0,1$ one of the following holds:
\begin{enumerate}
\item $E(\widetilde{G}_i) = \{ v_1 v_2, v_2 v_3 \}$.
\item $\widetilde{G}_i \setminus \{v_1,v_2,v_3\}$ contains a cycle.
\item $\widetilde{G}_i$ is a $\{v_1,v_2,v_3\}$-doublefan.
\end{enumerate}
\end{lemma}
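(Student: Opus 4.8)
The goal is a trichotomy for each piece $\widetilde{G}_i$ ($i=0,1$) of the edge partition $\{A_0,\{e\},A_1\}$ produced by Corollary~\ref{init_struc}. Fix $i$ and write $G' = \widetilde{G}_i$, $X = \{v_1,v_2,v_3\} = \partial(A_i)$. The plan is to assume that neither (1) nor (2) holds and deduce that (3) holds. So suppose $E(G') \neq \{v_1v_2, v_2v_3\}$ and $G' \setminus X$ contains no cycle.

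\textbf{Step 1: reduce to the ``rich/poor'' dichotomy.}
Since $G' \setminus X$ is acyclic, it is a forest, and since $\widetilde{G}$ is $3$-connected, $G' \setminus X$ is in fact connected, hence a tree; moreover every leaf of this tree other than (possibly) members of $X$ would have degree at most $2$ in $\widetilde{G}$, a contradiction. This is exactly the argument already used in the proof of Lemma~\ref{fan2}. Thus $G' \setminus X$ is a path, and to conclude that $G'$ is a $\{v_1,v_2,v_3\}$-doublefan it remains to show that one of the three vertices of $X$ is \emph{not} an endpoint of this path, that this non-endpoint vertex plays the role of $v_2$ (i.e.\ the path begins at it after deleting the other two), and that the remaining two vertices of $X$ are non-adjacent. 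I would split into cases according to how many of $v_1,v_2,v_3$ lie on the path $G'\setminus X$ — but since $G'\setminus X$ \emph{is} the whole of $G'$ minus three vertices, the relevant question is only which of the $v_j$ are endpoints of it.

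\textbf{Step 2: use the doublefan-forcing hypotheses of Corollary~\ref{init_struc}.}
The key leverage is property (5) of Corollary~\ref{init_struc}: at least one vertex of $X$, say $v_k$, has at most one neighbour in $V(A_i) \setminus X$. Combined with $3$-connectivity (every vertex of $X$ has degree $\geq 3$ in $\widetilde{G}$, and at most one incident edge of $G'$ can go to another vertex of $X$ plus the edge $e$), a short degree count shows $v_k$ must be exactly the ``centre'' $v_2$ of the doublefan: it has a single neighbour $w_1$ on the path $G'\setminus X$, and deleting the other two vertices of $X$ leaves a path starting at $v_k$. The remaining two vertices of $X$ are the two ``ends'' $x_1, x_3$, and I would check $x_1 x_3 \notin E(G')$ directly: since $e = v_1 v_3$ and in case (4) $e=v_1v_3$ is already the edge joining that pair across the separation, a parallel edge inside $G'$ would give either a loop after contraction or would contradict minimality of $|A_0|$ / $3$-connectivity. (One must be slightly careful about which two of $v_1,v_2,v_3$ are the ends versus the centre, depending on whether $v_k = v_2$ or $v_k \in \{v_1,v_3\}$; in the latter case one re-labels so that property (4), $e=v_1v_3$, is respected — this is the kind of bookkeeping the ``$\{v_1,v_2,v_3\}$-doublefan'' (unordered) wording of Lemma~\ref{basic_decomp} is designed to absorb.)

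\textbf{Step 3: rule out degenerate small cases and conclude.}
If the path $G'\setminus X$ is a single vertex $w_1$, then $G'$ has vertex set $X \cup \{w_1\}$ and, by $3$-connectivity and $|A_i\cap\widetilde{S}|=2$, its edges are forced; one checks this is consistent with (3) (a doublefan with one spine vertex) unless in fact $E(G') = \{v_1v_2,v_2v_3\}$, which is case (1). Assembling Steps 1–3 gives the trichotomy. The main obstacle I anticipate is Step 2: getting the case analysis on ``which vertex of $X$ is the doublefan centre'' airtight, in particular handling the interaction between property (5) (which names \emph{some} low-degree vertex of $X$) and property (4) (which pins down $e=v_1v_3$ and hence constrains which pair of $X$-vertices must be the doublefan ends), and making sure the non-adjacency $x_1x_3\notin E(G')$ is genuinely forced rather than merely typical — this likely uses the minimality of $|A_0|$ in the choice of partition together with $3$-connectivity, exactly as flagged after Corollary~\ref{init_struc}.
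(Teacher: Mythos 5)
Your Step 1 contains the central gap, and it stems from misreading how the argument in Lemma~\ref{fan2} transfers. In Lemma~\ref{fan2} only \emph{one} vertex $x_1$ is deleted, so $G_A\setminus x_1$ is connected by $3$-connectivity, and a leaf $u$ of that tree not in $\{x_2,x_3\}$ has degree $1$ in the tree plus at most one edge (to $x_1$), hence degree $\le 2$. Here you delete \emph{all three} of $v_1,v_2,v_3$ from $\widetilde{G}_i$. Neither half of your Step~1 claim then follows: (a) $3$-connectivity does not make $\widetilde{G}_i \setminus \{v_1,v_2,v_3\}$ connected — removing three vertices from a $3$-connected graph can easily disconnect it; and (b) a leaf $w$ of the forest $\widetilde{G}_i\setminus\{v_1,v_2,v_3\}$ has degree $1$ inside the forest plus potentially edges to \emph{each} of $v_1,v_2,v_3$, so its degree in $\widetilde{G}$ can be as large as $4$, giving no contradiction. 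Also, the parenthetical ``other than (possibly) members of $X$'' is vacuous: $X$ has been deleted, so no vertex of $X$ lives in this forest.

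The paper closes exactly these two gaps with excluded-minor arguments that you omit, and both crucially use information about the \emph{other} side $\widetilde{G}_{1-i}$, which your proposal never consults. If $\widetilde{G}_i\setminus\{v_1,v_2,v_3\}$ has $\ge 2$ components, then either $V(\widetilde{G}_{1-i}) \ne \{v_1,v_2,v_3\}$ and one finds a $K_{3,3}$ minor, or $V(\widetilde{G}_{1-i}) = \{v_1,v_2,v_3\}$, forcing $A_{1-i}\cup\{e\}$ to be a triangle of edges of $\widetilde{S}$, all contract-proof, hence a $K_5^-(2)$ minor. Once the forest is a tree, if it has $\ge 3$ leaves then by $3$-connectivity and planarity it contains the rooted minor of Figure~\ref{cube-v}, and pairing this with a rooted triangle or $K_{1,3}$ in $\widetilde{G}_{1-i}\cup e$ produces a cube or $H$ minor. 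Only after that is the tree known to be a path, and only then does the paper invoke property (5) of Corollary~\ref{init_struc} to finish — roughly your Step~2. The non-adjacency of the two end roots, which you treat as a delicate point, is trivial: $e = v_1 v_3$ already exists, so simplicity of $\widetilde{G}$ forbids another $v_1v_3$ edge inside $\widetilde{G}_i$; the paper notes this in one line at the outset.
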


\begin{proof} Since $\widetilde{G}$ is simple $v_1v_3 \not\in \widetilde{G}_i$ and since $|S\cap A_i| = 2$, $|A_i| \geq 2$.  

We shall assume that the first and second properties do not hold, and show the third does.  By these assumptions, the graph $\widetilde{G}_i  \setminus \{v_1,v_2,v_3\}$ is a nonempty forest.  Suppose first that $\widetilde{G}_i \setminus \{v_1,v_2,v_3\}$ has at least two components.  If $V(\widetilde{G}_{1-i}) \neq \{v_1,v_2,v_3\}$, then $\widetilde{G} \setminus \{v_1,v_2,v_3\}$ has at least three components, and then by 3-connectivity $\widetilde{G}$ contains 
a $K_{3,3}$-minor which is contradictory.  So, we must have $V(G_{1-i}) = \{v_1,v_2,v_3\}$ and since $|A_{1-i} \cap S| = 2$ and $e = v_1 v_3$ it can only be that 
$A_{1-i} = \{ v_1 v_2, v_2 v_3 \} \subseteq \widetilde{S}$.  As the edges in $A_{1-i} \cup \{e\}$ form a triangle, and since each of these edges is in $\widetilde{S}$ they must all be contract-proof, and it then follows that $\widetilde{G}$ contains $K_5^-(2)$ as a minor, a contradiction.  Thus $\widetilde{G}_i \setminus \{v_1,v_2,v_3\}$ must be connected, so it is a tree.

\begin{center}
\begin{figure}[h]
\centerline{\includegraphics[width=2cm]{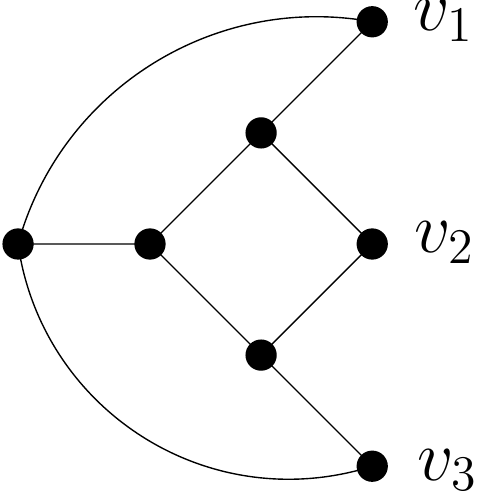}}
\caption{}
\label{cube-v}
\end {figure}
\end{center}

If $\widetilde{G}_i \setminus \{v_1,v_2,v_3\}$ has at least three leaves, then by 3-connectivity and planarity it must contain the graph from Figure \ref{cube-v} as a rooted minor.  However, $\widetilde{G}_{1-i} \cup e$ must contain a rooted minor consisting of a triangle on $v_1,v_2,v_3$ or a $K_{1,3}$ with degree one vertices $v_1,v_2,v_3$, and thus must $\widetilde{G}$ contains the cube or $H$ as a minor, which is contradictory.  So, it must be that $\widetilde{G}_i \setminus \{v_1,v_2,v_3\}$ is a path.  By 3-connectivity and planarity, each of $v_1,v_2,v_3$ must be adjacent to at least one endpoint of this path.  It now follows from the last property in Corollary \ref{init_struc} that $\widetilde{G}_i$ is a $\{v_1,v_2,v_3\}$-doublefan.
\end{proof}

\medskip

Our next lemma exploits the assumption that we have chosen our partition $(A_0,\{e\},A_1)$ with $|A_0|$ minimum.  

\begin{lemma}
\label{iftriangle}
If there is a triangle in $\widetilde{S}$, then $A_0 \cup \{e\}$ is a triangle of edges in $\widetilde{S}$.
\end{lemma}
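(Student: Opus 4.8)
The plan is to exploit the minimality of $|A_0|$ by producing a competing partition built directly from the triangle. Suppose $\widetilde{S}$ contains a triangle $T$; by Lemma~\ref{mm3sep}(1) its three edges are contract-proof, and by Lemma~\ref{mm3sep}(3) they lie in $\widetilde{S}$, so writing $V(T)=\{a,b,c\}$ we have $\widetilde{S}=E(T)\cup\{s_1,s_2\}$ with $s_1,s_2$ the two remaining edges. For a vertex $v$ of $T$ with incident $T$-edges $f,f'$ and third $T$-edge $f''$, consider the partition $\pi_v=(\{f,f'\},\{f''\},E(\widetilde{G})\setminus E(T))$ of $E(\widetilde{G})$. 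I claim that if $v$ has degree $3$ in $\widetilde{G}$ then $\pi_v$ satisfies all five conditions of Corollary~\ref{init_struc} for $\widetilde{G}$ itself. Conditions 1 and 2 are immediate since $f''\in\widetilde{S}$ and $|\widetilde{S}|=5$ with $E(T)\subseteq\widetilde{S}$. For conditions 3 and 4, $3$-connectivity forces every vertex of $T$ to have an edge outside $E(T)$, so the boundary of $\{f,f'\}$ and of $E(\widetilde{G})\setminus E(T)$ is exactly $\{a,b,c\}$, and $f''$ joins the two vertices of $T$ other than $v$. Condition 5 is the only place the degree hypothesis enters: the small side $\{f,f'\}$ has no interior vertices, and on the other side $v$ has at most $\deg_{\widetilde{G}}(v)-2=1$ neighbour outside $\{a,b,c\}$.

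Given such a $v$, minimality of $|A_0|$ among partitions meeting Corollary~\ref{init_struc} yields $|A_0|\le 2$, hence $|A_0|=2$ (we always have $|A_0|\ge 2$). A two-edge set $A_0$ whose boundary is $\{v_1,v_2,v_3\}$ and which avoids $e=v_1v_3$ must equal $\{v_1v_2,v_2v_3\}$: the two edges can span three vertices only by sharing one, and the shared vertex cannot be $v_1$ or $v_3$ without forcing $v_1v_3\in A_0$, while a four-vertex span would leave an interior vertex of degree $1$, contradicting $3$-connectivity. Therefore $A_0\cup\{e\}=\{v_1v_2,v_2v_3,v_1v_3\}$ is a triangle, and all three of its edges lie in $\widetilde{S}$ because $e\in\widetilde{S}$ and $|A_0\cap\widetilde{S}|=2=|A_0|$. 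This is exactly the conclusion, so it remains to produce a degree-$3$ vertex on $T$.

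Suppose for contradiction that every vertex of $T$ has degree $\ge 4$, hence at least two edges to $Z:=\widetilde{G}-\{a,b,c\}$. If $Z$ has at least two components then $3$-connectivity forces each component to be adjacent to all of $a,b,c$; contracting two such components to single vertices exhibits $K_5^-$ as a minor of $\widetilde{G}$ with $E(T)$ as its equatorial triangle, and since no $T$-edge is contracted, the contract-proof labels survive, so $\widetilde{G}$ has $K_5^-(2)\in\mathcal{F}$ as an enhanced minor — impossible, as $\widetilde{G}$ (a minimal counterexample to Lemma~\ref{fminor}, hence minor-minimal non-split with no $\mathcal{F}$-minor) contains no member of $\mathcal{F}$. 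If instead $Z$ is connected and nonempty, there is an unprotected edge incident with a vertex of $T$ but not in $E(T)$ (there are at least three such edges in total, but at most the two edges $s_1,s_2$ of $\widetilde{S}$ can be protected, by Lemma~\ref{mm3sep}(3)); contracting it and tracking the resulting structure produces a bad $2$-separation that splits $\widetilde{S}$, again a contradiction. Hence $T$ has a degree-$3$ vertex and the previous two paragraphs apply. (As a sanity check against Lemma~\ref{basic_decomp}: the outcome $A_0=\{v_1v_2,v_2v_3\}$ is precisely its case (1), so the lemma says any other structure on $\widetilde{G}_0$ is ruled out once $\widetilde{S}$ has a triangle.)

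The delicate point — and the expected main obstacle — is the final subcase: verifying that a triangle of $\widetilde{S}$ all of whose vertices have degree $\ge 4$ and whose complement $Z$ is connected must in fact split. Pinning this down requires following how the two extra edges $s_1,s_2$ and the connectivity of $Z$ behave after contracting a suitable spoke at a triangle vertex, and it is here, together with the reduction of condition 5 of Corollary~\ref{init_struc} to the degree-$3$ condition, that essentially all of the content sits; the remaining steps are bookkeeping against the minimality of $|A_0|$.
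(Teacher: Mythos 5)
Your overall architecture matches the paper's: reduce to producing a triangle vertex with at most one neighbour outside the triangle (equivalently, degree $3$ in $\widetilde{G}$), feed the resulting partition into Corollary~\ref{init_struc}, and invoke the minimality of $|A_0|$ to force $|A_0|=2$ and hence a triangle $A_0\cup\{e\}\subseteq\widetilde{S}$. The final bookkeeping (verification of conditions 1--5 for $\pi_v$, and that a two-edge $A_0$ with boundary $\{v_1,v_2,v_3\}$ avoiding $v_1v_3$ must be $\{v_1v_2,v_2v_3\}$) is correct and essentially identical to the paper's closing paragraph. The case in which $Z=\widetilde{G}-V(T)$ is disconnected is also handled the same way (a $K_5^-(2)$ minor).

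The genuine gap — which you flag yourself — is the case where all three triangle vertices have degree $\ge 4$ and $Z$ is connected. Your proposed mechanism (``contract an unprotected spoke and track the resulting structure to a bad 2-separation'') is both unsubstantiated and, as far as I can tell, aimed at the wrong target. In the paper this case is resolved by structural minor-finding, not by exhibiting that $\widetilde{S}$ splits: if $Z$ contains a cycle, $\widetilde{G}$ has a $P(1)$ minor; if $Z$ is a tree with $\ge 3$ leaves, a rooted $K_{1,3}$ sits against the triangle to give an $H$ minor; and if $Z$ is a path, the two extremal cut edges produce a $3$-separation $(B_0,B_1)$ with $|B_0|,|B_1|\ge 4$, so Lemma~\ref{mm3sep} forces $|B_i\cap\widetilde{S}|=1$, which in turn forces the triangle edge opposite the path to be delete-proof, and a $D(1)$ minor appears. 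Note in particular that the last step deduces a \emph{protection} from non-splitting and then contradicts the absence of a forbidden minor, rather than deriving a split; and in the cycle case contracting a spoke does not visibly produce a 2-separation with two $\widetilde{S}$-edges on each side. So the argument you sketch for this subcase would not close as stated, and one really does need the further case analysis on the shape of $Z$ and the specific excluded minors $P(1)$, $H$, and $D(1)$.
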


\begin{proof}
Suppose that $\widetilde{S}$ contains three edges which form a triangle, and let $A$ denote a set of  three such edges.  Note that by Lemma \ref{mm3sep} all edges in $A$ must be contract-proof.  Let $B$ be the complement of $A$ and let $\partial(B) = \{u_1,u_2,u_3\}$.  Suppose for a contradiction that each of $u_1,u_2,u_3$ has at least two neighbours in the set $B \setminus \{u_1,u_2,u_3\}$.  If $\widetilde{G} \setminus \{u_1,u_2,u_3\}$ has at least two components, then by 3-connectivity it contains $K_5^-(2)$ as a minor, which is a contradiction.  If $\widetilde{G} \setminus \{u_1,u_2,u_3\}$ contains a cycle, then by 3-connectivity, it contains $P(1)$ as a minor, which is a contradiction.  Therefore, $\widetilde{G} \setminus \{u_1,u_2,u_3\}$ is a tree.  As in the previous lemma, if $\widetilde{G} \setminus \{u_1,u_2,u_3\}$ has at least three leaves, then by 3-connectivity $\widetilde{G} \setminus A$ contains the rooted minor in Figure \ref{cube-v}.  However, then $\widetilde{G}$ contains an $H$ minor which is a contradiction.  So, it must be that $\widetilde{G} \setminus \{u_1,u_2,u_3\}$ is a path, and we let $w_1, w_2, \ldots, w_n$ be the vertex sequence of this path.  Since $\widetilde{G}$ is 3-connected, we may assume (by possibly reindexing) that $w_1$ is adjacent to $u_1$ and $u_2$ and $w_n$ is adjacent to $u_2$ and $u_3$ as shown on the left in Figure \ref{tame_triangle}.

\begin{center}
\begin{figure}[h]
\centerline{\includegraphics[width=10cm]{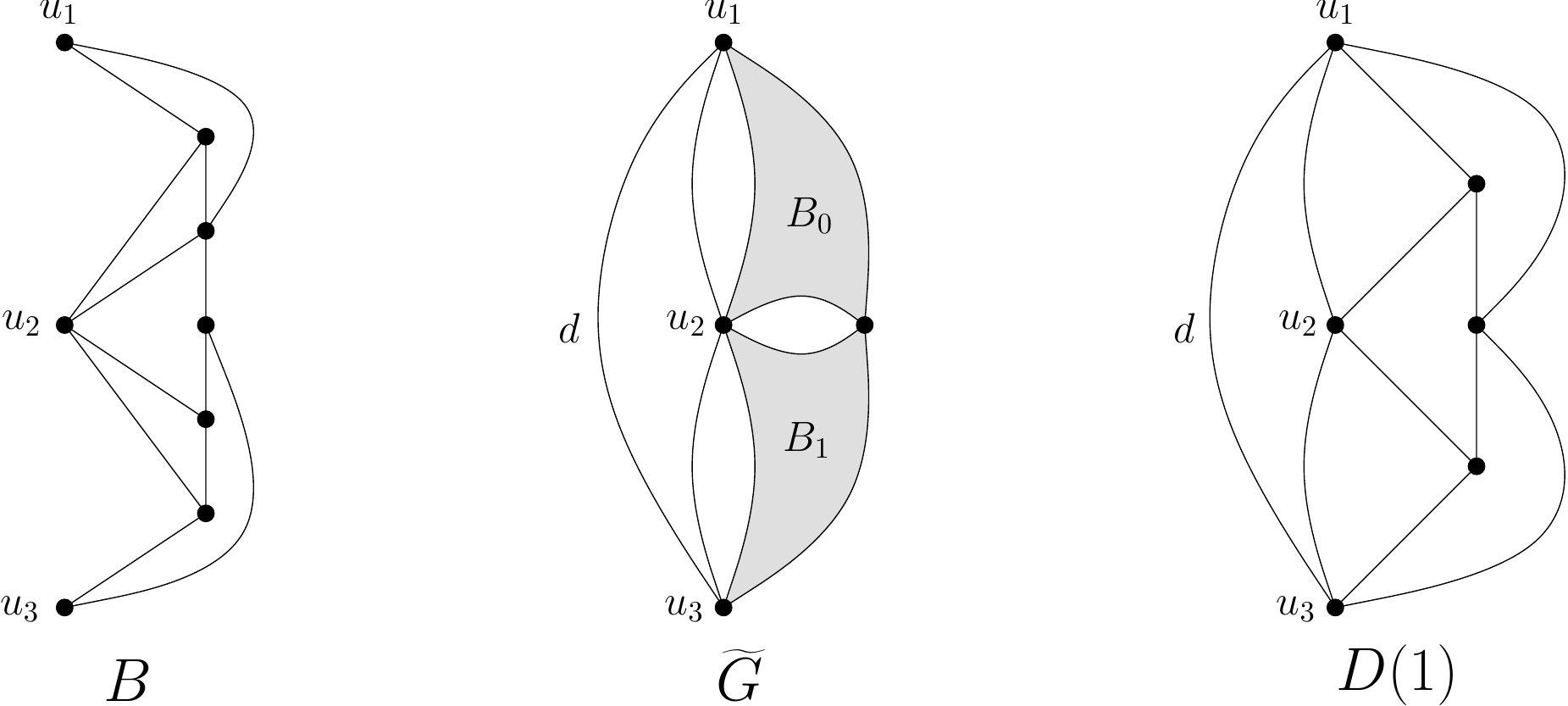}}
\caption{}
\label{tame_triangle}
\end {figure}
\end{center}

Let $1 \le j \le n$ be the highest index for which $w_j$ is adjacent to $u_1$, let $B_0$ consist of all edges in the subpath between $w_1$ and $w_j$ together with all edges between $\{u_1,u_2\}$ and $\{ w_1, \ldots, w_j \}$ and set $B_1 = B \setminus B_0$.  Then by planarity our entire graph $\widetilde{G}$ has the structure shown in the second graph of Figure \ref{tame_triangle}.  Now, by our assumptions, each of $u_1$ and $u_3$ has at least two neighbours in $\{w_1, \ldots, w_n\}$ and it follows that $|B_0|, |B_1| \ge 4$.  Therefore, Lemma \ref{mm3sep} implies that $|B_0 \cap \widetilde{S}| = 1$ and $|B_1 \cap \widetilde{S}| = 1$.  It follows from this and the assumption that $\widetilde{S}$ does not split that the edge $u_1 u_3$ must be delete-proof.  However, then $\widetilde{G}$ contains $D(1)$ as a minor (as shown in the last graph of Figure \ref{tame_triangle}) which is a contradiction.  

Therefore, one of the vertices $u_1,u_2,u_3$ must have at most one neighbour in $B \setminus \{u_1,u_2,u_3\}$.  Now partitioning $A$ into a set 
$A'$ of size two and a singleton $\{e'\}$ we find that $( A', \{e'\},  B )$ satisfies the properties in Corollary \ref{init_struc}.  Now, since we have chosen our partition 
$\{ A_0, \{e\}, A_1\}$ with $|A_0|$ minimum, it follows that $|A_0| = 2$ and then $A_0 \cup \{e\}$ is a triangle, as desired.
\end{proof}

\subsection{A triangle in the 5-configuration}\label{subsec triangle}

For convenience in the remainder of this section, let us set $e_3 = e$ and let $\{e_1,e_2\} = A_0 \cap \widetilde{S}$ and $\{e_4,e_5\} = A_1 \cap \widetilde{S}$.  
The purpose of  this subsection is to establish the following lemma.

\begin{lemma}
\label{notriangle}
There does not exist a triangle in $\widetilde{S}$.
\end{lemma}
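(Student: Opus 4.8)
The plan is to argue by contradiction: suppose $\widetilde{S}$ contains a triangle, and use Lemma~\ref{iftriangle} to normalize so that $A_0 \cup \{e\}$ is exactly that triangle, i.e. $e_1, e_2, e_3$ form a triangle of contract-proof edges (contract-proof by Lemma~\ref{mm3sep}(1)), say on vertices $v_1, v_2, v_3$ with $e_3 = v_1v_3$, $e_1 = v_1v_2$, $e_2 = v_2v_3$. Since $A_0 = \{e_1,e_2\}$, we have $|A_0| = 2$, so $\widetilde{G}_0$ is just the two-edge path $v_1 v_2 v_3$, and all the remaining structure of $\widetilde{G}$ lives in $\widetilde{G}_1$ together with whatever edges of $\widetilde{G}$ are incident to $v_2$ on the $A_1$ side. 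First I would record that $v_2$ has degree at least $3$ in $\widetilde{G}$, so $v_2$ has a neighbour in $V(A_1) \setminus \{v_1,v_2,v_3\}$ (or $v_2 v_1$ or $v_2 v_3$ appears again in $A_1$, but $\widetilde{G}$ is simple so the former). Then I would invoke Lemma~\ref{basic_decomp} applied to $i=1$: either $E(\widetilde{G}_1) = \{v_1v_2, v_2v_3\}$ (impossible, since $\widetilde{G}_1$ carries the bulk of the graph and would make $\widetilde{G}$ too small — specifically $\widetilde{G}$ would be $K_5^-$-like or smaller and one checks directly it splits or is in $\mathcal{F}$), or $\widetilde{G}_1 \setminus \{v_1,v_2,v_3\}$ contains a cycle, or $\widetilde{G}_1$ is a $\{v_1,v_2,v_3\}$-doublefan.

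In the cycle case, I would show $\widetilde{G}$ contains a minor in $\mathcal{F}$ directly: the triangle $e_1 e_2 e_3$ on $\{v_1,v_2,v_3\}$ together with three vertex-disjoint paths from $v_1, v_2, v_3$ into the cycle $D \subseteq \widetilde{G}_1 \setminus \{v_1,v_2,v_3\}$ (these exist by $3$-connectivity), produces a $P(1)$ minor, or more carefully, taking into account the two edges $e_4, e_5 \in A_1 \cap \widetilde{S}$ and their protection status, forces one of the excluded minors isomorphic to $P$ or $P^+$ (this is analogous to the argument in Lemma~\ref{iftriangle} where the cycle case produced a $P(1)$ minor). In the doublefan case — which I expect to be the main case and the main obstacle — $\widetilde{G}$ consists of the triangle $v_1v_2v_3$ glued onto a $(v_i, v_j, v_k)$-doublefan. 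I would split on which of $v_1, v_2, v_3$ plays the ``apex'' role (the vertex $x_2$ at the start of the path), using the $2$-fold symmetry $v_1 \leftrightarrow v_3$ of the triangle to cut down cases. For each sub-case, Lemma~\ref{spinerib} (applied with $S \cap A_1 = \{e_4, e_5\}$ and the doublefan preorder) pins down the protection status of $e_4$ and $e_5$: a rib among $\{e_4,e_5\}$ that is not maximal must be contract-proof, a non-maximal spine must be delete-proof. Combining this with the facts that $e_1, e_2, e_3$ are contract-proof and that certain small minors ($K_4(1)$, $W_4(i)$, $K_5^-(i)$, $P(i)$, $D(1)$, $P^+(i)$) are available, I would show $\widetilde{G}$ already contains one of these as an enhanced-graph minor, contradicting minimality of the counterexample.

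The hard part will be the bookkeeping in the doublefan case: one must track simultaneously (a) the combinatorial position of $e_4$ and $e_5$ along the doublefan (which rib/spine slots they occupy), (b) their forced protection from Lemma~\ref{spinerib}, and (c) the length of the doublefan's spine, since a long spine tends to produce a $W_k$-minor ($k \ge 5$, giving $W_5(1)$ or $W_5(2)$) while a short spine produces $W_4$ or $K_5^-$ minors. I would organize this by first handling the case where the doublefan is ``short'' (path on at most a couple of interior vertices) by brute inspection against Figures~\ref{excl_k4}, \ref{excl_w4}, \ref{excl_w5}, \ref{excl_k5m}, \ref{excl_p}, \ref{excl_pp}, and the $D$ figure, and then arguing that any ``long'' doublefan contains a short one as a minor that already suffices — i.e. contracting down the spine while respecting rib/spine protection (minor operations 4 and 5 for enhanced graphs) reduces to the short case without losing the non-splitting property. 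One technical point to be careful about: when contracting spine edges of a doublefan whose rim/triangle structure must be preserved, I must ensure the resulting enhanced graph still has a non-split $5$-configuration, which follows because Lemma~\ref{spinerib} tells us exactly which protections to carry along. Once every sub-case yields a member of $\mathcal{F}$ as a minor, the contradiction with the minimality of $\widetilde{G}$ completes the proof that no triangle can occur in $\widetilde{S}$.
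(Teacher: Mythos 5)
Your high-level plan matches the paper's proof quite closely: both start from Lemma~\ref{iftriangle} to normalize so that $A_0 \cup \{e\}$ is the (contract-proof) triangle on $\{v_1,v_2,v_3\}$, both rule out the case where $\widetilde{G}\setminus\{v_1,v_2,v_3\}$ contains a cycle via a $P(1)$ minor, and both are then left with the case that $\widetilde{G}_1$ is a $\{v_1,v_2,v_3\}$-doublefan, handled by a case analysis using Lemma~\ref{spinerib} together with the small excluded minors. So the skeleton is correct.

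However, as written this is a plan, not a proof: you repeatedly say ``I would show\dots'' and defer exactly the case analysis that constitutes the mathematical content to unexecuted ``bookkeeping.'' The paper performs this analysis explicitly, and it is not as amenable to your proposed short/long organization as you seem to hope. Two concrete issues. First, the paper does not need any $W_5$-minor: it dispatches the situation where $\widetilde{G}$ is itself a wheel at the outset (appealing to Section~\ref{secwheel}), and once $\widetilde{G}$ is a non-wheel doublefan glued to a triangle, the case split is by the \emph{position} of the minimal edge $e_4$ of $A_1\cap\widetilde{S}$ (spine vs.\ rib, and where it lies relative to the first $v_3$-rib), yielding $K_5^-(1)$, $K_5^-(5)$, $K_5^-(6)$, $K_5^-(7)$, $W_4(5)$, or $P^+(5)$ --- all small, regardless of spine length. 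Your long-spine worry is a red herring once the wheel case is carved off. Second, your idea of contracting spine edges ``without losing the non-splitting property'' is solving a problem you don't have: to derive the contradiction you only need to exhibit some member of $\mathcal{F}$ as a minor of $\widetilde{G}$, not to preserve non-splitting of any intermediate enhanced graph. Insisting on preserving non-splitting along the reduction would actually make your argument harder, since you would have to justify protections on edges outside $\widetilde{S}$, whereas Lemma~\ref{mm3sep}(3) forbids them. In short: the approach is right and the cycle-case step is fine, but the doublefan case is the whole lemma, and it is not done; were you to do it, you should drop the short/long reduction and the non-splitting-preservation requirement and instead argue by position of $e_4$ as the paper does.
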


\begin{proof} By Lemma \ref{iftriangle} we may assume that the set $A_0^+ = A_0 \cup \{e\} = \{e_1,e_2,e_3\}$ is a triangle of edges all contained in $\widetilde{S}$.  If $\widetilde{G} \setminus \{v_1,v_2,v_3\}$ contains a cycle, then by 3-connectivity $\widetilde{G}$ contains $P(1)$ 
as a minor, which is contradictory.  It then follows from Lemma \ref{basic_decomp} that $\widetilde{G}_1$ is a $\{v_1,v_2,v_3\}$-doublefan.  By possibly switching $e_3$ with another edge in $A_0^+$ and relabeling, we may then assume that $\widetilde{G}_1$ is a $(v_1,v_2,v_3)$-doublefan.  Let $w_1, w_2, \ldots, w_n$ be the vertex sequence of the path $\widetilde{G} \setminus \{v_1,v_3\}$ and assume $v_2 = w_1$.  If $\widetilde{G}$ is a wheel, then it follows from results in Section \ref{secwheel} that it must have an excluded minor in $\mathcal{F}$ which is contradictory.  Therefore each of $v_1$ and $v_3$ must have a neighbour in the set $\{w_2, \ldots, w_{n-1} \}$.  By possibly interchanging $v_1$ and $v_3$, we may assume that $v_1$ is incident with $w_2$ and we now choose 
the smallest $2 \le j \le n$ for which $v_3 w_j$ is an edge.  Note that we now have a structure similar to that in Figure  \ref{triangle-doublefan}.

\begin{center}
\begin{figure}[h]
\centerline{\includegraphics[height=4cm]{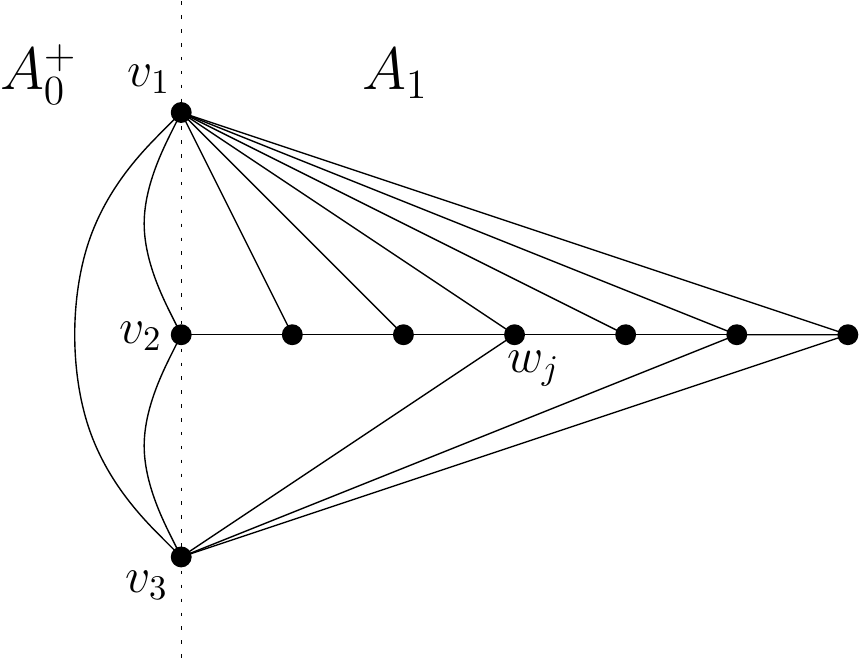}}
\caption{}
\label{triangle-doublefan}
\end {figure}
\end{center}

Assume without loss that $e_4 \le e_5$.  We now consider possibilities for the edge $e_4$.  

First suppose that $e_4$ is a spine edge.  In this case, it follows from Lemma \ref{spinerib} that $e_4$ must be delete-proof.  If $e_4 = w_1 w_2$, then $v_2 = w_1$ is a vertex of degree three incident with three edges of $\widetilde{S}$, so all must be delete-proof, and we find that $\widetilde{G}$ contains $K_5^-(1)$ as a minor (as seen in the first graph of Figure \ref{notriangle_extras}), which is a contradiction.  If $e_4 = w_i w_{i+1}$ where $1 < i < j$, then the edge $v_2 v_3$ must be delete-proof, as otherwise $G \setminus v_2 v_3$ has a bad 2-separation using the vertices $v_1, w_{i+1}$.  However, in this case $\widetilde{G}$ contains a $W_4(5)$ minor (as shown in the second graph of Figure \ref{notriangle_extras}), which is a contradiction.  In the remaining case $e_4 = w_i w_{i+1}$ where $j \le i$ and $\widetilde{G}$ contains a $K_5^-(5)$ minor (as shown in the third graph of Figure \ref{notriangle_extras}), which is contradictory.

\begin{center}
\begin{figure}[h]
\centerline{\includegraphics[scale=0.65]{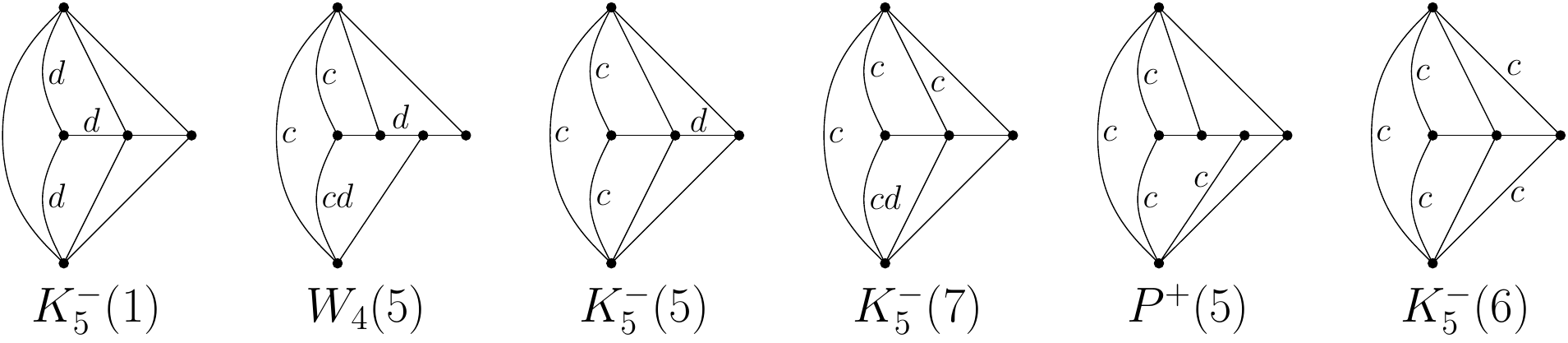}}
\caption{}
\label{notriangle_extras}
\end {figure}
\end{center}

Next we suppose that $e_4$ is a rib edge.  If $e_4 = v_1 w_i$ for $i \le j$ then the edge $v_2 v_3$ must be delete-proof as otherwise $\widetilde{G} \setminus v_2 v_3$ has a bad 2-separation using the vertices $v_1$ and $w_i$.  In this case $\widetilde{G}$ has a $K_5^-(7)$ minor (seen in the fourth graph in Figure \ref{notriangle_extras}) which 
is contradictory.  Similarly, if $e_4 = v_3 w_2$ then $v_1 v_2$ is delete-proof and we have a $K_5^-(7)$ minor which is contradictory.  It follows from this that if 
$e_4 = v_1 w_i$ then there is a rib edge $v_3 w_k$ for $1 < k < i$ and if $e_4 = v_3 w_i$ then there is a rib edge $v_1 w_k$ for $1 < k < i$.  If $e_4$ is an inner rib, then we have $P^+(5)$ as a minor (as in the fifth graph of Figure \ref{notriangle_extras}) which is a contradiction.  Otherwise, $e_4$ is incident with $w_n$, and by our choice $\{e_4,e_5\}$ must equal $\{v_1 w_n, v_3 w_n \}$ and both edges must be contract-proof giving us a $K_5^-(6)$ minor (as in the last graph of Figure \ref{notriangle_extras}) and a final contradiction. \end{proof}

\subsection{Two central doublefans}

\begin{lemma}
\label{central_doublefan}
$\widetilde{G}_0$ and $\widetilde{G}_1$ are not both $(v_1,v_2,v_3)$-doublefans.
\end{lemma}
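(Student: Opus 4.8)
The plan is to assume, for contradiction, that $\widetilde{G}_0$ and $\widetilde{G}_1$ are both $(v_1,v_2,v_3)$-doublefans and to produce either a smaller admissible partition (contradicting the choice of $\{A_0,\{e\},A_1\}$) or else an explicit minor in $\mathcal{F}$. Write $v_2 = w_1, w_2, \ldots, w_n$ for the spine path of $\widetilde{G}_0$ and $v_2 = u_1, u_2, \ldots, u_m$ for the spine path of $\widetilde{G}_1$; recall that $e = e_3 = v_1 v_3$ belongs to neither $\widetilde{G}_0$ nor $\widetilde{G}_1$, that $\widetilde{G}$ is $3$-connected and, having no $\mathcal{F}_0$ minor, planar, and that by our choice $|A_0|$ is the least value of $\min(|B_0|,|B_1|)$ over all partitions $\{B_0,\{e'\},B_1\}$ satisfying Corollary \ref{init_struc}. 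Three-connectivity forces every interior spine vertex to carry at least one rib and forces each of $w_n, u_m$ to be adjacent to both $v_1$ and $v_3$. Write $\{e_1,e_2\} = A_0 \cap \widetilde{S}$ and $\{e_4,e_5\} = A_1 \cap \widetilde{S}$ with $e_1 \le e_2$ and $e_4 \le e_5$ in the respective doublefan preorders.

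First I would use the minimality of $|A_0|$. For any interior spine vertex $w_i$ of $\widetilde{G}_0$ (so $2 \le i \le n-1$), the set $\{v_1, v_3, w_i\}$ is a $3$-cut of $\widetilde{G}$, whose removal splits off the ``tail'' $D$ of $\widetilde{G}_0$ consisting of the spine edges $w_j w_{j+1}$ with $j \ge i$ together with the ribs at $w_j$ for $j > i$ (and, at our discretion, the ribs at $w_i$). Setting $C = E(\widetilde{G}) \setminus (D \cup \{e\})$, the triple $\{D, \{e\}, C\}$ satisfies conditions (1)--(4) of Corollary \ref{init_struc} whenever $D$ contains both edges of $A_0 \cap \widetilde{S}$, and condition (5) is witnessed by $w_i$ (inside $D$ its only neighbour apart from the boundary is $w_{i+1}$, and inside $C$ it is $w_{i-1}$). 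Since $w_1 w_2 \notin D$ we have $D \subsetneq A_0$, so this would contradict the minimality of $|A_0|$. Hence no interior $w_i$ has both $\widetilde{S}$-edges of $A_0$ in its tail, which forces $e_1$ to be incident with $v_2$ unless $n = 2$. Running the same argument on $\widetilde{G}_1$ shows $e_4$ is incident with $v_2$ unless $m = 2$.

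It remains to finish off the now very restricted configurations. By Lemma \ref{spinerib}, any of $e_1, e_4$ that is incident with $v_2$ is contract-proof if it is a rib there and delete-proof if it is the spine edge there; combining these protections with the locations of $e_2, e_5$ and with the planar structure of the two doublefans, one exhibits in each case a minor in $\mathcal{F}$ --- typically one of $W_4(i)$, $K_5^-(i)$, $P(i)$, $P^+(i)$ or $D(1)$ --- where any protection not already present in $\widetilde{G}$ is supplied by the enhanced minor operations (a deleted parallel edge becoming delete-proof, a suppressed degree-two vertex becoming contract-proof). In the base case $n = m = 2$ the graph $\widetilde{G}$ has exactly five vertices and, being simple and $3$-connected, is isomorphic to $K_5$, $K_5^-$, or $W_4$, each of which contains a member of $\mathcal{F}$ by Lemmas \ref{k4}, \ref{w4}, and \ref{exclk5mlem} (with $K_5 \in \mathcal{F}_0$); the mixed case $n = 2 \le m$ (and its mirror) is treated the same way, using that the larger doublefan still has its preorder-least $\widetilde{S}$-edge at $v_2$. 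In every case $\widetilde{G}$ contains a member of $\mathcal{F}$, the desired contradiction.

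I expect the concluding case analysis to be the main obstacle: reading off, from the planar picture of two doublefans sharing $\{v_1,v_2,v_3\}$ plus the edge $e = v_1 v_3$, a concrete excluded minor in each sub-case, and verifying that the protections that minor requires are actually realized --- sometimes only after appealing to enhanced minor operations (4) and (5). The repartitioning reduction is the key step that makes this analysis finite, and the idea driving it is to cut a doublefan's spine at an interior vertex while keeping $e$ as the central edge.
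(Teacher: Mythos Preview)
Your repartitioning idea for $\widetilde{G}_0$ is valid and is a nice observation: cutting the spine of $\widetilde{G}_0$ at an interior vertex $w_i$ does produce a new partition satisfying Corollary~\ref{init_struc} with a strictly smaller side, so minimality of $|A_0|$ forces $e_1$ to be incident with $v_2$ when $n\ge 3$.  However, the sentence ``Running the same argument on $\widetilde{G}_1$ shows $e_4$ is incident with $v_2$ unless $m=2$'' is where the argument breaks.  The minimality hypothesis is on $|A_0|$ (equivalently, on the smaller side of the partition), and cutting the spine of $\widetilde{G}_1$ at $u_2$ yields a tail $D'\subsetneq A_1$ with $|D'|<|A_1|$; but there is no reason for $|D'|<|A_0|$.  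If, say, $|A_0|=5$ and $|A_1|=20$, the new partition can easily have both sides larger than $5$, and there is no contradiction.  So you cannot pin down the location of $e_4$ this way, and the ``very restricted configurations'' you promise simply do not materialise on the $\widetilde{G}_1$ side.

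Consequently the final paragraph is doing essentially all the work of the lemma with $e_4$ unrestricted, and ``one exhibits in each case a minor in $\mathcal{F}$'' is not a proof but a restatement of what has to be proved.  The paper's argument makes no use of the minimality of $|A_0|$ here; instead it runs a symmetric case analysis on the types of $e_1$ and $e_4$ (spine/spine, spine/rib, rib/rib) and in each sub-case locates a specific minor ($K_5^-(i)$, $W_4(i)$, or $P^+(i)$), using Lemma~\ref{spinerib} to supply the needed protections and reducing to wheels when no inner $v_1$-ribs are present.  Your reduction on the $A_0$ side could legitimately shorten one half of that analysis (by assuming $e_1$ is incident with $v_2$), but you still owe the full case analysis on $e_4$, and that is where the content of the lemma lies.
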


\begin{proof}
We break this proof into cases, considering the edges in $\widetilde{S}$ of lowest order in both $G_0$ and $G_1$. Suppose without loss that $e_1, e_2 \in E(\widetilde{G}_0)$ and $e_4,e_5 \in E(\widetilde{G}_1)$, $e_1 \leq e_2$ and $e_4 \leq e_5$ in their respective doublefan preorders. By Lemma \ref{general_doublefan}, this determines some of the needed protection.

\startingcases
\begin{case} Both $e_1$ and $e_4$ are spine edges. \end{case}

If there exist both $v_1$- and $v_3$-ribs of lower order, we immediately produce a $K_5^-(3)$ minor. Hence we may assume by symmetry that there are only $v_3$-ribs between $e_1$ and $e_4$. If there is at least one inner $v_1$-rib edge in both $G_0$ and $G_1$ then there will necessarily be a $W_4(2)$ minor. Thus, without loss we may restrict to the case that $G_0$ has no inner $v_1$-ribs. 

In $G_0$, edge $e_2$ must necessarily be protected. If there are no inner $v_1$-ribs, then $\widetilde{G}$ is a wheel and hence has a minor in $\mathcal{F}$. We may therefore assume that there is an inner $v_1$-rib in $G_1$. In any case, this graph must have a minor isomorphic to one of the graphs in Figure \ref{spinespinecases}; the first if $e_2$ is a spine or $v_1$-rib, the second otherwise. 

\begin{figure}[h]
  \centering
    \includegraphics[scale=0.65]{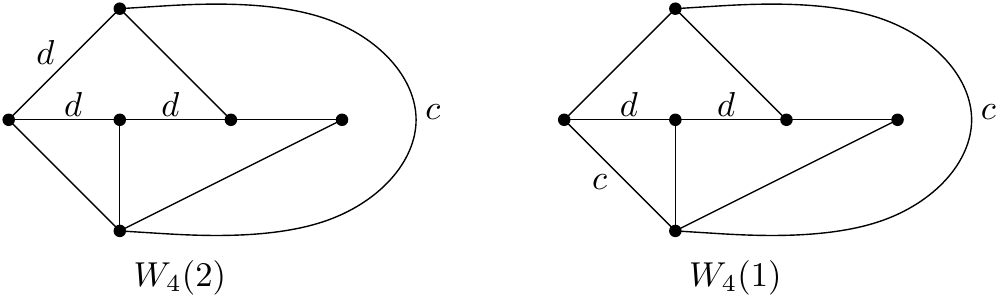}
  \caption{}
\label{spinespinecases}
\end{figure}

\begin{case} One of $e_1,e_4$ is a spine, the other a rib. \end{case}

Without loss of generality, assume that $e_4$ is a $v_3$-rib in $G_1$.  Further, we may assume that $e_4$ is an inner rib, as otherwise this reduces to Lemma \ref{iftriangle}.

If $e_1$ and $e_4$ do not share a vertex and there is a strictly lower order $v_1$-rib that does not share a vertex with $e_4$, then there is a $P^+(3)$ minor. Hence, either $e_1$ and $e_4$ share a vertex or all rib edges of lower order are $v_3$-ribs.

If there is inner $v_1$-rib in both $G_0$ and $G_1$ we get a $K_5^-(4)$ minor, seen in the fourth graph of Figure \ref{spineribcases}. If neither side has an inner $v_1$-rib, $\widetilde{G}$ is a wheel.

If $e_1$ is a spine furthest along this path from $v_2$, then $e_2$ must be protected; if $e_2$ is a $v_1$-rib it must be delete-proof, and otherwise it must be contract-proof and $e_1$ must be double protected. In either case, we may assume that there is a non-terminal $v_1$-rib in $G_1$ to avoid a wheel enhanced graph. This produces either a $K_5^-(4)$ or $K_5^-(7)$ minor. 

If we suppose that $e_1$ is not a spine of highest index, then precisely one of $G_0$ or $G_1$ must have at least one inner $v_1$-rib, and either $G_0$ or $G_1$, call it $G_i$, must contain only $v_3$-ribs. As such the second edge in $\widetilde{S}$ in $G_i$ must be protected; contract-proof if it is a $v_3$-rib and delete-proof otherwise. In all cases, this produces a non-splitting graph; each shown in Figure \ref{spineribcases}. The first row if $G_1$ has no $v_1$-ribs, and the second row if $G_0$ has no $v_1$-ribs. Columns correspond to spine, $v_1$-rib, and $v_3$-rib protection, respectively. Note in particular that the first $W_4(6)$ minor is produced if the two selected spine edges share a vertex or not, since there may only be $v_3$-ribs between if the two distinguished edges do not share a vertex, and the rib must be double protected otherwise.  Furthermore, in the case of a $v_3$-rib, illustrated in the last graph of Figure \ref{spineribcases}, we can ignore the case when this $v_3$-rib shares a vertex with $e_1$ by Lemma \ref{notriangle}.

\begin{figure}[h]
  \centering
    \includegraphics[scale=0.65]{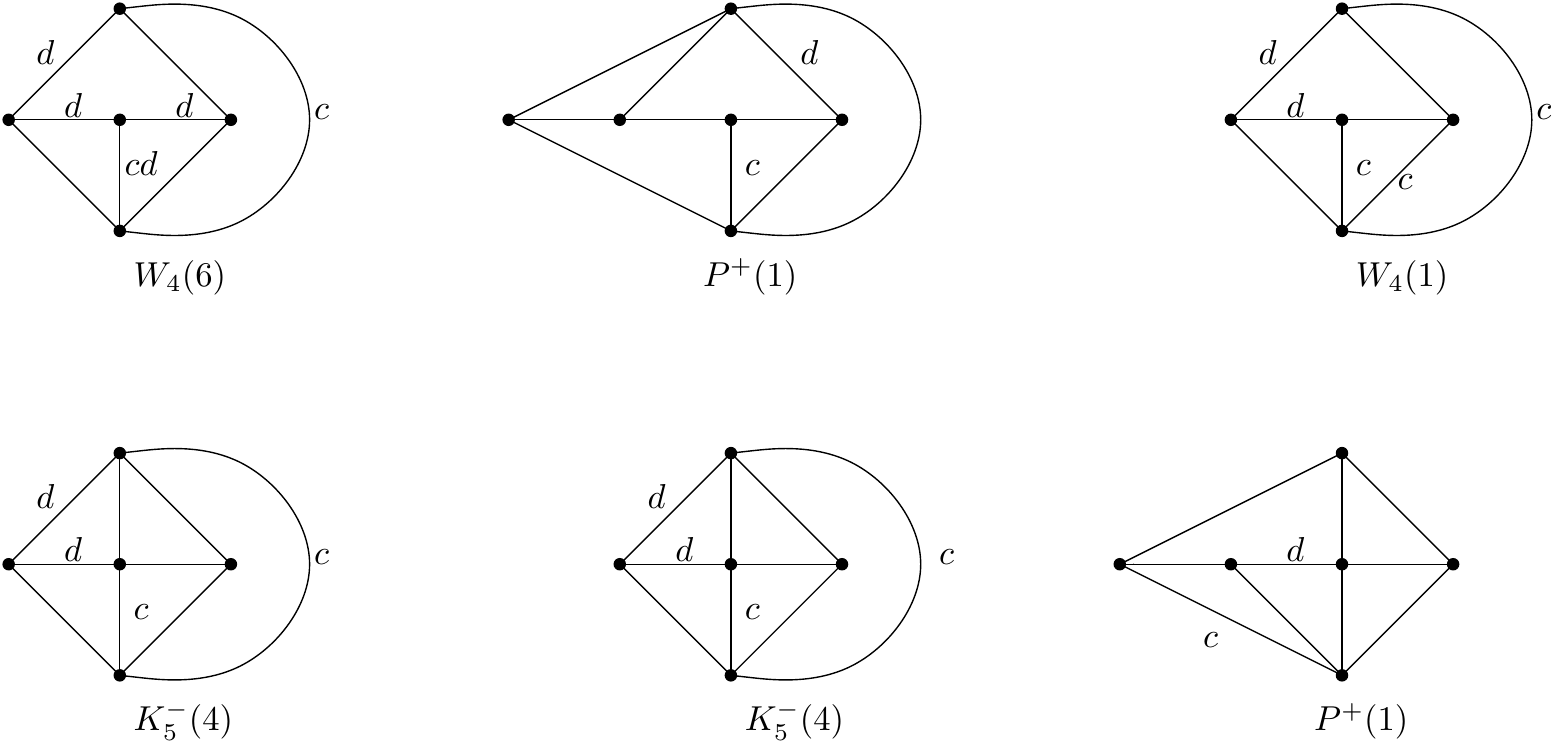}
  \caption{}
\label{spineribcases}
\end{figure}

\begin{case} Both $e_1$ and $e_4$ are ribs. \end{case}

Again, we may assume that $e_1$ and $e_4$ are inner ribs by Lemma \ref{notriangle}. By symmetry both distinguished ribs may be assumed to be $v_3$-ribs, as otherwise we get a $K_5^-(2)$ minor. If there exists a $v_1$-rib of strictly lower order, we get a $P^+(5)$ minor, the first graph in Figure \ref{ribribcases}. Thus, we may assume that either there are no vertices properly between $e_1$ and $e_4$ or that there are only $v_3$-ribs properly between. In either case, there may be at most one side with an inner $v_1$-rib after (or incident to) these ribs, as otherwise we get a $K_5^-(7)$ minor, the second graph in Figure \ref{ribribcases}.

If there are no vertices or only $v_3$-ribs between the two closest ribs, then precisely one of $G_0$ or $G_1$ must have at least one inner $v_1$-rib to avoid $\widetilde{G}$ being a wheel; say this is $G_i$. The second edge in $\widetilde{S}$ in $G_i$ must be protected; contract-proof if it's a $v_3$-rib and delete-proof otherwise. These are all non-splitting; the last three graphs in Figure \ref{ribribcases} shows these non-splitting minors.

\begin{figure}[h]
  \centering
    \includegraphics[scale=0.7]{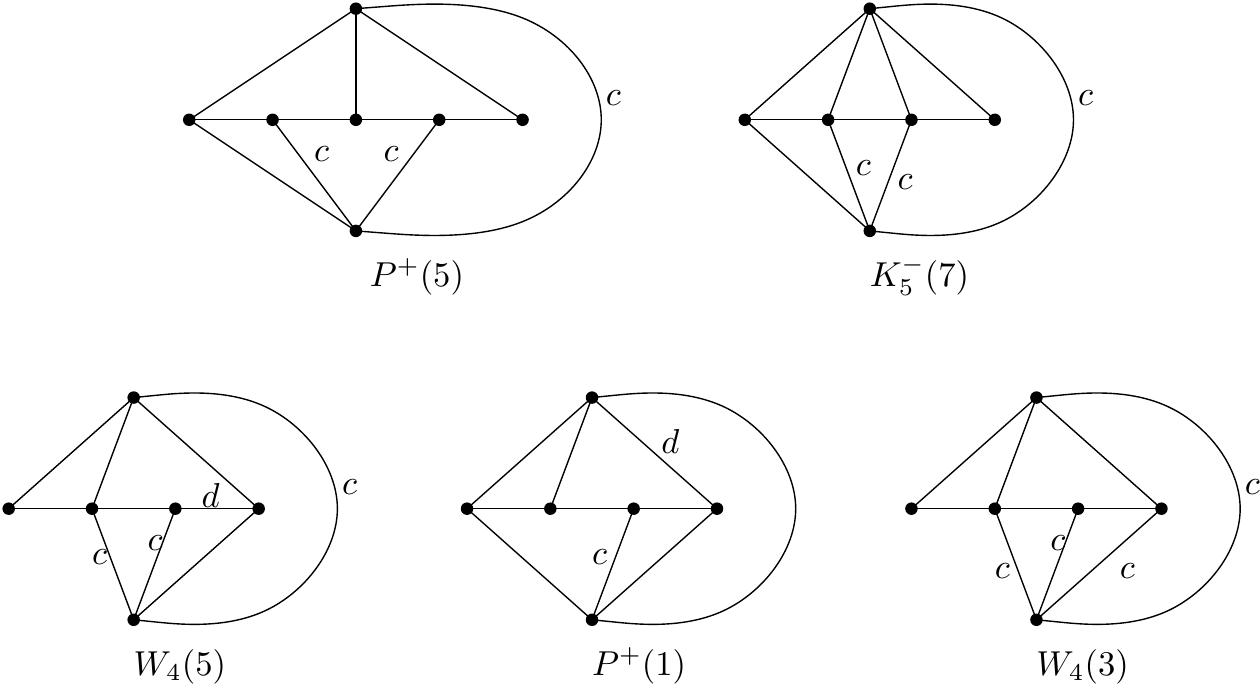}
  \caption{}
\label{ribribcases}
\end{figure}

Hence, any minor-minimal non-split graph with two $(v_1,v_2,v_3)$-doublefans is already in $\mathcal{F}$. \end{proof}

\subsection{Two doublefans}

\begin{lemma}
\label{general_doublefan}
If $\widetilde{G}_i$ is a $(v_1,v_2,v_3)$-doublefan with no delete-proof spine edge, then $\widetilde{G}_{1-i}$ is not a $\{v_1,v_2,v_3\}$-doublefan.
\end{lemma}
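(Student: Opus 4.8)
The plan is to argue by contradiction. Suppose $\widetilde{G}_{1-i}$ is a $\{v_1,v_2,v_3\}$-doublefan, and distinguish cases according to which of $v_1,v_2,v_3$ is its path-endpoint (``middle'') vertex. If this vertex is $v_2$, then $\widetilde{G}_{1-i}$ is a $(v_1,v_2,v_3)$-doublefan, so $\widetilde{G}_0$ and $\widetilde{G}_1$ are both $(v_1,v_2,v_3)$-doublefans and Lemma~\ref{central_doublefan} is violated. Otherwise the middle vertex is $v_1$ or $v_3$; the transposition $v_1 \leftrightarrow v_3$ fixes $e = v_1v_3$, leaves $\widetilde{G}_i$ together with its spine edges and all protections unchanged, and interchanges these two subcases, so we may assume $\widetilde{G}_{1-i}$ is a $(v_2,v_1,v_3)$-doublefan. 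Write $v_2 = w_1, w_2, \ldots, w_n$ for the spine path of $\widetilde{G}_i$, $v_1 = u_1, u_2, \ldots, u_m$ for the spine path of $\widetilde{G}_{1-i}$, and $A_i \cap \widetilde{S} = \{f,f'\}$ with $f \le f'$ in the preorder of $\widetilde{G}_i$.

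Before the main argument I would pin down the rigidity of this configuration. Since $|A_i \cap \widetilde{S}| = |A_{1-i} \cap \widetilde{S}| = 2$, if $n = 1$ then $\widetilde{G}_i = \{v_1v_2, v_2v_3\} \subseteq \widetilde{S}$ and, together with $e$, this is a triangle inside $\widetilde{S}$, contradicting Lemma~\ref{notriangle}; hence $n \ge 2$, and symmetrically $m \ge 2$. The vertex $w_n$ then lies outside $\partial(A_i) = \{v_1,v_2,v_3\}$, so all its edges belong to $\widetilde{G}_i$ and $3$-connectivity forces it to have degree exactly three there; thus $v_1w_n, v_3w_n \in E(\widetilde{G}_i)$, and likewise $v_2u_m, v_3u_m \in E(\widetilde{G}_{1-i})$. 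In particular $v_1w_n$ and $v_3w_n$ cannot both lie in $\widetilde{S}$, since with $e$ they would form a triangle in $\widetilde{S}$. Next, Lemma~\ref{spinerib} applied to $f \le f'$, together with the hypothesis that $\widetilde{G}_i$ has no delete-proof spine edge, forces $f$ to be a rib and hence contract-proof. Finally, if $\widetilde{G}$ is a wheel we are done by Section~\ref{secwheel}, so we may assume it is not; equivalently, $v_1$ has a rib onto an interior vertex of the $w$-path or $v_2$ has a rib onto an interior vertex of the $u$-path.

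It remains to perform a case analysis, in the style of the proof of Lemma~\ref{central_doublefan}, on the types (spine, $v_1$-rib, or $v_3$-rib; inner or terminal) and relative positions of the $\widetilde{S}$-edges $f,f'$ of $\widetilde{G}_i$ and the two $\widetilde{S}$-edges of $\widetilde{G}_{1-i}$, each time recording which further protections non-splitting then forces. In every case the outcome should be either a minor of $\widetilde{G}$ isomorphic to one of the forbidden enhanced graphs $K_5^-(i)$, $P(i)$, $P^+(i)$, $W_4(i)$, or $D(i)$ — obtained by contracting the spine stretches and deleting the unused ribs while carrying the forced protections along — or a triangle inside $\widetilde{S}$ (contradicting Lemma~\ref{notriangle}), or a configuration in which $\widetilde{G}$ is a wheel (contradicting our reduction). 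A useful organizing device is to contract the whole $w$-path to the vertex $v_2$ and slide the $u$-path back toward $v_1$; this collapses $\widetilde{G}_i$ to the triangle on $v_1,v_2,v_3$ and makes the relevant forbidden minor transparent. I expect this case analysis to be the main obstacle: because the two fans are oriented incompatibly (middle vertices $v_2$ and $v_1$), there is far less symmetry to exploit than in Lemma~\ref{central_doublefan}, so matching each configuration to the correct forbidden enhanced graph, and checking that precisely the required edges are protected, is the delicate part.
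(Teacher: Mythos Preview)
Your setup and overall approach are correct and essentially the same as the paper's. The one organizational difference is where you spend the $v_1 \leftrightarrow v_3$ symmetry: you use it to fix the orientation of the second doublefan as $(v_2,v_1,v_3)$, whereas the paper uses it to fix the smaller $\widetilde{S}$-rib (their $e_4$, your $f$) as a $v_1$-rib and then keeps both orientations $(v_1,v_3,v_2)$ and $(v_2,v_1,v_3)$ for $\widetilde{G}_0$. These two reductions are equivalent and lead to the same pairs of cases.

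Where your proposal stops short is exactly where you predicted: the case analysis is the entire content of the proof, and the paper organizes it differently from what you sketch. Rather than casing on the type of $f$ and $f'$ directly, the paper first splits on whether the contract-proof rib $e_4$ is incident to $v_2$. If it is \emph{not} (Case~1), then $3$-connectivity forces an extra $v_2$-rib in $\widetilde{G}_0$, and either this rib hits an interior spine vertex (yielding a $P(1)$ minor) or it does not, in which case both sides are actually central $(v_1,v_2,v_3)$-doublefans and Lemma~\ref{central_doublefan} applies. This quick geometric reduction is something your plan does not anticipate. Only when $e_4$ \emph{is} incident to $v_2$ does the paper enter the detailed subcase analysis (Cases~2 and~3, one for each orientation of $\widetilde{G}_0$), tracking the type and protection of $e_2$, the presence of inner ribs on each side, and producing minors from among $K_5^-(2)$, $K_5^-(4)$, $K_5^-(7)$, $W_4(1)$, $W_4(3)$, $W_4(6)$, $P^+(1)$, and $P^+(3)$. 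Your heuristic of contracting the whole $w$-path to $v_2$ is not what the paper does and would discard too much information; the actual arguments keep careful track of which inner ribs exist on each side before contracting.
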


\begin{proof}
  Suppose $\widetilde{G}_0$ is a $\{v_1 , v_2 , v_3\}$-doublefan and $\widetilde{G}_1$ is a $(v_1 , v_2 , v_3 )$-doublefan with no delete-proof spine edge.

  Again, $e_1,e_2 \in \widetilde{G}_0$ with $e_1 \geq e_2$ and  $e_4,e_5\in \widetilde{G}_1$ with $e_4 \leq e_5$. As $e_4$ must be protected, by assumption it cannot be a spine edge.  Without loss of generality say $e_4$ is a $v_1$-rib.
  By Lemma \ref{central_doublefan} we can assume that $\widetilde{G}_0$ is either a $(v_1, v_3, v_2)$-doublefan or a $(v_2,v_1,v_3)$-doublefan.  By Lemma \ref{notriangle} we know that $e_4$ must be an inner rib.

\startingcases
\begin{case} Edge $e_4$ is not incident to $v_2$.  \end{case}
 By 3-connectivity of $\widetilde{G}$, there is at least one additional $v_2$-rib. 
If there is a $v_2$-rib which is not incident to $v_1$ or $v_3$ then $G$ has a $P(1)$ minor.  Otherwise then $\widetilde{G}$ consists of two central doublefans and we are done by Lemma \ref{central_doublefan}.  These possibilities are illustrated in Figure \ref{dfdf_case1}.

\begin{figure}[h]
  \centering
    \includegraphics{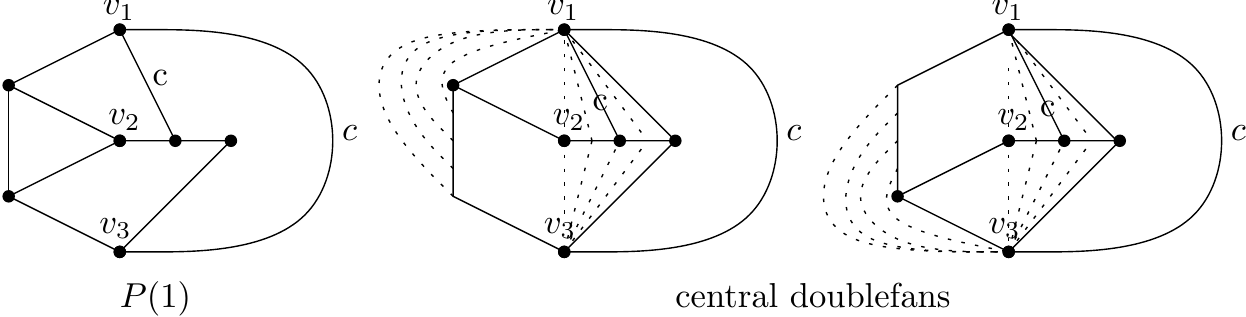}
  \caption{}
\label{dfdf_case1}
\end{figure}

\begin{case} $\widetilde{G}_0$ is a $(v_1,v_3,v_2)$-doublefan and $e_4$ is incident to $v_2$. \end{case}
  In this case $e_2$ must be protected.  Let $v$ be the last vertex in the spine of $\widetilde{G}_0$.  This case has subcases based on what kind of edge $e_2$ is.

Suppose $e_2$ is a $v_2$-rib.  Then $e_2$ is contract-protected.  If it is not incident to $v$ then $G$ has a $K_5^-(2)$ minor as in Figure \ref{dfdf_case2a}.  Suppose $e_2$ is incident to $v$. Then $e_1$ is the rib from $v$ to $v_1$ and so we are done by Lemma \ref{notriangle}.

\begin{figure}[h]
  \centering
    \includegraphics{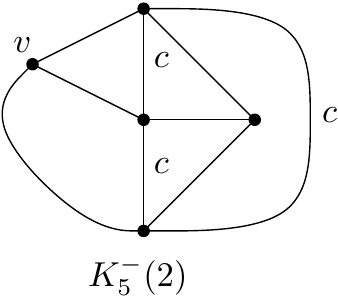}
  \caption{}
\label{dfdf_case2a}
\end{figure}

Suppose $e_2$ is a $v_1$-rib.  We may assume that $e_2$ is not incident to $v$ by Lemma \ref{notriangle}.  Suppose $e_1$ is not incident to $v$.  If there are no inner $v_2$-ribs in $\widetilde{G}_0$ and no inner $v_3$-ribs in $\widetilde{G}_1$ then $\widetilde{G}$ is a wheel and so by Lemma \ref{nonsplit_wheel} it has one of the known minors.  Assume there is at least one such rib $r$.  There are now two possibilities. 

\begin{figure}[h]
  \centering
    \includegraphics[scale=0.72]{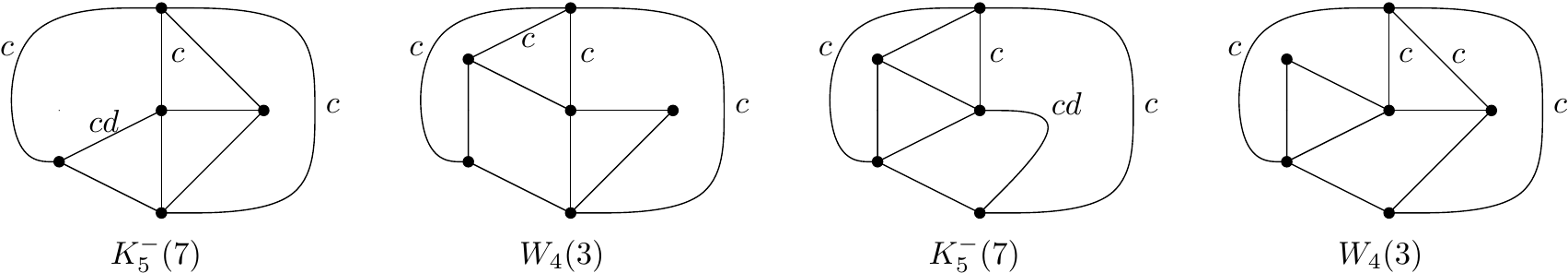}
  \caption{}
\label{dfdf_case2b}
\end{figure}

\begin{itemize}
  \item Suppose $r$ is either in $\widetilde{G}_1$ or is before $e_2$ in $\widetilde{G}_0$.  To avoid a 2-separation separating $e_2$ and $e_3$ from $e_4$ and $e_5$ then either there is a $v_2$-rib in $\widetilde{G}_0$ at or after $e_2$, or $e_1$ is protected.  In the case of the extra $v_2$-rib or a delete-protected $e_1$, i.e. $e_1$ on the spine or $e_1$ the rib from $v$ to $v_2$, then $G$ has a $K_5^-(7)$ minor.  In the case where $e_1$ is a contract-protected $v_1$-rib we get a $W_4(3)$ minor.  These possibilities are illustrated in the first two graphs of Figure \ref{dfdf_case2b}.
  \item Suppose $r$ is at or above $e_2$ in $\widetilde{G}_0$ and we are not in the previous point.  As above but with the sides flipped, to avoid a 2-separation separating $e_2$ and $e_3$ from $e_1$ and $e_4$ then $e_5$ must be protected.  If $e_5$ is delete protected then $G$ has a $K_5^-(7)$ and otherwise $G$ has a $W_4(3)$ minor.  These possibilities are illustrated in the third and fourth graphs of Figure \ref{dfdf_case2b}.
\end{itemize}

To complete Case 2 suppose $e_2$ is a spine.  If there is no inner $v_2$-rib in $\widetilde{G}_0$ except possibly from $v_2$ to $v_3$ then we are done by Lemma \ref{central_doublefan}.  So assume there is such a $v_2$-rib.  If this $v_2$-rib is smaller than $e_2$ then we get a $P^+(3)$ minor as illustrated in the first graph of Figure \ref{dfdf_case2_spine}.  Assume all $v_2$-ribs are larger than $e_2$ and there is at least one such rib.
 To avoid a 2-cut separating $e_4$ and $e_1$ from $e_2$ and $e_3$, either $e_5$ is protected or there is an inner $v_3$-rib.  There are four possibilities

\begin{figure}[h]
  \centering
    \includegraphics{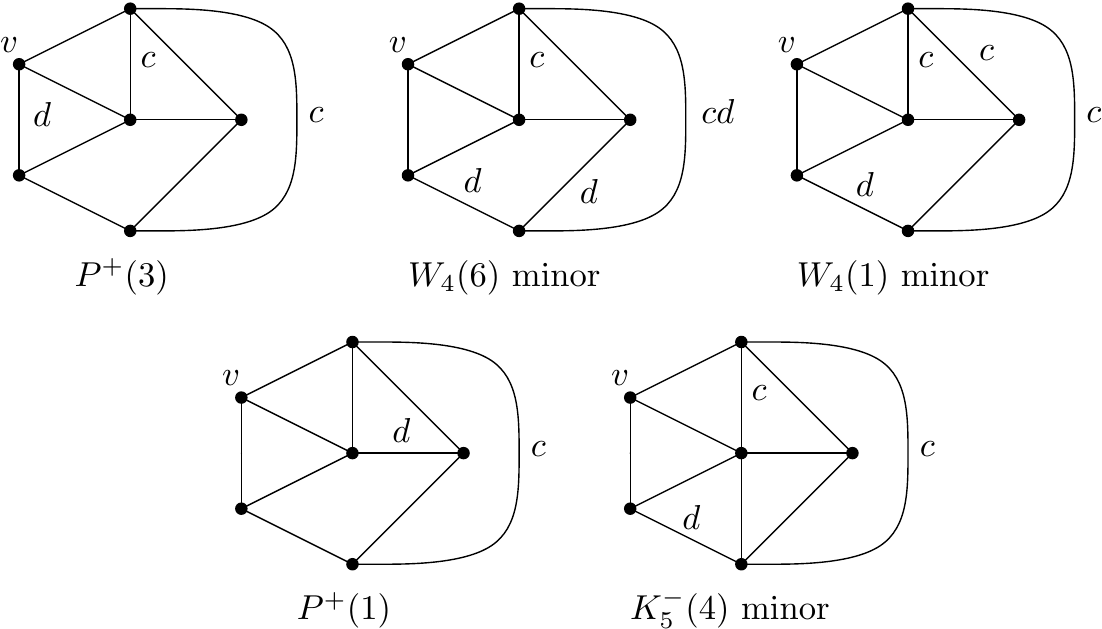}
  \caption{}
\label{dfdf_case2_spine}
\end{figure}

\begin{itemize}
\item If $e_5$ is protected and is a $v_3$-rib (possibly from $v_2$ to $v_3$) and there is no inner $v_3$-rib,  then either there is also a $v_1$-rib in $\widetilde{G}_0$ before $e_2$, or $e_3$ is double protected.  In either case this gives $W_4(6)$ as a minor.  
\item  If $e_5$ is protected and is a $v_1$-rib then $G$ has a $W_4(1)$ minor.
\item If $e_5$ is protected and is a spine then $G$ has a $P^+(1)$ minor.
\item If there is an inner $v_3$-rib then $G$ has a $K^-_5(4)$ minor.
\end{itemize}
  These possibilities are illustrated in the last four graphs of Figure \ref{dfdf_case2_spine}

\begin{case} $\widetilde{G}_0$ is a $(v_2,v_1,v_3)$-doublefan and $e_4$ is incident to $v_2$. \end{case}
 Assume we are not in the previous case, and so there is at least one inner $v_3$-rib in $\widetilde{G}_0$.
If there is at least one inner $v_2$-rib then $G$ has a $K_5^-(2)$ minor.  Otherwise again both sides become central doublefans so appeal to Lemma \ref{central_doublefan}.  These possibilities are illustrated in Figure \ref{dfdf_case3}.

\begin{figure}[h]
  \centering
    \includegraphics{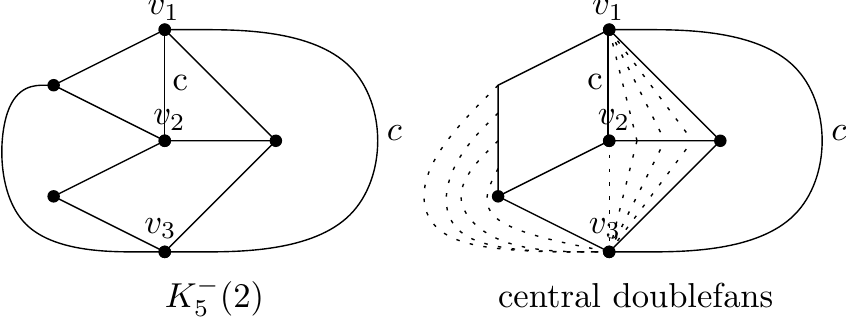}
  \caption{}
\label{dfdf_case3}
\end{figure}

\end{proof}

\subsection{Doublefan \& ladder}\label{subsec df-l}

In this subsection we establish another special case of the main theorem by proving that our minimal counterexample $\widetilde{G}$ cannot 
have a particular structure.  However, we first need to define a key structure of interest.

\begin{center}
\begin{figure}[h]
\centerline{\includegraphics[height=5.5cm]{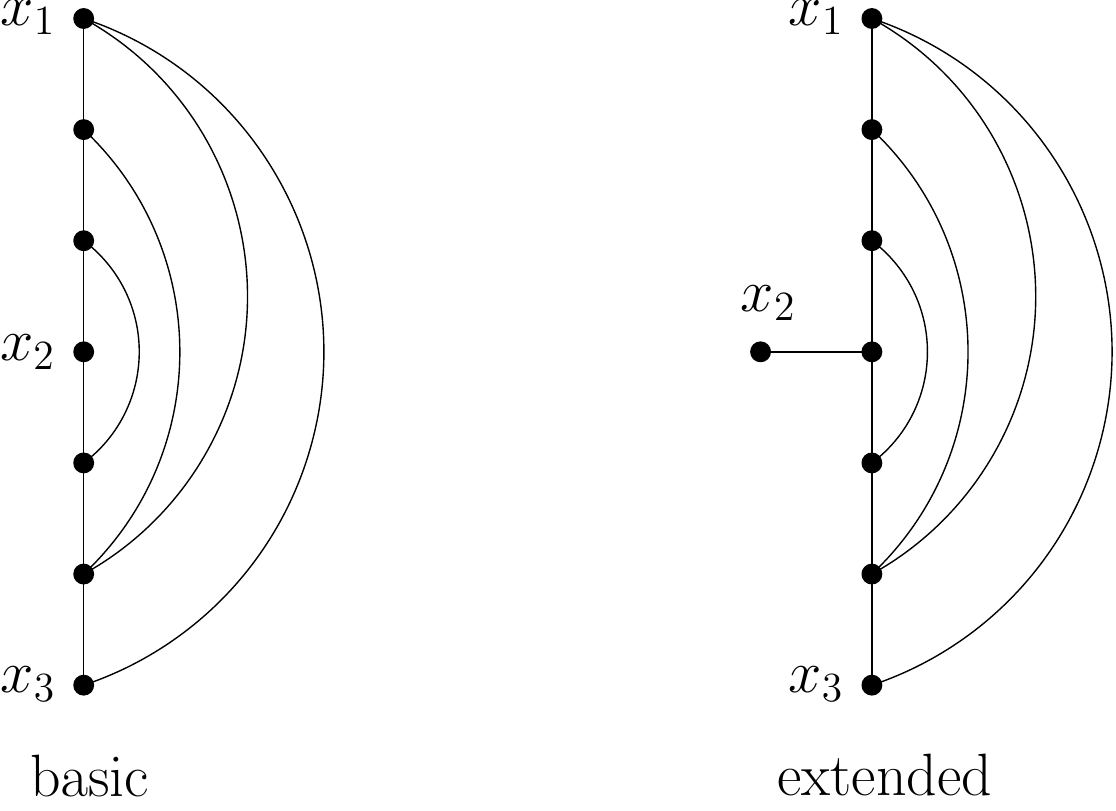}}
\caption{$(x_1,x_2,x_3)$-ladders}
\end {figure}
\end{center}

Let $L$ be a graph with distinct vertices $x_1,x_2,x_3$.  We define $L$ to be a \emph{basic} $(x_1,x_2,x_3)$-\emph{ladder} 
If there exist two internally disjoint paths $P_1,P_2$ in $L$ with the following properties:
\begin{enumerate}
\item $P_i$ is a path of length at least 2 from $x_i$ to $x_{i+1}$ for $i=1,2$.
\item Every vertex apart from $x_1,x_2,x_3$ has degree at least 3.
\item Every edge in $E(L) \setminus E(P_1 \cup P_2)$ has one end in $P_1$ and the other in $P_2$.
\item There do not exist edges $y_1y_2, z_1 z_2 \in E(L) \setminus E(P_1 \cup P_2)$ which ``cross'' in the sense that 
$y_1, z_1, y_2, z_2$ are distinct and appear in this order along the path $P_1 \cup P_2$. 
\end{enumerate}
We call edges in $P_1 \cup P_2$ \emph{supports} and the other edges \emph{rungs}.  Observe that our assumptions imply that the 
unique neighbour of $x_2$ on $P_1$ and the unique neighbour of $x_2$ on $P_2$ must be adjacent.  We say that $L$ 
is an \emph{extended} $(x_1,x_2,x_3)$-\emph{ladder} if $x_2$ is adjacent to a single vertex $x_2'$ in $L$ and $L \setminus x_2$ 
is a basic $(x_1,x_2',x_3)$-ladder.  We will say that $L$ is a $(x_1,x_2,x_3)$-\emph{ladder} if it is either a basic or extended 
$(x_1,x_2,x_3)$-ladder.

As we did with doublefans, it will be helpful to introduce a total preorder on the edges of a basic ladder.  To introduce this, let $L$
be a basic $(x_1,x_2,x_3)$-ladder as defined above and define a preorder on $E(L)$ by the following rule.  If $uv$ is a rung, then $uv \le f$
for every edge $f$ with both ends in the subpath of $P_1\cup P_2$ between $u$ and $v$.  If $uv$ is a support on the 
path $P_1$ with $u$ closer to $x_1$ than $v$ (along this path), then let $w$ be the furthest vertex from $v$ on the path $P_1 \cup P_2$ which is 
joined to $v$ by a rung.  Then we define $uv \le f$ for every edge $f$ with both ends in the subpath of $P_1 \cup P_2$ from $v$ to $w$.  Finally define the two edges of $P_1 \cup P_2$ incident with $x_2$ to be comparable with each other in both directions.
It is straightforward to check that this is indeed a total preorder.

\begin{lemma}
\label{df-lad}
For $i=0,1$ we do not have $\widetilde{G}_i$  a  $(v_1,v_2,v_3)$-doublefan with a delete-proof spine edge and $\widetilde{G}_{1-i}$ a $(v_1,v_2,v_3)$-ladder.  
\end{lemma}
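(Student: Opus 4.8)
The plan is to argue by contradiction in the minimal counterexample $\widetilde{G}$, following the template of Lemmas \ref{central_doublefan} and \ref{general_doublefan}. Relabelling if necessary, suppose $\widetilde{G}_0$ is a $(v_1,v_2,v_3)$-doublefan carrying a delete-proof spine edge $s$, and $\widetilde{G}_1$ is a $(v_1,v_2,v_3)$-ladder (the case with the roles of $\widetilde{G}_0$ and $\widetilde{G}_1$ reversed being analogous). Introduce the usual notation: write $\widetilde{G}_0 \setminus \{v_1,v_3\}$ as the spine path $w_1 = v_2, w_2, \ldots, w_n$, and let $P_1$ (from $v_1$ to $v_2$) and $P_2$ (from $v_2$ to $v_3$) together with the rungs be the supports and rungs of the ladder $\widetilde{G}_1$. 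As in the previous proofs put $e_3 = e = v_1 v_3$, $\{e_1,e_2\} = A_0 \cap \widetilde{S}$ with $e_1 \ge e_2$ in the doublefan preorder, and $\{e_4,e_5\} = A_1 \cap \widetilde{S}$ with $e_4 \le e_5$ in the ladder preorder; since $s \in \widetilde{D} \subseteq \widetilde{S}$ (Lemma \ref{mm3sep}) and $s$ is a spine edge we have $s \in \{e_1,e_2\}$.

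First I would read off the forced protections. On the doublefan side, Lemma \ref{spinerib} (applied to $e_2 \le e_1$) forces $e_2$ to be contract-proof if it is a rib and delete-proof if it is a spine; a brief analysis then records which of $e_1,e_2$ may serve as $s$. On the ladder side I would prove, by the analogous argument using the $\{v_1,v_3\}$-pair (or an $\{x,v_j\}$-pair read off from the ladder preorder), that $e_4$ is protected — contract-proof if it is a rung and delete-proof if it is a support. Using Lemma \ref{notriangle} I would discard any configuration in which $\widetilde{S}$ meets a triangle, so in particular $e_4$ must be an inner rung or support and neither side may contribute a triangular closure with $e_3$. Finally, if $\widetilde{G}_1$ has no rung strictly between its two endpoints then it is itself a doublefan, and if $\widetilde{G}$ is a wheel we are done; these degenerate possibilities would be routed at once to Lemmas \ref{nonsplit_wheel}, \ref{central_doublefan} and \ref{general_doublefan}.

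The feature that distinguishes this case from Lemma \ref{general_doublefan} is the delete-proof spine edge $s$ itself, and I would use it as follows. Its protection certifies a bad $2$-separation $(P,Q)$ of $\widetilde{G}\setminus s$, splitting the four edges of $\widetilde{S}\setminus\{s\}$ as $2+2$. Since $A_0 \cap \widetilde{S} = \{s,t\}$ (writing $t$ for the other edge), after deleting $s$ the doublefan side can supply at most the single edge $t$ to a side of $(P,Q)$; this forces $(P,Q)$ to carve the ladder $\widetilde{G}_1$ along a rung-level cut, which together with the planar embedding pins down the positions of $e_4$ and $e_5$ relative to the rungs of $\widetilde{G}_1$ and relative to $v_1$ and $v_3$, and in particular forbids the ladder from being balanced around $e_4$. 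With all of this in hand the proof becomes a finite case analysis on the types and positions of $e_2$ (resp.\ $e_1$) on the doublefan side — spine, $v_1$-rib, or $v_3$-rib — and of $e_4$ on the ladder side — support or rung, and where it sits relative to $v_1$ and $v_3$. In each case I would either exhibit an explicit minor of $\widetilde{G}$ lying in $\mathcal{F}$ (I expect $D(1)$ or $D^*(1)$ whenever the doublefan or the ladder is rich enough to host a double fan, and otherwise members such as $P(1)$, $P^+(1)$, $P^+(3)$, $P^+(5)$, various $K_5^-(j)$, or $W_4(j)$), contradicting minimality; or produce a bad $2$-separation of $\widetilde{G}$ through one of the pairs $\{v_1,v_3\}$, $\{v_1,v_2\}$, $\{v_2,v_3\}$ augmented by a rung or spine vertex, contradicting that $\widetilde{S}$ does not split; or reduce to an already-excluded configuration.

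The main obstacle is organizational: correctly enumerating the interaction between the location of the delete-proof spine edge $s$ in the long doublefan and the positions of the rungs of the ladder, and — in the tightest sub-cases, a long doublefan with a delete-proof spine abutting a ladder carrying several rungs — verifying from the planar structure that $\widetilde{G}$ genuinely contains the claimed member of $\mathcal{F}$ rather than only a lighter series-parallel reduction of $K_4$ or $W_4$ that already splits by Lemmas \ref{k4} and \ref{w4}. Careful weight bookkeeping, so that every minor produced is an honest excluded minor and not a smaller graph that splits, is the delicate point throughout.
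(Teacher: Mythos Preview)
Your outline diverges from the paper's argument at the very first substantive step, and the replacement mechanism you propose does not clearly do the work you need.

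The paper's proof opens with an immediate structural reduction: since the delete-proof spine edge (call it $e_2$) lies in $\widetilde{S}$, if any rib of $\widetilde{G}_0$ precedes $e_2$ in the doublefan preorder then $\widetilde{G}$ already contains $P^+(1)$ as a minor. This forces $e_2$ to be the very first spine edge $v_2 w_2$, and by the same $P^+(1)$ argument the ladder $\widetilde{G}_1$ must be basic rather than extended. Only after these reductions does the paper split into two short cases on whether $e_4$ is a rung or a support, and the minors actually used are $P^+(1)$, $P^+(2)$, $P(3)$, $P(4)$, $P(5)$, $P(7)$, $W_4(1)$, and $W_4(6)$ --- not the $D(1)$, $D^*(1)$, $K_5^-(j)$, $P^+(3)$, $P^+(5)$ family you anticipate.

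Your substitute idea is to exploit the bad $2$-separation $(P,Q)$ of $\widetilde{G}\setminus s$ guaranteed by minimality. That separation does exist, but your inference from it is not justified: the assertion that $(P,Q)$ must ``carve the ladder $\widetilde{G}_1$ along a rung-level cut'' does not follow from the counting observation that the doublefan side contributes only the single edge $t$. The cut vertices $\{a,b\}$ may include one or both of $v_1,v_3$ and the $2{+}2$ partition of $\{t,e_3,e_4,e_5\}$ need not separate $e_4$ from $e_5$; for instance $\{t,e_4\}$ versus $\{e_3,e_5\}$ is perfectly possible and does not pin down a rung of the ladder. Without the paper's opening $P^+(1)$ reduction that locates $e_2$ at $v_2 w_2$ and forces the ladder to be basic, the case analysis you sketch remains unconstrained: $s$ could sit anywhere along the spine, and the interaction with the ladder then has far more shapes than your outline accounts for. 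The missing idea is precisely that first move --- finding $P^+(1)$ whenever a rib precedes the protected spine edge --- which collapses the doublefan side to a single possibility and makes the remaining two-case analysis short.
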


\begin{center}
\begin{figure}[h]
\centerline{\includegraphics[scale=0.7]{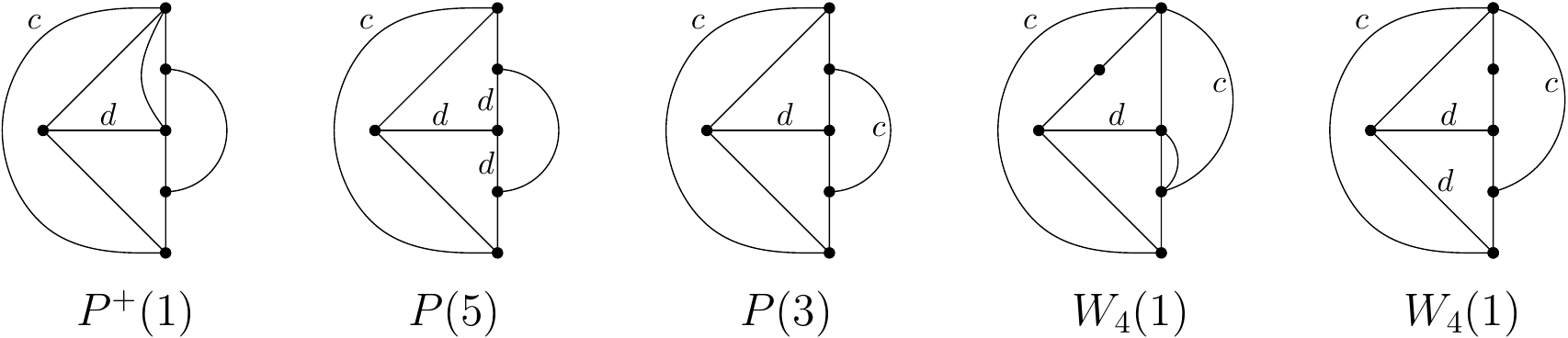}}
\caption{}
\label{df-lad1}
\end {figure}
\end{center}

\begin{proof} Suppose towards a contradiction that the lemma fails for $i=0$ (a similar argument works for $i=1$).  
Note that the delete-proof spine edge of $\widetilde{G}_0$ must be in $\widetilde{S}$ by Lemma \ref{mm3sep}, so we may assume (by possibly relabeling) that $e_2$ is a delete-proof spine edge.  If there is a rib edge which precedes $e_2$, then $\widetilde{G}$ contains $P^+(1)$ as a minor (as indicated in the first enhanced graph of Figure \ref{df-lad1}) which is a contradiction.  So, may assume that $e_2$ is the minimal edge in $\widetilde{G}_0$.  Similarly, by 3-connectivity, $\widetilde{G}$ will contain $P^+(1)$ if $\widetilde{G}_1$ is an extended ladder, so it must be that $\widetilde{G}_1$ is a basic ladder.  If the edges $\{e_4,e_5\}$ are the two elements incident with $v_2$ on this ladder, then $\widetilde{G}$ contains exactly three edges incident with $v_2$ and all are in $\widetilde{S}$, so all must be delete-proof.  It then follows that $\widetilde{G}$ contains a $P(5)$ minor (as indicated in the second enhanced graph of Figure \ref{df-lad1}) which is a contradiction.  So, we may assume that $e_4 < e_5$ without loss.  If $e_4$ is a rung, then $\widetilde{G} / e_4$ has a bad 2-separation, so $e_4$ must be contract-proof.  Conversely, if $e_4$ is a support, then $\widetilde{G} \setminus e_4$ has a bad 2-separation, so $e_4$ must be delete-proof.  We now split into cases depending on $e_4$.

\begin{center}
\begin{figure}[h]
\centerline{\includegraphics[height=4cm]{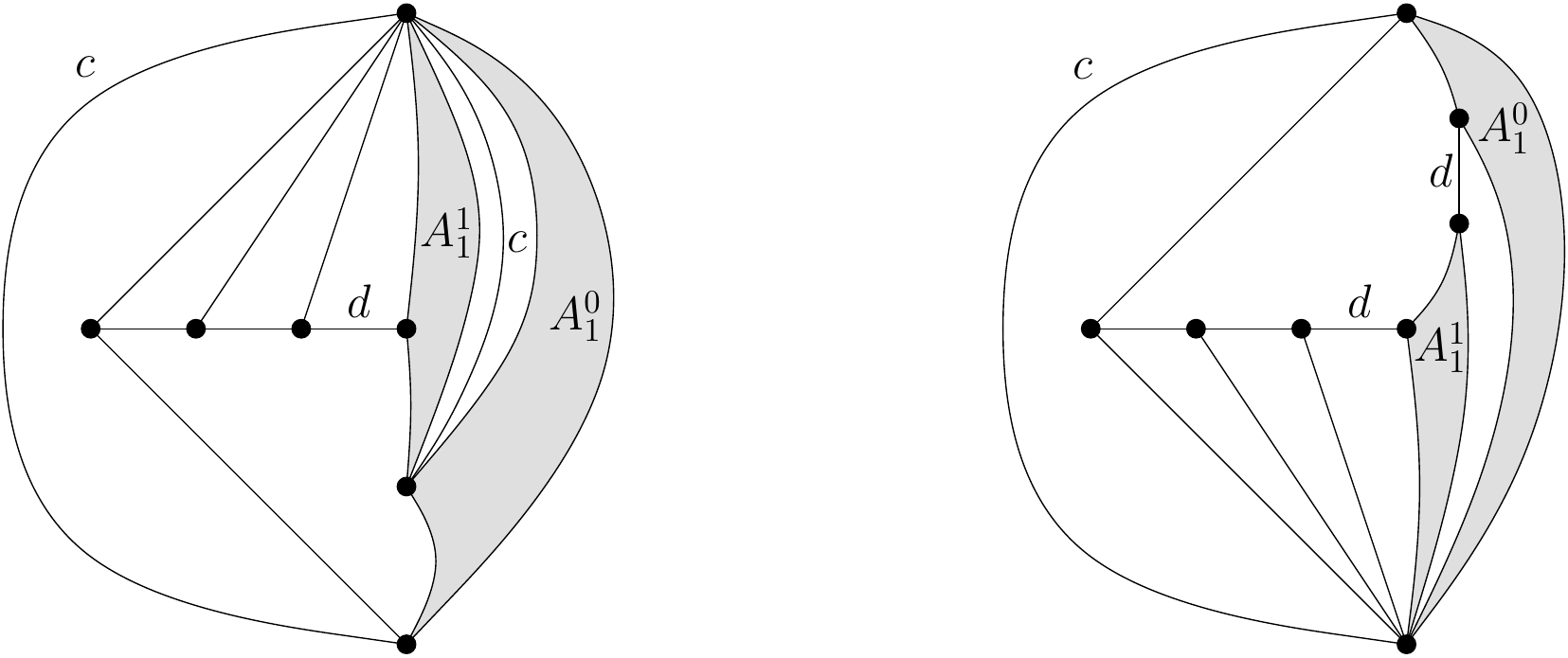}}
\caption{}
\label{df-lads}
\end {figure}
\end{center}

\startingcases
\begin{case} $e_4$ is a rung. \end{case}

If $e_4$ is not incident with either $v_1$ or $v_3$, then $\widetilde{G}$ contains a $P(3)$ minor as indicated in the third enhanced graph from Figure \ref{df-lad1} which 
is a contradiction.  So, by possibly reindexing, we may assume $e_4$ is incident with $v_1$.  If there is an inner $v_3$-rib of $\widetilde{G}_0$, then by contracting the edge of $P_1$ incident with $v_2$ we find that $\widetilde{G}$ contains a $W_4(1)$ minor (see the fourth enhanced graph in Figure \ref{df-lad1}) which is a contradiction. Therefore every inner rib of $G_0$ is incident with $v_1$.  Now setting $A^0_1 = \{ f \in A_1 \setminus \{e_4\} \mid f \le e_4 \}$ and $A_1^1 = A_1 \setminus (A_1^0 \cup \{e_4\})$ we find that $\widetilde{G}$ has the structure indicated on the left in Figure \ref{df-lads}.  If the edge $e_1$ is a spine edge, then it must be delete-proof, and $\widetilde{G}$ contains a $P^+(1)$ minor as before, which is contradictory.  So, $e_1$ must be a rib edge.  If $e_1$ is incident with $v_3$, then it must be delete proof and then $\widetilde{G}$ contains $W_4(1)$ as a minor (see the fifth enhanced graph in Figure \ref{df-lad1}), which is contradictory.  Therefore, $e_1$ is incident with $v_1$, and now it must be contract-proof.  However, again in this case $\widetilde{G}$ contains a $W_4(1)$ minor (this is similar to the fourth enhanced graph in Figure \ref{df-lad1} with the subdivided edge replaced by a contract-proof one), which is a contradiction.

\begin{center}
\begin{figure}[h]
\centerline{\includegraphics[height=3cm]{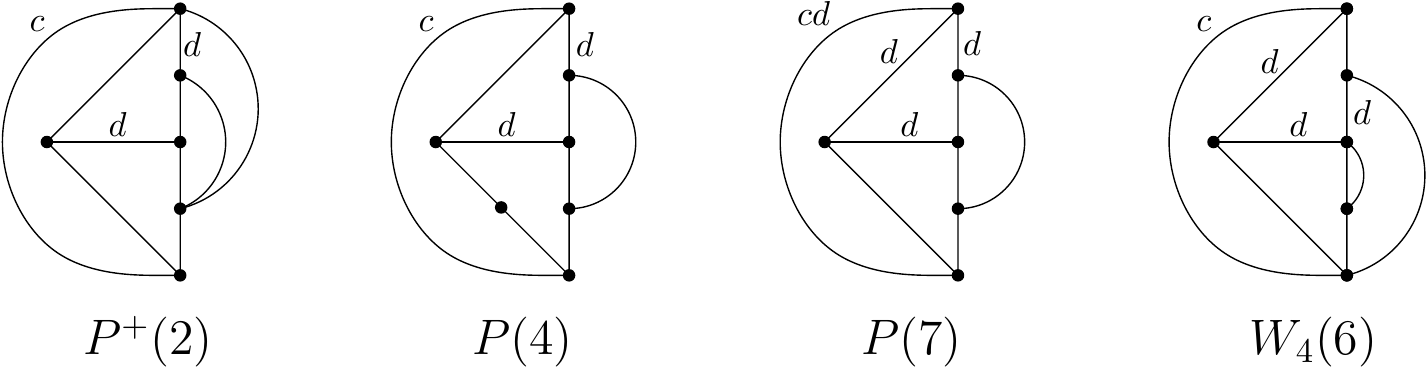}}
\caption{}
\label{df-lad2}
\end {figure}
\end{center}

\begin{case} $e_4$ is a support. \end{case}

We may assume (without loss) that $e_4$ is contained in the path $P_1$ from $v_1$ to $v_2$.  Let $Q$ be the component of $P_1 \setminus e_4$ 
containing $v_1$.  If there exists an edge between a vertex in $Q$ and the interior of the path $P_2$, then $\widetilde{G}$ contains a $P^+(2)$ minor 
(as shown in the first enhanced graph of Figure \ref{df-lad2}) which is contradictory.  Therefore, no such edge exists.  If there is an inner rib of $\widetilde{G}_0$ incident with  $v_1$, then $\widetilde{G}$ contains $P(4)$ (as shown in the second enhanced graph of Figure \ref{df-lad2}) which is a contradiction.  So, all inner ribs are incident with $v_3$.  Now setting $A_1^0 = \{ f \in A_1 \setminus \{e_4\} | f \le e_4 \}$ and $A_1^1 = A_1 \setminus (A_1^0 \cup \{e_4\})$ we find that $\widetilde{G}$ has the structure depicted on the right in Figure \ref{df-lads}.  Note that $A_1^0$ is empty if and only if $e_4$ is incident with $v_1$.  However $A_1^1$ contains the edge $e_5$, so it must be nonempty.  As before, we now turn our attention to the edge $e_1$.  If $e_1$ is a spine edge, then $e_1$ must be delete-proof and then $\widetilde{G}$ contains $P^+(1)$ as before, which is a contradiction.  If $e_1$ is incident with $v_3$, then $e_1$ must be contract-proof and we find that $\widetilde{G}$ contains a $P(4)$ minor (this is similar to the second enhanced graph in Figure \ref{df-lad2} except with the subdivided edge replaced by a contract-proof one), which is contradictory.  Therefore $e_1$ is the unique edge of $A_0$ incident with $v_1$.  In this case $e_1$ must be delete-proof.  If $A_1^0$ is empty, then all three edges incident with $v_1$ are in $\widetilde{S}$ and must therefore be delete-proof, and this gives us a $P(7)$ minor (as indicated in the third enhanced graph of Figure \ref{df-lad2}), which is a contradiction.  Otherwise, $A_1^0$ is nonempty and $\widetilde{G}$ contains a $W_4(6)$ minor as indicated in the fourth enhanced graph from Figure \ref{df-lad2}.  This final contradiction completes the proof.
\end{proof}

\subsection{Proof of theorem \ref{mainskeleton}}\label{subsec proof}

In this subsection we will complete our proof of the main result.  We begin by investigating the presence of small rooted minors.  For brevity, 
we will say that $\widetilde{G}_i$ contains one of the enhanced graphs $F$ from Figure \ref{triad_halves} if $(\widetilde{G}_i ; v_1,v_2,v_3)$ contains $(F; v_1, v_2, v_3)$ as a rooted minor.  

\begin{center}
\begin{figure}[h]
\centerline{\includegraphics[height=2.4cm]{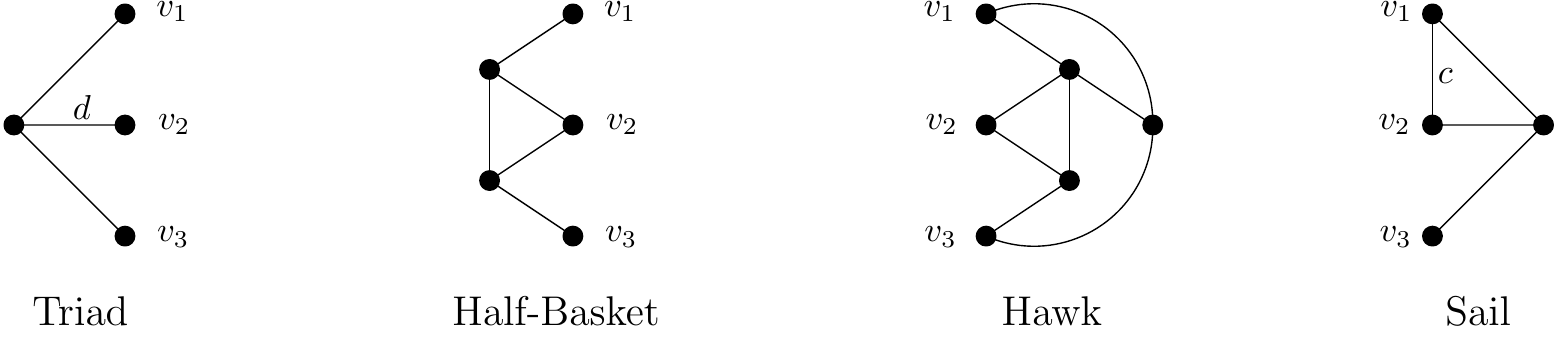}}
\caption{}
\label{triad_halves}
\end {figure}
\end{center}

\begin{lemma}
\label{no_triad} For $i=0,1$, either $\widetilde{G}_i$ contains triad, or it is a $(v_1,v_2,v_3)$-doublefan.
\end{lemma}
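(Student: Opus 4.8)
The plan is to read the dichotomy directly off Lemma~\ref{basic_decomp}, which already splits $\widetilde{G}_i$ into three shapes: $E(\widetilde{G}_i)=\{v_1v_2,v_2v_3\}$; or $\widetilde{G}_i\setminus\{v_1,v_2,v_3\}$ contains a cycle; or $\widetilde{G}_i$ is a $\{v_1,v_2,v_3\}$-doublefan. In the first shape, $\widetilde{G}_i\setminus\{v_1,v_3\}$ is the single vertex $v_2$, a trivial path beginning at $v_2$, and $v_1v_3$ is the edge $e\notin A_i$, so it is not an edge of $\widetilde{G}_i$ ($\widetilde{G}$ being simple); hence $\widetilde{G}_i$ is a $(v_1,v_2,v_3)$-doublefan (with empty spine) and we are done. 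So from now on $\widetilde{G}_i$ has a vertex off $\{v_1,v_2,v_3\}$, and the task is to produce a rooted triad (the relevant enhanced graph of Figure~\ref{triad_halves}).

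For the second shape, let $D$ be a cycle of $\widetilde{G}_i\setminus\{v_1,v_2,v_3\}$, so $|V(D)|\ge 3$ since $\widetilde{G}$ is simple. Because $\widetilde{G}$ is $3$-connected, no set of at most two vertices can separate the $3$-set $\{v_1,v_2,v_3\}$ from $V(D)$ (deleting two vertices leaves $\widetilde{G}$ connected), so Menger's theorem supplies three vertex-disjoint paths $Q_1,Q_2,Q_3$ from $v_1,v_2,v_3$ to $V(D)$, which I take internally disjoint from $\{v_1,v_2,v_3\}\cup V(D)$. Since $\partial(A_i)=\{v_1,v_2,v_3\}$ separates the edges of $A_i$ from the rest of $\widetilde{G}$, a path of this form cannot leave $A_i$ without revisiting a root, so each $Q_j$ lies inside $\widetilde{G}_i$; contracting $D\cup Q_1\cup Q_2\cup Q_3$ onto the roots then exhibits the triad as a rooted minor of $(\widetilde{G}_i;v_1,v_2,v_3)$.

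For the third shape, write $P'=\widetilde{G}_i\setminus\{v_1,v_2,v_3\}$; by (the proof of) Lemma~\ref{basic_decomp} this is a path and each of $v_1,v_2,v_3$ is adjacent to an end of it, and if $P'$ is empty we are back in the first shape, so assume $P'$ is a nonempty path. Then $3$-connectivity forces each root to have a neighbour actually on $P'$: a root whose only $\widetilde{G}_i$-edges ran to the other two roots would, upon deleting those two roots, leave $P'$ cut off from the remainder of $\widetilde{G}$. Picking for each root a neighbour on $P'$, the minimal subpath of $P'$ containing all three of them together with the three connecting edges already realizes the triad as a rooted minor of $(\widetilde{G}_i;v_1,v_2,v_3)$ — unless this configuration can equally be read as a $(v_1,v_2,v_3)$-doublefan, which by Corollary~\ref{init_struc}(5) is exactly the case in which $v_2$ has a single neighbour on $P'$ and it lies at an end of $P'$, so that $\widetilde{G}_i\setminus\{v_1,v_3\}$ is a path beginning at $v_2$. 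I expect the main obstacle to be exactly this last bookkeeping in the third shape: tracking the rib pattern of the doublefan (which root attaches where along $P'$, and the degenerate patterns where two roots share their only attachment), and checking that the alternative ``$\widetilde{G}_i$ is a $(v_1,v_2,v_3)$-doublefan'' versus ``$\widetilde{G}_i$ contains the triad'' is genuinely exhaustive; $3$-connectivity of $\widetilde{G}$ together with Corollary~\ref{init_struc}(5) is what forces one of the two to hold.
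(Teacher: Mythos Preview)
Your argument treats ``triad'' as the bare rooted $K_{1,3}$ with $v_1,v_2,v_3$ as leaves, but triad is an \emph{enhanced} rooted graph from Figure~\ref{triad_halves}: it carries a delete-proof edge on the $v_2$-arm. You can read this off the paper's later use of the lemma --- in the proof of Lemma~\ref{triad_struc} and again in the proof of Lemma~\ref{fminor}, a $(v_1,v_2,v_3)$-doublefan contains triad if and only if it has a delete-proof spine edge. If triad were just unprotected $K_{1,3}$, every $\widetilde{G}_i$ with a vertex off $\{v_1,v_2,v_3\}$ would contain it (your own argument shows this), and those implications would be vacuous.

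This breaks your third case. When $\widetilde{G}_i$ is a $\{v_1,v_2,v_3\}$-doublefan, your construction --- one edge from each root to a point on the path $P'$, plus a subpath --- yields only a tree, hence only an unprotected $K_{1,3}$; there is no cycle to manufacture parallel edges and no delete-proof edge appears, so the enhanced triad is not exhibited. Your ``unless'' clause does not save this: the issue is not that the construction sometimes degenerates into a $(v_1,v_2,v_3)$-doublefan, but that it \emph{never} produces the required protection. (Your Case~2 has the same defect in miniature: contracting the whole cycle $D$ loses the parallel pair; one must keep $D$ intact on the $v_2$-side so that two cycle edges become parallel after contraction and can be traded for a delete-proof edge via the enhanced-minor operation.) The paper's route avoids all of this by testing whether $\widetilde{G}_i\setminus\{v_1,v_3\}$ --- removing only two roots --- contains a cycle: if yes, that cycle supplies the parallel pair toward $v_2$ and hence the enhanced triad; if no, then Lemma~\ref{basic_decomp} together with Lemmas~\ref{iftriangle} and~\ref{notriangle} (ruling out the two-edge triangle case) forces $\widetilde{G}_i$ to be a $\{v_1,v_2,v_3\}$-doublefan, and acyclicity of $\widetilde{G}_i\setminus\{v_1,v_3\}$ is then literally the defining condition for it to be a $(v_1,v_2,v_3)$-doublefan.
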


\begin{proof} 
If $\widetilde{G}_i \setminus \{v_1,v_3\}$ contains a cycle, there exist three vertex disjoint paths from $\{v_1,v_2,v_3\}$ to this cycle, and $\widetilde{G}_i$ contains triad.  Otherwise, Lemma \ref{basic_decomp} along with Lemmas \ref{iftriangle} and \ref{notriangle} imply that $\widetilde{G}_i$ is a $\{v_1,v_2,v_3\}$-doublefan, and the result follows easily.
\end{proof}

\begin{lemma}
\label{triad_struc}
For $i=0,1$, if $\widetilde{G}_i$ contains triad, then $\widetilde{G}_{1-i}$ is either a $(v_1,v_2,v_3)$-doublefan or a $(v_1,v_2,v_3)$-ladder. 
\end{lemma}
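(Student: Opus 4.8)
The plan is to argue by contradiction inside the minimal counterexample $\widetilde{G}$, using that $\widetilde{G}$ is planar (it has no $K_{3,3}$- or $K_5$-minor) together with the triad supplied on the other side of the partition $\{A_0,\{e\},A_1\}$. Say $i=1$, so $\widetilde{G}_1$ contains a triad, and we must show $\widetilde{G}_0$ is a $(v_1,v_2,v_3)$-doublefan or a $(v_1,v_2,v_3)$-ladder. By Lemma \ref{no_triad}, if $\widetilde{G}_0$ is not a $(v_1,v_2,v_3)$-doublefan then it contains a triad; so I would assume $\widetilde{G}_0$ contains a triad and prove that it must then be a $(v_1,v_2,v_3)$-ladder, supposing not towards a contradiction.

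First I would fix a plane embedding of $\widetilde{G}$. Since $\partial(A_0)=\{v_1,v_2,v_3\}$, the subgraph $\widetilde{G}_1\cup\{e\}$ is a bridge of $\widetilde{G}_0$ with attachment set $\{v_1,v_2,v_3\}$, hence is drawn inside a single face $\Phi$ of $\widetilde{G}_0$. Using the $3$-connectivity of $\widetilde{G}$ --- and the fact that $\widetilde{G}_1$ contains a triad, so $v_1,v_2,v_3$ are joined inside $\widetilde{G}_1\cup\{e\}$ by three ``independent'' connections --- I would check that $\Phi$ is bounded by a cycle $D$ containing $v_1,v_2,v_3$, and that $\widetilde{G}_0$ embeds in a disc with $D$ on the boundary. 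Let $P_{12},P_{23},P_{13}$ be the arcs of $D$ joining the roots in the cyclic order $v_1,v_2,v_3$; since $\widetilde{G}$ is simple and $e=v_1v_3$ we have $v_1v_3\notin E(\widetilde{G}_0)$, so $P_{13}$ has an interior vertex, and by $3$-connectivity no root has all its $\widetilde{G}_0$-neighbours on a single arc.

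I would then run the bridge analysis of $\widetilde{G}_0$ relative to $D$, following the template of the proof of Lemma \ref{getsrich}: the attachments of any bridge $K$ on each arc $P_{ij}$ form an interval, and $K$ has an attachment off each arc. The configurations to rule out, each by exhibiting a minor in $\mathcal{F}$ assembled from $\widetilde{G}_0$ and the triad carried by $\widetilde{G}_1\cup\{e\}$, are: (a) a bridge meeting the interiors of all three arcs, which with the triad gives a cube $C$ minor; (b) bridges realising all three pairs of arc-interiors, which with the triad gives an $H$ (or $O$) minor; (c) two bridges crossing along $D$, which again gives one of $C,O,H$; (d) a bridge with an attachment at a root $v_k$ and an attachment in the interior of the opposite arc --- by the mechanism of Lemma \ref{fan2} this forces $\widetilde{G}_1\setminus v_k$ to be a path, so $\widetilde{G}_1\setminus\{v_1,v_3\}$ is a forest, contradicting that $\widetilde{G}_1$ contains a triad (alternatively such a bridge yields an $O$ minor directly); and, whenever Lemma \ref{mm3sep} forces the edges $e_1,e_2\in A_0\cap\widetilde{S}$ to be protected, a few $K_5^-$-, $P$- or $P^+$-type enhanced minors produced by those protected edges. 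With (a)--(d) excluded, every bridge of $\widetilde{G}_0$ attaches to the interiors of exactly one fixed pair of arcs, say $P_{12}$ and $P_{23}$, no two bridges cross, and by $3$-connectivity every interior vertex of $P_{12}\cup P_{23}$ meets a bridge; reading this structure off --- and using $e=v_1v_3$ to identify $v_1,v_3$ as the end-roots and $v_2$ as the middle root --- shows $\widetilde{G}_0$ is exactly a basic or extended $(v_1,v_2,v_3)$-ladder with supports $P_{12},P_{23}$, the desired contradiction.

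The step I expect to be the main obstacle is case (d) together with the protection bookkeeping: one must check that every configuration in which a bridge reaches across $D$ to a root really collapses --- either via the Lemma \ref{fan2} mechanism, which destroys the triad on the far side, or by producing a concrete member of $\mathcal{F}$ --- and throughout (a)--(d) one must track which of $e_1,e_2$ are forced contract- or delete-proof, since that is exactly what promotes a bare graph minor to an enhanced forbidden minor. A smaller but real subtlety is the very first reduction in the plane picture: proving that the face $\Phi$ holding $\widetilde{G}_1\cup\{e\}$ is bounded by a cycle through all three roots, which uses both the $3$-connectivity of $\widetilde{G}$ and the richness of $\widetilde{G}_1$.
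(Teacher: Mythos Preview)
Your overall strategy---embed the side to be classified in a disc and run a bridge analysis in the style of Lemma~\ref{getsrich}---is reasonable, but it differs from the paper's argument and, as written, contains an internal inconsistency that blocks the conclusion.

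\textbf{The three-arc setup does not match the ladder conclusion.} You argue first that $P_{13}$ has an interior vertex (since $v_1v_3\notin E(\widetilde{G}_0)$), and later that after (a)--(d) every bridge attaches only to the interiors of $P_{12}$ and $P_{23}$. These two claims cannot coexist: any interior vertex of $P_{13}$ has degree~$2$ on $D$ and, not being a root, must have a third edge in $\widetilde{G}_0$ by $3$-connectivity---i.e.\ it is an attachment of some bridge, contradicting that no bridge meets the interior of $P_{13}$. Put differently, a $(v_1,v_2,v_3)$-ladder has only \emph{two} support paths $P_1,P_2$; there is no third boundary arc, so forcing a three-arc cycle $D$ on $\widetilde{G}_0$ is already the wrong frame. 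The paper sidesteps this by never building a full cycle: it takes only the two facial paths $P_1$ (from $v_1$ to $v_2'$) and $P_2$ (from $v_2'$ to $v_3$) inside the side being classified, with the ``third side'' lying outside in $\widetilde{G}_{i}\cup\{e\}$.

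\textbf{The paper's argument is two-sided, not one-sided.} Even with the correct two-path frame, the direct analysis does not finish: when $V(G')\neq V(P_1)\cup V(P_2)$ one obtains the ``Hawk'' configuration, and at that point the paper turns back to the \emph{other} side $\widetilde{G}_i$ (the one assumed to contain triad), shows it must be a $(v_1,v_2,v_3)$-doublefan with the delete-proof spine edge equal to $v_2u_2$, and then finishes by locating $e_1$ and producing $P^+(1)$, $W_4(1)$, or $K_5^-(4)$. Your plan is entirely one-sided and has no analogue of this back-and-forth; the cases your (a)--(d) would leave over are essentially the Hawk case, and those require the structure of the triad side, not more bridge analysis on the ladder side. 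The Lemma~\ref{fan2} mechanism you invoke in (d) goes in the wrong direction for this: it would force the \emph{triad} side to be a path after deleting a root, which is exactly what the paper exploits (to conclude $\widetilde{G}_i$ is a doublefan there), but it does not by itself finish the classification of $\widetilde{G}_{1-i}$.
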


\begin{proof}
We will not use the size of the $A_i$ in this argument, so we may assume without loss that $i=0$. If $v_2$ is incident with at least two edges in $A_1$ then set $G' = \widetilde{G}_1$ and
set $v_2' = v_2$.  Otherwise, let $v_2'$ be the vertex adjacent to $v_2$ by an edge in $A_1$ and set $G' = \widetilde{G}_1 \setminus v_2$.  

\begin{center}
\begin{figure}[h]
\centerline{\includegraphics[scale=0.65]{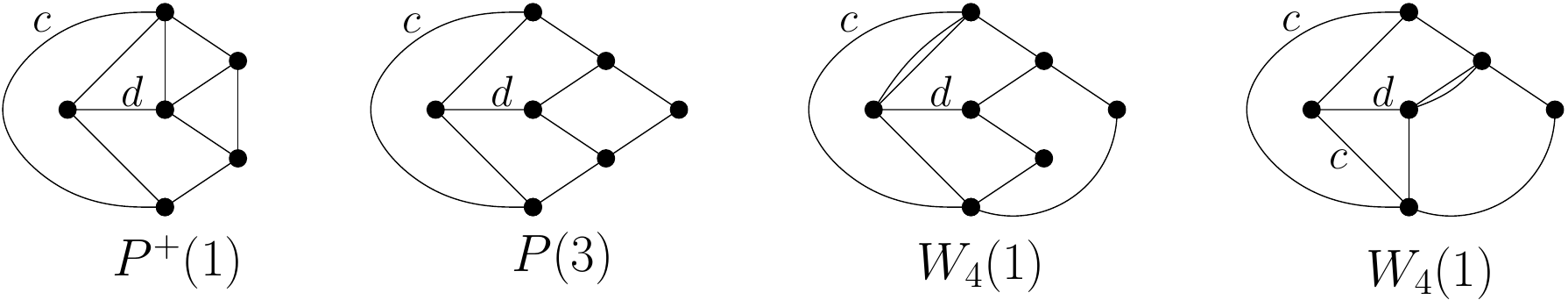}}
\caption{}
\label{triad_extras}
\end {figure}
\end{center}

First suppose that $v_2'$ is adjacent to one of $v_1,v_3$.  If the enhanced graph $G' \setminus \{v_1,v_3\}$ contains a cycle, then 
$\widetilde{G}$ contains $P^+(1)$ as a minor (as seen in the first enhanced graph in Figure \ref{triad_extras}) which is contradictory.  Otherwise, $\widetilde{G}_1$ is a
$(v_1,v_2,v_3)$-doublefan and we are finished.  So, we may assume $v_2'$ is not adjacent to $v_1,v_3$.
Now, $\widetilde{G}$ is a 3-connected planar graph, and $v_1,v_2',v_3$ form a 3 vertex separation, so there is a path $P_1 \subseteq G'$
from $v_1$ to $v_2'$ which forms a part of a facial cycle, and there is a similar path $P_2 \subseteq G'$ from $v_2'$ to $v_3$.  Note that our assumptions imply that these paths both have length at least two, and meet only at $v_2'$.  

There must not exist a path of length at least two which is internally disjoint from $P_1 \cup P_2$ and has one end in the interior of $P_1$
and the other in the interior of $P_2$, as in this case $\widetilde{G}$ contains $P(3)$ (as shown in the second graph from Figure \ref{triad_extras}).
Since the graph obtained from the union of the faces of $\widetilde{G}$ containing $v_2'$ by deleting the vertex $v_2'$ is a cycle 
(by 3-connectivity and planarity) it then follows that $v_2'$ is adjacent with exactly two vertices in $G'$, one vertex 
$v_2^-$ in $V(P_1)$ and one vertex $v_2^+$ in $V(P_2)$ and these three vertices form a triangular face.  

If $V(G') = V(P_1) \cup V(P_2)$ then the planarity and 3-connectivity of $\widetilde{G}$ imply that $G'$ is a 
$(v_1,v_2',v_3)$-ladder and we are finished.  Otherwise, let $X$ be the vertex set of a component of $G' \setminus (V(P_1) \cup V(P_2))$. 
There must not exist a vertex in the interior of $P_1$ adjacent to a point in $X$ and another in the interior of $P_2$ adjacent to a point in $X$, as then 
$\widetilde{G}$ would contain $P(3)$ (as before).  Thus by 3-connectivity and planarity (and symmetry between $v_1$ and $v_3$), we may assume that there is a vertex in $X$ adjacent to $v_3$ and all other vertices outside $X$ which are adjacent to a point in $X$ lie in the path $P_1$.    Note that this implies 
that $G'$ contains Hawk.

\begin{center}
\begin{figure}[h]
\centerline{\includegraphics[height=4.3cm]{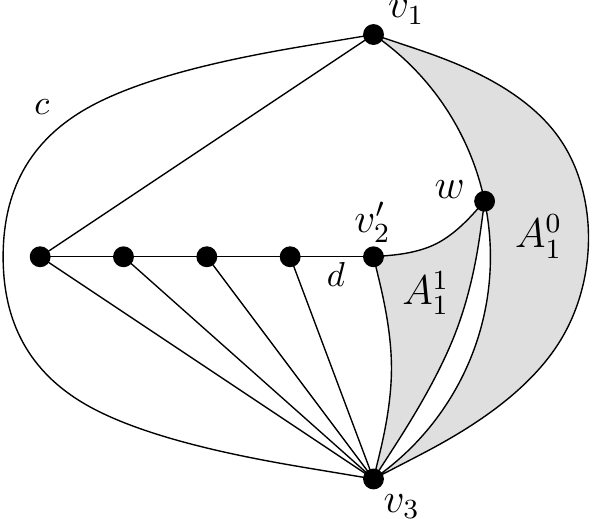}}
\caption{}
\label{triad_lem_struc}
\end {figure}
\end{center}

Let $w$ be the vertex on the path $P_1$ nearest $v_2'$ which is adjacent to a point in $X$.  Now, by planarity, there exists a partition 
of $E(G')$ into $\{A_1^0, A_1^1 \}$ so that $\partial(A_1^0) = \{v_1, w, v_3\}$ and $\partial(A_1^1) = \{ v_2', w, v_3 \}$ as on the right hand side of Figure \ref{triad_lem_struc}.  Furthermore, $|A_1^0|, |A_1^1| \ge 4$, so by Lemma \ref{mm3sep} we have $|A_1^0 \cap S| = |A_1^1 \cap S| = 1$.  

Equipped with this information, we now turn our attention to $\widetilde{G}_0$.  If $\widetilde{G}_0 \setminus \{v_1,v_3\}$ contains
a cycle, then $\widetilde{G}_0$ contains Half-Basket, and since $\widetilde{G}_1$ also contains Half-Basket, we find that $\widetilde{G}$ contains $D^*(1)$, which is a contradiction.  It follows that $\widetilde{G}_0$ is a $(v_1,v_2,v_3)$-doublefan.  Let $u_1, u_2, \ldots, u_n$ be the vertex sequence in the path $\widetilde{G}_0 \setminus \{v_1,v_3\}$ and assume $v_2 = u_1$.  Note that the assumption $\widetilde{G}_0$ contains triad implies that $\widetilde{G}_0$ has a delete-proof spine edge $u_i u_{i+1}$.  If there is a smaller rib edge, then $\widetilde{G}$ contains $P^+(1)$ (as shown in the first graph of Figure \ref{triad_extras}) which is a contradiction.  So, by 3-connectivity the only delete-proof spine edge is $u_1 u_2 = v_2 u_2$.  For the same reason, it must be that $v_2' = v_2$.  If there is an inner $v_1$-rib, then $\widetilde{G}$ contains $W_4(1)$ (as shown in the third graph of Figure \ref{triad_extras}) which is a contradiction.  So, we may assume that every inner rib is incident with $v_3$, and Figure \ref{triad_lem_struc} depicts our graph.  

The edge $u_1 u_2$ must be in $\widetilde{S}$ by Lemma \ref{mm3sep}, so we may assume $e_2 = u_1 u_2$.  If $e_1$ is a spine edge, then it must be delete-protected, (since $\widetilde{G} \setminus e_1$ has a bad 2-separation using the vertices $w,v_3$).  However, then $\widetilde{G}$ contains $P^+(1)$ (as shown in the first graph of Figure \ref{triad_extras}) which is contradictory.  If $e_1$ is a $v_3$-rib then it must be contract proof, and $\widetilde{G}$ contains $W_4(1)$ (as shown in the last graph of Figure \ref{triad_extras}) which is contradictory.  Thus $e_1 = v_1 u_n$, and this edge must be  delete-proof.  However then $\widetilde{G}$ has a $K_5^-(4)$ minor similar to that in the third graph of Figure \ref{triad_extras}.  This final contradiction completes the proof.
\end{proof}

Now we are ready to complete our results by proving Lemma \ref{fminor}.

\bigskip

\noindent{\it Proof of Lemma \ref{fminor}.} If neither $\widetilde{G}_0$ nor $\widetilde{G}_1$ contains triad, then Lemma \ref{no_triad} implies that both are $(v_1,v_2,v_3)$-doublefans contradicting Lemma \ref{central_doublefan}.  If they both contain triad, then by Lemma \ref{triad_struc} each of $\widetilde{G}_0,\widetilde{G}_1$ must either be a $(v_1,v_2,v_3)$-doublefan or a $(v_1,v_2,v_3)$-ladder.  If both are $(v_1,v_2,v_3)$-doublefans, then we again have a contradiction to Lemma \ref{central_doublefan}.  If both are $(v_1,v_2,v_3)$-ladders, then each contains Half-Basket, so $\widetilde{G}$ has $D^*(1)$ as a minor, which is a contradiction.  Finally, if exactly one is a ladder, then the other is a $(v_1,v_2,v_3)$-doublefan with a delete-proof spine edge, contradicting Lemma \ref{df-lad}.  

In the remaining case we may assume $\widetilde{G}_0$ does not contain triad but $\widetilde{G}_1$ does.  It follows from Lemma \ref{no_triad} and the assumption that $\widetilde{G}_0$ has no triad that $\widetilde{G}_0$ is a $(v_1,v_2,v_3)$-doublefan without a delete-proof spine edge.  Since there is no triangle of edges in $\widetilde{S}$, we may then assume (by possibly switching $v_1$ and $v_3$) that $\widetilde{G}_0$ contains Sail.  If $\widetilde{G}_1 \setminus \{v_1,v_2,v_3\}$ contains a cycle, then $\widetilde{G}$ has a $P(1)$ minor which is contradictory.  Thus, Lemma \ref{basic_decomp} implies that $\widetilde{G}_1$ is a $\{v_1,v_2,v_3\}$-doublefan, and now Lemma \ref{general_doublefan} gives us a contradiction. \qed

\section{Code}

Throughout this project, we used computers in a variety of ways.  The lion's share of code was written in Sage \cite{sagemath}, a freely available, open source mathematical software package built on top of the Python programming language.  Initially, we developed code to explore the splitting property of graphs ``from the bottom up,'' considering for each configuration of five edges the possible ways in which their endpoints were connected within a larger graph.  The code looked for graphs that had induced spanning trees in minors corresponding to the various five configurations, using Proposition \ref{calcdodgson} in Section \ref{sec splitting}.  This approach yielded limited results for certain connected configurations, but the number of possibilities quickly became unwieldy for more general configurations.

We then took the opposite ``top down'' approach to characterizing split graphs, starting with the graph and looking at various configurations of five edges contained in it.  This latter approach proved to be much more effective, although not initially.  The computational overhead for calculating Kirchhoff matrices and Dodgson polynomials for each combination of five edges was simply too high (more than a week running on 4 parallel 2.13 GHz cores to run through graphs up to 12 edges).  This bottleneck was somewhat alleviated by using large random integers rather than symbolic polynomials, although this opened the door to the (extremely unlikely) possibility of falsely determining a split five configuration as a result of a numerical coincidence.  It should be stressed that these calculations were meant as rough guides towards the characterization of the set of forbidden minors, not as the substance of rigorous proofs.

Each new discovery, as a result of an exhaustive search in Sage, led to new theoretical developments that necessitated rewriting the code to optimize the search.  Especially after relaxing the 3-connected hypothesis, the pantheon of minor minimal non-splitting graphs grew with each larger size of graphs tested.  The search was reaching the limits of feasibility, using brute-force methods.  Here the development of the matroid approach, using enhanced graphs (Section \ref{wands}) was pivotal.  The calculation was broken into ``phases,'' initially assuming that all types of protection were both contract- and delete-proof (one of the configurations of three edges on the right of Figure \ref{ordinary-enhanced}).  Those graphs that did not split were passed to the second phase, in which the smaller choices of protection were tested.  In this way, \emph{minimal} non-splitting graphs were found.  To alleviate the significant memory strain, results from the first phase were written to file in a systematic way and retrieved for the second phase of the calculation.  All told, the total processing time for this final successful approach measured in days.  Using this approach we exhaustively checked all enhanced graphs where the underlying graph had at most 11 edges.

Happily, this exhaustive search for minimal non-split graphs agrees completely with the characterization in Theorem \ref{mainskeleton}.

\appendix 

\section{All excluded minors}\label{app all_excl}

The full list of excluded minors for splitting is $K_5$, $K_{3,3}$, the octahedron $O$, the cube $C$, 
$H$ as defined in Figure \ref{firsth},
and the following enhanced graphs:

\noindent
    \includegraphics[width=\linewidth]{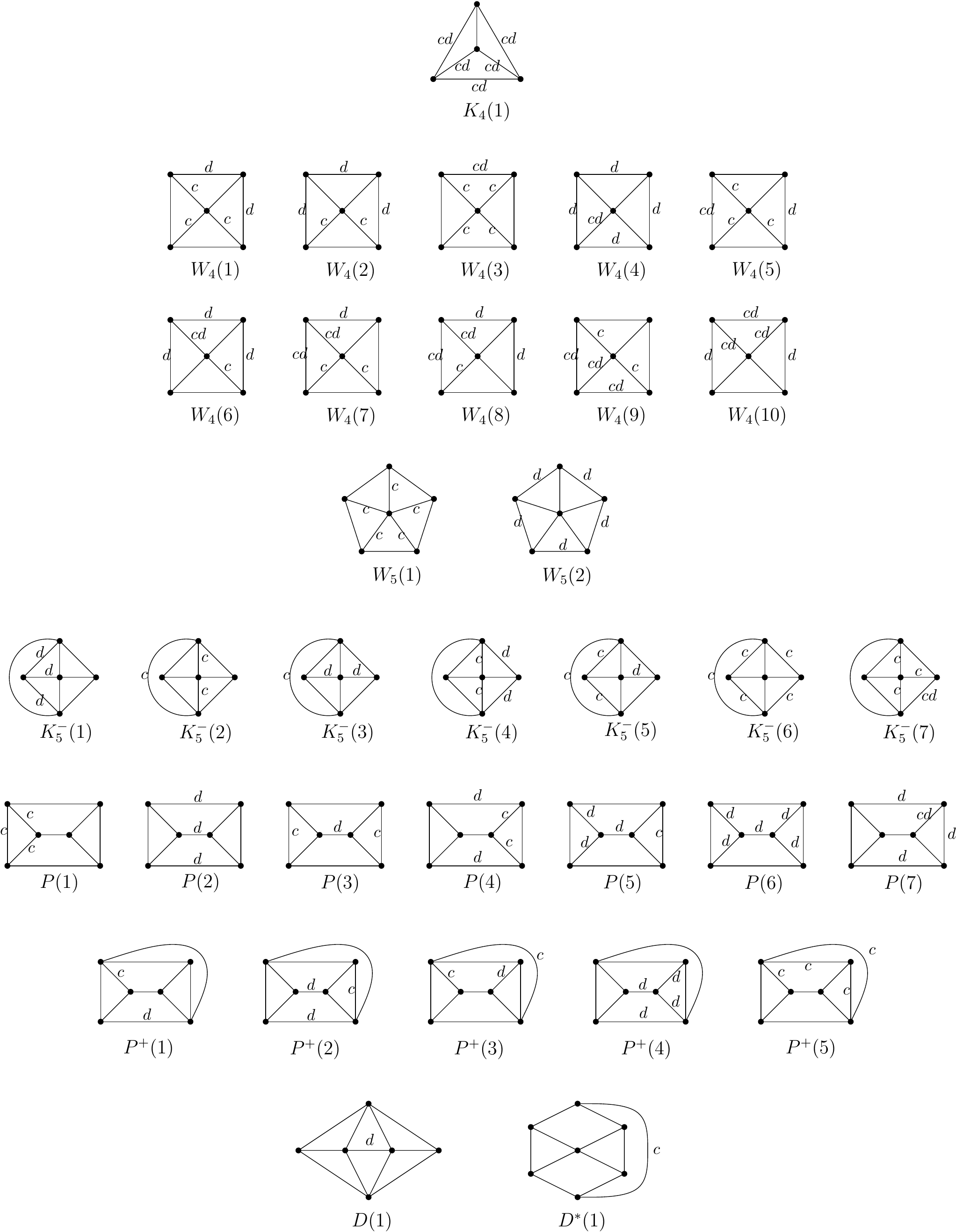}

\newpage

\bibliographystyle{plain}
\bibliography{split}

\end{document}